\documentclass[a4paper, 12pt]{amsart}
\usepackage{amsmath,mathrsfs,amsthm}
\usepackage{txfonts,rotating,tikz}
\usepackage{esvect}
\usepackage{dsfont}
\usetikzlibrary{decorations.pathmorphing,decorations.markings,decorations.pathreplacing,arrows,shapes}
\usepackage{float}
\usepackage{enumitem}

\usepackage[all]{xy}
\usepackage{nicefrac}

\usepackage[all]{xy}
\usepackage{color} 
\usepackage{hyperref}

\newtheorem{theorem}{Theorem}[section]

\newtheorem{lemma}[theorem]{Lemma}
\newtheorem{proposition}[theorem]{Proposition}
\newtheorem{corollary}[theorem]{Corollary}

\theoremstyle{definition}

\newtheorem{remark}[theorem]{Remark}
\newtheorem{definition}[theorem]{Definition}
\newtheorem{example}[theorem]{Example}
\newcommand{\mGr}{\mathscr{G}\!r}
\DeclareMathOperator{\SL}{SL}

\DeclareMathOperator{\Gr}{\mathrm{Gr}}

\newcommand{\mb}{\mathbb}
\newcommand{\mc}{\mathcal}
\newcommand{\mf}{\mathfrak}

\newcommand{\Fl}{\mathrm{Fl}}
\newcommand{\dbp}[1]{(\!(#1)\!)}
\newcommand{\dbb}[1]{[\![#1]\!]}

\newcommand{\rk}{{\mathrm{k}}}

\newcommand{\la}{\langle}
\newcommand{\ra}{\rangle}

\newcommand{\beq}{\begin{equation}}
\newcommand{\eeq}{\end{equation}}
\newcommand{\bt}{\begin{theorem}}
\newcommand{\et}{\end{theorem}}
\newcommand{\bde}{\begin{definition}}
\newcommand{\ede}{\end{definition}}
\newcommand{\bpr}{\begin{proposition}}
\newcommand{\epr}{\end{proposition}}
\newcommand{\ble}{\begin{lemma}}
\newcommand{\ele}{\end{lemma}}
\newcommand{\bco}{\begin{corollary}}
\newcommand{\eco}{\end{corollary}}
\newcommand{\bre}{\begin{remark}}
\newcommand{\ere}{\end{remark}}
\newcommand{\bpf}{\begin{proof}}
\newcommand{\epf}{\end{proof}}
\newcommand{\val}{\mathrm{val}}

\newcommand{\sx}{{\color{blue}s_{1}}}
\newcommand{\sy}{{\color{blue}s_{3}}}
\renewcommand{\sp}{{\color{red}s_{3}}}
\newcommand{\sz}{{\color{blue}s_{4}}}
\newcommand{\so}{{\color{red}s_{2}}}

\makeatletter
\newcommand{\xRightarrow}[2][]{\ext@arrow 0359\Rightarrowfill@{#1}{#2}}
\newcommand{\xLeftarrow}[2][]{\ext@arrow 0359\Leftarrowfill@{#1}{#2}}

\makeatother

\renewcommand{\doteq}{\underset{\rm val}{\sim}}

\def\varequals#1{\mathrel{\leaders\hbox to3pt{$=$\hss}\hskip#1=}}

\begin{document}

\title{Combinatorics of Mirkovi\'{c}-Vilonen cycles in twisted affine Grassmannians   }

\author{Jiuzu Hong} 
\address{
Department of Mathematics, University of North Carolina at Chapel Hill, Chapel Hill, NC 27599-3250, U.S.A.}
\email{jiuzu@email.unc.edu}

\author{Linhui Shen}
\address{
Department of Mathematics, Michigan State University, East Lansing, MI 48824, U.S.A.}
\email{shenlin1@msu.edu}

\author{Daping Weng} 
\address{Department of Mathematics, University of North Carolina at Chapel Hill, Chapel Hill, NC 27599-3250, U.S.A.}
\email{dweng@unc.edu}

\begin{abstract}
We investigate the explicit geometric and combinatorial structure of Mirkovi\'c-Vilonen (MV) cycles in twisted affine Grassmannians associated with special parahoric group schemes. Using the twist map on configuration spaces of decorated flags, we obtain an explicit parametrization of twisted MV cycles by Lusztig data. We show that the changes between reduced words are governed by the usual Lusztig transformations  for the reductive group arising from ramified geometric Satake correspondence. As a consequence, the moment polytopes of twisted MV cycles coincide with the usual MV polytopes for this group. Our construction applies to all special parahoric subgroups in the
tamely ramified setting.
   
\end{abstract}

\maketitle

\tableofcontents

\section{Introduction}

Mirkovi\'c-Vilonen cycles (MV cycles) are certain subvarieties of the affine Grassmannian of a reductive group $G$. They provide a natural basis for the representations of the Langlands dual group $G^\vee$ via the geometric Satake correspondence. Moreover, MV cycles exhibit a very rich combinatorial structure, which is elegantly described using MV polytopes, Berenstein--Zelevinsky (BZ) data, and Lusztig data, as developed extensively in the works of Anderson \cite{An} and Kamnitzer \cite{Kam}. They are also closely related to the combinatorics of canonical bases. Around the same time, an alternative approach to describe MV cycles via LS galleries on the Bruhat--Tits building was developed by Gaussent and Littelmann \cite{gaussent2005ls}.

In the equal-characteristic and tamely ramified setting, let $\mathscr{G}$ be a special parahoric group scheme and let $\mGr$ denote its associated twisted affine Grassmannian. 
When $\mathscr{G}$ is special, $\mGr$ shares many similar properties with the usual affine Grassmannian. Zhu \cite{Zh} proved the ramified geometric Satake correspondence for $\mGr$: the category of spherical perverse sheaves on $\mGr$ is equivalent to the category of representations of a reductive group $M$, which is determined by the loop group of $\mathscr{G}$. Unlike the original approach of Mirkovi\'c and Vilonen \cite{MV}, which relies essentially on MV cycles, Zhu's approach uses nearby cycles to deduce the ramified version of the geometric Satake equivalence from the usual geometric Satake equivalence of $\mGr$. More recently, the ramified geometric Satake equivalence has been extended to sheaves over fields of characteristic $p$ in \cite{ALRS} and has also been developed in the motivic setting \cite{hove2026}, in which van den Hove generalized the LS gallery approach of MV cycles to the setting of twisted affine Grassmannians.

In this work, we follow the approach of Anderson and Kamnitzer by studying MV cycles from the perspective of MV polytopes. To make everything explicit, following \cite {BH,DH}, we first realize the special parahoric group scheme $\mathscr{G}$ equivariantly, building from a split reductive group $G$ and a finite-order automorphism on $G$. The twisted affine Grassmannian $\mGr$ can then be realized as the quotient space $G(\mathcal{K})^\sigma / G(\mathcal{O})^\sigma$, where $\sigma$ is a certain special finite-order automorphism of $G$ that also acts on the field of Laurent series $\mathcal{K}$ as a field automorphism. This explicit realization allows us to utilize the Berenstein--Zelevinsky theory of generalized minors of $G$.

Let $\overline{\mGr}_\lambda$ be a spherical Schubert variety of $\mGr$, and let $S_\mu$ (resp.$T_\mu$) denote a semi-infinite cell (resp. opposite semi-infinite cell) in $\mGr$. By the ramified geometric Satake \cite{Zh}, the components $\overline{\mGr}_\lambda\cap T_\mu$ give rise to a basis of the weight space $V(\lambda)_\mu$ of the highest weight representation $V(\lambda)$ of $M$, cf.\,Theorem \ref{thm_ramified_Satake}. Furthermore, by Corollary \ref{cor_MV_Gr} the components of $\overline{S_\lambda\cap T_\mu}$ that are contained in $\overline{\mGr}_\lambda$ are exactly the components of $\overline{ \mGr_\lambda  \cap T_\mu}= \overline{  \overline{\mGr}_\lambda\cap T_\mu  } $. Following the strategy in \cite{Kam}, instead of studying the components of $\overline{\mGr}_\lambda\cap T_\mu$, we study the components of $S_\lambda\cap T_\mu$. Up to translation, it suffices to study the components of $S_{\lambda-\mu}\cap T_0$. 

Suppose $\mu$ is a non-negative linear combination of simple roots of $M$.  Fix a reduced word $\imath_\bullet$ of the longest element $w_0$ of the Weyl group $W_M$ of $M$. By Theorem \ref{thm_MV_constr}, we have the following disjoint decomposition of equi-dimensional strata
\[ S_\mu\cap T_0= \bigsqcup_{n_\bullet} A^{\imath_\bullet}(n_\bullet), \]
where the disjoint union is over the Lusztig data $n_\bullet$ and $A^{\imath_\bullet}(n_\bullet)$ is the intersection of certain semi-infinite cells defined in (\ref{eq_A_i(n)}). We prove in Theorem \ref{thm_MV_constr} that the closure of each $A^{\imath_\bullet}(n_\bullet)$ is an irreducible component of $S_\mu\cap T_0$.

In this paper, we give an explicit construction of the strata $A^{\imath_\bullet}(n_\bullet)$ from any given Lusztig data $(\imath_\bullet,n_\bullet)$, and prove the following main result.


\begin{theorem}[Theorem \ref{cor_MV_Lusztig_transf}]
For any fixed reduced word $\imath_\bullet$, every irreducible component of $\overline{S_\mu\cap T_0}$ arises as $\overline{A^{\imath_\bullet}(n_\bullet)}$ for some integer tuple $n_\bullet$. Furthermore, 
for any two reduced words $\imath_\bullet$ and $ \imath'_\bullet$, if the Lusztig data $(\imath_\bullet, n_\bullet)$ and $(\imath'_\bullet, n'_\bullet)$ are related by Lusztig transform, then $\overline{A^{\imath_\bullet}(n_\bullet)}= \overline{ A^{\imath'_\bullet}(n'_\bullet)}$. 
\end{theorem}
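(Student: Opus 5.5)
The first assertion I will take essentially from Theorem \ref{thm_MV_constr}. For a fixed reduced word $\imath_\bullet$ of $w_0\in W_M$ it gives the disjoint decomposition $S_\mu\cap T_0=\bigsqcup_{n_\bullet}A^{\imath_\bullet}(n_\bullet)$ into finitely many equidimensional strata, and says each closure $\overline{A^{\imath_\bullet}(n_\bullet)}$ is an irreducible component. Since the union is finite, every irreducible component $Z$ of $\overline{S_\mu\cap T_0}$ is the closure of $Z\cap(S_\mu\cap T_0)$. It therefore lies in $\bigcup_{n_\bullet}\overline{A^{\imath_\bullet}(n_\bullet)}$, and irreducibility forces $Z\subseteq\overline{A^{\imath_\bullet}(n_\bullet)}$ for some $n_\bullet$. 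Because that closure is itself a component, equality holds, which proves the first claim.

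For the second assertion I reduce to elementary braid moves. Any two reduced words of $w_0$ are connected by a chain of braid moves (Matsumoto--Tits), and the Lusztig transformation is defined as the corresponding composite of rank-$2$ transformations. So it suffices to treat the case where $\imath_\bullet$ and $\imath'_\bullet$ differ by a single braid move on positions $k+1,\dots,k+m$ involving simple reflections $s_i,s_j$ of $M$. I will use the explicit construction of $A^{\imath_\bullet}(n_\bullet)$ via the twist map. A point of $S_\mu\cap T_0$ corresponds, via configurations of decorated flags, to an element of $G(\mathcal K)^\sigma$. The stratum is cut out by prescribing the valuations of the generalized minors attached to the chamber weights $w_{\le l}\varpi$ of $\imath_\bullet$, computed on the twist. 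Equivalently, it is the image of the factorization map $(t^{n_1}x_{\imath_1}(\ast),\dots)$ built from the root subgroups of the $\sigma$-fixed group. Outside the positions $k+1,\dots,k+m$ the two words share prefixes $w_{\le l}$, so the minor-valuation conditions coincide for all chamber weights not involving the braid segment.

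This reduces everything to a rank-$2$ computation. Inside the braid segment one needs to show that, on a dense open subset, the valuations of the chamber minors for $\imath_\bullet$ and for $\imath'_\bullet$ are related by the tropicalization of the rank-$2$ transition maps. That is exactly the tropical Lusztig transformation, which I will get from the tropicalized Berenstein--Zelevinsky transition formulas. The relevant rank-$2$ Levi of $M$ is of type $A_1\times A_1$, $A_2$, $B_2$, or $G_2$. The twisted setting is only expected to add these types as folded versions of types $A_3$, $A_4$ and $D_4$ inside $G$ (plus the special $A_{2\ell}$ situation), so I will treat them separately. In each case I will write the positive rational map between the $\imath$- and $\imath'$-factorization coordinates of the $\sigma$-fixed unipotent radical. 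The point is that this map is subtraction-free over the coordinates appearing in $\mathcal K$, so for generic leading coefficients valuations transform by the tropical map. Then $A^{\imath_\bullet}(n_\bullet)\cap A^{\imath'_\bullet}(n'_\bullet)$ contains a dense open subset of $A^{\imath_\bullet}(n_\bullet)$. Since both closures are irreducible components, they coincide.

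The main obstacle is the last step in the twisted cases, especially $A_{2\ell}^{(2)}$. There the $\sigma$-fixed root subgroups are not simply one-parameter groups over $\mathcal K^\sigma$: they involve non-reduced pieces such as $x_\alpha(a)x_{\sigma\alpha}(\bar a)x_{\alpha+\sigma\alpha}(c)$ with a constraint on $c$. One must check two things: that the tropicalization matches Lusztig's transformation for $M$ rather than for the folded group, and that genericity (no cancellation of leading terms) holds on an open set. I would first verify this directly for rank $2$ by explicit matrix computation of the minors, then invoke the generic-valuation principle for subtraction-free expressions.
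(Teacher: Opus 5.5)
Your first part is correct and is what the paper does: it just quotes Theorem \ref{thm_MV_constr}. For the second part your skeleton is also the paper's (Theorem \ref{thm:main-MV}). You reduce to a single braid move, rewrite $y_{\imath_1}\cdots y_{\imath_N}$ in the new word by an explicit rank-$2$ computation in the twisted Lusztig coordinates, and show that on a nonempty open subset of $\mathcal{A}^{\imath_\bullet}(n_\bullet)$ the new coordinates have valuations $n'_k m_{\imath'_k}$. You then push forward by the surjective map $\Phi$ of Theorem \ref{prop: surjective} to get density in $A^{\imath_\bullet}(n_\bullet)$, and equality of closures follows.

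The gap is in the step that carries the content: you obtain the generic valuations from a ``generic-valuation principle for subtraction-free expressions,'' and that principle fails here. The twisted coordinates come in Galois packets. If $a=ut^{\nu}+\cdots$, then $\gamma^j(a)$ has leading coefficient $\epsilon^{-j\nu}u$, so the leading coefficients are not independent. A positive expression can then lose its lowest-order term for every choice of $u$, as the following cases show.
\begin{itemize}
\item Expressions such as $x+\gamma(x)+\gamma^2(x)$, which occur in the $D_4^{(3)}$ transition polynomials, have valuation $>\val(x)$ whenever $3\nmid\val(x)$.
\item For the exceptional orbit with $m=2$ and $n$ odd, the two $y_l$-arguments $\gamma(a)/b$ and $a/b$ of $y_\imath(b,c)$ cancel to leading order at every point of $\mathcal{A}^\imath(n)$, since their sum is $\gamma(b)$.
\item For $m=4$ the maps are not even subtraction-free: there is the $-\gamma(a)/(tb)$ in $y_\imath(b,c)$ and the $-a_1a_2b_4$ in $b'_1$.
\item In characteristic $p$, positive integer coefficients can vanish.
\end{itemize}

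What is actually needed is the criterion of Lemma \ref{lem: non-zero coefficient}: after identifying $\gamma$-conjugate variables, each vertex of the Newton polytope of each transition polynomial must be carried by exactly one monomial with nonzero coefficient. This has to be checked case by case: $P,Q$ in Proposition \ref{prop: B2}, and $P,Q,R,S$ in Proposition \ref{prop: G2}, where the paper checks this by computer. In the $A_{2\ell}^{(2)}$ case the extra terms involving the $b$'s, such as $a_1\overline{b}_2b_4$, must be controlled separately via $\val(b)\ge\tfrac12\val(a)$. This needs a parity split on $n_2,n_4$, substituting $b\overline{b}/2$ for $\overline{a}$ when both are even. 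It is this explicit computation, together with the normalization $\val(a_k)=n_km_{\imath_k}$, that produces Lusztig's transformation for $M$ (Cartan data $c_{\imath\jmath}$) rather than for the folded data $a_{\imath\jmath}$. You flagged that point but left it open. As written, your argument does not determine the valuations of the new coordinates, so it does not show that a dense part of $A^{\imath_\bullet}(n_\bullet)$ lies in $A^{\imath'_\bullet}(n'_\bullet)$.
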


As a consequence, the moment polytope of $\overline{A^{\imath_\bullet}(n_\bullet)}$ exactly coincides with the usual MV polytope of $M$ arising from the affine Grassmannian $\Gr_{M^\vee}$ of $M^\vee$ via the usual geometric Satake, as studied in \cite{Kam,An}. 

Now, we explain our construction of $A^{\imath_\bullet}(n_\bullet)$. We consider a space $\mc{A}^{\imath_\bullet}(n_\bullet)$ consisting of tuples $a_\bullet$ of formal power series, which we call \emph{twisted Lusztig coordinates}, whose entries have valuations that are equal to some appropriate rescalings of $n_\bullet$ (Definition \ref{defn: A}). Let $y$ be the product of Lusztig unipotent elements with arguments $a_\bullet$ according to the reduced word $\imath_\bullet$. By applying the inverse twist map $\eta_{w_0}^{-1}$ to $y$, we show in Theorem \ref{prop: surjective} that the image of $\mc{A}^{\imath_\bullet}(n_\bullet)$ is precisely $A^{\imath_\bullet}(n_\bullet)$. Since $\mc{A}^{\imath_\bullet}(n_\bullet)$ is irreducible, its image $A^{\imath_\bullet}(n_\bullet)$ is irreducible as well, which implies that $\overline{A^{\imath_\bullet}(n_\bullet)}$ is an irreducible component of $\overline{S_\mu\cap T_0}$.

\medskip

Finally, we remark on the new subtleties and novelties in the twisted case compared to the previous work \cite{Kam} in the untwisted case:
\begin{itemize}
\item We need to fine-tune the space $\mc{A}^{\imath_\bullet}(n_\bullet)$ so that under the twist map $\eta_{w_0}^{-1}$, it lands in $A^{\imath_\bullet}(n_\bullet)$. It is quite intricate to construct $\mc{A}^{\imath_\bullet}(n_\bullet)$ for the twisted loop group of type $A_{2l}^{(2)}$, in which case there are two special parahoric subgroups, one is absolutely special, and the other one is not (in the sense of \cite{HainesRicharz2020}).

\item For the non-absolutely special parahoric subgroups of $A_{2l}^{(2)}$, we need to adjust the transposition map in the definition of the twist map to make the twist map compatible with the automorphism $\sigma$.

\item Although the defining equations on valuations for $A^{\imath_\bullet}(n_\bullet)$ are governed by the folded Cartan data $a_{\imath\jmath}$ (Eq. \eqref{eq: folded cartan}), the Lusztig transform between $A^{\imath_\bullet}(n_\bullet)$ and $A^{\imath'_\bullet}(n'_\bullet)$ between different reduced words $\imath_\bullet$ and $\imath'_\bullet$ is governed by the Cartan data $c_{\imath\jmath}$ (Eq. \eqref{eq: cartan}) of $M$; in particular, these two Cartan data are not equal in the case of $A_{2l}^{(2)}$. 

\item We give new geometric proofs of some results of Berenstein and Zelevinsky \cite{BZ} on relating Lusztig data and generalized minors in \S \ref{ss: conf space}. 

\item We work out the explicit Lusztig transformation in the $G_2$ case in Propositions \ref{prop: G2} and \ref{prop:G2'}, which was absent in \cite{Kam}. 

\item We work over an algebraically closed field $\rm{k}$ which is allowed to be of positive characteristic. Consequently, we must address certain subtleties due to modular representation theory.  These include the definition of the discrete function $D_{w\Lambda_\imath}$ on $\mGr$ in (\ref{D_def}), as well as Lemma \ref{lem: representative}, which is crucial for proving the surjectivity of the map from $\mc{A}^{\imath_\bullet}(n_\bullet)$ to $A^{\imath_\bullet}(n_\bullet)$.
\end{itemize}

\vspace{0.2cm}

{\bf Acknowledgement}.
This project was initiated in November 2022 and developed through several stages of work, with a complete draft ready by June 2026. In September 2026, during the final stage of this work, we used OpenAI's ChatGPT for Academic Researchers for proofreading. It suggested several corrections and improvements, all of which were independently verified by the authors. 

We would like to thank Tom Haines for pointing out an error in a previous version. J. Hong and L. Shen gratefully acknowledge support from their respective Simons Travel Grants (MPS-TSM-00007468, MPS-TSM-00026110).

\section{Geometry of twisted affine Grassmannians}
In Section \ref{special parahoric subgroups}, we review the special parahoric subgroups of twisted loop groups from the perspective of  Bruhat-Tits theory, and we provide their  equivariant constructions following \cite{DH,HY2024}.  
In Section \ref{Twisted MV cycles}, we review the ramified geometric Satake correspondence and explain how the study of components of $\overline{\mGr}_\lambda\cap T_\mu$ can be reduced to the study of components of $S_\lambda\cap T_\mu$.  
In Section \ref{twisted ggms strata}, we introduce the twisted GGMS strata and the 
$D$-functions on the twisted loop groups. We give a characterization of the 
strata $A^{\imath_\bullet}(n_\bullet)$ in terms of these $D$-functions. 

\smallskip

Let $\tau$ be an action of finite order on a set $X$. Throughout this paper, we denote by $X^\tau$ the set of $\tau$-fixed elements of $X$, and by $X_\tau=X/\langle\tau\rangle$ the quotient set of $\tau$-orbits in $X$.

\subsection{Special parahoric subgroups}

\label{special parahoric subgroups}


Let $G$ be a simple algebraic group  over an algebraically closed field $\rk$ of characteristic $p$, where $p$ is either zero or a prime. Let $\tau$ be a nontrivial diagram automorphism on $G$ of order $r$ that preserves a Borel subgroup $B$ and a maximal torus $H\subset B$. Note that the order $r$ can only be $2$ or $3$.  

Let $\{ \alpha_i \mid i\in I\}$ be the set of simple roots of $G$ with respect to $(B, H)$. Let $\{x_{i}, y_{i}\mid i\in I \}$ be a pinning of $G$ associated with $\{ \alpha_i \mid i\in I\}$. Let $X^*(H)$ (resp. $X_*(H)$) denote the weight (resp. coweight) lattice of $H$. The automorphism $\tau$ induces natural actions $\tau$ on $X^*(H), X_*(H), I $, and the set   $ \{\alpha_i  \mid i\in I\}  $. The action $\tau$ preserves the pinning $\{x_i, y_i\mid i\in I \}$ in the sense that 
\[\tau(x_{i} (a) )= x_{\tau(i)}(a) , \quad \tau(y_{i} (a) )= y_{\tau(i)}(a), \quad \text{for any } a\in \rk. \]

In this paper, we assume that $p \not= r$ and that $G$ is either simply-connected or of adjoint type.  Let $G^\tau$ be the subgroup of $\tau$-fixed points of $G$. Consequently, $B^\tau$ is a Borel subgroup of $G^\tau$, and $H^\tau$ is a maximal torus of $G^\tau$ contained in $B^\tau$. Since $G$ is simply-connected or adjoint, $G^\tau, B^\tau, H^\tau$ are all connected.  There are natural isomorphisms 
\[
X^*(H^\tau)\simeq X^*(H)_\tau, \quad X_*(H^\tau)\simeq X_*(H)^\tau.\]
Let $I_\tau$ be the set of $\tau$-orbits in $I$. For each $\imath \in I_\tau$, we associate a simple root $\overline{\alpha}_{\imath}:=  \alpha_i |_{H^\tau}$ of $G^\tau$, where $i\in \imath$. Therefore, the set  $I_\tau$ coincides with the set of simple roots of $G^\tau$.

\smallskip

Let $\mc{K}_z=\rk\dbp{z}$ be the field of Laurent series over $\rk$. Let $\mc{O}_z=\rk\dbb{z}$ be the ring of formal power series over $\rk$. Fix a primitive $r$-th root of unity $\epsilon_r$.  We consider the action  $\tau$ on $\mc{K}_z$ and $\mc{O}_z$ defined by
\[\tau|_\rk={\rm Id}_\rk, \quad \mbox{and~}\tau(z)=\epsilon_r^{-1}z.\]
Let $\tau$ act diagonally on $G(\mc{K}_z)$ and $G(\mc{O}_z)$. Denote by $G( \mc{K}_z )^\tau$ and $G(\mc{O}_z)^\tau$ the  subgroups of $\tau$-fixed points.  The twisted loop group $G(\mc{K}_z)^\tau$ is associated with an affine Dynkin diagram $X_N^{(r)}$, whose affine simple roots are indexed by $\hat{I}_\tau=I_\tau \cup \{o\}$. We regard $X_*(H)^\tau_\mb{R}:= X_*(H)^\tau \otimes _\mb{Z} \mb{R}$ as an apartment of the Bruhat-Tits building of $G(\mc{K}_z)^\tau$. Let $\mf{C}$ be the fundamental alcove in $X_*(H)^\tau_ \mb{R}$. The vertices of $\mf{C}$ are naturally indexed by $\hat{I}_\tau $. 

Each vertex $\kappa$ 
 of $\mf{C}$ corresponds to a maximal parahoric subgroup $\mc{P}_\kappa$ of $G(\mc{K}_z)^\tau$. Following \cite[\S 1.9]{Ti},  we say that $\kappa$ and  $\mc{P}_\kappa$ are {\it special}  if the Weyl group of the Dynkin diagram associated with the subset $\hat{I}_\tau \backslash \{\kappa\}$ is isomorphic to the Weyl group of $G^\tau$.  Let $\mc{S}$ denote the set of special vertices of $\mf{C}$.  Following the Bourbaki labeling, we get
 \[ \mc{S}=  \begin{cases}
  \{ 0,  \check{\omega}_1+ \check{\omega}_{2\ell-1} \} \quad   \text{if } X_N^{(r)}=         A_{2\ell-1}^{(2)}  \\
    \{0,   \frac{1}{2}(\check{\omega}_\ell+ \check{\omega}_{\ell+1}) \} \quad      \text{if } X_N^{(r)}= A_{2\ell}^{(2)}   \\
     \{ 0,  \check{\omega}_\ell+ \check{\omega}_{\ell+1} \} \quad   \text{if } X_N^{(r)}= D_{\ell+1}^{(2)}  \\
     \{0\} \quad \quad \text{if }   X_N^{(r)}= E_6^{(2)}, D_4^{(3)}
 \end{cases} \]
 where  $\check{\omega}_i$ is the fundamental coweight of $G$ corresponding to $i\in I$.
Define the partial affine flag variety 
\[  \Fl_\kappa= G(\mc{K}_z)^\tau/ \mc{P}_\kappa.   \] 
Note that $\Fl_\kappa$ shares similar properties with the usual affine Grassmannians, cf.\,\cite{Zh}.  

 \smallskip
 
 Let $G_{\rm ad}=G/ Z(G)$ and $H_{\rm ad}=H/Z(G)$, where $Z(G)$ is the center of $G$.  Fix a special vertex $\kappa\in \mc{S}$.  Let $m$ be the least multiple of $r$ such that 
 \begin{equation}
 \label{eq_h}
    h:= \frac{m}{r}\kappa \in X_*(H_{\rm ad})^\tau.
 \end{equation}
  Note that $h=\kappa$ except when $X_N^{(r)}=A^{(2)}_{2\ell}$. It is easy to see that 
 \[    m=\begin{cases}
 2   \quad \text{ if }    X_N^{(r)}= A_{2\ell-1}^{(2)}, D_{\ell+1}^{(2)}, E_6^{(2)},\text{ or } X_N^{(r)}= A_{2\ell}^{(2)} \text{ and } \kappa=0 \\
 4 \quad  \text{ if } X_N^{(r)}= A_{2\ell}^{(2)} \text{ and } \kappa\not=0    \\
 3  \quad  \text{ if } X_N^{(r)}=D_4^{(3)}
 \end{cases} . \] 
In particular, if $\kappa\not=0$, then $X_N^{(r)}$ can only be either $A_{2\ell-1}^{(2)} , A_{2\ell}^{(2)} $ or $D_{\ell+1}^{(2)}$, and
 \begin{equation}\label{eq: h} h=  \begin{cases}
 \check{\omega}_1+ \check{\omega}_{2\ell-1}   \quad   \text{if } X_N^{(r)}=         A_{2\ell-1}^{(2)}  \\
   \check{\omega}_\ell+ \check{\omega}_{\ell+1} \quad      \text{if } X_N^{(r)}= A_{2\ell}^{(2)}   \\
    \check{\omega}_\ell+ \check{\omega}_{\ell+1} \quad   \text{if } X_N^{(r)}= D_{\ell+1}^{(2)}  
 \end{cases} . 
 \end{equation}
Following \cite{HainesRicharz2020}, the parahoric subgrouop $\mathcal{P}_\kappa$ is {\it absolutely special} when $(X_N^{(r)},m)\not= (A_{2\ell}^{(2)}, 4)$.
 
 Let $\epsilon$ be a primitive $m$-th root of unity in $\rk$ such that $\epsilon^{m/r}=\epsilon_r$.  We regard the $h$ in \eqref{eq_h} as a cocharacter of $H_{\rm ad}$, and associate to it the element $\epsilon^{h}:=h(\epsilon)\in H_{\rm ad}$. 
  We define the {\it special} automorphism $\sigma$ of $G$ such that
 \begin{equation}
 \sigma:= \tau\circ {\rm Ad}_{ \epsilon^{h}  }.
 \end{equation}
By \cite[Lemma 2.3]{HY2024},  $\sigma$ is of order $m$.

Let $\mc{K}=\rk\dbp{t}$ and $\mc{O}=\rk\dbb{t}$ with an action of $\gamma$ such that $\gamma|_{\rk}={\rm Id}|_{\rk}$ and $\gamma(t)=\epsilon^{-1}t$. We regard $\mc{K}_z$ as a subfield of $\mc{K}$, by identifying $z=t^{m/r}$. 
Let $ \sigma\circ \gamma$ be the automorphism  acting diagonally on $G(\mc{K})$ and $G(\mc{O})$. Without ambiguity, we still denote it by $\sigma$. 
Explicitly, for each $i\in I$, we have
\begin{equation}  \sigma(x_i(f)  )= x_{\tau(i)} ( \epsilon^{\alpha_i(h)}  \gamma(f) ), \quad \sigma(y_i(f)  )= y_{\tau(i)} ( \epsilon^{-\alpha_i(h)}  \gamma(f) ) , \quad \text{ for any } f\in \mc{K} .\end{equation}

The twisted affine Grassmannian is defined as
\begin{equation}\label{eq_twisted aff_def}
   \mGr:=G(\mc{K})^\sigma/G(\mc{O})^\sigma . 
\end{equation}


\begin{lemma}
If $G$ is either simply-connected or of adjoint type, then $G^\sigma$ is connected. 
\end{lemma}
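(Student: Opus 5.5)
We want $G^\sigma$ connected, where $\sigma=\tau\circ\mathrm{Ad}_{\epsilon^h}$ is a finite-order automorphism of $G$ of order $m$, and $p\nmid m$: we have $p\neq r$, and $m\in\{2,3,4\}$ is a power of $r$. The plan is to invoke Steinberg's theorem, which controls fixed points of semisimple automorphisms. For a semisimple automorphism $\theta$ of a simply connected semisimple group, $G^\theta$ is connected (Steinberg, \emph{Endomorphisms of linear algebraic groups}, Thm.~8.1). Since $\sigma$ has order prime to $p$, it is a semisimple automorphism. So the simply connected case is immediate once we check that $\sigma$ is an algebraic automorphism of $G$ of finite order prime to $p$. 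This holds because $\epsilon^h\in H_{\rm ad}$ acts on $G$ by conjugation through $G_{\rm ad}$, and $\sigma^m=\mathrm{id}$ by \cite[Lemma 2.3]{HY2024}.

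The adjoint case needs more care, since Steinberg's theorem fails for general adjoint groups. I will use the standard description of the component group. Let $\pi:\tilde G\to G$ be the simply connected cover, with kernel $Z=Z(\tilde G)$. Lift $\sigma$ to $\tilde\sigma$ on $\tilde G$, which is possible since $\tau$ and $\mathrm{Ad}_{\epsilon^h}$ both lift. Then $\pi(\tilde G^{\tilde\sigma})$ is the identity component of $G^\sigma$. There is an injection
\[
G^\sigma/\pi(\tilde G^{\tilde\sigma})\hookrightarrow Z/\{\tilde\sigma(z)z^{-1}\},
\]
given by $g=\pi(\tilde g)\mapsto \tilde g^{-1}\tilde\sigma(\tilde g)$. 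This quotient is the coinvariants of $\tilde\sigma$ on $Z$, i.e.\ $H^1(\langle\sigma\rangle,Z)$-type data. The action of $\tilde\sigma$ on $Z$ is the action of $\tau$, because inner automorphisms act trivially on the center. So it suffices to show that every $g\in G^\sigma$ lifts to some $\tilde g$ with $\tilde g^{-1}\tilde\sigma(\tilde g)\in(1-\tau)Z$.

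I would attack this with a torus argument. Every element of $G^\sigma$ is conjugate under $(G^\sigma)^\circ$ into a $\sigma$-stable torus normalizer. More concretely, I would reduce to elements of $H^\sigma=H^{\tau}$; note that $\sigma=\tau$ on $H$. Then $H^\tau$ is connected, as stated earlier in the paper for $G$ adjoint, and it is contained in $(G^\sigma)^\circ$.

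The hard step is showing that every component of $G^\sigma$ meets $H$. For this I would use the following: $\sigma$ preserves $B$ and $H$; it is quasi-semisimple, being $\tau$ twisted by an element of $H$. By Steinberg, every semisimple automorphism fixes a Borel–torus pair, and $G^\sigma$ acts transitively on the $\sigma$-stable pairs up to the identity component, via the Weyl group $W^\tau$ realized inside $(G^\sigma)^\circ$. Hence $G^\sigma=(G^\sigma)^\circ\cdot (N_G(H)\cap G^\sigma\cap\text{stab}(B))=(G^\sigma)^\circ H^\sigma$, and $H^\sigma$ is connected.

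If this route gets messy, I would fall back on a case check. The only adjoint cases are the types $A_{2\ell-1},A_{2\ell},D_{\ell+1},E_6,D_4$. For each one, compute $Z$ and the action of $\tau$ on it, and verify that $Z_\tau$ has order prime to the relevant component count. Alternatively, verify that the lift condition holds directly by exhibiting the coinvariant class as trivial via elements of $H^\tau$. I expect this adjoint-case component analysis to be the main obstacle.
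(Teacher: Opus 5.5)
Your simply connected case is correct, and more uniform than the paper's case split: $\sigma$ has order $m$ prime to $p$, so it is semisimple, and Steinberg's Theorem 8.1 applies directly. Your injection $G^\sigma/(G^\sigma)^\circ\hookrightarrow Z/\{\tau(z)z^{-1}\}$ is also correct. It already settles the adjoint case for $A_{2\ell}^{(2)}$, $E_6^{(2)}$ and $D_4^{(3)}$. There $Z(\rk)$ is either cyclic of odd order with $\tau$ acting by inversion, or $\mu_2\times\mu_2$ with $\tau$ permuting the nontrivial elements cyclically, so the coinvariants vanish. This is essentially the Steinberg fact the paper quotes for $A_{2\ell}^{(2)}$.

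The gap is the step you yourself flag as hard. Steinberg's theorem does not give that $(G^\sigma)^\circ$ acts transitively on $\sigma$-stable Borel--torus pairs, i.e.\ that every $w\in W^\tau$ has a representative in $(G^\sigma)^\circ$. It gives only the existence of a stable pair, and the transitivity claim is false in general. For $G=\mathrm{PGL}_2$ and $\sigma=\mathrm{Ad}(\mathrm{diag}(1,-1))$, $\sigma$ preserves $(B,H)$ and $H^\sigma=H$ is connected, yet $G^\sigma=N_G(H)$ has two components. In the twisted setting itself, the lemma fails for adjoint $G$ when $\kappa$ is not special. The same recipe $\sigma=\tau\circ\mathrm{Ad}_{\epsilon^{h}}$ with $\epsilon=-1$ and $h=\check\omega_\ell$ (the non-special end vertex of $A_{2\ell-1}^{(2)}$) is an orthogonal-type involution of $\mathrm{PGL}_{2\ell}$. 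Its fixed-point group is $\mathrm{PGO}_{2\ell}$, which has two components, even though $\sigma|_H=\tau|_H$ and $H^\tau$ is connected. So an argument that never uses the specialness of $\kappa$, as yours does not, cannot be right. Your fallback does not close this either: for $A_{2\ell-1}^{(2)}$ and $D_{\ell+1}^{(2)}$ one has $Z_\tau\cong\mathbb{Z}/2$, and this example shows the map $G^\sigma\to Z_\tau$ can be onto.

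To repair the argument, you can follow the paper. For $\kappa=0$, $\sigma=\tau$ and $G^\tau$ is connected. Your torus argument also works in this case, since the $\tau$-fixed representatives $\overline{w}$ of $w\in W^\tau$ lie in $(G^\tau)^\circ$. For $\kappa\neq0$ in types $A_{2\ell-1}^{(2)}$ and $D_{\ell+1}^{(2)}$, one has $\sigma=\mathrm{Ad}_{\epsilon^\mu}\circ\tau\circ\mathrm{Ad}_{\epsilon^\mu}^{-1}$ with $\mu=\check\omega_1$, resp.\ $\check\omega_\ell$. This holds because $\epsilon=-1$ and $h-(\mu-\tau(\mu))=2\tau(\mu)$, so $G^\sigma\cong G^\tau$. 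Alternatively, keep your route but prove that the Weyl group of $(G^\sigma)^\circ$ is all of $W^\tau$. This is exactly what specialness supplies, via the description of $(G^\sigma)^\circ$ by the diagram $\hat I_\tau\setminus\{\kappa\}$. With that input, your decomposition $G^\sigma=(G^\sigma)^\circ N_G(H)^\sigma=(G^\sigma)^\circ H^\sigma$ goes through, using also $C_G((H^\sigma)^\circ)=H$, which holds because $2\check\rho$ is a $\tau$-fixed regular cocharacter.
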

\begin{proof} 
If $G$ is simply-connected or adjoint and $\tau$ is a diagram automorphism of $G$, then it is well known that $G^\tau$ is connected.

If $\kappa=0$, then $\sigma=\tau$ is a diagram automorphism. The Lemma follows.

If $\kappa\not=0$ and $X_N^{(r)}\not= A_{2\ell}^{(2)}$, then there is an isomorphism $G^\tau\simeq G^\sigma$, given by $g\mapsto {\rm Ad}_{\epsilon^\mu}(g)$, where 
 \[ \mu=  \begin{cases}
 \check{\omega}_1 \quad   \text{if } X_N^{(r)}=         A_{2\ell-1}^{(2)}  \\
   \check{\omega}_\ell \quad   \text{if } X_N^{(r)}= D_{\ell+1}^{(2)}  
    \end{cases}  .\]
For the final case $X_N^{(r)}=A_{2\ell}^{(2)}$, we resort to the following fact: when $G$ is simply-connected or $G$ is adjoint of type $A_{2n}, D_4, E_6$, $G^\sigma$ is always connected (cf.\,\cite[\S 8.1, \S 9.8]{St}). 
\end{proof}

 By \cite[Theorem 4.1.2]{DH} and \cite[Theorem 2.4]{HY2024}, there exists an isomorphism ${\rm Ad}_{t^h}: G(\mc{K}_z)^\tau\simeq G(\mc{K})^\sigma$ such that ${\rm Ad}_{t^{h}} (\mc{P}_\kappa )= G(\mc{O})^\sigma$. It therefore induces an isomorphism of ind-varieties
 \begin{equation} \mathrm{Ad}_{t^{h}}:  \Fl_\kappa\simeq \mGr .  \end{equation}
Keeping the isomorphism in mind,  we will  work with $\mGr$ rather than $\Fl_\kappa$ in this paper.

\subsection{Twisted MV cycles}
\label{Twisted MV cycles}

 Let $\check{G}$ denote the Langlands dual group of $G$. The action of $\tau$ on $\check{G}$ as a diagram automorphism is induced by its action on $G$. Let $\check{G}^\tau$ be the corresponding $\tau$-fixed point subgroup of $\check{G}$. The dual torus $\check{H}$ is the maximal torus of $\check{G}$.  There is a natural identification $X_*(H)\simeq X^*(\check{H})$, and $X_*(H)_\tau\simeq X^*(\check{H}^\tau)$. Let $X_*(H)_\tau^+$ be the subset of $X_*(H)_\tau$ corresponding to the set $X^*(\check{H}^\tau)^+$ of dominant weights of $\check{G}^\tau$.  Note that the image of $X_*(H)^+$ in $X_*(H)_\tau$ is contained in $X_*(H)^+_\tau$.  For $\imath\in I_\tau$, let $i\in \imath$ be the smallest index with respect to the Bourbaki labelling. Let $\gamma_\imath$ be the image of $\check{\alpha}_i$ in $X_*(H)_\tau$. Then, $\{ \gamma_\imath \,|\, \imath\in I_\tau\}$ corresponds to the set of simple roots of $\check{G}^\tau$.  For $\mu, \lambda\in X_*(H)_\tau$, we have the standard dominant order on $X_*(H)^+_\tau$, i.e. $\mu\preceq \lambda$ if $\lambda-\mu$ is a non-negative summation of $\{ \gamma_\imath\mid  \imath\in I_\tau  \}$.
 \begin{remark}\label{rmk:surj}
 There is a direct description of $X_*(H)_\tau^+$ via the quasi-split group associated to the twiste loop group $G(\mathcal{K}_z)^\tau$, see \cite[Section 2.4]{ALRS}. Moreover, the map $X_*(H)^+\to X_*(H)^+_\tau$ is surjective when $G$ is adjoint, but NOT in general. See \cite[Lemma 2.6, Remark 2.7]{ALRS}. 
 \end{remark}

For $\lambda\in X_*(H)_\tau$,  pick a lift $\tilde{\lambda}\in X_*(H)$ of $\lambda$.  Set 
$n^{\tilde{\lambda}}:= \prod_{s=1}^m \sigma^s(t^{\tilde{\lambda}})$ and $e_{\lambda}:= n^{\tilde{\lambda}}\cdot e$, where $e$ is the base point in $\mGr$. Note that $e_\lambda$ does not depend on the choice of $\tilde{\lambda}$. Set $\mGr_{\lambda}:= G(\mc{O})^\sigma\cdot e_\lambda$. We have the Cartan decomposition
\[  \mGr= \bigsqcup_{\lambda\in X_*(H)_\tau^+}  \mGr_\lambda.    \]
Let $\overline{\mGr}_\lambda$ be the closure of $ \mGr_\lambda$. Note that $\mGr_\mu\subset \overline{\mGr_\lambda} $ if and only if $\mu\preceq \lambda$.  By \cite[Corollary 4.3]{BH}, $((\mGr)^{H^\sigma} )_{\rm red}=\{e_\mu \mid \mu\in X_*(H)_\tau \}$, where $()_{\rm red}$ denotes the reduced structure. 

Let $W^\tau$ be the $\tau$-fixed point subgroup of $W$. Then $W^\tau$ can be identified with the Weyl group of $G^\sigma$; that is, $W^\tau\simeq N_{G^\sigma}(H^\sigma)/ H^\sigma$. Let $N$ be the unipotent radical of $B$. Let $N^-$ be the unipotent radical of the opposite Borel subgroup $B^-$ relative to $(B,H)$. 
Set $N^-(\mc{K})^\sigma_w:= \dot{w} N^-(\mc{K})^\sigma \dot{w}^{-1}$, where $\dot{w}$ is a representative of $w$ in $N_{G^\sigma}(H^\sigma)$. 
For $\mu\in X_*(H)_\tau$ and $w\in W^\tau$,  set 
$$T^w_\mu:= N^-(\mc{K})^\sigma_w \cdot e_\mu.$$ 
For simplicity, we denote $$T_\mu=T_\mu^e,  \qquad S_\mu=T^{w_0}_\mu=N(\mc{K})^\sigma \cdot e_\mu.$$
For each $w\in W^\tau$, we have the following Iwasawa decomposition
\[ \mGr:= \bigsqcup_{\mu\in X_*(H)_\tau}  T^w_\mu. \]
Let $\overline{T^w_\mu}$ be the closure of $T^w_\mu$. Then 
\begin{proposition}[{{\cite[Prop. 5.4]{AGLTR}}}]
\label{prop_semi_closure}
$T^w_\nu\subset \overline{T^w_\mu}$ if and only if $w^{-1}\nu\succeq w^{-1} \mu$. 
\end{proposition}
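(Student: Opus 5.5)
We reduce to the case $w=e$ by translating with $\dot w$. Since $\dot w\in N_{G^\sigma}(H^\sigma)\subset G(\mc{O})^\sigma$, left multiplication by $\dot w^{-1}$ is an automorphism of $\mGr$. It sends $T^w_\mu=\dot w N^-(\mc{K})^\sigma\dot w^{-1}e_\mu$ to $N^-(\mc{K})^\sigma\cdot \dot w^{-1}e_\mu$. Moreover $\dot w^{-1}e_\mu=\dot w^{-1}n^{\tilde\mu}\dot w\cdot e$, and $\dot w^{-1}n^{\tilde\mu}\dot w$ equals $n^{w^{-1}\tilde\mu}$ up to an element of $H(\mc{O})^\sigma$. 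Here we use that $w\in W^\tau$ commutes with $\sigma$ on $X_*(H)$, since $\mathrm{Ad}_{\epsilon^h}$ acts trivially on cocharacters. Hence $\dot w^{-1}T^w_\mu=T_{w^{-1}\mu}$, and the statement reduces to: $T_\nu\subset\overline{T_\mu}$ if and only if $\nu\succeq\mu$.

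For the case $w=e$ I would argue with a $\mathbb{G}_m$-contraction together with dimension and weight-function arguments, as in the untwisted case (Mirkovi\'c--Vilonen, Prop.~3.1).

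\emph{Necessity.} Let $\rho^\vee$ be a cocharacter of $H^\sigma$ pairing strictly positively with all the simple roots. It acts on $\mGr$ and contracts $T_\mu$ onto $e_\mu$ as $s\to\infty$. Using the fixed-point description $((\mGr)^{H^\sigma})_{\rm red}=\{e_\nu\}$ (\cite[Cor.~4.3]{BH}), each $T_\nu$ is the repelling set of $e_\nu$. If $T_\nu\cap\overline{T_\mu}\neq\emptyset$, then $e_\nu\in\overline{T_\mu}$. To compare weights I would use the functions $D_{\Lambda_\imath}$, namely valuations of highest-weight matrix coefficients restricted to $G(\mc{K})^\sigma$. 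They are constant on $T_\mu$, equal to a linear functional of $\mu$, and semicontinuous. This gives $\langle \nu,\Lambda_\imath\rangle\ge\langle\mu,\Lambda_\imath\rangle$ for all $\imath$. Combined with $\nu-\mu$ lying in the coroot lattice (connected components), this yields $\nu\succeq\mu$.

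\emph{Sufficiency.} It suffices to treat $\nu=\mu+\gamma_\imath$. We reduce to a rank-one twisted group: the root subgroup of $G(\mc{K})^\sigma$ attached to the $\sigma$-orbit $\imath$. Depending on the orbit type, this is a twisted or untwisted $\SL_2$ or $\SL_3$ loop group, the latter in type $A_{2l}^{(2)}$. In its twisted affine Grassmannian, an explicit family $y_\imath(a t^{-k})$ with $a\to\infty$, conjugated appropriately, exhibits $e_{\mu+\gamma_\imath}$ as a limit of points of $T_\mu$. Then $N^-(\mc{K})^\sigma$-stability of $\overline{T_\mu}$ gives $T_{\mu+\gamma_\imath}\subset\overline{T_\mu}$.

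The main obstacle is this rank-one computation, especially the non-reduced $A_{2l}^{(2)}$ orbit. There the correct simple coroot step $\gamma_\imath$ must be matched with the explicit curve, and the normalization by $\epsilon^{\alpha_i(h)}$ must be handled.
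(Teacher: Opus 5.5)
The paper gives no proof of this statement; it quotes it from \cite[Prop.~5.4]{AGLTR}. Your argument is therefore an independent, Mirkovi\'c--Vilonen style route, and most of it is sound.

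\textbf{Reduction and necessity.} The reduction to $w=e$ is correct; in fact $\dot w^{-1}n^{\tilde\mu}\dot w=n^{w^{-1}\tilde\mu}$ on the nose, since $\dot w$ is $\sigma$-fixed. Necessity also works:
\begin{itemize}
\item contracting by a regular dominant cocharacter of $H^\sigma$ puts $e_\nu$ in $\overline{T_\mu}$;
\item the loci $\{D_{\Lambda_\imath}\ge n\}$ are closed;
\item since $\la \Lambda_\jmath, a(\gamma_\imath)\ra=m_\imath\delta_{\imath\jmath}$, the resulting inequalities force nonnegative coefficients.
\end{itemize}
Two small repairs are needed. First, for adjoint $G$ the functions $D_{\Lambda_\imath}$ are not defined, because fundamental weights are not characters of $H$. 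Either use dominant weights in $X^*(H)$, or pass to the simply connected cover as the paper does at the start of \S\ref{twisted ggms strata}. Second, ``connected components'' should be the precise statement $\pi_0(\mGr)\cong\pi_1(G)_\tau$. That is what places $\nu-\mu$ in the $\mb{Z}$-span of the $\gamma_\imath$ when $X_*(H)_\tau$ has torsion.

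\textbf{The gap: sufficiency.} The rank-one degeneration is the entire content of this direction, and you only announce it.

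For a standard orbit it is fine. With $u\in\rk^\times$, the $\SL_2$ identity $y(f)=x(f^{-1})\check\alpha(f^{-1})\overline{s}\,x(f^{-1})$ gives $y_\imath(ut^{-m_\imath})\cdot e\to t^{m_\imath\sum_{i\in\imath}\check\alpha_i}\cdot e=e_{\gamma_\imath}$ as $u\to\infty$. You may compute this in $G(\mc K)/G(\mc O)$, into which $\mGr$ maps injectively with closed image.

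For the exceptional orbit $\imath=\{l,l+1\}$ your recipe does not apply as written, since $y_\imath$ there depends on two arguments $(b,c)$. Moreover, the obvious one-parameter choice fails when $m=4$. Put $\check\beta:=\check\alpha_l+\check\alpha_{l+1}$, so that $e_{\gamma_\imath}=t^{2\check\beta}\cdot e$. The elements $y_\imath(0,c)$ form the root subgroup of $\alpha_l+\alpha_{l+1}$ with argument in $\rk\dbp{t^4}$. They lie in a copy of $\SL_2(\rk\dbp{t^4})$, whose Grassmannian contains only the fixed points $t^{4k\check\beta}\cdot e=e_{2k\gamma_\imath}$. So $e_{\gamma_\imath}$ is not even in the closure of that family. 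For $m=2$ the analogous family does work: $y_\imath(0,2ut^{-2})=y_{\alpha_l+\alpha_{l+1}}(ut^{-1})$ converges to $t^{\check\beta}\cdot e=e_{\gamma_\imath}$.

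For $m=4$ you must take $b\neq 0$. In the $\SL_3$ attached to $\{l,l+1\}$, the element $y_\imath(ut^{-2},0)$ is
\[
\begin{pmatrix}1&0&0\\ ut^{-1}&1&0\\ u^2t^{-2}/2&ut^{-1}&1\end{pmatrix}
=\check\beta(u^{-1})\begin{pmatrix}1&0&0\\ t^{-1}&1&0\\ t^{-2}/2&t^{-1}&1\end{pmatrix}\check\beta(u),
\qquad \check\beta(s)=\mathrm{diag}(s,1,s^{-1}).
\]
For the middle matrix, the minimal valuations of the minors on rows $\{3\}$ and $\{2,3\}$ are both $-2$ (using $p\neq 2$). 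Hence that point lies in $N(\mc K)\,t^{2\check\beta}\cdot e$. Contracting by $\check\beta(u^{-1})$ as $u\to\infty$ then gives $t^{2\check\beta}\cdot e=e_{\gamma_\imath}$.

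With this supplied, your route gives a complete, self-contained proof using the paper's own tools, whereas the paper relies on the general result of \cite{AGLTR}.
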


We  write $\nu\succeq_w \mu$ if $w^{-1}\nu\succeq w^{-1}\mu$. By \cite[Lemma 3.7]{ALRS}, 
$\overline{\mGr}_\lambda\cap T_\mu $ is nonempty if and only if $\mu_+\preceq \lambda$, where $\mu_+$ is the unique dominant element in  the orbit $W^\tau\cdot \mu$. In this case, $\overline{\mGr}_\lambda\cap T_\mu $ is of pure dimension $\langle \lambda-\mu, \rho \rangle$, where $\rho$ is half the sum of all positive roots of $G$. In general, for $w\in W^\tau$, if $\overline{\mGr}_\lambda\cap T^w_\mu\not= \emptyset$, then $\overline{\mGr}_\lambda\cap T^w_\mu $ is of pure dimension $\la \lambda- w^{-1}(\mu), \rho \ra$.

\smallskip

Let $\mb{G}_m$ be the multiplicative group acting on $\mGr$ by loop rotation.

\begin{lemma}
\label{key_lem}
Let $Z$ be a $\mb{G}_m\times H^\sigma$-orbit on $\mGr$. Then $Z\subset \mGr_\lambda$ for some $e_\lambda\in \overline{Z}$. 
\end{lemma}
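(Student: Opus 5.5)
The plan is to use that the Cartan strata $\mGr_\lambda$ are $G(\mc{O})^\sigma$-orbits, and that $G(\mc{O})^\sigma$ is stable under both the $H^\sigma$-action and loop rotation. Hence each $\mGr_\lambda$ is $\mb{G}_m\times H^\sigma$-stable. Here we use that loop rotation $t\mapsto ct$ commutes with $\gamma$, so it acts on $G(\mc{K})^\sigma$ preserving $G(\mc{O})^\sigma$. The orbit $Z$ is irreducible (it is the image of a connected group), and it lies in $\mGr=\bigsqcup_\lambda \mGr_\lambda$ with every piece $\mb{G}_m\times H^\sigma$-stable. Since $Z$ is a single orbit, it is contained in exactly one stratum, say $Z\subset \mGr_\lambda$. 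The first step is this containment.

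The second step is to show that $\overline{Z}$ contains a fixed point of the torus $\mb{G}_m\times H^\sigma$. Take $x\in Z$. The closure $\overline{Z}$ lies in the projective variety $\overline{\mGr}_\lambda$, since $Z\subset\mGr_\lambda$ and the Schubert varieties are projective. Choose a generic cocharacter $\chi$ of $\mb{G}_m\times H^\sigma$, for instance $\chi(s)=(s^N,\check\rho(s))$ with $N\gg0$ restricted to $H^\sigma$, so that the $\chi$-fixed points coincide with the $\mb{G}_m\times H^\sigma$-fixed points. The limit $\lim_{s\to0}\chi(s)x$ exists in $\overline{\mGr}_\lambda$ by properness. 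This limit lies in $\overline{Z}$ and is $\chi$-fixed, hence $\mb{G}_m\times H^\sigma$-fixed. The torus-fixed points are contained in $((\mGr)^{H^\sigma})_{\rm red}=\{e_\mu\}$ by \cite[Corollary 4.3]{BH}. So $\overline{Z}$ contains some $e_\mu$ with $e_\mu\in\overline{\mGr}_\lambda$, i.e.\ $\mu_+\preceq\lambda$.

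The third step, which I expect to be the main obstacle, is to show that one can find such a limit point lying in the open stratum $\mGr_\lambda$ itself, i.e.\ with $\mu\in W^\tau\lambda$. The idea is to use the $G(\mc{O})^\sigma$-equivariant retraction onto the orbit $G^\sigma\cdot e_\lambda\cong G^\sigma/P_\lambda$ given by $t\to0$. Concretely, $\mGr_\lambda$ is a vector bundle over the partial flag variety $G^\sigma\cdot e_\lambda$ via evaluation $G(\mc{O})^\sigma\to G^\sigma$. The fibres are affine spaces on which loop rotation acts with strictly positive weights, contracting them to the zero section. Since $\pi$ is $\mb{G}_m\times H^\sigma$-equivariant, and $\mb{G}_m$ acts trivially on the base, we take the limit $s\to0$ of loop rotation $s\cdot x$. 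This limit exists inside $\mGr_\lambda$ and equals $\pi(x)\in G^\sigma\cdot e_\lambda$, and it lies in $\overline{Z}$.

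Next we take an $H^\sigma$-limit of $\pi(x)$ inside the projective variety $G^\sigma\cdot e_\lambda$, using a regular cocharacter of $H^\sigma$. By the Bialynicki-Birula theorem on the flag variety, this limit is an $H^\sigma$-fixed point $w\cdot e_\lambda=e_{w\lambda}$ with $w\in W^\tau$. It lies in $\overline{Z}$, since $\overline{Z}$ is closed and torus stable, and it lies in $\mGr_\lambda$. The points to check carefully are the positivity of the loop-rotation weights on the fibres, and the identification of the fixed points of $G^\sigma/P_\lambda$ with $\{e_{w\lambda}\}$. The latter follows from connectedness of $G^\sigma$ and the identification $W^\tau\simeq N_{G^\sigma}(H^\sigma)/H^\sigma$.
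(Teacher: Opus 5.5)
Your proposal is correct and follows essentially the same route as the paper, which adapts Anderson's proof of \cite[Lemma 6]{An}. In that route, $Z$ lies in a single $\mb{G}_m\times H^\sigma$-stable stratum $\mGr_\lambda$, loop rotation contracts $\mGr_\lambda$ onto its core $G^\sigma\cdot e_\lambda$ (this is the vector-bundle structure the paper singles out), and an $H^\sigma$-limit inside $G^\sigma\cdot e_\lambda$ produces a point $e_{w\lambda}\in\overline{Z}\cap\mGr_\lambda$. Your second step, the generic-cocharacter limit in $\overline{\mGr}_\lambda$, is not needed, since the third step already supplies the required fixed point. The contraction is cleanest to justify by writing $x=g_0k\cdot e_\lambda$ with $g_0\in G^\sigma$ and $k$ in the kernel of evaluation at $t=0$, so that $k(st)\to 1$ as $s\to 0$.
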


\begin{proof}
The proof of \cite[Lemma 6]{An} carries over to $\mGr$. The only difference is that, for $\eta\in X_*(H)^+_\tau$, $\mGr_\eta$ is a vector bundle over the core $G^\sigma\cdot e_\eta$ with respect to the $\mb{G}_m$-action. 
\end{proof}

\begin{proposition}
\label{T_S_prop}
The intersection $S_\lambda\cap T_\mu$ is nonempty if and only if $\mu\preceq \lambda$. In this case, it is of pure dimension $\la \lambda-\mu, \rho \ra$.  Moreover, when $\mu\preceq \lambda$ and $\lambda,\mu$ are sufficiently dominant, then $\overline{S_\lambda\cap T_\mu}= \overline{\mGr_\lambda\cap T_\mu }$.  
\end{proposition}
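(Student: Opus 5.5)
We will follow the strategy of Anderson \cite{An} and Kamnitzer \cite{Kam}, adapted to $\mGr$ via Lemma \ref{key_lem}, Proposition \ref{prop_semi_closure} and the dimension statement for $\overline{\mGr}_\lambda\cap T^w_\mu$ from \cite[Lemma 3.7]{ALRS}.

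\emph{Nonemptiness.} If $S_\lambda\cap T_\mu\neq\emptyset$, pick $x$ in it. The attracting and repelling loci of a regular dominant cocharacter of $H^\sigma$ give $\lim_{s\to 0}=e_\lambda$ and $\lim_{s\to\infty}=e_\mu$ along the orbit of $x$. Then $e_\lambda\in\overline{T_\mu}$, and Proposition \ref{prop_semi_closure} with $w=e$ gives $\lambda\succeq\mu$. The set $T_\mu\cap\overline{S_\lambda}$ is closed in $T_\mu$, stable under $H^\sigma$, and contracts to $e_\mu$. For the converse, we translate by $n^{\tilde\mu}$ and reduce to $S_{\lambda-\mu}\cap T_0$ with $\lambda-\mu\succeq 0$. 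We then produce a point explicitly: for each simple $\gamma_\imath$ occurring, the twisted root subgroup $\sigma$-averaged $y_i(\cdot)$ applied to $e$ moves $e$ into $S_{\gamma_\imath}\cap T_0$. This is a rank-one computation in $\SL_2$ or $\mathrm{SU}_3$-type twisted loop groups. Composing such steps, a point of $S_{\lambda-\mu}$ lies in $T_0$ by the disjointness in the Iwasawa decomposition. Alternatively, nonemptiness follows from the dimension count below once it is positive.

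\emph{Pure dimension.} We choose $\eta\in X_*(H)^+_\tau$ dominant enough that $S_\lambda\cap T_\mu\subset\overline{\mGr}_\eta$. This is possible since $S_\lambda\cap T_\mu$ is finite-dimensional: it is contained in $\overline{T_\mu}\cap\overline{S_\lambda}$, which is bounded by the $H^\sigma$-limits argument and Lemma \ref{key_lem}. Then
\[ \overline{\mGr}_\eta\cap S_\lambda\cap T_\mu = (\overline{\mGr}_\eta\cap S_\lambda)\cap(\overline{\mGr}_\eta\cap T_\mu). \]
By \cite[Lemma 3.7]{ALRS}, with $w=w_0$ and $w=e$, the two factors have pure dimensions $\la\eta+\lambda,\rho\ra$ and $\la\eta-\mu,\rho\ra$ inside $\overline{\mGr}_\eta$, which has dimension $2\la\eta,\rho\ra$. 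This gives the lower bound $\la\lambda-\mu,\rho\ra$ for every component, using the usual local-intersection inequality at smooth points, i.e. on the open orbit $\mGr_\eta$ for $\eta$ large. For the upper bound, we use that $S_\lambda\cap T_\mu\subset \overline{\mGr}_{\lambda}$-type estimates as in \cite{MV}. Concretely, the function $x\mapsto$ the $\rho$-grading, i.e. the $\mb{G}_m$ weight filtration on $\overline{S_\lambda}$, shows that $\overline{S_\lambda}\cap T_\mu$ is stratified by $S_\nu\cap T_\mu$ for $\mu\preceq\nu\preceq\lambda$. Induction on $\lambda-\mu$ then gives $\dim\le\la\lambda-\mu,\rho\ra$. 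I expect this upper bound to be the main obstacle. The cleanest route is to reproduce the MV argument: write $\overline{\mGr}_\eta\cap T_\mu=\bigsqcup_\nu \overline{\mGr}_\eta\cap S_\nu\cap T_\mu$. Since the left side has pure dimension $\la\eta-\mu,\rho\ra$ and the $\nu=\eta$ piece is open, the dimension of each piece is bounded. Comparing with the translated statement for $\eta$ then forces $\dim S_\nu\cap T_\mu\le\la\nu-\mu,\rho\ra$.

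\emph{Closure statement.} Take $\lambda,\mu$ sufficiently dominant. Then $\mGr_\lambda\cap T_\mu\subset S_\lambda\cap T_\mu$ is open: it is the intersection with the open orbit of $\overline{S_\lambda}\cap\overline{\mGr}_\lambda$, using Lemma \ref{key_lem} to see $\mGr_\lambda\cap S_\lambda$ is open in $S_\lambda\cap\overline{\mGr}_\lambda$. Both sides have pure dimension $\la\lambda-\mu,\rho\ra$. It remains to show that no component of $S_\lambda\cap T_\mu$ lies outside $\overline{\mGr}_\lambda$. For this, choose dominance so that $S_\lambda\cap T_\mu\subset\overline{\mGr}_\lambda$ after translation. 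Writing $S_\lambda\cap T_\mu=n^{\tilde\mu}(S_{\lambda-\mu}\cap T_0)$, the bounded set $S_{\lambda-\mu}\cap T_0$ lies in some $\overline{\mGr}_\zeta$ depending only on $\lambda-\mu$. For $\mu$ dominant enough, $t^{\mu}\overline{\mGr}_\zeta\cap S_\lambda\subset\overline{\mGr}_\lambda$, by a convexity argument on $H^\sigma$-fixed points via Lemma \ref{key_lem}. Then $S_\lambda\cap T_\mu\subset\overline{\mGr}_\lambda\cap T_\mu$, and the complement $(\overline{\mGr}_\lambda\setminus\mGr_\lambda)\cap T_\mu$ has smaller dimension. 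Taking closures gives the claimed equality.
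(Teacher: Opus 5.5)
\textbf{There are genuine gaps in your nonemptiness and pure-dimension arguments.}

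\emph{Lower bound.} You apply the inequality $\dim(Y\cap Z)\ge\dim Y+\dim Z-\dim X$ inside $X=\overline{\mGr}_\eta$. This inequality is only available at smooth points of $X$, and the only smooth locus you cite is the open orbit $\mGr_\eta$. But $S_\lambda\cap T_\mu$ is a fixed bounded set lying in some $\overline{\mGr}_\zeta$. So for $\eta$ large it does not meet $\mGr_\eta$ at all; it sits in the boundary, where $\overline{\mGr}_\eta$ is in general singular. No lower bound follows.

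\emph{Upper bound.} This is not established either. Decomposing $\overline{\mGr}_\eta\cap T_\mu$ only bounds each piece by $\la\eta-\mu,\rho\ra$, and ``comparing with the translated statement'' does not bring this down to $\la\nu-\mu,\rho\ra$.

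\emph{Converse of nonemptiness.} Composing rank-one points does not add their weights. Already for untwisted $\SL_3$, the point $y_1(t^{-a})y_2(t^{-b})\cdot e$ lies in $S_{(a+b)\check\alpha_1+b\check\alpha_2}$, not in $S_{a\check\alpha_1+b\check\alpha_2}$; look at the minors supported on the bottom rows. The Iwasawa decomposition says nothing about which $S_\nu$ a point of $T_0$ lies in. Your fallback, that nonemptiness follows from the dimension count, is circular: a dimension bound on components says nothing about whether any components exist.

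\textbf{The missing idea.} Half of it is already in your last paragraph, where you obtain $S_\lambda\cap T_\mu\subseteq\overline{\mGr}_\lambda$; you just need to do this first and push it further.
\begin{enumerate}
\item Translate so that every $\nu$ with $\mu\preceq\nu\preceq\lambda$ is dominant.
\item $X=\overline{S_\lambda}\cap\overline{T_\mu}$ is closed and $\mb{G}_m\times H^\sigma$-stable, with $X^{H^\sigma}\subseteq\{e_\nu\mid \mu\preceq\nu\preceq\lambda\}$. Lemma \ref{key_lem} then gives $X\subseteq\overline{\mGr}_\lambda$.
\item Conversely, $\overline{\mGr}_\lambda\cap S_\lambda$ has full dimension in the irreducible $\overline{\mGr}_\lambda$, so $\overline{\mGr}_\lambda\subseteq\overline{S_\lambda}$.
\item Hence $\overline{S_\lambda}\cap T_\mu=\overline{\mGr}_\lambda\cap T_\mu$. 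By \cite[Lemma 3.7]{ALRS} this set is nonempty and pure of dimension $\la\lambda-\mu,\rho\ra$.
\item $S_\lambda\cap T_\mu$ is open in this set. Its complement $\bigsqcup_{\mu\preceq\nu\prec\lambda}S_\nu\cap T_\mu$ has strictly smaller dimension, by the same identity applied to each $\nu$.
\end{enumerate}
This gives nonemptiness, purity and the closure equality at once, with no intersection inequality and no explicit construction. Translation invariance then carries the first two claims to arbitrary $\lambda,\mu$.

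The same identity for the intermediate $\nu$ also supplies the justification your closure paragraph lacks for $\mGr_\lambda\cap T_\mu\subseteq S_\lambda$. A point of $\mGr_\lambda\cap T_\mu\cap S_\nu$ with $\nu\prec\lambda$ would lie in $\overline{\mGr}_\nu$, which does not meet $\mGr_\lambda$.
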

\begin{proof}
For any $\eta\in X_*(H)_\tau$,  we have the following translation invariance: 
\[ S_{\lambda}\simeq S_{\lambda+\eta}, \, T_{\mu}\simeq T_{\mu+\eta},  \, \text{ and } \, S_\lambda \cap T_\mu\simeq S_{\lambda+\eta} \cap T_{\mu+\eta} . \]
We may assume $\lambda, \mu$ are sufficiently dominant so that for any $\nu\in X_*(H)_\tau$, $\mu\preceq \nu \preceq \lambda$ implies that $\nu$ is dominant.  Let $X=\overline{S_\lambda}\cap \overline{T_\mu}$.  Then, $X$ is a $\mb{G}_m\times H^\sigma$-stable closed subset in $\mGr$. By the closure relation of semi-infinite orbits (cf.\,Proposition \ref{prop_semi_closure}), $X^{H^\sigma}$ is contained in $\{ e_\nu \mid  \mu\preceq \nu\preceq \lambda \}$. In particular, if $S_\lambda\cap T_\mu$ is nonempty, then $\mu\preceq \lambda$. 
By Lemma \ref{key_lem},  
$$X\subseteq \bigcup_{e_\nu\in X^{H^\sigma} }\overline{\mGr}_\nu \subseteq \bigcup_{\mu\preceq \nu \preceq \lambda }\overline{\mGr}_\nu.$$
Then, by the assumption on $\lambda$ and $\mu$, any $\nu$  above is dominant. Thus, $X\subseteq \overline{\mGr}_\lambda$, and then  $\overline{S_\lambda}\cap T_\mu\subseteq \overline{\mGr}_\lambda\cap T_\mu$.

Since $\dim (\overline{\mGr}_\lambda\cap S_\lambda)= \dim \overline{\mGr}_\lambda$, $\overline{\mGr}_\lambda\cap S_\lambda$ is dense in $\overline{\mGr}_\lambda$. Thus, $\overline{\mGr}_\lambda\subset \overline{S}_\lambda$. It follows that, $\overline{\mGr}_\lambda\cap T_\mu \subseteq \overline{S}_\lambda\cap T_\mu$. 
Therefore, if $\mu\preceq \lambda$ then
 $ \overline{S}_\lambda\cap T_\mu=\overline{\mGr}_\lambda\cap T_\mu$ is nonempty and is of pure dimension $\la \lambda-\mu, \rho \ra$. By dimension consideration, $ S_\lambda\cap T_\mu$ is open and dense in  $\overline{S}_\lambda\cap T_\mu$. It further follows that, $\overline{S_\lambda\cap T_\mu }=\overline{ \overline{S}_\lambda\cap T_\mu}$. On the other hand, ${\mGr}_\lambda\cap T_\mu$ is also open and dense in $\overline{\mGr}_\lambda\cap T_\mu$. Thus, 
\[ \overline{\mGr_\lambda\cap T_\mu}= \overline{\overline{\mGr}_\lambda\cap T_\mu}=\overline{\overline{S}_\lambda\cap T_\mu}= \overline{S_\lambda\cap T_\mu}.\]
This concludes the proposition.
\end{proof}
\begin{definition}
\label{def_MV_weight}
Irreducible components of $S_\lambda\cap T_\mu$ (or $\overline{S_\lambda\cap T_\mu}$) are called \emph{MV cycles} of coweight $(\lambda, \mu)$. 
\end{definition}

After establishing Proposition \ref{T_S_prop}, 
the proof of the following corollary is identical to the proof of \cite[Proposition 3]{An}.
\begin{corollary}
\label{cor_MV_Gr}
Let $\lambda\in X_*(H)_\tau^+$ and $\mu\preceq \lambda$. The components of
$\overline{ S_\lambda \cap T_\mu }$ contained in $\overline{\mGr}_{ \lambda}$ are exactly the components of  
$\overline{\mGr_\lambda\cap T_\mu}$. 
\end{corollary}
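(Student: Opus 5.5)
We follow the argument of \cite[Proposition 3]{An}, using Proposition \ref{T_S_prop} and the translation invariance from its proof. By translation invariance, $S_\lambda\cap T_\mu\simeq S_{\lambda+\eta}\cap T_{\mu+\eta}$, and the same holds for $\mGr_\lambda\cap T_\mu$ when $\eta$ is a translation by a suitably dominant element. Here one must be careful: the spherical orbits $\mGr_\lambda$ are not translation invariant, so we cannot simply shift. Instead we argue directly at $\lambda$, proving two inclusions of sets of components.

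First inclusion: each component of $\overline{\mGr_\lambda\cap T_\mu}$ is a component of $\overline{S_\lambda\cap T_\mu}$ lying in $\overline{\mGr}_\lambda$. By \cite[Lemma 3.7]{ALRS}, $\overline{\mGr}_\lambda\cap T_\mu$ has pure dimension $\langle\lambda-\mu,\rho\rangle$. Decompose it along the semi-infinite cells: $\overline{\mGr}_\lambda\cap T_\mu=\bigsqcup_{\nu}\overline{\mGr}_\lambda\cap S_\nu\cap T_\mu$. Here $\overline{\mGr}_\lambda\cap S_\nu$ is nonempty only for $\nu\preceq\lambda$ in the $W^\tau$-orbit-hull sense, so in particular $\nu\preceq \lambda$. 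By Proposition \ref{T_S_prop}, $S_\nu\cap T_\mu$ has dimension $\langle\nu-\mu,\rho\rangle$, which is strictly less than $\langle\lambda-\mu,\rho\rangle$ when $\nu\neq\lambda$ (using $\langle\gamma_\imath,\rho\rangle>0$). Hence the top-dimensional pieces lie in $S_\lambda\cap T_\mu$, and $\overline{\mGr}_\lambda\cap S_\lambda\cap T_\mu$ is open dense in $\overline{\mGr}_\lambda\cap T_\mu$. Moreover $\overline{\mGr}_\lambda\cap S_\lambda$ is open in $S_\lambda$: $S_\lambda\cap\overline{\mGr}_\lambda\subseteq\mGr_\lambda\cup\bigcup_{\nu\prec\lambda}\overline{\mGr}_\nu$, the latter union is closed, and $S_\lambda\cap\overline{\mGr}_\nu=\emptyset$ for $\nu\prec\lambda$. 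Therefore each component of $\overline{\mGr}_\lambda\cap S_\lambda\cap T_\mu$ is an open subset of $S_\lambda\cap T_\mu$ of full dimension $\langle\lambda-\mu,\rho\rangle$, hence a component of it. Taking closures, we see that each component of $\overline{\mGr_\lambda\cap T_\mu}=\overline{\overline{\mGr}_\lambda\cap T_\mu}$ is a component of $\overline{S_\lambda\cap T_\mu}$ contained in $\overline{\mGr}_\lambda$.

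Second inclusion: conversely, let $Z$ be a component of $\overline{S_\lambda\cap T_\mu}$ with $Z\subseteq\overline{\mGr}_\lambda$. Then $Z\cap S_\lambda\cap T_\mu$ is dense in $Z$ of dimension $\langle\lambda-\mu,\rho\rangle$, and it lies in $\overline{\mGr}_\lambda\cap T_\mu$. Since $\overline{\mGr}_\lambda\cap T_\mu$ has pure dimension $\langle\lambda-\mu,\rho\rangle$, $Z$ is contained in a component of $\overline{\overline{\mGr}_\lambda\cap T_\mu}$ of the same dimension. As both are irreducible, $Z$ equals that component.

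The main obstacle is the first inclusion, specifically the vanishing $S_\lambda\cap\overline{\mGr}_\nu=\emptyset$ for $\nu\prec\lambda$ and the dimension bound on the lower strata. For the vanishing, we use that $\overline{\mGr}_\nu$ is $H^\sigma\times\mathbb{G}_m$-stable with fixed points $e_{\nu'}$ for $\nu'$ in the convex hull of $W^\tau\nu$. The attracting set $S_\lambda$ flows under a dominant cocharacter to $e_\lambda$, so a point of $S_\lambda$ in the closed set $\overline{\mGr}_\nu$ would force $e_\lambda\in\overline{\mGr}_\nu$, contradicting $\lambda\not\preceq\nu$.
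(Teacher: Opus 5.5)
Your overall route is the right one: it is essentially Anderson's argument for \cite[Proposition 3]{An}, which is what the paper invokes after Proposition \ref{T_S_prop}. Your second inclusion is correct as written, and so is the dimension count showing that $S_\lambda$ meets every component of $\overline{\mGr}_\lambda\cap T_\mu$ densely. One step of your first inclusion, however, is false.

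You claim that $\overline{\mGr}_\lambda\cap S_\lambda$ is open in $S_\lambda$. Your argument, the vanishing $S_\lambda\cap\overline{\mGr}_\nu=\emptyset$ for $\nu\prec\lambda$ (which you prove correctly), only shows $S_\lambda\cap\overline{\mGr}_\lambda=S_\lambda\cap\mGr_\lambda$. Since $\mGr_\lambda$ is open in $\overline{\mGr}_\lambda$ but not in $\mGr$, this gives no openness in $S_\lambda$. In fact the claim fails. The set is closed in $S_\lambda$. Moreover $S_\lambda$ is connected, since every point flows to $e_\lambda$ under the contracting cocharacter, and it is not contained in the finite-dimensional $\overline{\mGr}_\lambda$. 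So the set cannot also be open; for instance, for $\lambda=0$ it is the single point $e$. Consequently your sentence ``each component of $\overline{\mGr}_\lambda\cap S_\lambda\cap T_\mu$ is an open subset of $S_\lambda\cap T_\mu$ \dots, hence a component of it'' is unjustified.

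The repair is to use closedness instead of openness.
\begin{enumerate}
\item Because $\overline{\mGr}_\lambda$ is closed, $\overline{\mGr}_\lambda\cap S_\lambda\cap T_\mu$ is closed in $S_\lambda\cap T_\mu$.
\item For a component $Y$ of $\overline{\mGr}_\lambda\cap T_\mu$, your dimension count shows that $Y\cap S_\lambda$ is dense in $Y$. Hence $Y\cap S_\lambda$ is irreducible of dimension $\langle\lambda-\mu,\rho\rangle$.
\item Its closure in $S_\lambda\cap T_\mu$ is then an irreducible closed subset of full dimension. By Proposition \ref{T_S_prop}, $S_\lambda\cap T_\mu$ is of pure dimension $\langle\lambda-\mu,\rho\rangle$ with finitely many components. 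So this closure is one of those components, and its closure in $\mGr$ is $\overline{Y}$.
\end{enumerate}
This makes the vanishing you single out as the main obstacle unnecessary. It also avoids your unproved assertion that $\overline{\mGr}_\lambda\cap S_\lambda\cap T_\mu$ is open in $\overline{\mGr}_\lambda\cap T_\mu$; that assertion is true, via $\overline{\mGr}_\lambda\subseteq\overline{S_\lambda}$ and Proposition \ref{prop_semi_closure}, but it is not needed. Finally, delete your opening claim that $\mGr_\lambda\cap T_\mu$ is translation invariant; as you note yourself, it is not.
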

 
We recall the following theorem of Zhu, cf.\,\cite{Zh}.
\begin{theorem}
Assume that $\rm{char} (\rk)\not= \ell$. 
The $\ell$-adic intersection cohomology $IH^*(\overline{\mGr}_\lambda, \overline{\mathbb{Q}}_\ell)$ with coefficients in $\overline{\mathbb{Q}}_\ell$ admits an algebraic action of the reductive $\overline{\mb{Q}}_\ell$-algebraic group $\check{G}^\tau$, which is isomorphic to the highest weight module $V(\lambda)$ of $\check{G}^\tau$. 
\end{theorem}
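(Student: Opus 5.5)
This theorem is Zhu's ramified geometric Satake correspondence \cite{Zh}, so the plan follows Zhu's nearby-cycles strategy rather than a direct construction. Let $\mathcal{G}$ be the Bruhat--Tits group scheme over the $z$-line attached to $\mathcal{P}_\kappa$, and consider the global (Beilinson--Drinfeld type) affine Grassmannian $\mathrm{Gr}_{\mathcal{G}}$ over the formal disc. Its generic fiber, after a tame base change trivializing $\tau$, is the usual affine Grassmannian $\Gr_G$. Its special fiber is $\Fl_\kappa\simeq\mGr$, via $\mathrm{Ad}_{t^h}$.

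\textbf{Step 1 (nearby cycles functor).} I would consider the nearby cycles functor
\[\Psi:\mathrm{Perv}_{G(\mathcal{O})}(\Gr_G,\overline{\mathbb{Q}}_\ell)\to \mathrm{Perv}_{G(\mathcal{O})^\sigma}(\mGr,\overline{\mathbb{Q}}_\ell).\]
Here the tame monodromy acts through $\tau$ on the source. I would show $\Psi$ is exact and monoidal, using the Beilinson--Drinfeld fusion construction over the curve. I would also show it is compatible with total cohomology: $H^*(\mGr,\Psi\mathcal{F})\simeq H^*(\Gr_G,\mathcal{F})$, by properness of the family over supports.

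\textbf{Step 2 (Tannakian structure).} On the special fiber, I would show that the spherical category is semisimple, using parity vanishing of IC stalks; this is proved via the $\mathbb{G}_m\times H^\sigma$-action and Lemma \ref{key_lem}. I would then show it is symmetric monoidal under fusion. Hypercohomology is a fiber functor that is exact and faithful. This holds because $S_\mu\cap\overline{\mGr}_\lambda$ has pure dimension $\langle\lambda+\mu,\rho\rangle$, by Proposition \ref{T_S_prop} and the dimension statements before it. So the hypercohomology splits by weights as compactly supported cohomology along the $S_\mu$. Tannakian reconstruction then gives a reductive group $M$, with torus $\check{H}^\tau$ and weight lattice $X_*(H)_\tau$.

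\textbf{Step 3 (identifying $M$ and the module).} Via $\Psi$, the group $M$ is identified with a subgroup of $\check{G}$ normalized by monodromy. I would show that $\mathrm{IC}_\lambda$ on $\mGr$ is a direct summand of $\Psi(\mathrm{IC}_{\tilde\lambda})$, and that monodromy invariants land in $\check{G}^\tau$. I would identify $M=\check{G}^\tau$ by matching root data: the roots are the $\gamma_\imath$, and the weights are the $\tau$-coinvariants. The MV-cycle count from Corollary \ref{cor_MV_Gr} gives $\dim H^*(\mathrm{IC}_\lambda)_\mu$. By semisimplicity, $IH^*(\overline{\mGr}_\lambda)$ is then the irreducible representation of highest weight $\lambda$.

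\textbf{Main obstacle.} I expect the main obstacle to be the identification $M\simeq\check{G}^\tau$, rather than merely a twisted form of it. This is subtle for $A_{2\ell}^{(2)}$ with $\kappa\neq 0$, where root lengths and the surjectivity in Remark \ref{rmk:surj} behave differently. I would handle it by computing the rank-one cases explicitly (types $A_1$ and $A_2^{(2)}$ Schubert varieties) and reducing to them via Levi subgroups.
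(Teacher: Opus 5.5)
The paper gives no proof of this statement; it quotes it from Zhu \cite{Zh}. Your skeleton matches the shape of Zhu's argument: nearby cycles from $\Gr_G$, $H^*\circ\Psi\simeq H^*$ by proper base change, and a Tannakian group mapping to $\check G$ with image fixed by monodromy. However, Step~2 contains a step that cannot be carried out: there is no fusion on $\mGr$.

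\textbf{The missing symmetric structure.} In the Beilinson--Drinfeld Grassmannian of the Bruhat--Tits group scheme, the twisted fibre $\mGr$ occurs only over the ramified point, which is fixed. A fibre $\mGr\times\mGr$ would require two distinct ramified points, and these can never be made to collide. What one can degenerate is $\Gr_G\times\mGr$ to $\mGr$, which gives centrality of $\Psi(\mathcal{F})$; one cannot degenerate $\mGr\times\mGr$ to $\mGr$. Hence two things are unavailable on the special-fibre category:
\begin{itemize}
\item the commutativity constraint;
\item the Mirkovi\'c--Vilonen global argument that $H^*$ is a tensor functor.
\end{itemize}
Without a symmetric structure compatible with $H^*$, Tannakian reconstruction only produces a Hopf algebra, not a group $M$. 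This is precisely the non-formal part of Zhu's proof: the commutative tensor structure has to be imported from the generic fibre through $\Psi$. For instance, suppose $H^*$ is monoidal, $H^*\circ\Psi\simeq H^*$ as tensor functors, and every $\mathrm{IC}_\lambda$ is a direct summand of some $\Psi(\mathcal{F})$. Then the coordinate Hopf algebra of the special-fibre category is a quotient of $\mathcal{O}(\check G)$, hence commutative, and $M$ is automatically a closed subgroup of $\check G$. So the main difficulty is not only where your last paragraph places it.

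\textbf{Other steps that need repair.}
\begin{itemize}
\item \emph{Parity vanishing.} Parity of $\mathrm{IC}$ stalks does not follow from the $\mathbb{G}_m\times H^\sigma$-action and Lemma \ref{key_lem}. That lemma only says which Cartan stratum a torus orbit lies in and carries no cohomological information. You need Kazhdan--Lusztig parity for the twisted (Kac--Moody) affine flag variety, a resolution with fibres paved by affine spaces, or a purity argument.
\item \emph{Dominant lifts.} In Step~3, ``$\mathrm{IC}_\lambda$ is a summand of $\Psi(\mathrm{IC}_{\tilde\lambda})$'' presupposes a dominant lift $\tilde\lambda$. By Remark \ref{rmk:surj} this need not exist for simply connected $G$, already for $\kappa=0$. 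For $G=\SL_3$, the class $\gamma$ of $\check\alpha_1$ is dominant in $X_*(H)_\tau$, but no $a\check\alpha_1+b\check\alpha_2$ with $a+b=1$ satisfies both $2a-b\geq 0$ and $2b-a\geq 0$. So $\mathrm{IC}_\gamma$ must be found inside $\Psi$ of something else. One expects it in $\Psi(\mathrm{IC}_{\check\alpha_1+\check\alpha_2})$, reflecting $\mathfrak{sl}_3|_{\mathrm{SO}_3}=V(\gamma)\oplus V(2\gamma)$. The closed-embedding criterion therefore needs a separate argument.
\item \emph{MV-cycle count.} This cannot be an input. Corollary \ref{cor_MV_Gr} only compares two families of components. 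In this paper, the equality between the number of components and $\dim V(\lambda)_\mu$ (Theorems \ref{thm_ramified_Satake} and \ref{thm_MV_constr}) is deduced \emph{from} the present theorem. It is also unnecessary: once $M\simeq\check G^\tau$, simplicity of $\mathrm{IC}_\lambda$ together with the weight decomposition already identifies $IH^*(\overline{\mGr}_\lambda)$ with $V(\lambda)$.
\end{itemize}
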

By \cite[Lemma 3.7]{ALRS}, $\overline{\mGr}_\lambda\cap T_\mu$ is of pure dimension $\la\lambda-\mu,\rho \ra$. 
By the same arguments as in \cite[Theorem 3.6, Proposition 3.10]{MV}, we have
\begin{theorem}
\label{thm_ramified_Satake}
There exists a decomposition 
\[ IH^*(\overline{\mGr}_\lambda,\overline{\mathbb{Q}}_\ell)\simeq \bigoplus_{\mu\in X_*(H)_\tau} H_c^{2\la \lambda-\mu, \rho \ra} (\overline{\mGr}_\lambda\cap T_\mu, \overline{\mathbb{Q}}_\ell) \]
corresponding to the weight decomposition $V(\lambda)=\bigoplus_{\mu\in X_*(H)_\nu} V(\lambda)_\mu$. Thus, the weight space $V(\lambda)_\mu$ admits a basis from the components of $\overline{\mGr}_\lambda\cap T_\mu$.
\end{theorem}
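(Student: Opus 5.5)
We follow the strategy of \cite[Theorem 3.6, Proposition 3.10]{MV}, adapted to $\mGr$ using Zhu's theorem as input. The first step is to fix a regular dominant cocharacter $2\check\rho$ in $X_*(H)^\tau$. Its $\mb{G}_m$-action on $\mGr$ has fixed points $\{e_\mu\}$; this uses \cite[Corollary 4.3]{BH}, which identifies $((\mGr)^{H^\sigma})_{\rm red}$. The attracting loci are $S_\mu$ and the repelling loci are $T_\mu$. Since $\overline{\mGr}_\lambda$ is $H^\sigma$-stable, we apply Braden's hyperbolic localization to the IC sheaf $\mathrm{IC}_\lambda$ of $\overline{\mGr}_\lambda$. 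This gives, for each $\mu$, a natural isomorphism
\[ H^*_c(S_\mu, \mathrm{IC}_\lambda)\simeq H^*_{T_\mu}(\mGr,\mathrm{IC}_\lambda)=H^*(T_\mu, i^!\mathrm{IC}_\lambda). \]

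The second step is a dimension estimate. The intersection $\overline{\mGr}_\lambda\cap T_\mu$ has pure dimension $\la\lambda-\mu,\rho\ra$ by \cite[Lemma 3.7]{ALRS}, and the opposite intersections $\overline{\mGr}_\lambda\cap S_\mu$ have dimension $\le\la\lambda+\mu,\rho\ra$ (the $w=w_0$ case of the general dimension statement). We combine these with the stratification of $\overline{\mGr}_\lambda$ by the orbits $\mGr_\nu$, $\nu\preceq\lambda$, and with the perversity bounds on the stalks and costalks of $\mathrm{IC}_\lambda$ along those strata. As in \cite[Theorem 3.5]{MV}, this shows that $H^k_c(S_\mu,\mathrm{IC}_\lambda)$ vanishes except in the single degree $k=2\la\rho,\mu\ra$, up to the normalizing shift. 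The MV argument needs the strata to be even-dimensional and simply connected; here each $\mGr_\nu$ is a vector bundle over a partial flag variety of $G^\sigma$ (as noted in the proof of Lemma \ref{key_lem}), so these properties hold.

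The third step identifies the cohomology with top-degree classes. Because of the degree concentration, the top-dimensional open piece $\mGr_\lambda\cap T_\mu$ of $\overline{\mGr}_\lambda\cap T_\mu$ carries the constant sheaf up to shift, and the lower strata contribute nothing in the relevant degree. We therefore get
\[ H^*_{T_\mu}(\mGr,\mathrm{IC}_\lambda)\simeq H_c^{2\la\lambda-\mu,\rho\ra}(\overline{\mGr}_\lambda\cap T_\mu,\overline{\mb{Q}}_\ell). \]
This group has a basis given by the fundamental classes of the irreducible components. The $\mu$-graded pieces of $H^*(\mGr,\mathrm{IC}_\lambda)$ filter the total cohomology, and purity/degree concentration splits the filtration. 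This yields the decomposition $IH^*(\overline{\mGr}_\lambda)\simeq\bigoplus_\mu H_c^{2\la\lambda-\mu,\rho\ra}(\overline{\mGr}_\lambda\cap T_\mu)$.

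The main obstacle is the fourth step: showing that this decomposition coincides with the weight decomposition of $V(\lambda)$ for $\check G^\tau$. Zhu's theorem constructs the $\check G^\tau$-action through nearby cycles, not through weight functors. To match the two, we use that Zhu's fiber functor is compatible with the constant term functors $\mathrm{CT}_B$, which are exactly the hyperbolic localizations above. Under Zhu's equivalence, $H^*_c(S_\mu,-)$ should correspond to the $\mu$-weight functor for $\check{H}^\tau$, because $\check H^\tau$ is recovered from the Satake category of the torus $H^\sigma$, whose twisted affine Grassmannian has reduced points indexed by $X_*(H)_\tau$. We would verify this compatibility from Zhu's construction, or cite the version in \cite{ALRS}. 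Granting it, $V(\lambda)_\mu$ acquires a basis indexed by the components of $\overline{\mGr}_\lambda\cap T_\mu$.
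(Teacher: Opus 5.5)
Your proposal is correct and follows the same route as the paper, whose proof simply invokes the arguments of \cite[Theorem 3.6, Proposition 3.10]{MV} (hyperbolic localization, the two dimension estimates, degree concentration, and identifying the top-degree compactly supported cohomology with the fundamental classes of components), together with the pure-dimensionality of $\overline{\mGr}_\lambda\cap T_\mu$ from \cite[Lemma 3.7]{ALRS}. The paper also does not reprove the compatibility you isolate in your fourth step, namely that the resulting $X_*(H)_\tau$-grading matches the weight decomposition for $\check{G}^\tau$; it takes this as part of Zhu's ramified Satake theorem.
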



 Set $X_*(H)_{\tau,\mb{R}}:=X_*(H)_\tau\otimes_\mb{Z} \mb{R}$. For any $H^\sigma$-stable closed subset $Z$ in $\mGr$, we define its associated moment polytope
\[  P( Z ): ={\rm Conv} ( \mu   \mid  e_\mu\in  Z     ) \subset X_*(H)_{\tau,\mb{R}}.    \]
 In particular,  $P\left(\overline{\mGr}_\lambda\right)$ is the Weyl polytope $P_\lambda:={\rm Conv}(w\cdot \lambda)_{w\in W^\tau}$. For any MV cycle $C$,  $P(C)$ is called the associated MV polytope. 

\begin{proposition}
An irreducible component $C$ of $\overline{ S_\lambda\cap T_\mu }$ is contained in $\overline{\mGr}_\lambda$ if and only if $P(C)$ is contained in the Weyl polytope $P_\lambda$.
\end{proposition}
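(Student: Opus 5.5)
The forward direction is immediate. If $C\subseteq \overline{\mGr}_\lambda$, then every $H^\sigma$-fixed point $e_\nu\in C$ lies in $\overline{\mGr}_\lambda$. Since $\mGr_\nu\subset\overline{\mGr}_\lambda$ iff $\nu_+\preceq\lambda$, and the fixed points of $\overline{\mGr}_\lambda$ are exactly those $e_\nu$ with $\nu_+\preceq\lambda$, we get $P(C)\subseteq P(\overline{\mGr}_\lambda)=P_\lambda$.

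For the converse, suppose $P(C)\subseteq P_\lambda$. The plan is to mimic the proof of Proposition \ref{T_S_prop} using Lemma \ref{key_lem}.

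First, $C$ is a closed subset stable under $\mb{G}_m\times H^\sigma$. Indeed, $S_\lambda\cap T_\mu$ is stable under this group, and since the group is connected it preserves each irreducible component. Hence $C$ is a union of $\mb{G}_m\times H^\sigma$-orbits $Z$.

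Take such an orbit $Z\subseteq C$. By Lemma \ref{key_lem}, $Z\subseteq \mGr_\nu$ for some $\nu$ with $e_\nu\in\overline{Z}\subseteq C$. Then $\nu\in P(C)\subseteq P_\lambda$.

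Next I need that $\nu$ (or rather its dominant representative $\nu_+$) satisfies $\nu_+\preceq\lambda$. Two points go into this.

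\begin{itemize}
\item Every coweight $e_\nu\in C\subseteq \overline{S_\lambda}$ satisfies $\nu\preceq\lambda$ by Proposition \ref{prop_semi_closure}, so $\lambda-\nu$ lies in the lattice spanned by the $\gamma_\imath$. In particular $\nu_+$ lies in the same coset $\lambda+\mathbb{Z}\{\gamma_\imath\}$, because $W^\tau$ acts preserving cosets of the coroot lattice of $\check G^\tau$.
\item The standard fact for the reductive group $\check G^\tau$: a dominant $\eta$ lies in the convex hull $P_\lambda$ and in $\lambda+\mathbb{Z}\{\gamma_\imath\}$ iff $\eta\preceq\lambda$.
\end{itemize}

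Since $\nu_+\in P_\lambda$ (the Weyl polytope is $W^\tau$-stable), we obtain $\nu_+\preceq\lambda$. Hence $Z\subseteq\mGr_\nu=\mGr_{\nu_+}\subseteq\overline{\mGr}_\lambda$. Taking the union over all orbits $Z$ gives $C\subseteq\overline{\mGr}_\lambda$.

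The main point to check carefully is the lattice and convexity step. We must make sure that $P_\lambda$ is computed in $X_*(H)_{\tau,\mb{R}}$ with the $W^\tau$-action, and that the root datum of $\check G^\tau$ (simple roots $\gamma_\imath$, Weyl group $W^\tau$) is the one governing the order $\preceq$. Torsion in $X_*(H)_\tau$ could cause trouble, since the image in $X_*(H)_{\tau,\mb{R}}$ might lose information. I expect that in all cases here $X_*(H)_\tau$ is torsion-free ($G$ simply connected or adjoint). Alternatively, one can work directly with the coset condition from $\nu\preceq\lambda$, which is stated integrally, and only use the real polytope to compare the dominant elements.
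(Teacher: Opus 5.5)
Your proof is correct and is essentially the paper's argument: the paper only says that the proof of \cite[Proposition 7]{An} carries over using Lemma \ref{key_lem}. That proof is exactly what you wrote: decompose $C$ into $\mb{G}_m\times H^\sigma$-orbits, and note that each orbit lies in $\mGr_{\nu_+}\subseteq\overline{\mGr}_\lambda$ because its limit point $e_\nu$ has $\nu\in P(C)\subseteq P_\lambda$. Your coset bookkeeping via Proposition \ref{prop_semi_closure} correctly justifies the passage from convexity to $\preceq$. The torsion worry does not arise: for $G$ simply connected or adjoint, $\tau$ permutes the $\mathbb{Z}$-basis $\{\check{\alpha}_i\}$ or $\{\check{\omega}_i\}$ of $X_*(H)$, so $X_*(H)_\tau$ is free.
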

\begin{proof}
Using Lemma \ref{key_lem}, the proof of this proposition is identical to the proof of \cite[Proposition 7]{An}.
\end{proof}

\subsection{Twisted GGMS strata}
\label{twisted ggms strata}
By translation invariance, the components of $S_\lambda \cap T_\mu$ depend only on $\lambda-\mu$. Hence, it suffices to consider $S_\lambda\cap T_0$, where $\lambda$ is in the root lattice of $(\check{G})^\tau$. Let $\mGr'$ denote the twisted affine Grassmannian defined as in (\ref{eq_twisted aff_def}) for the simply-connected cover $G'$ of $G$ with the lifted action of $\sigma$. The induced morphism 
\[ \mGr'\to \mGr^\circ  \]
to the neutral component $\mGr^\circ$ of $\mGr$ restricts to a finite birational universal homeomorphism on corresponding Schubert varieties; see 
\cite[Section 2, proof of Proposition 2.1]{HainesLourencoRicharz2024}. (In fact, this is an isomorphism when $p\not \mid |\pi_1(G)| $). This morphism is compatible with the corresponding semi-infinite cells and torus fixed points.  Consequently, it induces a bijection on the irreducible components of the corresponding intersections, preserving their dimensions and moment polytopes.  We may therefore assume that \(G\) is {\it simply-connected} from now on.

Let $2\check{\rho}$ be the summation of all positive coroots of $G$. Then, $2\check{\rho}$ can be regraded as a regular cocharacter $2\check{\rho}: \mb{G}_m\to H^\sigma \subset G^\sigma$. Then, 
\[ T_\mu^w= \{ L\in \mGr  \mid      \lim_{s\to \infty}w(2\check{\rho}) (s)\cdot L= e_\mu    \}  . \]
Using this description and Proposition \ref{prop_semi_closure}, the proof of the following lemma is the same as the proof of \cite[Lemma 2.1]{Kam}.
\begin{lemma}
\label{lem:semiinfinite-int}
For any $w,y\in W^\tau$, if $T_\mu^w  \cap T_\nu^y \not= \emptyset$, then $\nu\succeq_w \mu$.
\end{lemma}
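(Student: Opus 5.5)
We follow Kamnitzer's argument for \cite[Lemma 2.1]{Kam}, using the limit description of $T^w_\mu$ together with Proposition \ref{prop_semi_closure}.

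Suppose $L\in T^w_\mu\cap T^y_\nu$. Since $T^y_\nu$ is stable under $N^-(\mc{K})^\sigma_y\supseteq$ the image of $y(2\check\rho)$, and in particular under $H^\sigma$, the whole orbit $H^\sigma\cdot L$ lies in $T^y_\nu$. Consider the one-parameter subgroup $s\mapsto w(2\check\rho)(s)$. By the displayed characterization of $T^w_\mu$, we have $\lim_{s\to\infty} w(2\check\rho)(s)\cdot L=e_\mu$.

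The points $w(2\check\rho)(s)\cdot L$ all lie in $T^y_\nu$, since $H^\sigma$ normalizes $N^-(\mc{K})^\sigma_y$ and fixes $e_\nu$. Hence the limit $e_\mu$ lies in $\overline{T^y_\nu}$. Because $e_\mu\in T^y_\mu$ and the closure $\overline{T^y_\nu}$ is a union of $T^y_{\nu'}$ (it is $N^-(\mc{K})^\sigma_y$-stable and the $T^y_{\nu'}$ partition $\mGr$ by the Iwasawa decomposition), we get $T^y_\mu\subset\overline{T^y_\nu}$. Proposition \ref{prop_semi_closure} then gives $\mu\succeq_y\nu$.

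This is not yet the desired inequality, which is with respect to $w$. So we run the argument symmetrically. Let $L':=\lim_{s\to 0}w(2\check\rho)(s)\cdot L$. Equivalently, this is the limit under the opposite cocharacter $w(-2\check\rho)=ww_0(2\check\rho)$ up to conjugation, which exists by ind-properness of $\mGr$ and is an $H^\sigma$-fixed point, so $L'=e_{\nu''}$ for some $\nu''$. The key point is that the whole $\mathbb{G}_m$-orbit of $L$ lies in $T^w_\mu$, so both limits lie in $\overline{T^w_\mu}$. Indeed, $T^w_\mu$ is an attracting cell and is therefore stable under this torus.

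The cleaner route, and the one we would actually take, is as follows. The limit $\lim_{s\to\infty}y(2\check\rho)(s)\cdot L=e_\nu$ lies in $\overline{H^\sigma\cdot L}\subset \overline{T^w_\mu}$, because $H^\sigma$ preserves $T^w_\mu$. Since $e_\nu\in T^w_\nu$ and $\overline{T^w_\mu}$ is a union of cells $T^w_{\nu'}$, we get $T^w_\nu\subset\overline{T^w_\mu}$. Proposition \ref{prop_semi_closure} then yields $w^{-1}\nu\succeq w^{-1}\mu$, i.e.\ $\nu\succeq_w\mu$.

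The only point requiring care is that $\overline{T^w_\mu}$ is a union of the strata $T^w_{\nu'}$ containing each of its fixed points' cells. This follows from $N^-(\mc{K})^\sigma_w$-stability of the closure and the Iwasawa decomposition. A second point is that the limit exists in the ind-scheme and equals $e_\nu$; this is exactly the stated characterization of $T^y_\nu$.
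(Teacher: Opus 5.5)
Your final argument (the ``cleaner route'') is correct and is exactly the Kamnitzer Lemma~2.1 argument that the paper invokes. The limit $e_\nu=\lim_{s\to\infty}y(2\check\rho)(s)\cdot L$ lies in $\overline{H^\sigma\cdot L}\subset\overline{T^w_\mu}$, so $N^-(\mc{K})^\sigma_w$-stability of $\overline{T^w_\mu}$ together with Proposition \ref{prop_semi_closure} gives $\nu\succeq_w\mu$. The exploratory paragraphs before it can be dropped; in particular, $N^-(\mc{K})^\sigma_y$ does not contain the image of $y(2\check\rho)$, and $H^\sigma$-stability of the cells comes instead from $H^\sigma$ normalizing $N^-(\mc{K})^\sigma_y$ and fixing $e_\nu$, as you correctly note later.
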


For every simple reflection $s_i\in W$, we fix a representative $\overline{s}_i:=x_i(-1)y_i(1)x_i(-1)$. These representatives satisfy the braid relation and hence give rise to a unique representative $\overline{w}$ for each $w\in W$ through any of its reduced words. Note that $\tau(\overline{s}_i)=\overline{s}_{\tau(i)}$. Therefore $\tau(\overline{w})=\overline{\tau(w)}$. Meanwhile, when $w\in W^\tau$, $\overline{w}$ may be different from the $\sigma$-invariant representative $\dot{w}$ chosen before; in particular, $\overline{w}$ is $\tau$-invariant. See \cite[\S 2.3]{Hong-Shen-2015} for more discussions.

For each $i\in I$, let $V(\Lambda_i)$ be the Weyl module of $G$ with fundamental weight $\Lambda_i$ with respect to $(B, H)$. We fix a highest weight vector $v_{\Lambda_i}$ for each $V(\Lambda_i)$ and define the extremal weight vectors $v_{w\Lambda_i}:=\overline{w}\cdot v_{\Lambda_i}$.
Let $V(\Lambda_i)^*$ be the dual representation of $V(\Lambda_i)$. Then $V(\Lambda_i)^*$ is of highest weight $\Lambda_i^*:= -w_0\Lambda_i$. Let $\{\xi_{w\Lambda_i} \mid  w\in W \}$ be the
dual vectors of $\{ v_{w\Lambda_i} \mid w\in W\}$. In particular, $\xi_{w\Lambda_i}=\overline{w}\cdot\xi_{\Lambda_i}$ is of weight $-w\Lambda_i$.

Let $\Lambda_\imath=\Lambda_i|_{H^\sigma}$ for the fixed $i\in \imath$,  and let $\overline{V}(\Lambda_\imath)$ be the Weyl module of $G^\sigma$ of highest weight $\Lambda_\imath$. By a dual version of \cite[Theorem I.8.3]{FS} (note that $p\not \mid m$),  $V(\Lambda_i)|_{G^\sigma}$ admits a Weyl filtration as a representation of $G^\sigma$.  Since $\Lambda_\imath$ is the maximal weight appearing in $V(\Lambda_i)|_{G^\sigma}$, 
by \cite[\S 4.19]{Ja} there is an embedding $\overline{V}(\Lambda_{\imath})\hookrightarrow V(\Lambda_i)|_{G^\sigma}$, which is  unique up to a scalar.  By base change from $\rk$ to $\mathcal{K}$, the loop groups $G(\mathcal{K})$ and $G(\mc{K})^\sigma$ act on $V(\Lambda_i)_\mathcal{K}$ and $V(\Lambda_i)_\mc{K}^*$.

For any $w\in W^\tau$ and $g\in G(\mc{K})^\sigma$,  define
\begin{equation}
\label{D_def}
D_{w \Lambda_\imath} (g):= \min_{v \in \overline{V}(\Lambda_\imath)}  \left\{{\rm val} (\la   \xi_{w\Lambda_i}  , g\cdot  v  \ra )  \right\}. \end{equation}
where $\langle ,\rangle: V(\Lambda_i)^*_\mathcal{K}\times V(\Lambda_i)_\mathcal{K}$ is the standard pairing. 
Define the map
\begin{align*}
a: X_*(H)_\tau&\longrightarrow X_*(H)^\tau\\
\lambda & \longmapsto \sum_{s=0}^{m-1}\tau^s(\tilde{\lambda}),
\end{align*}
 where $\tilde{\lambda}$ is a representative of $\lambda$. 


\begin{lemma}
\label{lem:semi-infinite}
For any $k\in G(\mc{O})^\sigma$ and $g\in G(\mc{K})^\sigma$, 
$D_{w \Lambda_\imath}(gk)=D_{w\Lambda_\imath}(g)$. Thus, $D_{w\Lambda_\imath}$ descends to a function on $\mGr$.  
 Moreover,
 \[
 T^w_\mu=\{L\in \mGr\mid D_{w \Lambda_\imath}(L)=\la w \Lambda_\imath, a(\mu)\ra \text{ for all $\iota\in I_\tau$}\}.
 \]
\end{lemma}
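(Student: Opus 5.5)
First I will prove the invariance $D_{w\Lambda_\imath}(gk)=D_{w\Lambda_\imath}(g)$. The key observation is that $G(\mc{O})^\sigma$ preserves the $\mc{O}$-lattice $\overline{V}(\Lambda_\imath)_{\mc{O}}\subset \overline{V}(\Lambda_\imath)_{\mc{K}}$. Here I regard $\overline{V}(\Lambda_\imath)$ as a $G^\sigma$-submodule of $V(\Lambda_i)|_{G^\sigma}$, and hence its $\mc{O}$-span as a $G^\sigma(\mc{O})$-stable lattice. One subtlety is that $G(\mc{O})^\sigma$ is the group of $\sigma$-fixed points with $\sigma$ also acting on $t$, so it is not literally $G^\sigma(\mc{O})$. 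To handle this, I will use that the minimum of ${\rm val}\la \xi,g v\ra$ over $v\in \overline{V}(\Lambda_\imath)$ equals the minimum over $v$ in the $\mc{O}$-lattice $L_{\imath}:=\overline{V}(\Lambda_\imath)\otimes\mc{O}$. This holds because valuation is $\mc{O}$-semilinear, so $\min$ over $\mc{O}$-combinations is attained on a $\rk$-basis. It then suffices to show $k L_\imath= L_\imath$ for $k\in G(\mc{O})^\sigma$.

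For this, the plan is to show that the $\mc{O}$-span $\mc{L}\subset V(\Lambda_i)_{\mc{O}}$ generated by $G(\mc{O})^\sigma\cdot v_{\Lambda_i}$ equals $L_\imath$.

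\begin{itemize}
\item[(a)] The group $G(\mc{O})^\sigma$ is generated by $G^\sigma$, the torus part $H(\mc{O})^\sigma$, and root subgroups of the form $x_\alpha$ applied to $\sigma$-twisted power series. Modulo $t$, $G(\mc{O})^\sigma$ reduces to $G^\sigma$. Alternatively, and more cleanly, one can use that $G(\mc{O})^\sigma$ is a smooth connected group scheme whose special fiber surjects onto $G^\sigma$, with kernel in the first congruence subgroup.
\item[(b)] Elements of the congruence kernel act on $V(\Lambda_i)_{\mc{O}}$ as $1+tA$. They therefore need not preserve $L_\imath$ itself, but the minimum of valuations is still invariant, for the following reason. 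If $D(g)$ is attained at $v\in\overline V$, then $gkv = g v + t\,g(\cdots)$, so I need control of the error term. This is the genuinely delicate point.
\end{itemize}

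Because of the difficulty in (b), I will instead reformulate $D$ through a $G(\mc{O})^\sigma$-stable object. Namely, I will show that $L_\imath$ is the $\mc{O}$-saturation of the $G(\mc{O})^\sigma$-span of $v_{\Lambda_i}$, intersected appropriately. Then I will use the fact that any $\sigma$-stable $\mc{O}$-lattice generated by a highest weight line under $G(\mc{O})^\sigma$ is $G(\mc{O})^\sigma$-stable by construction. This is the step I expect to be the main obstacle, and it is presumably where the paper's Lemma on representatives and the Weyl filtration enter.

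The description of $T^w_\mu$ follows once invariance is known. First, $D_{w\Lambda_\imath}$ is invariant under left multiplication by $N^-(\mc{K})^\sigma_w=\overline{w}N^-(\mc{K})^\sigma\overline{w}^{-1}$. This is because $\xi_{w\Lambda_i}\cdot \overline{w}n\overline{w}^{-1}=\xi_{w\Lambda_i}$: the vector $\xi_{\Lambda_i}$ is lowest-weight-type for the dual action and is fixed by $N^-$ acting on the right. Second, I compute at $e_\mu$ with $g=n^{\tilde\mu}$. For $v\in\overline{V}(\Lambda_\imath)$, the pairing $\la\xi_{w\Lambda_i},n^{\tilde\mu}v\ra$ picks out the $w\Lambda_i$-weight component of $v$ scaled by a power of $t$. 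That power is $\prod_s\sigma^s(t^{\tilde\mu})$ evaluated on weight $w\Lambda_i$, namely $t^{\la w\Lambda_i,a(\mu)\ra}$ up to units, since the $\gamma$-twists only change scalars. Since $w\in W^\tau$, the extremal vector $v_{w\Lambda_i}$ lies in $\overline{V}(\Lambda_\imath)$, so the minimum equals $\la w\Lambda_\imath,a(\mu)\ra$. This shows $T^w_\mu$ is contained in the right-hand set.

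Conversely, by the Iwasawa decomposition any $L$ lies in some $T^w_\nu$, and then $D_{w\Lambda_\imath}(L)=\la w\Lambda_\imath,a(\nu)\ra$ for all $\imath$. The fundamental weights $w\Lambda_\imath$ for $\imath\in I_\tau$ separate points of $X_*(H)^\tau_{\mb{R}}$, and $a$ is injective on $X_*(H)_\tau$ modulo torsion. Moreover, $X_*(H)_\tau$ is torsion-free for simply connected $G$, which is our standing assumption. Hence $\nu=\mu$, which gives equality.
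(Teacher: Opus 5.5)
\textbf{Gap: the invariance $D_{w\Lambda_\imath}(gk)=D_{w\Lambda_\imath}(g)$ is never proved.} Your description of $T^w_\mu$ depends on it: you evaluate only at $g=n^{\tilde\mu}$ and transport by left $N^-(\mc{K})^\sigma_w$-invariance, so you need right $G(\mc{O})^\sigma$-invariance to reach every point of $T^w_\mu$.

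The lattice route cannot be repaired as you formulate it. Your ``key observation'' is false: $G(\mc{O})^\sigma$ does not preserve $\overline{V}(\Lambda_\imath)\otimes\mc{O}$. Likewise, the $\mc{O}$-span of $G(\mc{O})^\sigma\cdot v_{\Lambda_i}$ is not $\overline{V}(\Lambda_\imath)\otimes\mc{O}$. Here is a counterexample. Take $A_3^{(2)}$ with $\kappa=0$ and $\imath=\{2\}$. Over $G^\sigma=\mathrm{Sp}_4$, $V(\Lambda_2)=\wedge^2\rk^4$ splits as $\overline{V}(\Lambda_\imath)\oplus\rk$. The element $k=x_{-(\alpha_1+\alpha_2)}(t)\,x_{-(\alpha_2+\alpha_3)}(\pm t)$, with the appropriate sign, is $\sigma$-fixed; it is $1+tX+O(t^2)$ with $X$ in the $(-1)$-eigenspace of $\tau$ on $\mathrm{Lie}(G)$. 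It sends $v_{\Lambda_2}$ to $v_{\Lambda_2}+tu+O(t^2)$, where $u$ is a weight-zero vector not in $\overline{V}(\Lambda_\imath)$. If you replace $L_\imath$ by some other $G(\mc{O})^\sigma$-stable lattice, you change the function $D$, unless you prove the two minima agree, and that is the content of the lemma. Neither the Weyl filtration nor Lemma \ref{lem: representative} is relevant here.

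\textbf{The missing idea is to apply the Iwasawa decomposition first, as the paper does.} Write $g=\dot w u\dot w^{-1}n^{\tilde\mu}k$ with $u\in N^-(\mc{K})^\sigma$ and $k\in G(\mc{O})^\sigma$.
\begin{itemize}
\item The left factors act on $\xi_{w\Lambda_i}$ by a scalar of valuation $\la w\Lambda_i,a(\mu)\ra$. Hence $\la\xi_{w\Lambda_i},gv\ra$ is that scalar times $\la\xi_{w\Lambda_i},kv\ra$.
\item The lower bound $\val\la\xi_{w\Lambda_i},kv\ra\geq 0$ needs only $k\,V(\Lambda_i)_{\mc{O}}\subseteq V(\Lambda_i)_{\mc{O}}$. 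You are now pairing with a fixed $\rk$-rational covector, so stability of $\overline{V}(\Lambda_\imath)_{\mc{O}}$ is never needed, and your delicate point (b) disappears.
\item Equality is attained at $v=k_0^{-1}\dot w\, v_{\Lambda_i}\in\overline{V}(\Lambda_\imath)$, where $k_0\in G^\sigma$ is $k$ mod $t$. Indeed $\la\xi_{w\Lambda_i},kv\ra\equiv\la\xi_{w\Lambda_i},\dot w\,v_{\Lambda_i}\ra\neq 0 \bmod t$.
\end{itemize}
Thus $D_{w\Lambda_\imath}(g)=\la w\Lambda_\imath,a(\mu)\ra$ depends only on the cell containing $gG(\mc{O})^\sigma$. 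Since $gk'$ has the same decomposition with $k$ replaced by $kk'$, invariance and the inclusion of $T^w_\mu$ in the right-hand set follow together.

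Your computation at $n^{\tilde\mu}$ is exactly the case $k=1$; doing it for general $k$ is all that was missing. Your converse argument is fine: it uses disjointness of the cells and injectivity of $\nu\mapsto(\la w\Lambda_\imath,a(\nu)\ra)_\imath$ on the torsion-free group $X_*(H)_\tau$, and it makes explicit a step the paper leaves implicit.
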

\begin{proof}
By the Iwasawa decomposition, 
we can write $g= \dot{w} u \dot{w}^{-1}n^{\tilde{\mu}} k$, where $u\in N^{-}(\mc{K})^\sigma$,  $k\in G(\mc{O})^\sigma$, $\dot{w}$ is a $\sigma$-invariant representative of $w$ in $G$, and $\tilde{\mu}$ is a lifting of $\mu\in X_*(H)_\tau$ in $X_*(H)$. Observe that for any $v\in \overline{V}(\Lambda_\imath)$,  $k\cdot v =k_0\cdot v+ tV(\Lambda_i)_{\mc{O}}$, where $k_0\in G^\sigma\subset G(\mc{O})^\sigma$ is the image of $k$ via the evaluation map $G(\mc{O})^\sigma\to G^\sigma$. Then, 
\[ \la   \xi_{w \Lambda_i}  , g\cdot v  \ra= \la  \xi_{w \Lambda_i}, g\cdot v  \ra  = (\epsilon^{-1}t)^{ \la w \Lambda_i, a(\mu)  \ra } \la  \xi_{w \Lambda_i}, k \cdot v  \rangle   \]
lies in $   (\epsilon^{-1}t)^{\la w\Lambda_i, a(\mu)  \ra }  (  \la  \xi_{w\Lambda_i}, k_0 \cdot v  \ra + t \mc{O})$.
Since $ \xi_{w \Lambda_i}$ is nontrivial on $\overline{V}(\Lambda_\imath)$,
 it follows that
 \[ \min_{v \in \overline{V}(\Lambda_\imath)} \val ( \la  \xi_{w \Lambda_\imath}  , g\cdot v  \ra)= \la w \Lambda_i, a(\mu) \ra= \la w \Lambda_\imath, a(\mu) \ra .\]
 From this calculation, it is easy to see that $D_{w \Lambda_\imath}$ is invariant with respect to the right multiplication of $G(\mc{O})^\sigma$, and thus the lemma follows. 
\end{proof}

 As a consequence of Proposition \ref{prop_semi_closure}, we have 
 \begin{align*}P( \overline{T_\mu^w} )=&\{ x \in X_*(H)_{\tau,\mathbb{R}}  \mid  x\succeq_w \mu   \}\\
 =&\{  x\in X_*(H)_{\tau,\mathbb{R}} \mid  \la w\Lambda_\imath, a(x)  \ra   \geq \la  w\Lambda_\imath, a(\mu)   \ra   \text{ for all } \imath\in I_\tau  \},\end{align*}
 where $a: X_*(H)_\tau\to X_*(H)^\tau$ extends to $a: X_*(H)_{\tau,\mathbb{R}}\to X_*(H)^\tau_\mb{R}$, and the partial order $\succeq_w$ extends from $X_*(H)_\tau$ to $X_*(H)_{\tau,\mathbb{R}} $. 

For a collection $(\mu_w)_{w\in W^\tau}$ of elements in $X_*(H)_\tau$,   we define the following GGMS stratum as in \cite[2.4]{Kam}, 
 \[A(\mu_\bullet):= \bigcap_{w\in W^\tau}  T^w_{\mu_w}  . \]
 This intersection is empty unless $\mu_y \succeq_w \mu_w$ for any $y,w$. By the same proof as in \cite[Lemma 2.3]{Kam}, we have
 \begin{lemma}
 If $A(\mu_\bullet)$ is not empty, then $P( \overline{A(\mu_\bullet) })= {\rm Conv}(\mu_\bullet)$.
 \end{lemma}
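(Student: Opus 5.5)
We need to prove: if $A(\mu_\bullet)$ is nonempty, then $P(\overline{A(\mu_\bullet)})=\mathrm{Conv}(\mu_\bullet)$. The plan is to follow Kamnitzer's argument for \cite[Lemma 2.3]{Kam}, proving the two inclusions separately.

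\emph{Step 1: $\supseteq$.} Fix $w\in W^\tau$ and take $L\in A(\mu_\bullet)\subset T^w_{\mu_w}$. Since $T^w_{\mu_w}$ is defined by the limit description
\[
T^w_{\mu_w}=\{L \mid \lim_{s\to\infty} w(2\check\rho)(s)\cdot L=e_{\mu_w}\},
\]
the point $e_{\mu_w}$ lies in the closure of the orbit $w(2\check\rho)(\mb{G}_m)\cdot L$. The cocharacter $w(2\check\rho)$ lands in $H^\sigma$. Each $T^y_{\mu_y}$ is $H^\sigma$-stable, because $H^\sigma$ normalizes $N^-(\mc K)^\sigma_y$ and fixes $e_{\mu_y}$. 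Hence $A(\mu_\bullet)$ is $H^\sigma$-stable, and so is its closure. Therefore $e_{\mu_w}\in\overline{A(\mu_\bullet)}$ for every $w$, and $\mathrm{Conv}(\mu_\bullet)\subseteq P(\overline{A(\mu_\bullet)})$.

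\emph{Step 2: $\subseteq$.} Let $e_\nu\in\overline{A(\mu_\bullet)}$. For each $w$ we have $\overline{A(\mu_\bullet)}\subseteq\overline{T^w_{\mu_w}}$. Since $e_\nu\in T^w_\nu$, Proposition \ref{prop_semi_closure} gives $\nu\succeq_w\mu_w$. By the description of $P(\overline{T^w_\mu})$ stated just above, this says
\[
\la w\Lambda_\imath,a(\nu)\ra\ge\la w\Lambda_\imath,a(\mu_w)\ra \quad\text{for all }\imath.
\]
So $\nu$ lies in the polytope
\[
Q:=\bigcap_w\{x \mid x\succeq_w\mu_w\}\subset X_*(H)_{\tau,\mb R}.
\]

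\emph{Step 3: $Q=\mathrm{Conv}(\mu_\bullet)$.} This is the main obstacle. It is a purely convex-geometric statement, and I expect it to be the only real point of the proof. Nonemptiness of $A(\mu_\bullet)$ gives $\mu_y\succeq_w\mu_w$ for all $y,w$, by Lemma \ref{lem:semiinfinite-int} or the remark preceding the lemma. I would then invoke Kamnitzer's result \cite[Prop.~2.2]{Kam}: a collection $(\mu_w)$ satisfying these inequalities forms a pseudo-Weyl polytope, and for such a collection the intersection of the cones $\mu_w+C_w$ equals $\mathrm{Conv}(\mu_\bullet)$. Here $C_w$ is the $w$-translated positive cone.

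To apply this, I will transport everything to the root datum of $\check G^\tau$. Its Weyl group is $W^\tau$, and via $a$ the pairings $\la w\Lambda_\imath,a(\cdot)\ra$ are (positive multiples of) the fundamental weights of $(\check G^\tau)^\vee$ evaluated on $X_*(H)_{\tau,\mb R}$. The point to check is that $a$ is injective after tensoring with $\mb R$, and that under this identification the cone $\{x\succeq_w 0\}$ is exactly $w$ applied to the cone spanned by the simple roots $\gamma_\imath$. This reduces the claim to the untwisted statement for the reductive group $\check G^\tau$, where Kamnitzer's argument applies verbatim.

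Combining Steps 1–3 yields
\[
\mathrm{Conv}(\mu_\bullet)\subseteq P(\overline{A(\mu_\bullet)})\subseteq Q=\mathrm{Conv}(\mu_\bullet).
\]
The middle inclusion uses that $P(\overline{A(\mu_\bullet)})$ is the convex hull of fixed points that all lie in the convex set $Q$. This gives the equality.
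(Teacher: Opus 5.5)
Your proposal is correct and is essentially the paper's proof. The paper just invokes the argument of \cite[Lemma 2.3]{Kam}, which is exactly your argument: first $e_{\mu_w}\in\overline{A(\mu_\bullet)}$ via the $H^\sigma$-limit; then the closure relation puts every fixed point in $\bigcap_w\{x\succeq_w\mu_w\}$; finally that intersection is identified with $\mathrm{Conv}(\mu_\bullet)$ after transporting to the root datum of $\check G^\tau$.

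A small simplification: Step 3 only needs the inclusion $Q\subseteq\mathrm{Conv}(\mu_\bullet)$. This follows by separating a point of $Q\setminus\mathrm{Conv}(\mu_\bullet)$ with a generic functional lying in some open chamber $w$, and it does not use the inequalities $\mu_y\succeq_w\mu_w$.
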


We will consider irreducible components of the intersection $ T_{0}\cap S_{\mu} $, where $\mu$ is a non-negative summation of $\gamma_\imath, \imath\in I_\tau$.

\begin{definition}\label{def_exceptional} An orbit $\imath\in I_\tau$ is said to be \emph{exceptional} if $X_N^{(r)}=A_{2l}^{(2)}$ and $\imath=\{l,l+1\}$; otherwise we say the orbit $\imath$ is \emph{standard}. Note that $a_{ij}=0$ for any distinct $i,j$ in a standard orbit $\imath$.
\end{definition}

We set 
\begin{equation}\label{eq: s_i}
s_\imath=\left\{\begin{array}{ll}
    \prod_{i\in \imath} s_i & \text{if $\imath$ is standard}, \\
    s_ls_{l+1}s_l & \text{if $\imath=\{l,l+1\}$ is exceptional.}
    \end{array}\right.
\end{equation}
Then $\{ s_\imath  \mid \imath\in I_\tau \}$ is a set of simple reflections in the Coxeter group $W^\tau$. 
Let 
%
$\imath_\bullet=(\imath_1,\cdots, \imath_N)$ be a reduced word for $w_0\in W^\tau$, where $W^\tau$ is regarded as the Weyl group of  $(\check{G})^\tau$. Let $w_{ k}^{\imath_\bullet}:= s_{\imath_1}s_{\imath_2}\cdots s_{\imath_k}$.  
Set $\beta_{k}^{\imath_\bullet}=w_{ k-1}^{\imath_\bullet}(\gamma_{\imath_k})$. 
Given a tuple of non-negative integers $n_\bullet=(n_1,\cdots, n_N)$, set $\mu_k^{\imath_\bullet }=\sum_{s=1}^k  n_s \beta_{s}^{\imath_\bullet}$.
Define 
\begin{equation}
\label{eq_A_i(n)}
A^{\imath_\bullet}(n_\bullet):=\bigcap_{k=0}^N  T^{ w_{k}^{\imath_\bullet}  } _{ \mu_{k}^{\imath_\bullet}  }  \end{equation}
In Section \ref{Section_para_MV}, we show that $\overline{A^{\imath_\bullet}(n_\bullet)}$ is an irreducible component of $\overline{S_\mu\cap T_0}$ with $\mu=\sum_{s=1}^Nn_s\beta_s^{\imath_\bullet}$.

Since $A^{\imath_\bullet}(n_\bullet)$ is the intersection of $T^{w_k^{\imath_\bullet}}_{\mu_k^{\imath_\bullet}}$, we can depict $A^{\imath_\bullet}(n_\bullet)$ using the function $D_{w_k^{\imath_\bullet}\Lambda_i}$. Let us define
\begin{equation} \label{eq: folded cartan}
a_{\imath\jmath}=\left\{\begin{array}{ll}\displaystyle\frac{1}{|\jmath|}\sum_{\substack{s\in \imath \\ t\in \jmath}}a_{st} & \text{if $\jmath$ is standard},\\
\displaystyle\sum_{\substack{s\in \imath \\ t\in \jmath}}a_{st} & \text{if $\jmath$ is exceptional}.
\end{array}\right.
\end{equation}
Then we have the following proposition.

\begin{proposition}\label{prop: A(n)} $A^{\imath_\bullet}(n_\bullet)$ consists of those $L\in \mGr$ satisfying
\[
D_{\Lambda_\imath}(L)=0 \quad \text{ for all $\imath\in I_\tau$, and}
\]
\[
D_{w_{ k}^{\imath_\bullet}\Lambda_{\imath_k}}(L)=-n_km_{\imath_k}-D_{w_{k-1}^{\imath_\bullet}\Lambda_{\imath_k}}(L)-\sum_{\jmath\neq \imath_k}a_{\jmath \imath_k}D_{w_{k-1}^{\imath_\bullet}\Lambda_{\jmath}}(L) \quad \text{ for all $1\leq k\leq N$},
\]
where $m_{\imath_k}=\frac{m}{|\imath_k|}$.
\end{proposition}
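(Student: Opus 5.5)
By Lemma \ref{lem:semi-infinite}, membership $L\in T^{w}_{\nu}$ is equivalent to the equalities $D_{w\Lambda_\jmath}(L)=\la w\Lambda_\jmath, a(\nu)\ra$ for all $\jmath\in I_\tau$. So $A^{\imath_\bullet}(n_\bullet)=\bigcap_{k=0}^N T^{w_k}_{\mu_k}$ (writing $w_k=w_k^{\imath_\bullet}$, $\mu_k=\mu_k^{\imath_\bullet}$) is cut out by the system
\[
D_{w_k\Lambda_\jmath}(L)=\la w_k\Lambda_\jmath, a(\mu_k)\ra,\qquad 0\le k\le N,\ \jmath\in I_\tau .
\]
The plan is to show that this system is equivalent to the one in the proposition. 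The $k=0$ equations are exactly $D_{\Lambda_\imath}(L)=0$, since $w_0^{\imath_\bullet}=e$ and $\mu_0=0$.

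For the recursion, use $w_k=w_{k-1}s_{\imath_k}$ and $\mu_k=\mu_{k-1}+n_k w_{k-1}\gamma_{\imath_k}$. First treat $\jmath\neq\imath_k$. Here $s_{\imath_k}\Lambda_\jmath=\Lambda_\jmath$ as characters of $H^\sigma$, so $w_k\Lambda_\jmath=w_{k-1}\Lambda_\jmath$. Moreover $\la w_{k-1}\Lambda_\jmath, a(w_{k-1}\gamma_{\imath_k})\ra=\la \Lambda_\jmath, a(\gamma_{\imath_k})\ra=0$, using that $a$ is $W^\tau$-equivariant and $a(\gamma_{\imath_k})=\sum_s\tau^s(\check\alpha_i)$ is a multiple of $\sum_{i\in\imath_k}\check\alpha_i$. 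Hence the $(k,\jmath)$ equation for $\jmath\ne\imath_k$ coincides with the $(k-1,\jmath)$ equation, and the only new condition at step $k$ is the one for $\jmath=\imath_k$. By induction, on the locus where all earlier equations hold, each $D_{w_{k-1}\Lambda_\jmath}(L)$ equals $\la w_{k-1}\Lambda_\jmath,a(\mu_{k-1})\ra$.

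It therefore remains to verify the identity
\[
\la w_k\Lambda_{\imath_k},a(\mu_k)\ra=-n_km_{\imath_k}-\la w_{k-1}\Lambda_{\imath_k},a(\mu_{k-1})\ra-\sum_{\jmath\neq\imath_k}a_{\jmath\imath_k}\la w_{k-1}\Lambda_\jmath,a(\mu_{k-1})\ra .
\]
I would establish it from the folded relation
\[
s_{\imath}\Lambda_\imath=-\Lambda_\imath-\sum_{\jmath\neq\imath}a_{\jmath\imath}\Lambda_\jmath \quad\text{on }X_*(H)^\tau,
\]
together with $\la \Lambda_\imath, a(\gamma_\imath)\ra=m_\imath$. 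Apply the relation after $w_{k-1}$ and pair with $a(\mu_k)$. Then use $\la w_{k-1}\Lambda_\jmath,a(\mu_k)\ra=\la w_{k-1}\Lambda_\jmath,a(\mu_{k-1})\ra$ for $\jmath\neq\imath_k$, and $\la w_{k-1}\Lambda_{\imath_k},a(\mu_k)\ra=\la w_{k-1}\Lambda_{\imath_k},a(\mu_{k-1})\ra+n_km_{\imath_k}$. The two $n_km_{\imath_k}$ contributions combine to give the claimed $-n_km_{\imath_k}$ after moving terms. An induction on $k$ then shows that the two systems are equivalent.

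The main obstacle is the folded computation. One must check that $s_\imath$ (including the exceptional $s_ls_{l+1}s_l$) acts on $\Lambda_\imath|_{H^\sigma}$ with coefficients exactly the $a_{\jmath\imath}$ of \eqref{eq: folded cartan}, and that $\la\Lambda_\imath,a(\gamma_\imath)\ra=m/|\imath|$. For standard orbits, $a(\gamma_\imath)=\frac{m}{|\imath|}\sum_{t\in\imath}\check\alpha_t$ and $s_\imath\Lambda_i=\Lambda_i-\alpha_i$; restricting $\alpha_i$ to $H^\tau$ gives $2\Lambda_\imath+\sum_\jmath \frac{1}{|\jmath|}\sum a_{st}\Lambda_\jmath$-type coefficients, which should match after pairing against $\tau$-invariant coweights. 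In the exceptional $A_{2l}^{(2)}$ case, $s_ls_{l+1}s_l\Lambda_l=\Lambda_l-\alpha_l-\alpha_{l+1}$ and $a(\gamma_\imath)=\frac{m}{2}(\check\alpha_l+\check\alpha_{l+1})$, so the unnormalized sum in \eqref{eq: folded cartan} is what appears. I would verify this directly by pairing both sides with the $\tau$-invariant coweights $\sum_{t\in\jmath}\check\omega_t$.
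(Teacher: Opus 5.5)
Your proposal is correct and matches the paper's proof step for step. Both use Lemma \ref{lem:semi-infinite} to rewrite each $T^{w_k}_{\mu_k}$ as equalities of $D$-functions. Both observe that the conditions for $\jmath\neq\imath_k$ repeat those of step $k-1$, since $\la\Lambda_\jmath,a(\gamma_{\imath_k})\ra=0$. Both then finish with the folded relation $s_{\imath_k}\Lambda_{\imath_k}=-\Lambda_{\imath_k}-\sum_{\jmath\neq\imath_k}a_{\jmath\imath_k}\Lambda_\jmath$ and the pairing $\la\Lambda_{\imath_k},a(\gamma_{\imath_k})\ra=m_{\imath_k}$, both of which the paper simply asserts.

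Two small corrections to your sketched check of that relation. For standard $\imath$ the coefficient is $a_{\jmath\imath}=\sum_{s\in\jmath}a_{si}$, so the normalization is $1/|\imath|$, not $1/|\jmath|$. Also, only a single $n_km_{\imath_k}$ term arises, not two. With these fixes the verification goes through, and in the exceptional case it correctly gives $a_{\jmath\imath}=-2$ for $\jmath=\{l-1,l+2\}$.
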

\begin{proof} The condition $D_{\Lambda_\imath}(L)=0$ is determined by the fact that $L\in T_0$. We now inductively prove the remaining equalities by an induction on $k$. Note that for each $k$, by Lemma \ref{lem:semi-infinite}, $L\in T^{w^{\imath_\bullet}_k}_{\mu^{\imath_\bullet}_k}$ if and only if $D_{w^{\imath_\bullet}_k\Lambda_\jmath}(L)=\left\la w^{\imath^\bullet}_k\Lambda_\jmath,a(\mu^{\imath_\bullet}_k)\right\ra$ for all $\jmath$. Recall that 
\[
\mu^{\imath_\bullet}_k=\mu^{\imath_\bullet}_{k-1}+n_k\beta^{\imath_\bullet}_k=\mu_{k-1}^{\imath_\bullet}+n_kw^{\imath_\bullet}_{k-1}\gamma_{\imath_k}.
\]
For any $\jmath\neq \imath_k$, we have $w^{\imath_\bullet}_k\Lambda_\jmath=w^{\imath_\bullet}_{k-1}\Lambda_\jmath$, and hence
\[
\left\la w^{\imath^\bullet}_k\Lambda_\jmath,a(\mu^{\imath_\bullet}_k)\right\ra=\left\la w^{\imath^\bullet}_{k-1}\Lambda_\jmath, a(\mu^{\imath_\bullet}_{k-1})\right\ra+n_k\left\la \Lambda_\jmath,a(\gamma_{\imath_k})\right\ra=\left\la w^{\imath^\bullet}_{k-1}\Lambda_\jmath, a(\mu^{\imath_\bullet}_{k-1})\right\ra.
\]
In other words, the conditions for $L\in T^{w^{\imath_\bullet}_k}_{\mu^{\imath_\bullet}_k}$ on $\jmath\neq \imath_k$ are identical to those for $L\in T^{w^{\imath_\bullet}_{k-1}}_{\mu^{\imath_\bullet}_{k-1}}$. The remaining condition is associated with $\imath_k$. Note that $s_{\imath_k}\Lambda_{\imath_k}=-\Lambda_{\imath_k}-\sum_{\jmath\neq \imath_k}a_{\jmath\imath_k}\Lambda_\jmath$. Therefore,
\begin{align*}
\left\la w^{\imath^\bullet}_k\Lambda_{\imath_k},a(\mu^{\imath_\bullet}_k)\right\ra=&\left\la w^{\imath^\bullet}_{k-1}s_{\imath_k}\Lambda_{\imath_k},a(\mu^{\imath_\bullet}_{k-1})\right\ra+n_k\left\la s_{\imath_k}\Lambda_{\imath_k},a(\gamma_{\imath_k})\right\ra\\
=&-\left\la w^{\imath^\bullet}_{k-1}\Lambda_{\imath_k},a(\mu^{\imath_\bullet}_{k-1})\right\ra-\sum_{\jmath\neq \imath_k}a_{\jmath\imath_k}\left\la w^{\imath^\bullet}_{k-1}\Lambda_{\jmath},a(\mu^{\imath_\bullet}_{k-1})\right\ra-n_km_{\imath_k}\\
=&-n_km_{\imath_k}-D_{w_{k-1}^{\imath_\bullet}\Lambda_{\imath_k}}(L)-\sum_{\jmath\neq \imath_k}a_{\jmath \imath_k}D_{w_{k-1}^{\imath_\bullet}\Lambda_{\jmath}}(L).
\end{align*}
This proves the remaining equalities.
\end{proof}

\begin{example} Suppose $W^\tau$ is of type $\mathrm{B}_2$. Fix the reduced word $s_1s_2s_1s_2$ for $w_0$, and let $n_\bullet=(1,2,2,1)$. The coweights $\{\mu_{ k}^{\imath_\bullet}\}_{k=0}^N$ are shown below. The red part is drawn via the other reduced word $s_2s_1s_2s_1$ of $w_0$, where  $n'_\bullet=(3,1,1,4)$ from Proposition \ref{prop:B2'}.
\begin{figure}[H]
    \centering
    \begin{tikzpicture}[scale=0.7]
        \node (0) at (0,0) [] {$\bullet$};
        \node (1) at (1,0) [] {$\bullet$};
        \node (2) at (3,2) [] {$\bullet$};
        \node (3) at (3,4) [] {$\bullet$};
        \node (4) at (2,5) [] {$\bullet$};
        \foreach \i in {1,...,4}
        {
        \pgfmathtruncatemacro{\j}{\i-1}
        \draw [->] (\j) -- (\i);
        }
        \node at (0) [below] {$0$};
        \node at (1) [below right] {$\mu_{w_{ 1}^{\imath_\bullet}}$};
        \node at (2) [right] {$\mu_{w_{ 2}^{\imath_\bullet}}$};
        \node at (3) [above right] {$\mu_{w_{ 3}^{\imath_\bullet}}$};
        \node at (4) [above] {$\mu_{w_0}$};
        \node (r1) [red] at (-3,3) [] {$\bullet$};
        \node (r2) [red] at (-3,4) [] {$\bullet$};
        \node (r3) [red] at (-2,5) [] {$\bullet$};
        \draw [->,red] (0) -- (r1);
        \draw [->,red] (r1) -- (r2);
        \draw [->, red] (r2) -- (r3);
        \draw [->, red] (r3) -- (4);
    \end{tikzpicture}
    \caption{Example of an MV polytope.}
\end{figure}
\end{example}

\section{Twisted Lusztig coordinates and generalized minors}
In this section, we introduce the twisted Lusztig coordinates and study the behavior of generalized minors under the automorphism \(\sigma\). These constructions will be used, together with the twist map developed in Section \ref{ss: conf space}, to parametrize the twisted GGMS strata in Section \ref{ss:parametrization}.

In this section and later, we assume that $G$ is simply-connected.
\subsection{Twisted Lusztig coordinates}\label{subsec: reduced word of w_0}

For $\imath\in I_\tau$, set $m_\imath=\frac{m}{|\imath|}$ and $\mc{K}_\imath=\rk\dbp{t^{m_\imath}}$, where $|\imath|$ is the cardinality of $\imath$. Note that $\mc{O}_\imath=\rk\dbb{t^{m_\imath}}$. The subfield of $\gamma$-fixed points is $\mc{K}_\imath^\gamma=\rk\dbp{t^m}$.



For each standard orbit $\imath\in I_\tau$, 
let $i\in \imath$ be a fixed representative and define for $a\in \mc{K}_\imath$
\begin{equation}\label{eq: root subgroup elements} 
y_\imath(a)= \prod_{k=0}^{|\imath|-1} y_{\tau^k(i)}\left(\epsilon^{-k\alpha_{i}(h)}\gamma^k(a)\right),
\end{equation}
where $h$ is defined in (\ref{eq_h}).

For the exceptional $\imath=\{l,l+1\}$ in $A_{2l}^{(2)}$,
we define for $(b,c)\in \mc{K}_\imath\times \mc{K}_\imath^\gamma$
\begin{equation}\label{eq: a}
a=\left\{\begin{array}{ll}(b\gamma(b)+tc)/2 & \text{if $m=2$,}\\
\left(-t^2b\gamma(b)+c\right)/2 & \text{if $m=4$.}
\end{array}\right.
\end{equation}
We define
\begin{equation}\label{eq: root subgroup element for A2n}
y_\imath(b,c)=\left\{\begin{array}{ll}y_l\left(\dfrac{\gamma(a)}{b}\right)y_{l+1}(b)y_l\left(\dfrac{a}{b}\right) & \text{if $b\neq 0$ and $m=2$,}\\
y_l\left(\dfrac{-\gamma(a)}{tb}\right)y_{l+1}(tb)y_l\left(\dfrac{a}{tb}\right) & \text{if $b\neq 0$ and $m=4$,}\\
y_l(1)y_{l+1}(- a)y_l(-1)y_{l+1}(a) & \text{if $b=0$.}\end{array}\right.
\end{equation}

The following result of Lusztig will be used multiple times in this article.

\begin{proposition}[{\cite[Proposition 2.2]{Lusztig}}]\label{prop: Lusztig transform} Suppose $a_{ij}=a_{ji}=-1$. Then for any non-zero scalars $f_1$, $f_2$, and $f_3$ with $f_1+f_3\neq 0$,
\begin{equation}\label{eq: Lusztig transform}
y_i(f_1)y_j(f_2)y_i(f_3)=y_j\left(\frac{f_2f_3}{f_1+f_3}\right)y_i(f_1+f_3)y_j\left(\frac{f_1f_2}{f_1+f_3}\right)
\end{equation}
Moreover, if $f_1+f_3=0$, then such a transformation does not exist.
\end{proposition}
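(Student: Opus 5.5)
The plan is to reduce to a computation inside a copy of the lower unitriangular group of $\SL_3$. Since $a_{ij}=a_{ji}=-1$, the negative roots $-\alpha_i$, $-\alpha_j$, $-\alpha_i-\alpha_j$ form a closed set of type $A_2$. I will use the fact that there is a homomorphism $\varphi$ from the lower unitriangular matrices $N^-_{\SL_3}$ to $N^-$. It sends $I+aE_{21}\mapsto y_i(a)$ and $I+bE_{32}\mapsto y_j(b)$, possibly after rescaling the pinning of the non-simple root group. It is an isomorphism onto $U_{-\alpha_i}U_{-\alpha_i-\alpha_j}U_{-\alpha_j}$, because the product map of root subgroups in a fixed order is an isomorphism of varieties. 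Such a $\varphi$ exists by the Chevalley commutator relations for a simply-laced rank-$2$ pair. The sign of the structure constant is irrelevant here, since it only rescales the coordinate on $U_{-\alpha_i-\alpha_j}$ and $\varphi$ stays injective. Hence it suffices to verify \eqref{eq: Lusztig transform} in $\SL_3$, where both sides are determined by three matrix entries.

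Next I compute both sides in $\SL_3$. Because $E_{21}E_{32}=0$ while $E_{32}E_{21}=E_{31}$, one gets
\[
(I+f_1E_{21})(I+f_2E_{32})(I+f_3E_{21})=I+(f_1+f_3)E_{21}+f_2E_{32}+f_2f_3E_{31}.
\]
For arbitrary $p,q,r$ one similarly gets
\[
(I+pE_{32})(I+qE_{21})(I+rE_{32})=I+qE_{21}+(p+r)E_{32}+pqE_{31}.
\]
Comparing entries yields the system $q=f_1+f_3$, $p+r=f_2$, $pq=f_2f_3$. When $f_1+f_3\neq0$ its unique solution is
\[
q=f_1+f_3,\qquad p=\frac{f_2f_3}{f_1+f_3},\qquad r=f_2-p=\frac{f_1f_2}{f_1+f_3}.
\]
This is exactly the right-hand side of \eqref{eq: Lusztig transform}, and all three values are nonzero because the $f_k$ are.

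For the last assertion, suppose $f_1+f_3=0$. Then any factorization $y_j(p)y_i(q)y_j(r)$ of the left-hand side would force $q=0$, hence $pq=0$. But the $E_{31}$-entry on the left is $f_2f_3\neq 0$, a contradiction. By injectivity of $\varphi$ the same holds in $G$.

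There is no real obstacle here. The only point needing care is the existence and injectivity of the $A_2$ homomorphism compatible with the chosen pinning $\{x_i,y_i\}$. I would cite the standard construction of the rank-$2$ Levi's derived subgroup, which is a quotient of $\SL_3$ (and the map is injective on unipotents). Alternatively, one can argue directly from the commutator relation $[y_i(a),y_j(b)]\in U_{-\alpha_i-\alpha_j}$ together with the uniqueness of the ordered root-subgroup factorization in $N^-$.
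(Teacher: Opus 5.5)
Your proposal is correct: the reduction to the lower unitriangular group of $\SL_3$ via the rank-two commutator relations, the two matrix products, the solution of the resulting system, and the nonexistence argument ($f_1+f_3=0$ forces $q=0$, so $pq=0\neq f_2f_3$) all check out. The paper gives no proof and simply cites Lusztig, but its proof of Lemma~\ref{lem: change of word in special case} uses exactly this $\SL_3$ computation of $y_1(f_1)y_2(f_2)y_1(f_3)$ and the same entry-matching argument, so your route is essentially the paper's.
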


\begin{lemma}\label{lem: change of word in special case} Suppose $a_{ij}=a_{ji}=-1$. Then 
\begin{enumerate}
    \item $y_i(1)y_j(-f)y_i(-1)y_j(f)=y_j(1)y_i(f)y_j(-1)y_i(-f)$.
    \item If $f\neq 0$, then for any scalars $f_1$, $f_2$, and $f_3$,
    \[
    y_i(1)y_j(-f)y_i(-1)y_j(f)\neq y_i(f_1)y_j(f_2)y_i(f_3).
    \]
    \item If $f\neq 0$, $y_i(1)y_j(-f)y_i(-1)y_j(f)\in N^-\cap Bs_is_js_iB$.
\end{enumerate}
\end{lemma}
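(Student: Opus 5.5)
The plan is to reduce everything to computations inside the rank-two subgroup generated by the root subgroups $y_i, y_j$. Since $a_{ij}=a_{ji}=-1$, this subgroup is a quotient of $\SL_3$, and it suffices to work in $\SL_3$ with $y_i(a)=1+aE_{21}$ and $y_j(a)=1+aE_{32}$, or the other way round. All three statements are identities or non-identities of lower unitriangular $3\times 3$ matrices, or Bruhat-cell membership statements. These can be checked in $\SL_3$ and then transported to $G$ through the homomorphism $\SL_3\to G$ attached to the pinning. That homomorphism is injective on $N^-_{\SL_3}$ and compatible with the Bruhat decompositions.

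For (1), we compute both sides as lower unitriangular matrices. The product $y_i(1)y_j(-f)y_i(-1)y_j(f)$ is a commutator-type expression. Using the Chevalley commutator relation $[y_i(a),y_j(b)]=y_{i+j}(\pm ab)$, where $y_{i+j}$ is the root subgroup of $\alpha_i+\alpha_j$, the left side equals $y_{i+j}(\pm f)$, with the $(3,1)$ entry equal to $\pm f$ and all other off-diagonal entries zero. The same computation with $i,j$ swapped gives $y_{i+j}(\mp(-f))$ with the appropriate sign. We then check that the two signs agree, so that both sides equal the same element $y_{i+j}(c f)$ with $c=\pm1$. The only care needed is the sign convention, which is easiest to settle by directly multiplying the matrices.

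For (2), we note that $y_i(f_1)y_j(f_2)y_i(f_3)$ has $(2,1)$ entry $f_1+f_3$, $(3,2)$ entry $f_2$, and $(3,1)$ entry $f_2f_1$ (or $f_2f_3$, depending on the ordering convention). Equating this with $y_{i+j}(cf)$ forces $f_2=0$ and then forces the $(3,1)$ entry to be $0$. That contradicts $f\neq0$.

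For (3), membership in $N^-$ is clear. For the Bruhat cell, we use the minors criterion. A lower unitriangular matrix $u$ lies in $Bw B$ for $w=w_0=s_is_js_i$ exactly when the relevant minors $\Delta_{w_0\Lambda,\Lambda}$ are nonzero, namely the $(3,1)$ entry and the lower-left $2\times2$ minor. For $u=1+cfE_{31}$, the $(3,1)$ entry is $cf\neq0$. The lower-left $2\times 2$ minor, rows $\{2,3\}$ and columns $\{1,2\}$, is $\det\begin{pmatrix}0&1\\ cf&0\end{pmatrix}=-cf\neq0$. Hence $u\in Bw_0B$.

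The main subtle point is this last transfer to $G$. We need that the image of $Bs_is_js_iB$ in $\SL_3$ lies in $Bs_is_js_iB$ in $G$, which holds because the Bruhat decomposition is compatible with the Levi-type embedding.
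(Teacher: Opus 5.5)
Your proposal is correct and follows the paper's proof: parts (1) and (2) are the same $\SL_3$ matrix computations (both sides of (1) equal $1+fE_{31}$, and in (2) matching the entry carrying the middle parameter forces $f_2=0$, so the corner entry vanishes), and your remark that $\SL_3\to G$ is injective on $N^-$ supplies the justification, left implicit in the paper, for pulling the non-identity in (2) back to $\SL_3$. The only variation is in (3): the paper writes down an explicit factorization $x(1/f)\,\overline{s}_i\overline{s}_j\overline{s}_i\, b$ with $x(1/f)\in N$ and $b\in B$, whereas you check that the $(3,1)$ entry and the lower-left $2\times 2$ minor are nonzero (the $\SL_3$ case of the paper's own criterion for $N^-(w_0)$ via $\Delta_{w_0\Lambda_i}\neq 0$), and either argument suffices.
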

\begin{proof} (1) and (2) can be verified by a direct computation in $\SL_3$. Let $i=1$ and $j=2$, then on the one hand,
\[
y_1(1)y_2(-f)y_1(-1)y_2(f)=\begin{pmatrix}
    1 & 0 & 0 \\ 0 & 1 & 0 \\ f & 0 & 1
\end{pmatrix}=y_2(1)y_1(f)y_2(-1)y_1(-f).
\]
On the other hand,
\[
y_1(f_1)y_2(f_2)y_1(f_3)=\begin{pmatrix} 1 & 0 & 0 \\ f_1+f_3 & 1 & 0 \\ f_2f_3 & f_2 & 1
\end{pmatrix}.
\]
In order to match the $(3,2)$ entry of the two matrices, we need $f_2=0$; but then it forces $f_2f_3=0\neq f$ at the $(3,1)$ entry. Thus, no such $f_1,f_2,f_3$ are possible.

For (3), let $x(f):=x_j(1)x_i(-f)x_j(-1)x_i(f)$. Since $f\neq 0$, there exists some $b\in B$ such that
\[
y_i(1)y_j(-f)y_i(-1)y_j(f)=x(1/f)\overline{s}_i\overline{s}_j\overline{s}_ib.
\]
This proves that $y_i(1)y_j(-f)y_i(-1)y_j(f)\in N^-\cap Bs_is_js_iB$.
\end{proof}

\begin{lemma}\label{lem: limit} Let $\imath$ be an exceptional orbit. In both the $m=2$ and $m=4$ cases in Eq. \eqref{eq: root subgroup element for A2n}, $\lim_{b\rightarrow 0}y_\imath(b,c)=y_\imath(0,c)$.
\end{lemma}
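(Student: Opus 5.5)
The plan is to compute the product defining $y_\imath(b,c)$ for $b\neq 0$ and watch each entry as $b\to 0$ with $c$ fixed. Everything happens inside the copy of $\SL_3$ generated by the root subgroups of $\alpha_l$ and $\alpha_{l+1}$, and there $a_{l,l+1}=a_{l+1,l}=-1$. So the matrix calculation from the proof of Lemma \ref{lem: change of word in special case} applies, with $i=l$ and $j=l+1$.

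In this $\SL_3$ model, a direct multiplication (the same formula used in that proof) gives
\[
y_l(f_1)y_{l+1}(f_2)y_l(f_3)=\begin{pmatrix} 1 & 0 & 0 \\ f_1+f_3 & 1 & 0 \\ f_2f_3 & f_2 & 1\end{pmatrix}.
\]
The proof of Lemma \ref{lem: change of word in special case} also computed the limiting element:
\[
y_l(1)y_{l+1}(-a)y_l(-1)y_{l+1}(a)=\begin{pmatrix} 1&0&0\\0&1&0\\ a&0&1\end{pmatrix}.
\]
The statement therefore reduces to checking three entries of the first matrix. As $b\to 0$ we need the $(2,1)$ entry $f_1+f_3\to 0$, the $(3,2)$ entry $f_2\to 0$, and the $(3,1)$ entry $f_2f_3\to a|_{b=0}$. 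Here $a$ depends on $b$ through \eqref{eq: a}, and $a|_{b=0}$ is the value used in the case $b=0$ of \eqref{eq: root subgroup element for A2n}.

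For the case $m=2$, take $f_1=\gamma(a)/b$, $f_2=b$, $f_3=a/b$. Then $f_2=b\to0$ and $f_2f_3=a\to tc/2=a|_{b=0}$. The sum is $f_1+f_3=(a+\gamma(a))/b$. Here I use that $c\in\mc{K}_\imath^\gamma$, that $\gamma(t)=-t$ when $m=2$, and that $\gamma^2=\mathrm{id}$. These give $\gamma(a)=(\gamma(b)b-tc)/2$, hence $a+\gamma(a)=b\gamma(b)$. So $f_1+f_3=\gamma(b)$, which tends to $0$.

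For the case $m=4$, take $f_1=-\gamma(a)/(tb)$, $f_2=tb$, $f_3=a/(tb)$. Then $f_2\to 0$ and $f_2f_3=a\to c/2$. The sum is $f_1+f_3=(a-\gamma(a))/(tb)$. This time $\gamma(t)=\epsilon^{-1}t$ with $\epsilon^2=-1$, so $\gamma(t^2)=-t^2$, while $c$ is $\gamma$-invariant. I also need $\gamma^2(b)=b$. This holds because $b\in\mc{K}_\imath=\rk\dbp{t^2}$ (as $m_\imath=2$) and $\gamma^2(t^2)=t^2$. Then $\gamma(a)=(t^2\gamma(b)b+c)/2$, so $a-\gamma(a)=-t^2b\gamma(b)$. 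Hence $f_1+f_3=-t\gamma(b)$, which tends to $0$.

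The main obstacle is justifying the reduction to $\SL_3$. In type $A_{2l}$ the roots $\alpha_l$ and $\alpha_{l+1}$ generate an $\SL_3$ subgroup, since $G$ is simply connected. All the elements involved live in the image of $\SL_3(\mc{K})$, so the limit can be computed in matrix entries there. The remaining task is to state precisely what ``$\lim_{b\to0}$'' means here: entrywise in the coordinates $(b,c)$, i.e.\ the map $b\mapsto y_\imath(b,c)$ extends to a morphism at $b=0$ whose value is $y_\imath(0,c)$. This follows because all the entries computed above are polynomials in $b$, $\gamma(b)$ and $c$.
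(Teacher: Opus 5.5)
Your proposal is correct and is essentially the paper's own proof. The paper likewise reduces to a computation in $\SL_3$ and uses the identities $a+\gamma(a)=b\gamma(b)$ for $m=2$ and $a-\gamma(a)=-t^2b\gamma(b)$ for $m=4$ to rewrite the $(2,1)$ entry as $\gamma(b)$, resp.\ $-t\gamma(b)$; letting $b\to 0$ then gives the matrix of $y_l(1)y_{l+1}(-a)y_l(-1)y_{l+1}(a)$. Your extra remarks, namely why $\gamma^2(b)=b$ when $m=4$ and reading the limit as the extension of a map that is polynomial in $b,\gamma(b),c$, only make explicit what the paper leaves implicit.
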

\begin{proof} It suffices to verify the lemma by a computation in $\SL_3$. Let $i=1$ and $j=2$. When $m=2$, we have $a+\gamma(a)=b\gamma(b)$, and hence
\[
\lim_{b\rightarrow 0} y_1\left(\frac{\gamma(a)}{b}\right)y_2(b)y_1\left(\frac{a}{b}\right)=\lim_{b\rightarrow 0}\begin{pmatrix} 1 & 0 & 0 \\ \gamma(b) & 1 & 0 \\ a & b & 1\end{pmatrix}=\begin{pmatrix} 1 & 0 & 0 \\ 0 & 1 & 0 \\ a & 0 & 1\end{pmatrix} = y_1(1)y_2(-a)y_1(-1)y_2(a).
\]
When $m=4$, we have $a-\gamma(a)=-t^2b\gamma(b)$, and hence
\begin{align*}
\lim_{b\rightarrow 0} y_1\left(\frac{-\gamma(a)}{tb}\right)y_2(tb)y_1\left(\frac{a}{tb}\right)=\lim_{b\rightarrow 0}\begin{pmatrix} 1 & 0 & 0 \\ -t\gamma(b) & 1 & 0 \\ a & tb & 1\end{pmatrix}=&\begin{pmatrix} 1 & 0 & 0 \\ 0 & 1 & 0 \\ a & 0 & 1\end{pmatrix}\\ =& y_1(1)y_2(-a)y_1(-1)y_2(a). \qedhere
\end{align*}
\end{proof}

\begin{proposition}\label{prop: sigma invariant unipotent} The $y_\imath(a)$ and $y_\imath(b,c)$ in Eq. \eqref{eq: root subgroup elements} and Eq. \eqref{eq: root subgroup element for A2n} are $\sigma$-invariant.
\end{proposition}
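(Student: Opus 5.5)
The plan is a direct computation using the explicit formula for $\sigma$ on root subgroups, namely
\[
\sigma(y_i(f))=y_{\tau(i)}\bigl(\epsilon^{-\alpha_i(h)}\gamma(f)\bigr),
\]
split into the standard and exceptional cases.

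\emph{Standard orbits.} Let $\imath$ be standard with representative $i$, and write $k=|\imath|$, so $\tau^{k}(i)=i$. Since $a_{st}=0$ for distinct $s,t\in\imath$, the root subgroups $y_s$ with $s\in\imath$ commute pairwise, so the order of the factors in $y_\imath(a)$ is irrelevant. Because $\tau$ preserves the pinning and $h$ is $\tau$-fixed, $\alpha_{\tau^j(i)}(h)=\alpha_i(h)$ for all $j$. Applying $\sigma$ to the factor indexed by $j$ therefore gives
\[
y_{\tau^{j+1}(i)}\bigl(\epsilon^{-(j+1)\alpha_i(h)}\gamma^{j+1}(a)\bigr).
\]
For $j=0,\dots,k-2$ this is the factor indexed by $j+1$. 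The only thing to check is the last factor, $j=k-1$, which becomes $y_i\bigl(\epsilon^{-k\alpha_i(h)}\gamma^{k}(a)\bigr)$. We need this to equal $y_i(a)$, i.e.\ $\epsilon^{-k\alpha_i(h)}\gamma^k(a)=a$ for all $a\in\mc{K}_\imath=\rk\dbp{t^{m_\imath}}$. Since $\gamma^k(t^{m_\imath n})=\epsilon^{-km_\imath n}t^{m_\imath n}$ and $km_\imath=m$, we get $\gamma^k=\mathrm{id}$ on $\mc{K}_\imath$. It remains to show $\epsilon^{k\alpha_i(h)}=1$, i.e.\ $m\mid k\,\alpha_i(h)$. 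This is checked case by case from \eqref{eq: h}.
\begin{itemize}
\item If $\kappa=0$, then $h=0$ and there is nothing to prove.
\item If $\kappa\neq0$ with $m=2$ ($A_{2\ell-1}^{(2)}$, $D_{\ell+1}^{(2)}$), the values $\alpha_i(h)$ are $0$ or $1$. The value $1$ occurs only on the orbits $\{1,2\ell-1\}$ and $\{\ell,\ell+1\}$, which have $k=2$, so $m\mid k\,\alpha_i(h)$.
\item If $\kappa\neq0$ for $A_{2\ell}^{(2)}$, then $m=4$ and $h=\check\omega_\ell+\check\omega_{\ell+1}$. Here $\alpha_i(h)=0$ on every standard orbit, since only $i=\ell,\ell+1$ pair nontrivially with $h$.
\end{itemize}

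\emph{Exceptional orbit.} Here $\imath=\{l,l+1\}$ and $\tau$ swaps $l$ and $l+1$. All factors of $y_\imath(b,c)$ lie in the $\SL_3$ generated by $y_l,y_{l+1}$, so I would compute in $\SL_3$ as in the proof of Lemma~\ref{lem: limit}. In that $\SL_3$ the automorphism $\tau$ is the pinning-preserving outer automorphism, swapping $y_1$ and $y_2$. The first step is to check that $\gamma$ and $\tau$ preserve the element $a$ up to the pattern needed, using $c\in\mc{K}_\imath^\gamma$ and $\gamma^2=\mathrm{id}$ on $\mc{K}_\imath=\rk\dbp{t}$ (case $m=2$) or $\rk\dbp{t^2}$ (case $m=4$).

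\emph{Case $m=2$, $b\neq0$.} Here $\epsilon=-1$, $h=0$, and $\gamma(t)=-t$. From the definition of $a$, $\gamma(a)=(b\gamma(b)-tc)/2$, so $\gamma(\gamma(a))=a$ and $a+\gamma(a)=b\gamma(b)$. Applying $\sigma$ to $y_\imath(b,c)$ gives
\[
y_{l+1}\bigl(a/\gamma(b)\bigr)\,y_l\bigl(\gamma(b)\bigr)\,y_{l+1}\bigl(\gamma(a)/\gamma(b)\bigr).
\]
To compare with $y_\imath(b,c)$, I will use the explicit lower-triangular matrix computed in Lemma~\ref{lem: limit},
\[
\begin{pmatrix}1&0&0\\ \gamma(b)&1&0\\ a&b&1\end{pmatrix}.
\]
I then compare this with the image of $\sigma$, realized as the automorphism $g\mapsto J(g^{-1})^{T}J^{-1}$ of $\SL_3$ (with $J$ antidiagonal with signs), followed by $\gamma$ on entries. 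The matrix is invariant provided $\gamma$ swaps the two entries $b,\gamma(b)$ and the corner entry transforms as $a\mapsto b\gamma(b)-\gamma(a)$, which is exactly the relation $a+\gamma(a)=b\gamma(b)$.

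\emph{Case $m=4$, $b\neq0$.} This is analogous. The factors $\epsilon^{\mp\alpha_i(h)}$ with $\alpha_l(h)=\alpha_{l+1}(h)=1$ and $\epsilon=\sqrt{-1}$ interact with $\gamma(t)=\epsilon^{-1}t$. The identity to use is $a-\gamma(a)=-t^2b\gamma(b)$, and the extra factors $t$ in $tb$ are precisely what absorb the $\epsilon$ powers.

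\emph{Case $b=0$.} This follows either directly, since the matrix is $I+aE_{31}$ with $\gamma(a)=a$ and the corner entry is fixed up to the sign conventions, or by continuity from Lemma~\ref{lem: limit}, since $\sigma$ is continuous and the case $b\neq0$ is already invariant.

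The main obstacle is the sign and $\epsilon$-power bookkeeping in the exceptional $m=4$ case. The explicit form of $\tau$ on $\SL_3$ (transpose-inverse conjugated by a signed antidiagonal matrix) has to match the pinning, and the constants in \eqref{eq: a} are presumably chosen precisely so that this works. I would resolve it by directly computing $\sigma$ on the three-factor product, factor by factor, and then matching the entries of the resulting matrix with those of $y_\imath(b,c)$.
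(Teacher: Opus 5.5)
Your proof is correct and follows the paper's strategy; the only differences are in how the exceptional orbit is checked.

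\textbf{Standard orbits.} You supply the verification the paper calls obvious. Since $|\imath|m_\imath=m$, the power $\gamma^{|\imath|}$ is trivial on $\mc{K}_\imath$. Inspecting $h$ shows that $m$ divides $|\imath|\alpha_i(h)$.

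\textbf{Exceptional orbit, comparison with the paper.} Like you, the paper first applies $\sigma$ factor by factor. It then refactors $y_{l+1}(a/\gamma(b))\,y_l(\gamma(b))\,y_{l+1}(\gamma(a)/\gamma(b))$, and its $m=4$ analogue, using Lusztig's relation (Proposition~\ref{prop: Lusztig transform}). There $f_1+f_3$ equals $b$ (resp.\ $tb$) exactly because $a+\gamma(a)=b\gamma(b)$ (resp.\ $a-\gamma(a)=-t^2b\gamma(b)$). The paper treats $b=0$ separately via Lemma~\ref{lem: change of word in special case}(1).

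\textbf{Your matrix route.} Your comparison of $3\times 3$ matrices, with $\tau|_{\SL_3}$ written as $g\mapsto J(g^{-1})^TJ^{-1}$, is equivalent. With $J$ antidiagonal with entries $1,-1,1$, this map does swap $y_1(f)$ and $y_2(f)$. The $m=4$ bookkeeping also closes: the factor $\epsilon^{-1}$ from $\mathrm{Ad}_{\epsilon^h}$ times the factor $\epsilon^{-1}$ from $\gamma(t)$ gives the sign built into the entry $-t\gamma(b)$. Your route has one small advantage. The lower-triangular matrix form of $y_\imath(b,c)$, and the relation between $a$ and $\gamma(a)$ that you use, both remain valid at $b=0$. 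So $b=0$ needs no separate case, which is what your continuity argument amounts to.

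\textbf{One slip in the $b=0$ case.} In your ``direct'' option you assert $\gamma(a)=a$. That holds only for $m=4$, where $a=c/2$. For $m=2$ one has $a=tc/2$ with $c\in\mc{K}^\gamma$, hence $\gamma(a)=-a$, and this sign is essential. Indeed $h=0$ there and $\tau(I+aE_{31})=I-aE_{31}$, so invariance holds if and only if $\gamma(a)=-a$. Taken literally, your direct check would fail in the $m=2$ case, and ``up to the sign conventions'' hides the actual content. Finish with the continuity argument or the uniform matrix computation instead.
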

\begin{proof} It is obvious that $y_\imath(a)$ in Eq. \eqref{eq: root subgroup elements} for a standard $\imath$ is $\sigma$-invariant. 

Now we consider the exceptional orbit $\imath=\{l,l+1\}$ in $A_{2l}^{(2)}$. For $m=2$ and $b\neq 0$, we have $a+\gamma(a)=b\gamma(b)$. Hence,
\begin{align*}
\sigma(y_\imath(b,c))=&y_{l+1}\left(\frac{a}{\gamma(b)}\right)y_l\left(\gamma(b)\right)y_{l+1}\left(\frac{\gamma(a)}{\gamma(b)}\right)\\
=&y_l\left(\frac{\gamma(a)}{b}\right)y_{l+1}\left(b\right)y_l\left(\frac{a}{b}\right)\\
=&y_\imath(b,c).
\end{align*}
If $b=0$, then $\gamma(a)=-a$. Hence,
\[
\sigma(y_\imath(0,c))=y_{l+1}(1)y_l(a)y_{l+1}(-1)y_l(-a),
\]
which is equal to $y_\imath(0,c)$ by Lemma \ref{lem: change of word in special case} (1).

For $m=4$ and $b\neq 0$, we have $a-\gamma(a)=-t^2b\gamma(b)$, and hence
\begin{align*}
\sigma(y_\imath(b,c))=&y_{l+1}\left(\frac{-a}{t\gamma(b)}\right)y_l\left(-t\gamma(b)\right)y_{l+1}\left(\frac{\gamma(a)}{t\gamma(b)}\right)\\
=&y_l\left(\frac{-\gamma(a)}{tb}\right)y_{l+1}\left(tb\right)y_l\left(\frac{a}{tb}\right)\\
=&y_\imath(b,c).
\end{align*}
If $b=0$, then $\gamma(a)=a$. Hence
\[
\sigma(y_\imath(0,c))=y_{l+1}(\epsilon^{-1})y_l(- \epsilon^{-1}a)y_{l+1}(-\epsilon^{-1})y_l(\epsilon^{-1}a).
\]
By a computation similar to the proof of Lemma \ref{lem: change of word in special case}, one verifies that this is  $y_\imath(0,c)$.
\end{proof}

\subsection{Generalized minors}

For any $w\Lambda_i$, 
define the generalized minor 
\begin{equation}\label{eq: generalized minor} \Delta_{w\Lambda_i}(g)=\la  \xi_{w\Lambda_i}, g\cdot v_{\Lambda_i}   \ra ,\quad \text{ for any } g\in G(\mc{K}) .\end{equation}


\begin{lemma}
    \label{lem:MC-diagram}
For any $g\in G,  w\in W$, and any dominant weight $\Lambda_i$,   we have
\[
\Delta_{\tau(w\Lambda_i)}(\tau(g))=\Delta_{w\Lambda_i}(g).
\]
\end{lemma}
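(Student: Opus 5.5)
We need $\Delta_{\tau(w\Lambda_i)}(\tau(g))=\Delta_{w\Lambda_i}(g)$. Here $\tau(w\Lambda_i)=\tau(w)\Lambda_{\tau(i)}$. The plan is to realize $\tau$ on representations, that is, to compare the Weyl module $V(\Lambda_i)$ twisted by $\tau$ with $V(\Lambda_{\tau(i)})$.

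First, consider the pullback representation $V(\Lambda_i)^{\tau}$, which is $V(\Lambda_i)$ with $g$ acting via $\tau^{-1}(g)$. It is a Weyl module of highest weight $\tau(\Lambda_i)=\Lambda_{\tau(i)}$ with respect to $(B,H)$, since $\tau$ preserves $B$ and $H$. By the universal property of Weyl modules, there is a unique isomorphism $\phi: V(\Lambda_{\tau(i)})\to V(\Lambda_i)$ with $\phi(v_{\Lambda_{\tau(i)}})=v_{\Lambda_i}$ and $\phi(\tau(g)u)=g\,\phi(u)$. We normalize it this way, which is possible because both highest weight vectors are fixed nonzero vectors. The same argument works for $g\in G(\mathcal{K})$ after base change, with $\tau$ acting only on $G$. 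This is the version needed, since the lemma is stated for $g\in G$ while the minors are defined on $G(\mathcal{K})$.

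Second, track the extremal vectors. Since $\tau(\overline{s}_j)=\overline{s}_{\tau(j)}$, we have $\tau(\overline{w})=\overline{\tau(w)}$, which the paper already noted. Hence
\[\phi(v_{\tau(w)\Lambda_{\tau(i)}})=\phi(\overline{\tau(w)}v_{\Lambda_{\tau(i)}})=\overline{w}\,v_{\Lambda_i}=v_{w\Lambda_i}.\]
Extremal weight spaces are one-dimensional, and $\phi$ maps the $\tau(\nu)$-weight space to the $\nu$-weight space. Therefore the dual vectors satisfy $\xi_{w\Lambda_i}\circ\phi=\xi_{\tau(w)\Lambda_{\tau(i)}}$. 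Here I use that $\xi_{w\Lambda_i}$ is the dual vector of $v_{w\Lambda_i}$ with respect to a weight basis, so it kills all other weight spaces.

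Third, compute directly:
\[\Delta_{\tau(w)\Lambda_{\tau(i)}}(\tau(g))=\la \xi_{w\Lambda_i},\phi(\tau(g)v_{\Lambda_{\tau(i)}})\ra=\la\xi_{w\Lambda_i}, g\,v_{\Lambda_i}\ra=\Delta_{w\Lambda_i}(g).\]

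The only delicate point is the normalization and uniqueness of $\phi$ in positive characteristic. This requires that the Weyl module be generated by its highest weight vector, with a universal property, so that $\phi$ exists. The pulled-back module is again a Weyl module because $\tau$ preserves the pinning. It is also needed that the extremal weight spaces are one-dimensional, which holds since they are $W$-conjugate to the highest weight space.
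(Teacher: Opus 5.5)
Your argument is correct, but it takes a different route from the paper's. The paper builds no intertwiner. It regards $f(g)=\Delta_{w\Lambda_i}(g)$ and $f'(g)=\Delta_{\tau(w\Lambda_i)}(\tau(g))$ as regular functions on $G$ and checks that they agree on the dense open subset $\overline{w}N^-HN$. There both evaluate at $\overline{w}ytx$ to $\Lambda_i(t)=\Lambda_{\tau(i)}(\tau(t))$. This uses only $\tau(\overline{w})=\overline{\tau(w)}$, $\xi_{w\Lambda_i}=\overline{w}\cdot\xi_{\Lambda_i}$, the fact that $N$ fixes $v_{\Lambda_i}$, and the fact that $\xi_{\Lambda_i}$ kills lower weight spaces.

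The paper's route needs essentially nothing about Weyl modules beyond the highest weight vector, at the cost of a density argument on the irreducible group $G$. The version for $\mathcal{K}$-points then follows because the statement is an identity of regular functions.

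Your route replaces density by representation theory. You need that the $\tau$-twist of $V(\Lambda_i)$ is again the Weyl module of highest weight $\Lambda_{\tau(i)}$; this holds in every characteristic because $\tau$ preserves $(B,H)$. You also need that extremal weight spaces are one-dimensional. In exchange you get a stronger and more conceptual statement. The normalized isomorphism $\phi$ transports all matrix coefficients, $\langle \xi\circ\phi,\tau(g)u\rangle=\langle\xi,g\,\phi(u)\rangle$, not only the extremal minors. It also explains directly why the chosen normalizations of $v_{w\Lambda_i}$ and $\xi_{w\Lambda_i}$ are compatible with $\tau$.
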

\begin{proof}
Let us set 
\[
f(g)= \la \xi_{w \Lambda_i},  g \cdot v_{\Lambda_i} \ra =\Delta_{w\Lambda_i}(g) \quad \text{and} \quad   f'(g)= \la  \xi_{\tau(w) \Lambda_{\tau(i)} },  \tau(g)\cdot v_{\Lambda_{\tau(i)}} \ra=\Delta_{\tau(w\Lambda_i)}(\tau(g)).    
\]
Since $f$ and $f'$ are regular functions on $G$, it suffices to check that $f$ and $f'$ agree on the open subset $\overline{w}N^-HN\subset G$. 
Recall that $\overline{\tau(w)}=\tau(\overline{w})$ and $\xi_{w\Lambda_i}=\overline{w}\cdot \xi_{\Lambda_i}$. Thus,
for any $y\in N^-, t\in T, x\in N $, on the one hand, 
\[
f(\overline{w}ytx  ) = \la  \xi_{w\Lambda_i}, \overline{w}ytx\cdot v_{\Lambda_i} \ra =  \la  \overline{w}^{-1}\cdot\xi_{w\Lambda_i},  yt \cdot v_{\Lambda_i}   \ra   =  \Lambda_i(t)  \la  \xi_{\Lambda_i}, v_{\Lambda_i} \ra= \Lambda_i(t)  .
\]
On the other hand, 
\begin{align*} f'\left(\overline{w}ytx\right)=&\left\la\xi_{\tau(w)\Lambda_{\tau(i)}}, \tau(\overline{w})\tau(y)\tau(t)\tau(x)\cdot v_{\Lambda_{\tau(i)}}\right\ra\\
=&\left\la \xi_{\Lambda_{\tau(i)}},\tau(t)\cdot v_{\Lambda_{\tau(i)}}\right\ra\\
=&\Lambda_{\tau(i)}(\tau(t))\la\xi_{\Lambda_{\tau(i)}},v_{\Lambda_{\tau(i)}}\ra\\
=&\Lambda_i(t).
\end{align*}
Thus, $f=f'$ on $G$. 
\end{proof}

\begin{lemma}\label{lem: minor under gamma action} For $g\in G(\mc{K})$, $w\in W$, and any dominant weight $\Lambda_i$, we have
\[
\Delta_{\tau(w\Lambda_{i})}(\sigma(g))=\epsilon^{\la h, w\Lambda_i-\Lambda_i\ra}\gamma(\Delta_{w\Lambda_i}(g)).
\]
In particular, $\val(\Delta_{\tau(w\Lambda_i)}(\sigma(g)))=\val(\Delta_{w\Lambda_i}(g))$.
\end{lemma}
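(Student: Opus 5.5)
The plan is to unwind $\sigma$ on $G(\mc{K})$ into its three constituent pieces and track the minor through each one. As an automorphism of $G(\mc{K})$, $\sigma$ is the composite
\[
\sigma(g)=\tau\bigl(\mathrm{Ad}_{\epsilon^{h}}(\gamma(g))\bigr),
\]
where $\gamma$ acts on the coefficients only. These three operations commute, since $\tau$ and $\mathrm{Ad}_{\epsilon^h}$ are defined over $\rk$ and $\gamma$ fixes $\rk$. I will compute $\Delta_{\tau(w\Lambda_i)}(\sigma(g))$ by peeling off $\tau$, then $\mathrm{Ad}_{\epsilon^h}$, then $\gamma$.

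\emph{Step 1 (removing $\tau$).} Lemma \ref{lem:MC-diagram} states $\Delta_{\tau(w\Lambda_i)}\circ\tau=\Delta_{w\Lambda_i}$ for $g\in G$. Both sides are regular functions on $G$ defined over $\rk$, so this is an identity of regular functions. It therefore holds on $R$-points for any $\rk$-algebra $R$, in particular on $G(\mc{K})$. Applying it to $g'=\mathrm{Ad}_{\epsilon^h}(\gamma(g))$ gives
\[
\Delta_{\tau(w\Lambda_i)}(\sigma(g))=\Delta_{w\Lambda_i}\bigl(\mathrm{Ad}_{\epsilon^h}(\gamma(g))\bigr).
\]

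\emph{Step 2 (removing $\mathrm{Ad}_{\epsilon^h}$).} Here $\epsilon^h$ lies in $H_{\rm ad}$, but since $\rk$ is algebraically closed I can choose a lift $\tilde h\in H(\rk)$ of $\epsilon^h$; conjugation by $\tilde h$ equals $\mathrm{Ad}_{\epsilon^h}$. For any $g''\in G(\mc{K})$,
\[
\Delta_{w\Lambda_i}(\tilde h g''\tilde h^{-1})=\la \tilde h^{-1}\xi_{w\Lambda_i},\, g''\,\tilde h^{-1}v_{\Lambda_i}\ra .
\]
Since $\xi_{w\Lambda_i}$ has weight $-w\Lambda_i$ and $v_{\Lambda_i}$ has weight $\Lambda_i$, this equals $(w\Lambda_i-\Lambda_i)(\tilde h)\,\Delta_{w\Lambda_i}(g'')$. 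The character $w\Lambda_i-\Lambda_i$ lies in the root lattice, so it factors through $H_{\rm ad}$, and its value at $\tilde h$ is $\epsilon^{\la h,w\Lambda_i-\Lambda_i\ra}$. The exponent is an integer, and the value does not depend on the choice of lift. This is the only mildly delicate point: $h$ is only a coweight of $H_{\rm ad}$, so one must observe that the relevant character is a difference of weights in the root lattice.

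\emph{Step 3 (removing $\gamma$).} The function $\Delta_{w\Lambda_i}$ is a polynomial in the matrix coefficients of $g$ (in $V(\Lambda_i)$) with coefficients in $\rk$. Since $\gamma$ is a ring automorphism of $\mc{K}$ fixing $\rk$, we get $\Delta_{w\Lambda_i}(\gamma(g))=\gamma(\Delta_{w\Lambda_i}(g))$. Combining the three steps yields the stated identity.

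For the valuation statement, $\gamma(t)=\epsilon^{-1}t$ multiplies each coefficient of $t^n$ by the nonzero scalar $\epsilon^{-n}$, so $\gamma$ preserves valuation. The prefactor $\epsilon^{\la h,w\Lambda_i-\Lambda_i\ra}$ is a nonzero constant, so it does not change the valuation either.
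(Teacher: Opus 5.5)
Your proof is correct and uses the same three ingredients as the paper: Lemma \ref{lem:MC-diagram} applied on $G(\mc{K})$, the torus-weight factor coming from $\mathrm{Ad}_{\epsilon^h}$, and compatibility of $\Delta_{w\Lambda_i}$ with $\gamma$; the only difference is the order, since the paper first uses $\tau(\epsilon^h)=\epsilon^h$ to write $\sigma(g)=\epsilon^h\tau(\gamma(g))\epsilon^{-h}$ and extracts the torus factor before removing $\tau$, whereas you remove $\tau$ first and never need $\tau$-invariance of $h$. Your aside that $\tau$ and $\mathrm{Ad}_{\epsilon^h}$ commute holds because $h$ is $\tau$-fixed, not because both maps are defined over $\rk$, but you never use that commutation, so the argument is unaffected.
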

\begin{proof} By Lemma \ref{lem:MC-diagram}, for any $g\in G(\mc{K})$ we have
 $\la\xi_{\tau(w\Lambda_i)},\tau(g)\cdot v_{\Lambda_{\tau(i)}}\ra=\la \xi_{w\Lambda_i},g\cdot v_{\Lambda_i}\ra$, where $\tau$ acts on $G$ as a diagram automorphism and acts on $\mc{K}$ trivially. On the other hand, we also notice that $\epsilon^h$ is $\tau$-invariant, and hence for $g(f)\in G(\mc{K})$,
\begin{align*}
    \Delta_{\tau(w\Lambda_i)}(\sigma(g))=&\la \xi_{\tau(w\Lambda_i)},\epsilon^h\tau(g(\gamma(f)))\epsilon^{-h}\cdot v_{\Lambda_{\tau(i)}}\ra\\
    =&\epsilon^{\la h,\tau(w\Lambda_i-\Lambda_i)\ra}\la \xi_{\tau(w\Lambda_i)},\tau(g(\gamma(f))).v_{\Lambda_{\tau(i)}}\ra\\
    =&\epsilon^{\la h,w\Lambda_i-\Lambda_i\ra}\Delta_{w\Lambda_i}(g(\gamma(f)))\\
    =&\epsilon^{\la h, w\Lambda_i-\Lambda_i\ra}\gamma(\Delta_{w\Lambda_i}(g(f))).\qedhere
\end{align*}
\end{proof}

\begin{corollary}\label{cor: gamma invariant minor} Suppose $g\in G(\mc{K})^\sigma$, $w\in W^\tau$, and $\tau(i)=i$. Then $\Delta_{w\Lambda_i}(g)$ is $\gamma$-invariant.
\end{corollary}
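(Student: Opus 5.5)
This is a direct specialization of Lemma \ref{lem: minor under gamma action}. Since $g\in G(\mc{K})^\sigma$, we have $\sigma(g)=g$. Since $w\in W^\tau$ and $\tau(i)=i$, the weight is also $\tau$-fixed:
\[
\tau(w\Lambda_i)=\tau(w)\Lambda_{\tau(i)}=w\Lambda_i .
\]
Substituting both facts into the lemma gives
\[
\Delta_{w\Lambda_i}(g)=\epsilon^{\la h, w\Lambda_i-\Lambda_i\ra}\,\gamma\bigl(\Delta_{w\Lambda_i}(g)\bigr).
\]
It therefore remains to show that the scalar $\epsilon^{\la h, w\Lambda_i-\Lambda_i\ra}$ equals $1$.

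The weight $\Lambda_i-w\Lambda_i$ is a non-negative integer combination $\sum_j c_j\alpha_j$ of simple roots. It is $\tau$-fixed because both $\Lambda_i$ and $w\Lambda_i$ are, so $c_j=c_{\tau(j)}$ for all $j$. Hence
\[
\la h,\Lambda_i-w\Lambda_i\ra=\sum_{\jmath\in I_\tau} c_\jmath \sum_{j\in\jmath}\alpha_j(h).
\]
We use the following facts about $h$.
\begin{itemize}
\item If $\kappa=0$, then $h=0$ and there is nothing to prove.
\item Otherwise $h$ is given by \eqref{eq: h}, and we need $\sum_{j\in\jmath}\alpha_j(h)\in m\mb{Z}$ for every orbit $\jmath$.
\item For $A_{2\ell-1}^{(2)}$ and $D_{\ell+1}^{(2)}$ we have $m=2$. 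Here $h$ pairs to $1$ with exactly the two simple roots of one $\tau$-orbit, and to $0$ with all others. So each orbit sum is $0$ or $2$, which lies in $2\mb{Z}$.
\item For $A_{2\ell}^{(2)}$ with $m=4$, we have $h=\check{\omega}_\ell+\check{\omega}_{\ell+1}$. It pairs to $1$ with $\alpha_\ell$ and $\alpha_{\ell+1}$, so the orbit sum over the exceptional orbit is only $2$, not a multiple of $4$. The argument must therefore use the coefficients as well.
\end{itemize}

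The exceptional case is the main obstacle. There I need $c_\ell=c_{\ell+1}$ to be even whenever $i$ is $\tau$-fixed. But $A_{2\ell}$ has no $\tau$-fixed nodes, so the hypothesis $\tau(i)=i$ never holds in type $A_{2\ell}$ and this case is vacuous. In the remaining types, the only case needing an argument is $m=2$, handled above. So $\epsilon^{\la h,w\Lambda_i-\Lambda_i\ra}=1$, and $\Delta_{w\Lambda_i}(g)=\gamma(\Delta_{w\Lambda_i}(g))$, as claimed.
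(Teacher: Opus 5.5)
Your proof is correct and follows the paper's argument. Like the paper, you use Lemma~\ref{lem: minor under gamma action} to reduce to showing $\epsilon^{\la h, w\Lambda_i-\Lambda_i\ra}=1$; this is trivial when $h=0$, and in the $m=2$ cases $A_{2\ell-1}^{(2)}$ and $D_{\ell+1}^{(2)}$ it follows because the $\tau$-symmetry of the simple-root coefficients makes the pairing even. Your explicit remark that $A_{2\ell}^{(2)}$ with $m=4$ is vacuous, since $A_{2\ell}$ has no $\tau$-fixed nodes, supplies a justification the paper's proof leaves implicit.
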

\begin{proof} By Lemma \ref{lem: minor under gamma action}, it suffices to prove that $\epsilon^{\la h,w\Lambda_i-\Lambda_i\ra}=1$. This is obvious if $h=0$. Among the cases with $h\neq 0$, $X_N^{(r)}$ can only be $A_{2l-1}^{(2)}$ and $D_{l+1}^{(2)}$, and $m=2$ for both cases, which implies that $\epsilon=-1$. Note that $w\Lambda_i-\Lambda_i$ is an integer linear combination of simple roots. Since both $w$ and $i$ are $\tau$-invariant, the coefficients of $\alpha_1$ and $\alpha_{2l-1}$ in $w\Lambda_i-\Lambda_i$ must be equal in the case of $A_{2l-1}^{(2)}$, and the coefficients of $\alpha_l$ and $\alpha_{l+1}$ in $w\Lambda_i-\Lambda_i$ must be equal in the case of $D_{l+1}^{(2)}$ as well. Thus, it follows that $\la h,w\Lambda_i-\Lambda_i\ra$ must be an even integer and therefore $\epsilon^{\la h, w\Lambda_i-\Lambda_i\ra}=1$.
\end{proof}

\begin{lemma}\label{lem: pairing mod 4} In the case of $A_{2l}^{(2)}$, let $w\in W^\tau$. Then
\begin{enumerate}
    \item $\la h, w\Lambda_i-\Lambda_i\ra$ is an even integer for any $\Lambda_i$.
    \item If $i=l,l+1$ and $h=\check{\omega}_l+\check{\omega}_{l+1}$, then $\la h, w\Lambda_i-\Lambda_i\ra\equiv 2M\mod 4$, where $M$ is the number of occurrences of $s_\imath$ for the exceptional $\imath$ in a $\tau$-invariant reduced word of $w\in W^\tau$.
\end{enumerate}
\end{lemma}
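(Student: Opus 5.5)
The plan is to move the Weyl group action from the weight to the coweight, writing
\[
\la h, w\Lambda_i-\Lambda_i\ra=\la w^{-1}h-h,\Lambda_i\ra ,
\]
and to compute $w^{-1}h-h$ one simple reflection of $W^\tau$ at a time. If $h=0$ (the case $m=2$, $\kappa=0$), both sides vanish and (1) is trivial. So assume $h=\check{\omega}_l+\check{\omega}_{l+1}$, with $G=\SL_{2l+1}$. The first step is to compute $s_\imath h-h$ for each simple reflection $s_\imath$ of $W^\tau$ from \eqref{eq: s_i}.

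For a standard orbit $\imath=\{j,\tau(j)\}$ we have $j\neq l,l+1$, so $\la h,\alpha_j\ra=\la h,\alpha_{\tau(j)}\ra=0$ and $s_\imath h=h$. For the exceptional orbit, $s_ls_{l+1}s_l$ is the reflection in the root $\alpha_l+\alpha_{l+1}$. Since $\la h,\alpha_l+\alpha_{l+1}\ra=2$, we get
\[
s_\imath h-h=-2(\check{\alpha}_l+\check{\alpha}_{l+1}).
\]

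Next, write $w^{-1}=s_{\imath_k}\cdots s_{\imath_1}$ for a $\tau$-invariant reduced word $w=s_{\imath_1}\cdots s_{\imath_k}$. Telescoping gives
\[
w^{-1}h-h=\sum_{j=1}^{k} s_{\imath_k}\cdots s_{\imath_{j+1}}\,(s_{\imath_j}h-h).
\]
Only the $M$ indices $j$ with $\imath_j$ exceptional contribute. Hence
\[
w^{-1}h-h=-2\sum_{\text{exceptional } j}u_j\check{\gamma},
\qquad \check{\gamma}=\check{\alpha}_l+\check{\alpha}_{l+1},\quad u_j=s_{\imath_k}\cdots s_{\imath_{j+1}}\in W^\tau .
\]
Each $u_j\check{\gamma}$ is a coroot, so it pairs integrally with $\Lambda_i$. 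This gives (1): $\la h,w\Lambda_i-\Lambda_i\ra\in 2\mathbb{Z}$.

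For (2) it suffices to show that $\la u\check{\gamma},\Lambda_i\ra$ is odd for every $u\in W^\tau$ and $i\in\{l,l+1\}$. Then the sum is $\equiv M \pmod 2$, and multiplying by $-2$ gives $\la h,w\Lambda_i-\Lambda_i\ra\equiv 2M\pmod 4$. This is the one genuinely combinatorial step, and I would do it in $\SL_{2l+1}$ coordinates. The coroot $\check{\beta}=u\check{\gamma}$ is $\tau$-invariant, because $u\in W^\tau$ and $\check{\gamma}$ is $\tau$-invariant. Write $\check{\beta}=e_a-e_b$ with $a\neq b$. The diagram automorphism acts by $e_a-e_b\mapsto e_{2l+2-b}-e_{2l+2-a}$, so invariance forces $a+b=2l+2$. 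Since $a\neq b$, neither index equals $l+1$, and exactly one of $a,b$ is $\leq l$. Pairing with $\Lambda_l=e_1+\cdots+e_l$ gives $\pm1$. Pairing with $\Lambda_{l+1}=e_1+\cdots+e_{l+1}$ also gives $\pm1$, since neither index is $l+1$. Both are odd, which finishes (2).

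I would also remark that (2) shows $M \bmod 2$ does not depend on the chosen reduced word. The main point to check carefully is the identification of the $\tau$-action on coroots of type $A_{2l}$, namely the sign convention of the diagram automorphism. However, only invariance of the pair $\{a,b\}$ under $a\mapsto 2l+2-a$ matters here, so the argument is insensitive to that sign.
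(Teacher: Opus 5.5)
Your proof is correct, and it takes a genuinely different, dual route from the paper's.

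\textbf{The paper's route.} The paper lets $w$ act on the weight. It expands $w\Lambda_i-\Lambda_i$ in simple roots and uses $s_ls_{l+1}s_l\mu=\mu-\la\check\alpha_l+\check\alpha_{l+1},\mu\ra(\alpha_l+\alpha_{l+1})$ to see that only the exceptional reflection changes the $\alpha_l$- and $\alpha_{l+1}$-coefficients, and it changes both by the same integer; this gives (1). For (2) it inducts on $M$. It writes $w=us_\imath v$ with $v$ free of $s_\imath$, so that $v\Lambda_i=\Lambda_i$. It then proves $\la h,u(\alpha_l+\alpha_{l+1})\ra\equiv 2\pmod 4$ by a second round of bookkeeping: $\tau$-invariance of each intermediate vector makes every increment of those coefficients even.

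\textbf{Your route.} You move $w$ onto $h$, where the picture is simpler. The standard $s_\imath$ fix $h$ outright, and the exceptional one shifts it by $-2(\check\alpha_l+\check\alpha_{l+1})$. Telescoping then gives the closed formula $\la h,w\Lambda_i-\Lambda_i\ra=-2\sum_j\la u_j(\check\alpha_l+\check\alpha_{l+1}),\Lambda_i\ra$. Part (1) follows immediately. Part (2) reduces to a short check in $\SL_{2l+1}$ coordinates: a $\tau$-invariant coroot $e_a-e_{2l+2-a}$ pairs to $\pm1$ with both $\Lambda_l$ and $\Lambda_{l+1}$.

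\textbf{Trade-offs.} Your argument avoids the induction and the second mod-$4$ argument, but it relies on explicit type-$A$ coordinates; the paper's argument stays coordinate-free inside root-lattice bookkeeping.

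Your formula also gives more than stated. For a reduced word, each $u_j(\check\alpha_l+\check\alpha_{l+1})=s_{\imath_k}\cdots s_{\imath_{j+1}}(\check\alpha_l+\check\alpha_{l+1})$ is one of the positive roots attached to the reduced word $s_{\imath_k}\cdots s_{\imath_1}$ of $w^{-1}$ in the folded root system of $W^\tau$. It is therefore a positive coroot of $A_{2l}$, so every term equals $+1$ and in fact $\la h,w\Lambda_i-\Lambda_i\ra=-2M$ exactly.
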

\begin{proof} Claim (1) is obviously true when $h=0$. When $h=\check{\omega}_l+\check{\omega}_{l+1}$, we need to count the occurrences of $\alpha_l$ and $\alpha_{l+1}$ in $w\Lambda_i-\Lambda_i$. Note that $\alpha_l$ and $\alpha_{l+1}$ are only produced during the action of $s_\imath$ for the exceptional $\imath$ in a reduced word of $w$, and for any weight $\mu$,
\begin{align*}
s_ls_{l+1}s_l(\mu)=& s_ls_{l+1}(\mu-\la \check{\alpha}_l,\mu\ra\alpha_l)\\
=&s_l(\mu-\la \check{\alpha}_{l+1},\mu\ra\alpha_{l+1}-\la \check{\alpha}_l,\mu\ra(\alpha_l+\alpha_{l+1}))\\
=&\mu-\la \check{\alpha}_l,\mu\ra \alpha_l-\la \check{\alpha}_{l+1},\mu\ra(\alpha_l+\alpha_{l+1})-\la \check{\alpha}_l,\mu\ra\alpha_{l+1}\\
=&\mu-\la\check{\alpha}_l+\check{\alpha}_{l+1},\mu\ra(\alpha_l+\alpha_{l+1}).
\end{align*}
Since $\la\check{\alpha}_l+\check{\alpha}_{l+1},\mu\ra$ is always an integer, we can conclude that $w\Lambda_i-\Lambda_i$ contains an integer multiple of $\alpha_l+\alpha_{l+1}$ in its expansion with respect to simple roots, and hence $\la h, w\Lambda_i-\Lambda_i\ra$ is an even integer, proving claim (1).

For claim (2), we do an induction on $M$. Let $i$ be either $l$ or $l+1$. When $M=0$, neither $\alpha_l$ nor $\alpha_{l+1}$ appears in $w\Lambda_i-\Lambda_i$, and therefore $\la h, w\Lambda_i-\Lambda_i\ra=0$. Inductively, we suppose $w=us_\imath v$ for some $u,v\in W^\tau$ such that $v$ does not contain $s_\imath$ in its reduced word; then
\begin{align*}
\la h, w\Lambda_i-\Lambda_i\ra=&\la h, us_\imath\Lambda_i-\Lambda_i\ra\\
=&\la h, u(\Lambda_i-\alpha_l-\alpha_{l+1})-\Lambda_i\ra\\
\equiv &2(M-1)-\la h, u(\alpha_l+\alpha_{l+1})\ra \mod 4.
\end{align*}
Now it remains to prove that $\la h,u(\alpha_l+\alpha_{l+1})\ra\equiv 2\mod 4$. By a similar argument as above, the change in the coefficients of $\alpha_l$ and $\alpha_{l+1}$ in the expansion of $u(\alpha_l+\alpha_{l+1})$ can only occur as a result of $s_\imath$ for the exceptional $\imath$ in a reduced word of $u$, and the change is equal to $\la \check{\alpha}_l+\check{\alpha}_{l+1}, \cdot\ra(\alpha_l+\alpha_{l+1})$. Since $\la \alpha_l,\cdot\ra$ and $\la \alpha_{l+1},\cdot\ra$ are equal integers, the change $\la \check{\alpha}_l+\check{\alpha}_{l+1}, \cdot\ra(\alpha_l+\alpha_{l+1})$ paired with $h$ is congruent to $0$ mod $4$. Thus, 
\[
\la h, u(\alpha_l+\alpha_{l+1})\ra\equiv  \la h, \alpha_l+\alpha_{l+1}\ra\equiv 2\mod 4.\qedhere
\]
\end{proof}

\begin{corollary}\label{cor: A2l with m=4} In the case of $A_{2l}^{(2)}$ with $m=4$, if $g\in G(\mc{K})^\sigma$ and $w\in W^\tau$, then $\Delta_{w\Lambda_i}(g)\in k\dbp{t^2}$ for all $\Lambda_i$.
\end{corollary}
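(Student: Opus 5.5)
We derive the statement from Lemma \ref{lem: minor under gamma action} together with Lemma \ref{lem: pairing mod 4}. Here $m=4$, so $\epsilon$ is a primitive fourth root of unity and $\gamma(t)=\epsilon^{-1}t$. A Laurent series $f\in\mc{K}$ lies in $\rk\dbp{t^2}$ exactly when $\gamma^2(f)=f$, since $\gamma^2(t)=\epsilon^{-2}t=-t$. It therefore suffices to show that $\gamma^2(\Delta_{w\Lambda_i}(g))=\Delta_{w\Lambda_i}(g)$ for $g\in G(\mc{K})^\sigma$.

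First, apply Lemma \ref{lem: minor under gamma action} twice to $g=\sigma(g)=\sigma^2(g)$. Since $w\in W^\tau$, we have $\tau(w\Lambda_i)=w\Lambda_{\tau(i)}$. Using $\sigma(g)=g$, this gives
\[
\Delta_{w\Lambda_{\tau(i)}}(g)=\epsilon^{\la h,w\Lambda_i-\Lambda_i\ra}\gamma(\Delta_{w\Lambda_i}(g)).
\]
Applying the same identity with $i$ replaced by $\tau(i)$, and using $\tau^2=1$ since $r=2$, yields
\[
\Delta_{w\Lambda_i}(g)=\epsilon^{\la h,w\Lambda_{\tau(i)}-\Lambda_{\tau(i)}\ra}\,\epsilon^{\la h,w\Lambda_i-\Lambda_i\ra}\,\gamma^2(\Delta_{w\Lambda_i}(g)).
\]
Here we use that $\gamma$ fixes the scalar $\epsilon$. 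Because $h$ is $\tau$-invariant and $w$ commutes with $\tau$, the two exponents are equal. So the total exponent is $2\la h,w\Lambda_i-\Lambda_i\ra$.

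Next, by Lemma \ref{lem: pairing mod 4}(1), $\la h,w\Lambda_i-\Lambda_i\ra$ is even. Hence $2\la h,w\Lambda_i-\Lambda_i\ra\equiv 0\pmod 4$, so $\epsilon^{2\la h,w\Lambda_i-\Lambda_i\ra}=1$. This gives $\gamma^2(\Delta_{w\Lambda_i}(g))=\Delta_{w\Lambda_i}(g)$, and hence $\Delta_{w\Lambda_i}(g)\in\rk\dbp{t^2}$.

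The only delicate point is the bookkeeping in the double application: we must check that $\tau$ acting on the weight $w\Lambda_i$ is compatible with the index change $i\mapsto\tau(i)$, and that $\la h,\tau(\cdot)\ra=\la h,\cdot\ra$. Both follow from $\tau(h)=h$ and $w\in W^\tau$.
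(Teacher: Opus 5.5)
Your proof is correct and is essentially the paper's argument. You apply Lemma~\ref{lem: minor under gamma action} twice, using $\sigma(g)=g$, $\tau^2=1$, $w\in W^\tau$ and $\tau(h)=h$ to identify the two exponents, which gives $\Delta_{w\Lambda_i}(g)=\epsilon^{2\la h,w\Lambda_i-\Lambda_i\ra}\gamma^2(\Delta_{w\Lambda_i}(g))$, and then use Lemma~\ref{lem: pairing mod 4}(1) to see that the scalar is $1$; the only difference is that you spell out the bookkeeping the paper compresses into a single displayed line.
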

\begin{proof} Note that in this case $\tau^2(i)=i$ and $\la h,w\Lambda_i-\Lambda_i\ra=\la h, w\Lambda_{\tau(i)}-\Lambda_{\tau(i)}\ra$. Thus, by Lemmas \ref{lem: minor under gamma action} and \ref{lem: pairing mod 4} (1), we know that
\[
\Delta_{w\Lambda_i}(g)=\Delta_{w\Lambda_{\tau^2(i)}}(g)=\epsilon^{2\la h,w\Lambda_i-\Lambda_i\ra} \gamma^2(\Delta_{w\Lambda_i}(g))=\gamma^2(\Delta_{w\Lambda_i}(g)).
\]
This shows that $\Delta_{w\Lambda_i}(g)$ 
must be an element of $k\dbp{t^2}$.
\end{proof}

\begin{corollary}\label{cor: A2n m=4 middle minor} In the case of $A_{2l}^{(2)}$ with $m=4$, if $g\in G(\mc{K})^\sigma$ and $w\in W^\tau$, then $\Delta_{ws_l\Lambda_l}(g)$ and $\Delta_{ws_{l+1}\Lambda_{l+1}}(g)$ are in $tk\dbp{t^2}$.
\end{corollary}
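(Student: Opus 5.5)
We want to show: in $A_{2l}^{(2)}$ with $m=4$, for $g\in G(\mc{K})^\sigma$ and $w\in W^\tau$, the minors $\Delta_{ws_l\Lambda_l}(g)$ and $\Delta_{ws_{l+1}\Lambda_{l+1}}(g)$ lie in $t\,\rk\dbp{t^2}$. The plan mirrors the proof of Corollary \ref{cor: A2l with m=4}, but applies $\sigma$ once rather than twice, so that the sign $\epsilon^{\la h,\cdot\ra}$ is detected modulo $4$.

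Here $\epsilon$ is a primitive $4$-th root of unity, $\tau$ swaps $l$ and $l+1$, and $h=\check{\omega}_l+\check{\omega}_{l+1}$. Set $u=ws_l$. Since $\tau(s_l)=s_{l+1}$, we have $\tau(u\Lambda_l)=\tau(w)s_{l+1}\Lambda_{l+1}=ws_{l+1}\Lambda_{l+1}$, using $\tau(w)=w$. Because $\sigma(g)=g$, Lemma \ref{lem: minor under gamma action} applied to $u$ and $i=l$ gives
\[
\Delta_{ws_{l+1}\Lambda_{l+1}}(g)=\epsilon^{\la h,ws_l\Lambda_l-\Lambda_l\ra}\gamma\bigl(\Delta_{ws_l\Lambda_l}(g)\bigr).
\]
Symmetrically, applying the lemma to $ws_{l+1}$ and $i=l+1$ gives
\[
\Delta_{ws_l\Lambda_l}(g)=\epsilon^{\la h,ws_{l+1}\Lambda_{l+1}-\Lambda_{l+1}\ra}\gamma\bigl(\Delta_{ws_{l+1}\Lambda_{l+1}}(g)\bigr).
\]
Substituting one identity into the other yields
\[
\Delta_{ws_l\Lambda_l}(g)=\epsilon^{E}\gamma^2\bigl(\Delta_{ws_l\Lambda_l}(g)\bigr),\qquad E=\la h,ws_l\Lambda_l-\Lambda_l\ra+\la h,ws_{l+1}\Lambda_{l+1}-\Lambda_{l+1}\ra.
\]
Since $\gamma^2(t)=-t$, the relation $f=-\gamma^2(f)$ says exactly that $f$ is odd in $t$, i.e.\ $f\in t\,\rk\dbp{t^2}$. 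So it suffices to show $E\equiv 2\pmod 4$; the claim for $\Delta_{ws_{l+1}\Lambda_{l+1}}(g)$ then follows from the first displayed identity.

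The main obstacle is this congruence, because $ws_l$ is not in $W^\tau$ and Lemma \ref{lem: pairing mod 4} does not apply to it directly. I would expand the two weights. We have $s_l\Lambda_l=\Lambda_l-\alpha_l$, so
\[
ws_l\Lambda_l-\Lambda_l=(w\Lambda_l-\Lambda_l)-w\alpha_l,\qquad ws_{l+1}\Lambda_{l+1}-\Lambda_{l+1}=(w\Lambda_{l+1}-\Lambda_{l+1})-w\alpha_{l+1}.
\]
Hence
\[
E=\la h,w\Lambda_l-\Lambda_l\ra+\la h,w\Lambda_{l+1}-\Lambda_{l+1}\ra-\la h,w(\alpha_l+\alpha_{l+1})\ra.
\]
By Lemma \ref{lem: pairing mod 4}(2), each of the first two terms is $\equiv 2M\pmod 4$, so together they contribute $4M\equiv 0\pmod 4$.

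For the last term, $\alpha_l+\alpha_{l+1}$ is $\tau$-invariant. The argument at the end of the proof of Lemma \ref{lem: pairing mod 4} (the claim about $u(\alpha_l+\alpha_{l+1})$ for $u\in W^\tau$) gives $\la h,w(\alpha_l+\alpha_{l+1})\ra\equiv\la h,\alpha_l+\alpha_{l+1}\ra=2\pmod 4$. Therefore $E\equiv -2\equiv 2\pmod 4$, which completes the proof.
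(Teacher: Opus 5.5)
Your proof is correct, and it takes a different route from the paper's.

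The paper uses only the first of your two $\sigma$-relations. It adds the analogous relations for $\Delta_{w\Lambda_{l+1}}(g)$ and $\Delta_{w'\Lambda_{l+1}}(g)$, with $w'=ws_ls_{l+1}s_l\in W^\tau$, and substitutes all of them into the Berenstein--Zelevinsky three-term identity \eqref{eq: minor identity}. Writing $f=\Delta_{ws_l\Lambda_l}(g)$, this expresses $f\gamma(f)$ as a scalar multiple of $X-\gamma(X)$ for some $X\in\rk\dbp{t^2}$. Hence $f\gamma(f)\in t^2\rk\dbp{t^4}$, and the paper concludes that $f$ is odd from this.

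You instead apply Lemma \ref{lem: minor under gamma action} twice, to $ws_l$ and to $ws_{l+1}$. Neither lies in $W^\tau$, but the lemma is stated for all of $W$. This yields the eigenvalue relation $f=\epsilon^{E}\gamma^2(f)$, and everything reduces to the congruence $E\equiv 2\pmod 4$. You extract that correctly from Lemma \ref{lem: pairing mod 4} together with the fact $\la h,w(\alpha_l+\alpha_{l+1})\ra\equiv 2\pmod 4$, which is proved inside that lemma's proof. The first two terms of $E$ are in fact equal integers, by $\tau$-invariance of $h$ and $w$, so part (1) of the lemma would already suffice for them.

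What each approach buys:
\begin{itemize}
\item Yours is more elementary, since no minor identity is needed.
\item Yours lands directly on $f\in t\,\rk\dbp{t^2}$, because $\gamma^2(t)=-t$ and $\mathrm{char}\,\rk\neq 2$.
\item The paper's final step, from $f\gamma(f)\in t^2\rk\dbp{t^4}$ to $f\in t\,\rk\dbp{t^2}$, needs a short parity argument that it leaves implicit. One splits $f$ into even and odd parts and checks that the even part must vanish.
\item The paper's route ties the corollary to identity \eqref{eq: minor identity}, which it reuses later in the proof of Theorem \ref{prop: surjective} to treat a vanishing middle minor.
\end{itemize}
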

\begin{proof} Let $w'=ws_ls_{l+1}s_l=ws_{l+1}s_ls_{l+1}\in W^\tau$. By Lemma \ref{lem: minor under gamma action} and Corollary \ref{cor: A2l with m=4}, we know that 
\begin{align*}\Delta_{ws_{l+1}\Lambda_{l+1}}(g)=&\Delta_{\tau(ws_l\Lambda_l)}(\sigma(g))=\epsilon^{\la h,ws_l\Lambda_l-\Lambda_l\ra}\gamma(\Delta_{ws_l\Lambda_l}(g)),\\
\Delta_{w\Lambda_{l+1}}(g)=&\Delta_{\tau(w\Lambda_l)}(\sigma(g))=\epsilon^{\la h, w\Lambda_l-\Lambda_l\ra}\gamma(\Delta_{w\Lambda_l}(g)),\\
\Delta_{w'\Lambda_{l+1}}(g)=&\Delta_{\tau(w'\Lambda_l)}(\sigma(g))=\epsilon^{\la h, w'\Lambda_l-\Lambda_l\ra}\gamma(\Delta_{w'\Lambda_l}(g)).
\end{align*}
We also observe that
\[
\la h,ws_l\Lambda_l-w\Lambda_l\ra=-\la h, w\alpha_l\ra \quad \text{and} \quad \la h,w'\Lambda_l-w\Lambda_l\ra=-\la h, w(\alpha_l+\alpha_{l+1})\ra=-2\la h, w\alpha_l\ra.
\]
Now let us consider the following identity of generalized minors (\cite[Proposition 4.2]{BZ})
\begin{equation}\label{eq: minor identity}
\Delta_{ws_{l+1}\Lambda_{l+1}}(g)\Delta_{ws_l\Lambda_l}(g)=\Delta_{w\Lambda_l}(g)\Delta_{w'\Lambda_{l+1}}(g)+\Delta_{w\Lambda_{l+1}}(g)\Delta_{w'\Lambda_l}(g).
\end{equation}
With appropriate substitution and cancellation, we get
\begin{align*}
\Delta_{ws_l\Lambda_l}(g)\gamma(\Delta_{ws_l\Lambda_l}(g))=&\epsilon^{-\la h, w\alpha_l\ra}\Delta_{w\Lambda_l}(g)\gamma(\Delta_{w'\Lambda_l}(g))+\epsilon^{\la h, w\alpha_l\ra}\gamma(\Delta_{w\Lambda_l}(g))\Delta_{w'\Lambda_l}(g)\\
=&\epsilon^{\la h, w\alpha_l\ra}\left(\gamma(\Delta_{w\Lambda_l}(g))\Delta_{w'\Lambda_l}(g)-\Delta_{w\Lambda_l}(g)\gamma(\Delta_{w'\Lambda_l}(g))\right).
\end{align*}
Since both $\gamma(\Delta_{w\Lambda_l}(g))\Delta_{w'\Lambda_l}(g)$ and $\Delta_{w\Lambda_l}(g)\gamma(\Delta_{w'\Lambda_l}(g))$ are in $k\dbp{t^2}$ and are $\gamma$-images of each other, we can conclude that the right hand side of the above equation are in $t^2k\dbp{t^4}$. This implies that $\Delta_{ws_l\Lambda_l}(g)$, and hence $\Delta_{ws_{l+1}\Lambda_{l+1}}(g)$, must be in $tk\dbp{t^2}$.
\end{proof}

\begin{corollary}\label{prop: quotient in K} Suppose $\imath$ is a standard orbit and $i\in \imath$. Suppose $g\in G(\mc{K})^\sigma$ and $w\in W^\tau$ such that $\Delta_{w\Lambda_i}(g)$ and $\Delta_{ws_\imath\Lambda_i}(g)$ are both non-zero. Then
\[
\frac{\prod_{k\neq i}\Delta_{w\Lambda_k}(g)^{-a_{ki}}}{\Delta_{w\Lambda_i}(g)\Delta_{ws_\imath\Lambda_i}(g)}\in \mc{K}_{\imath}.
\]
\end{corollary}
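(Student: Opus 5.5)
The plan is to show that the quotient
\[
Q:=\frac{\prod_{k\neq i}\Delta_{w\Lambda_k}(g)^{-a_{ki}}}{\Delta_{w\Lambda_i}(g)\Delta_{ws_\imath\Lambda_i}(g)}
\]
is fixed by $\gamma^{s}$, where $s:=|\imath|$. This suffices: $\gamma^s(t)=\epsilon^{-s}t$ and $\epsilon^{-s}$ has order $m/s=m_\imath$, so the $\gamma^s$-fixed subfield of $\mc{K}$ is $\rk\dbp{t^{m_\imath}}=\mc{K}_\imath$. The tool is Lemma~\ref{lem: minor under gamma action}, iterated.

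\emph{Step 1 (iterating the lemma).} Since $g=\sigma(g)$, the weight $h$ is $\tau$-invariant and $w\in W^\tau$, applying Lemma~\ref{lem: minor under gamma action} $s$ times gives, for every $k\in I$ and every $u\in W^\tau$,
\[
\Delta_{u\Lambda_{\tau^s(k)}}(g)=\epsilon^{s\la h,u\Lambda_k-\Lambda_k\ra}\,\gamma^s\bigl(\Delta_{u\Lambda_k}(g)\bigr).
\]
Here $u\Lambda_k$ goes to $\tau(u\Lambda_k)=u\Lambda_{\tau(k)}$ at each step, and $\la h,\tau(\cdot)\ra=\la h,\cdot\ra$.

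\emph{Step 2 (the permutation is harmless).} Since $\tau^s(i)=i$, the permutation $\tau^s$ of $I$ fixes $i$ and permutes $I\setminus\{i\}$. It also satisfies $a_{\tau^s(k)i}=a_{ki}$. Applying Step~1 factor by factor (with $u=w$ in the numerator and for $\Delta_{w\Lambda_i}$, and $u=ws_\imath\in W^\tau$ for the last factor) gives
\[
\gamma^s(Q)=\epsilon^{-E}Q,
\qquad
E=s\Bigl(\sum_{k\neq i}(-a_{ki})\la h,w\Lambda_k-\Lambda_k\ra-\la h,w\Lambda_i-\Lambda_i\ra-\la h,ws_\imath\Lambda_i-\Lambda_i\ra\Bigr).
\]
Reindexing the numerator product by $\tau^s$ does not change it. The nonvanishing hypotheses make $Q$ well defined.

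\emph{Step 3 (computing $E$).} From $\alpha_i=\sum_k a_{ki}\Lambda_k$ (with $a_{ii}=2$) we get
\[
\sum_{k\neq i}(-a_{ki})\Lambda_k=2\Lambda_i-\alpha_i=\Lambda_i+s_i\Lambda_i.
\]
Because $\imath$ is standard, the other reflections $s_j$ with $j\in\imath$ fix $\Lambda_i$, so $s_\imath\Lambda_i=s_i\Lambda_i$. By linearity of $w$ this yields
\[
E=s\la h,\Lambda_i-s_i\Lambda_i\ra=s\,\alpha_i(h).
\]

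\emph{Step 4 (the case check).} It remains to show $m\mid s\,\alpha_i(h)$, i.e. $\epsilon^{E}=1$.
\begin{itemize}
\item If $h=0$, then $E=0$.
\item For $A_{2\ell-1}^{(2)}$ or $D_{\ell+1}^{(2)}$ with $h\neq0$, we have $m=2$. If $s=2$ the divisibility is automatic. If $s=1$, the fixed node $i$ is $\ell$, respectively one of $1,\dots,\ell-1$, and $\alpha_i(h)=0$ since $h=\check\omega_1+\check\omega_{2\ell-1}$, respectively $\check\omega_\ell+\check\omega_{\ell+1}$.
\item For $A_{2\ell}^{(2)}$ with $m=4$, the standard orbits are $\{j,2\ell+1-j\}$ with $j<\ell$. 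Then $\alpha_j(h)=0$ because $h=\check\omega_\ell+\check\omega_{\ell+1}$.
\end{itemize}
Hence $\gamma^s(Q)=Q$, so $Q\in\mc{K}_\imath$.

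The only delicate point is bookkeeping the $\epsilon$-exponents in Step~2. The key is that everything collapses to the single pairing $s\,\alpha_i(h)$ via the identity $\sum_{k\neq i}(-a_{ki})\Lambda_k=\Lambda_i+s_\imath\Lambda_i$. That identity is exactly where standardness of $\imath$ is used.
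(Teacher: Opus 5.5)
Your proof is correct, but it is organized differently from the paper's.

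The paper argues case by case:
\begin{itemize}
\item when $m_\imath=1$ there is nothing to show;
\item standard orbits in $A_{2l}^{(2)}$ with $m=4$ are handled by Corollary~\ref{cor: A2l with m=4}, since all minors lie in $\rk\dbp{t^2}=\mc{K}_\imath$;
\item in the remaining case $\imath=\{i\}$, it shows separately that the denominator is $\gamma$-invariant (Corollary~\ref{cor: gamma invariant minor}, i.e.\ the parity argument on $\la h,w\Lambda_i-\Lambda_i\ra$) and that the numerator is $\gamma$-invariant (fixed $k$ via the same corollary, and the paired factors $\Delta_{w\Lambda_k}\Delta_{w\Lambda_{k'}}$ of two-element orbits via Lemma~\ref{lem: minor under gamma action}).
\end{itemize}

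You instead prove $\gamma^{|\imath|}$-invariance of the whole quotient in one step, without controlling the $\epsilon$-factors of individual minors. The identity $\sum_{k\neq i}(-a_{ki})\Lambda_k=\Lambda_i+s_\imath\Lambda_i$ makes all $w$-dependent exponents cancel. What remains is the factor $\epsilon^{-|\imath|\alpha_i(h)}$, and $m\mid |\imath|\,\alpha_i(h)$ is a one-line check from the explicit $h$.

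Your route is uniform and does not need Lemma~\ref{lem: pairing mod 4} or Corollaries~\ref{cor: gamma invariant minor} and~\ref{cor: A2l with m=4}. It also explains why this particular ratio lands in $\mc{K}_\imath$. The paper's route is shorter given those already-established invariance facts.

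One small point: the equality $s_\imath\Lambda_i=s_i\Lambda_i$ needs more than $s_j\Lambda_i=\Lambda_i$ for $j\in\imath\setminus\{i\}$. You also need the $s_j$ to commute with $s_i$, so that $s_i$ can be placed leftmost. This is again the standardness condition $a_{ij}=0$.
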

\begin{proof} If $m_{\imath}=m/|\imath|=1$, there is nothing to show. The standard orbits in the case $A_{2l}^{(2)}$ with $m=4$ follows from Corollary \ref{cor: A2l with m=4}. Thus, it remains to consider $\tau$-fixed points, i.e., $\imath=\{i\}$. Note that by Corollary \ref{cor: gamma invariant minor}, both $\Delta_{w\Lambda_i}(g)$ and $\Delta_{ws_\imath\Lambda_i}(g)$ are $\gamma$-invariant. 

On the one hand, if $h=0$, by Lemma \ref{lem: minor under gamma action}, the product $\prod_{k\neq i}\Delta_{w\Lambda_k}(g)^{-a_{ki}}$ is obviously $\gamma$-invariant as well, and therefore the whole quotient must be in the $\gamma$-invariant subfield of $\mc{K}$, which is $\mc{K}_{\imath}$. 

On the other hand, among the cases with $h\neq 0$, the only cases with $\tau$-fixed indices are $A_{2l-1}^{(2)}$ and $D_{l+1}^{(2)}$. Now for $k\neq i$, either $\tau(k)=k$ and hence $\Delta_{w\Lambda_k}(g)^{-a_{ki}}$ is $\gamma$-invariant by Corollary \ref{cor: gamma invariant minor}, or $\{k,k'\}$ is a $\tau$-orbit and both $\Delta_{w\Lambda_k}(g)$ and $\Delta_{w\Lambda_{k'}}(g)=\epsilon^{\la h,w\Lambda_k-\Lambda_k\ra}\gamma(\Delta_{w\Lambda_k}(g))$ (Lemma \ref{lem: minor under gamma action}) are present in the product with the same power $-a_{ki}$, which implies that the product is $\gamma$-invariant. Thus, the whole quotient must again be in $\mc{K}_{\imath}$.
\end{proof}

\begin{lemma}
    \label{lem: representative}
For any $g\in N^-(\mc{K})^\sigma$, there exists $h\in (N^-)^\sigma$, such that

\[ {\rm val}(  \Delta_{w\Lambda_i}(g \cdot h) )=D_{w\Lambda_\imath}(g) \quad \text{ for any $w\in W^\tau$ and any $\imath\in I_\tau$}  .  \]
\end{lemma}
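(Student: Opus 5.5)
The plan is to reduce the statement to a genericity argument over the finite-dimensional group $(N^-)^\sigma$. Fix $g\in N^-(\mc{K})^\sigma$, $w\in W^\tau$ and $\imath\in I_\tau$ with chosen $i\in\imath$.

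First, I would compare $\Delta_{w\Lambda_i}(gh)$ with $D_{w\Lambda_\imath}(g)$. Since $h\in (N^-)^\sigma\subset G^\sigma$, we have $\Delta_{w\Lambda_i}(gh)=\la \xi_{w\Lambda_i}, g\cdot (h\cdot v_{\Lambda_i})\ra$. Because $\overline{V}(\Lambda_\imath)\subset V(\Lambda_i)$ is a $G^\sigma$-submodule containing the highest weight line $\rk v_{\Lambda_i}$ (the unique-up-to-scalar embedding from \cite[\S 4.19]{Ja}), we get $h\cdot v_{\Lambda_i}\in \overline{V}(\Lambda_\imath)$. Hence
\[
\val\Delta_{w\Lambda_i}(gh)\ \geq\ D_{w\Lambda_\imath}(g)
\]
for every $h$, and only the reverse inequality needs an argument.

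Next, I would set $d=D_{w\Lambda_\imath}(g)$ and consider the $\rk$-linear functional
\[
\phi_{w,i}\colon \overline{V}(\Lambda_\imath)\to \rk,\qquad v\mapsto \text{coefficient of } t^{d} \text{ in } \la \xi_{w\Lambda_i}, g\cdot v\ra .
\]
This is well defined: $\overline{V}(\Lambda_\imath)$ is finite-dimensional over $\rk$, and $g$ has entries in $\mc{K}$, so all valuations on a basis are bounded below. The minimum $d$ is attained on some basis vector, so $\phi_{w,i}\neq 0$.

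The key step is to show that the orbit $(N^-)^\sigma\cdot v_{\Lambda_i}$ spans $\overline{V}(\Lambda_\imath)$. Since $(N^-)^\sigma$ is the unipotent radical of the opposite Borel $(B^-)^\sigma$ of the connected group $G^\sigma$, and $v_{\Lambda_i}$ is a highest weight vector, this span is $\mathrm{Dist}(N^{-,\sigma})\cdot v_{\Lambda_i}$, which is the whole Weyl module. In positive characteristic this is the standard fact that a Weyl module is generated by its highest weight vector over the hyperalgebra of $N^-$. This is the point where the modular subtlety is absorbed, and the reason the definition uses $\overline{V}(\Lambda_\imath)$ instead of all of $V(\Lambda_i)$.

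Consequently $h\mapsto \phi_{w,i}(h\cdot v_{\Lambda_i})$ is a regular function on the irreducible affine variety $(N^-)^\sigma$ that is not identically zero. It is nonzero precisely when $\val\Delta_{w\Lambda_i}(gh)=d$, so the set of good $h$ is a nonempty Zariski open subset. There are only finitely many pairs $(w,\imath)$, and irreducibility of $(N^-)^\sigma$ (an affine space, since $G^\sigma$ is connected) ensures that the finite intersection of these nonempty opens is nonempty. Any $h$ in that intersection works for all $w\in W^\tau$ and $\imath\in I_\tau$ simultaneously.

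The main obstacle I expect is the spanning claim together with the compatibility of the chosen embedding $\overline{V}(\Lambda_\imath)\hookrightarrow V(\Lambda_i)$ sending the $G^\sigma$-highest weight vector to $v_{\Lambda_i}$. This requires checking that $\Lambda_\imath$ has a one-dimensional weight space in $V(\Lambda_i)|_{H^\sigma}$ spanned by $v_{\Lambda_i}$. I would argue this from the maximality of $\Lambda_\imath$ among the restricted weights, since every other weight of $V(\Lambda_i)$ has the form $\Lambda_i$ minus a nonzero nonnegative root combination, and such a combination restricts nontrivially to $H^\sigma$.
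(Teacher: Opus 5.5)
Your proposal is correct and follows the paper's proof essentially step for step. It uses the same three ingredients: the leading-coefficient functional (the paper's $u^*=\sum_s p_s u_s^*$), the spanning of $\overline{V}(\Lambda_\imath)$ by $(N^-)^\sigma\cdot v_{\Lambda_i}$ via the hyperalgebra of $(N^-)^\sigma$ acting on the Weyl module, and a simultaneous generic choice of $h$ on the irreducible variety $(N^-)^\sigma$. You also spell out a point the paper uses tacitly, namely that $v_{\Lambda_i}$ lies in $\overline{V}(\Lambda_\imath)$, so $\mathrm{val}\,\Delta_{w\Lambda_i}(gh)\geq D_{w\Lambda_\imath}(g)$ holds for every $h\in(N^-)^\sigma$.
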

 \begin{proof}
We adapt part of the proof of \cite[Theorem 4.5]{Kam} into our setting.

Let $\overline{V}(\Lambda_\imath)'$ be the span of all $n\cdot v_{\Lambda_i}, n\in (N^-)^\sigma$.  Then $\overline{V}(\Lambda_\imath)'$ is a rational $(N^-)^\sigma$-submodule of $\overline{V}(\Lambda_\imath)$. Hence, it is stable under the action of the hyperalgebra $\mathfrak{D}((N^-)^\sigma)$ of $(N^{-})^\sigma$.  Since $\overline{V}(\Lambda_\imath)$ is a Weyl module with $v_{\Lambda_i}$ as a highest weight vector,  by \cite[I.7.15, II.2.13]{Ja} we have $\mathfrak{D}((N^-)^\sigma)\cdot v_{\Lambda_i}=\overline{V}(\Lambda_\imath)$. Thus, $\overline{V}(\Lambda_\imath)'=\overline{V}(\Lambda_\imath)$.

Choose a basis $\{u_s\}$ in $\overline{V}(\Lambda_\iota)$ (as a subspace of $V(\Lambda_i)$). Let $\{u_s^*\}$ be the dual basis of $\{u_s\}$ in $\overline{V}(\Lambda_\iota)^*$ (as a quotient of $V(\Lambda_i)^*$).  
For any $h\in (N^-)^\sigma$,  
\[ h\cdot v_{\Lambda_i}=\sum_{s} c_s u_s \in  \overline{V}(\Lambda_\iota) .\]
Set $d=D_{w\Lambda_i}(g)$. Let $p_s$ be the coefficient of $t^{d}$ in $\la \xi_{w\Lambda}, g\cdot u_s      \ra$.  Since 
\[ D_{w\Lambda_i}(g)= \min_{s} \{ \val ( \la \xi_{w\Lambda_i}, g\cdot u_s   \ra  ) \} , \]
there exists $s$ such that $p_s$ is nonzero. Set $u^*= \sum_s p_s u_s^* \in \overline{V}(\Lambda_i)^*$.  
 Then, $u^*$ is nonzero, and ${\rm val}(  \Delta_{w\Lambda_i}(g \cdot h) )=d$ if and only if $\la u^*, h\cdot v_{\Lambda_i} \ra =\sum_s p_s c_s$ is nonzero.  

 For each $w\Lambda_{\iota}$ (with a choice of $i\in \iota$), we define a function $f_{w\Lambda_\iota}:  (N^-)^\sigma\to \rk$ with $f_{w\Lambda_\iota}(h)=\la u^*, h\cdot v_{\Lambda_i}  \ra$.
Then, $f_{w\Lambda_\iota}$ is nonzero regular function on $(N^-)^\sigma$.  Since $(N^-)^\sigma$ is an irreducible variety, we can find an element $h\in (N^-)^\sigma$ such that $f_{w\Lambda_{\iota}}(h)\not= 0$ for all $w\Lambda_{\iota}$. Thus, there exists $h\in (N^-)^\sigma$, such that for any $w\Lambda_\iota$,
 \[ {\rm val}(  \Delta_{w\Lambda_i}(g \cdot h) )=D_{w\Lambda_\imath}(g) . \qedhere\]
\end{proof}

\section{Configuration spaces of flags and the twist map}\label{ss: conf space} In this section, we review configuration spaces of flags and recall some of their basic properties. We define the twist map in terms of configurations of flags and establish several properties that will be useful in the next section. For convenience, we suppress the notation ``$(\mc{K})$'' for all spaces within this section. 

\subsection{Configurations of flags and angle unipotents} Fix a triple $H\subset B\subset G$. Let $N:=[B,B]$ be the maximal unipotent subgroup of $B$. Let $B^-$ be the opposite Borel subgroup, so that $B\cap B^-=H$. Let $N^-:=[B^-,B^-]$. We consider two presentations of flag varieties: $G/B$ and $B^-\backslash G$. Correspondingly, we define two versions of \emph{decorated flag varieties}: $G/N$ and $N^-\backslash G$. Note that $g\in G$ acts on $G/B$ and $G/N$ by left multiplication by $g$, and acts on $B^-\backslash G$ and $N^-\backslash G$ by right multiplication by $g^{-1}$. 
There are natural projections $G/N\rightarrow G/B$ and $N^-\backslash G\rightarrow B^-\backslash G$.

\begin{remark} For convenience, we denote flags by the symbol ``$B$'' and decorated flags by the symbol ``$A$''. We use subscripts for left cosets of $B$ and $N$, and superscripts for right cosets of $B^-$ and $N^-$.
\end{remark}

By the Bruhat decomposition $G=\bigsqcup_{w\in W} BwB=\bigsqcup_{w\in W}B^-wB^-$, we define the corresponding Tits distances as follows:
\begin{itemize}
    \item $(xB,yB)$ has Tits distance $w$ if $x^{-1}y\in BwB$; we denote this relation by $xB\xrightarrow{w}yB$.
    \item $(B^-x,B^-y)$ has Tits distance $w$ if $xy^{-1}\in B^-wB^-$; we denote this relation by $B^-x\xrightarrow{w}B^-y$.
\end{itemize}
By the Birkhoff decomposition $G=\bigsqcup_{w\in W}B^-wB$, we can define a Tits codistance:
\begin{itemize}
    \item $(B^-x,yB)$ has Tits codistance $w$ if $xy\in B^-wB$; we denote this relation by $B^-x\overset{w}{\mbox{ --- }}yB$. In particular, when two flags are of Tits codistance $e$, we omit the Weyl group element label and just denote the relation by $B^-x\mbox{ --- }yB$.
\end{itemize}

We will use the same arrow and dash notations among decorated flags to denote the Tits distances and Tits codistance relations between the underlying undecorated flags.

\smallskip



A pair of decorated flags satisfying the Tits distance relation $xN\xrightarrow{w} yN$ is said to be \emph{compatible} if $x^{-1}y\in N\overline{w}N$; we denote such a compatible pair of decorated flags by $xN\xRightarrow{w}yN$. 
A pair of decorated flags satisfying the Tits codistance relation $A^1\mbox{ --- }A_2$ is said to be a \emph{pinning} if there exists some $g$ such that $(g\cdot A^1,g\cdot A_2)=(N^-,N)$; we denote a pinning by $A^1\varequals{10pt} A_2$ (note that this is not an equal sign). 

\begin{lemma}[{\cite[Lemmas 2.5, 2.11]{ShenWeng}}]\label{lem: relative position} Suppose $w=uv$ in $W$ and $l(w)=l(u)+l(v)$. Then
\begin{enumerate}\item $B_1\xrightarrow{w}B_3$ if and only if there exists $B_2$ such that $B_1\xrightarrow{u}B_2\xrightarrow{v} B_3$. Moreover, such $B_2$ is unique if it exists.
\item $A_1\xRightarrow{w}A_3$ if and only if there exists $A_2$ such that $A_1\xRightarrow{u}A_2\xRightarrow{v} A_3$. Moreover, such $A_2$ is unique if it exists.
\item $B^1\overset{u}{\mbox{ --- }}B_3$ if and only if there exists $B_2$ such that $B^1\overset{w}{\mbox{ --- }}B_2\xrightarrow{v^{-1}}B_3$. Moreover, such $B_2$ is unique if it exists.
\item $B^1\overset{v}{\mbox{ --- }}B_3$ if and only if there exists $B^2$ such that $B^1\xrightarrow{u^{-1}}B^2\overset{w}{\mbox{ --- }}B_3$. Moreover, such $B^2$ is unique if it exists.
\end{enumerate}
\end{lemma}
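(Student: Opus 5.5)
We translate each of the four statements into a statement about cosets of $B$ or $B^-$ in $G$, and reduce everything to the Bruhat decomposition and the standard multiplication rule: $BuB\cdot BvB=BuvB$ whenever $l(uv)=l(u)+l(v)$. Since $G$ acts on pairs of flags preserving Tits distances and codistances, we may normalize one flag to be the standard one in each case. Concretely, in (1) we take $B_1=B$ and $B_3=yB$ with $y\in BwB$.

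\textbf{Statement (1).} For existence, write $BwB=BuB\cdot BvB$ and $y=b_1\overline{u}b_2\overline{v}b_3$, and set $B_2=b_1\overline{u}B$. Then $B_1\xrightarrow{u}B_2$, and $B_2^{-1}$-translate considerations give $B_2\xrightarrow{v}B_3$. Conversely, if $B_1\xrightarrow{u}B_2\xrightarrow{v}B_3$, then $y\in BuB\,BvB=BwB$.

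For uniqueness, we use the fact that the multiplication map
\[
BuB\times^B BvB\to BwB
\]
is a bijection under the length additivity assumption. This follows from the product decompositions $BwB\cong N_w\times \overline{w}B$ with $N_w=N\cap \overline{w}N^-\overline{w}^{-1}$, together with $N_w=N_u\cdot \overline{u}N_v\overline{u}^{-1}$ (uniquely). Hence $B_2$ is determined by $B_3$.

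\textbf{Statement (2).} This is the decorated refinement of (1). Normalize $A_1=N$ and $A_3=yN$ with $y\in N\overline{w}N$. Using $\overline{w}=\overline{u}\,\overline{v}$ (the braid-compatible representatives), we get
\[
N\overline{w}N=N\overline{u}N\cdot N\overline{v}N,
\]
with uniqueness of the middle coset $x\overline{u}N$. Indeed, writing $y=n\overline{w}n'$ with $n\in N_w$ unique, we factor $n=n_1\,\overline{u}n_2\overline{u}^{-1}$ and take $A_2=n_1\overline{u}N$. Uniqueness follows from (1) applied to the underlying flags together with the fact that, once the underlying flag $B_2$ is fixed, the compatibility condition $A_1\xRightarrow{u}A_2$ pins down the decoration. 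Explicitly, if $x\overline{u}b$ and $x\overline{u}$ both lie in $N\overline{u}N$ with $b\in B$, then $b\in N$, since the $H$-part is detected by the $H$-component of the Bruhat cell.

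\textbf{Statements (3) and (4).} These are the codistance versions. Normalize $B^1=B^-$. In (3), $B^1\overset{u}{\mbox{ --- }}B_3$ means $B_3=yB$ with $y\in B^-uB$. We use the Birkhoff analogue of the product rule:
\[
B^-wB\cdot Bv^{-1}B=B^-uB \quad\text{when } w=uv \text{ is length-additive}.
\]
This follows from $B^-w_0=w_0B$ by translating Birkhoff cells into Bruhat cells via $B^-=\overline{w}_0B\overline{w}_0^{-1}$. We have $B^-xB=\overline{w}_0\,B(\overline{w}_0^{-1}x)B$, so codistance $u$ corresponds to Bruhat distance $w_0u$. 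The length additivity $l(w_0u)=l(w_0w)+l(v^{-1})$ is then exactly $l(w)=l(u)+l(v)$, because
\[
l(w_0u)=l(w_0)-l(u)=\bigl(l(w_0)-l(w)\bigr)+l(v)=l(w_0w)+l(v).
\]
So (3) reduces to (1) for the triple $(\overline{w}_0B,\ B_2,\ B_3)$ with $w_0u=(w_0w)\cdot v^{-1}$. Statement (4) reduces similarly, now applying (1) to flags of the form $B^-x$. Alternatively, one can apply the anti-involution $g\mapsto g^{-1}$ (or transpose), which swaps the roles of $G/B$ and $B^-\backslash G$ and turns (4) into (3) with $u,v$ exchanged.

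\textbf{Main obstacle.} The step requiring the most care is the bookkeeping in (3) and (4). One must check that the translation between codistance and distance via $\overline{w}_0$ sends the factorization $w=uv$ to a length-additive factorization of the correct element, with the arrows in the right direction ($v^{-1}$ versus $v$). This is a matter of verifying the inverses carefully. The uniqueness claims all follow from the bijectivity in (1) once the translation is done correctly.
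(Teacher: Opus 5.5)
Your argument is correct. It is the standard Bruhat/Birkhoff-cell proof. The paper gives no proof of this lemma and simply cites Shen--Weng (Lemmas 2.5 and 2.11), so your write-up is a self-contained version of what is being quoted.

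There are two small bookkeeping slips, both in your treatment of (4):
\begin{enumerate}
\item The inversion $g\mapsto g^{-1}$ sends $G/B$ to $B\backslash G$, not to $B^-\backslash G$, so only the transposition does what you want. Under transposition, (4) becomes (3) for the length-additive factorization $w^{-1}=v^{-1}u^{-1}$. That is, the substitution is $u\mapsto v^{-1}$, $v\mapsto u^{-1}$, not merely $u\leftrightarrow v$.
\item If you instead use the identification $B^-g\mapsto g^{-1}\overline{w}_0B$, as you did for (3), note that the paper's Tits distance $u^{-1}$ on $B^-\backslash G$ becomes $w_0u^{-1}w_0$ on $G/B$. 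The factorization to feed into (1) is then $w_0v=(w_0u^{-1}w_0)(w_0w)$. This is length-additive exactly because $l(w)=l(u)+l(v)$.
\end{enumerate}
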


\begin{lemma}\label{lem: possible Tits codistance} 
\begin{enumerate}
    \item Suppose $w\in W$ with $l(ws_i)>l(w)$. If $B^1\overset{w}{\mbox{ --- }}B_2\xrightarrow{s_i}B_3$, then the Tits codistance from $B^1$ to $B_3$ is either $w$ or $ws_i$.
    \item Suppose $w\in W$ with $l(s_iw)>l(w)$. If $B^1\overset{w}{\mbox{ --- }}B_2\xleftarrow{s_i}B_3$, then the Tits codistance from $B^1$ to $B_3$ is either $w$ or $s_iw$.
    \item If $B_1\xrightarrow{s_i}B_2\xrightarrow{s_i}B_3$, then the Tits distance from $B_1$ to $B_3$ is either $e$ or $s_i$.
\end{enumerate}
\end{lemma}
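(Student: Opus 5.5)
The plan is to translate each part of Lemma \ref{lem: possible Tits codistance} into a statement about products of double cosets, and then reduce to a rank-one computation inside the subgroup generated by $U_{\pm\alpha_i}$.

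For (1), write $B^1=B^-x$ and $B_2=yB$, so that $xy\in B^-wB$. The flag $B_3$ has the form $ygB$ with $g\in Bs_iB=U_{\alpha_i}\overline{s}_iB$. Hence $xyg\in B^-wU_{\alpha_i}\overline{s}_iB$, and it suffices to show
\[
B^-wU_{\alpha_i}\overline{s}_iB\subseteq B^-wB\cup B^-ws_iB .
\]
The key rank-one identity, checked in $\SL_2$, is that for $a\neq 0$ one has $x_i(a)\overline{s}_i\in U_{-\alpha_i}HU_{\alpha_i}$, with explicit form $y_i(\ast)\alpha_i^\vee(\ast)x_i(\ast)$, while for $a=0$ we simply get $\overline{s}_i$. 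Therefore
\[
wU_{\alpha_i}\overline{s}_iB\subseteq w\overline{s}_iB\cup wU_{-\alpha_i}B .
\]
Since $l(ws_i)>l(w)$, the root $w\alpha_i$ is positive. Then $wU_{-\alpha_i}=U_{-w\alpha_i}w$ with $U_{-w\alpha_i}\subset N^-\subset B^-$, so $B^-wU_{-\alpha_i}B=B^-wB$. This gives the two possibilities $w$ and $ws_i$. In fact the same computation shows that when $w\alpha_i<0$ only $ws_i$ occurs, since then $U_{w\alpha_i}\subset B^-$ can be absorbed on the left.

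For (2), I would proceed in the same way, but first pin down the conventions carefully. Write $B_3=yB$ and $B_2=ygB$ with $g\in Bs_iB$. The statement then becomes one about $B^-wB\cdot Bs_iB$, or equivalently, after applying the anti-involution $g\mapsto g^{-1}$ (which exchanges $B^-\backslash G$ and $G/B^-$ and swaps left and right), about $B^-s_iB^-\cdot B^-wB$. In the latter form I would use $B^-s_iB^-=U_{-\alpha_i}\overline{s}_iB^-$ and rewrite $\overline{s}_iU_{-\alpha_i}$ via the rank-one identity as lying in $\overline{s}_iB^-\cup U_{\alpha_i}HU_{-\alpha_i}$, depending on whether the parameter vanishes. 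Pushing $U_{\alpha_i}$ past $w$ gives $U_{\alpha_i}w=wU_{w^{-1}\alpha_i}$. Under the hypothesis $l(s_iw)>l(w)$ we have $w^{-1}\alpha_i>0$, so this factor is absorbed into $B$, and the outcomes are $w$ or $s_iw$. The main obstacle I expect is this bookkeeping step: making sure which side $s_i$ lands on under the paper's conventions for codistance, i.e.\ $xy\in B^-wB$ with $B^1=B^-x$ and $B_2=yB$. The root-group computation itself is routine.

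Part (3) is the classical fact $Bs_iB\cdot Bs_iB\subseteq B\cup Bs_iB$. Indeed, $x^{-1}z=(x^{-1}y)(y^{-1}z)\in Bs_iU_{\alpha_i}\overline{s}_iB$. The element $\overline{s}_ix_i(a)\overline{s}_i$ lies in $U_{-\alpha_i}$, and by the rank-one identity this is contained in $B$ if $a=0$ and in $Bs_iB$ if $a\neq 0$.
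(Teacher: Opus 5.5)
Your arguments for (1) and (3) are correct. For (3) you use the same fact $Bs_iB\cdot Bs_iB=B\sqcup Bs_iB$ as the paper. For (1) you give a self-contained rank-one computation ($x_i(a)\overline{s}_i\in U_{-\alpha_i}HU_{\alpha_i}$ for $a\neq 0$, then absorbing $U_{-w\alpha_i}\subset N^-$), where the paper simply cites Shen--Weng.

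The gap is in (2), at the step ``or equivalently, after applying the anti-involution $g\mapsto g^{-1}$ \dots, about $B^-s_iB^-\cdot B^-wB$.'' This equivalence is false. Inversion sends $B^-wB\cdot Bs_iB$ to $Bs_iB\cdot Bw^{-1}B^-$. The anti-involution that does swap $B$ and $B^-$ (transposition) sends it to $B^-s_iB^-\cdot B^-w^{-1}B$, and your computation applied to that only reproves (1).

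Nor can the step be repaired. Since $(Bs_iB)^{-1}=Bs_iB$, the relation $B_2\xleftarrow{s_i}B_3$ is the same as $B_2\xrightarrow{s_i}B_3$. So under the paper's conventions the codistance from $B^1$ to $B_3$ lies in $B^-wB\cdot Bs_iB$, exactly as in (1). The conclusion ``$w$ or $s_iw$'' then fails as soon as $ws_i\neq s_iw$. For instance, in $\SL_3$ take $w=s_2$, $i=1$ (so $l(s_1s_2)>l(s_2)$), $B^1=B^-\overline{s}_2$, $B_2=B$, $B_3=\overline{s}_1B$. Then $B^1\overset{s_2}{\mbox{ --- }}B_2\xleftarrow{s_1}B_3$, but $\overline{s}_2\overline{s}_1\in B^-s_2s_1B$, so the codistance is $s_2s_1\notin\{s_2,s_1s_2\}$.

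What your $B^-s_iB^-\cdot B^-wB$ computation does prove is the version in which the $s_i$-step is taken between $B^-$-flags: if $B^3\xrightarrow{s_i}B^1\overset{w}{\mbox{ --- }}B_2$ with $l(s_iw)>l(w)$, then the codistance from $B^3$ to $B_2$ is $w$ or $s_iw$. To make that computation correct, write $B^-s_iB^-=B^-\overline{s}_iU_{-\alpha_i}$ and use $\overline{s}_iy_i(a)\in U_{-\alpha_i}HU_{\alpha_i}$ for $a\neq 0$, not $U_{\alpha_i}HU_{-\alpha_i}$ as you wrote; the $(2,2)$ entry of $\overline{s}_iy_i(a)$ vanishes. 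The $B^-$-side version is presumably the intended statement behind the citation. In any case the paper only needs (2) with $w=e$, where $ws_i=s_iw$ and the literal statement reduces to (1).
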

\begin{proof} The first two claims are proved in {\cite[Proposition A.8]{ShenWeng}}. The last claim follows from the fact that $Bs_iBs_iB=B\sqcup Bs_iB$.
\end{proof}

\begin{definition} A \emph{configuration} of flags is a tuple of (decorated) flags satisfying certain Tits (co)distance and compatibility conditions, considered up to a diagonal $G$-action.
\end{definition}

Elements inside the open subset $B^-B\subset G$ are said to be \emph{Gaussian decomposable}. Any Gaussian decomposable element $g$ can be factored uniquely as $[g]_-[g]_0[g]_+$ for $[g]_-\in N^-$, $[g]_0\in H$, and $[g]_+\in N$. For each fundamental weight $\Lambda_i$ we define a function $\Delta_{\Lambda_i}$ on the Gaussian decomposable elements by $\Delta_{\Lambda_i}(g):=\Lambda_i([g]_0)$. This function extends regularly to $G$ and agrees with the generalized minors defined in \eqref{eq: generalized minor}. Given a pair of decorated flags $(N^-x,yN)$, we define 
\[
\Delta_{\Lambda_i}(N^-x,yN):=\Delta_{\Lambda_i}(xy).
\]

\begin{lemma}[{\cite[\S 7.2]{GLS}}]\label{lem: Gaussian decomposable} An element $g\in G$ is Gaussian decomposable if and only if $\Delta_{\Lambda_i}(g)\neq 0$ for all $\Lambda_i$.
\end{lemma}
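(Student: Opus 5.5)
The plan is to prove the two implications separately, using the weight-vector description of generalized minors. The open Bruhat cell argument for $B^-B$ is the key input.

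For the forward direction, suppose $g=[g]_-[g]_0[g]_+$ is Gaussian decomposable. Then
\[
\Delta_{\Lambda_i}(g)=\la \xi_{\Lambda_i},[g]_-[g]_0[g]_+\cdot v_{\Lambda_i}\ra .
\]
Since $N$ fixes $v_{\Lambda_i}$, $H$ scales it by $\Lambda_i([g]_0)$, and $N^-$ fixes the lowest-weight covector $\xi_{\Lambda_i}$ (it acts by lowering weights, so its transpose raises, and $\xi_{\Lambda_i}$ has the extremal weight $-\Lambda_i$), this equals $\Lambda_i([g]_0)\neq 0$ for all $i$. This also confirms the agreement of $\Lambda_i([g]_0)$ with the minor of \eqref{eq: generalized minor}.

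For the converse, use the Bruhat decomposition $G=\bigsqcup_{w}B^-\overline{w}B$ (Birkhoff decomposition over the field). Write $g=b^-\overline{w}b$ with $b^-\in B^-$ and $b\in B$. Exactly as above,
\[
\Delta_{\Lambda_i}(g)=c\cdot\la \xi_{\Lambda_i},\overline{w}\cdot v_{\Lambda_i}\ra
\]
for a nonzero scalar $c$ coming from the torus parts of $b^-$ and $b$. The vector $\overline{w}\cdot v_{\Lambda_i}=v_{w\Lambda_i}$ has weight $w\Lambda_i$, so the pairing vanishes unless $w\Lambda_i=\Lambda_i$. If all $\Delta_{\Lambda_i}(g)\neq0$, then $w$ fixes every fundamental weight. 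Since $G$ is simply connected, the $\Lambda_i$ span the weight lattice, so $w$ acts trivially and $w=e$. Hence $g\in B^-B$.

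The only delicate point is the vanishing of $\la\xi_{\Lambda_i},v_{w\Lambda_i}\ra$ for $w\Lambda_i\ne\Lambda_i$, which holds because distinct weight spaces pair to zero. It also needs $\Delta_{\Lambda_i}$ to be defined via the Weyl module in any characteristic. This is standard, and it is the step I would state with care rather than the main obstacle: the argument is essentially that of \cite[\S 7.2]{GLS}.
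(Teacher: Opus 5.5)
Your proof is correct and complete. The forward direction is the computation $\Delta_{\Lambda_i}(g)=\Lambda_i([g]_0)$, and the converse combines the Birkhoff decomposition $G=\bigsqcup_w B^-\overline{w}B$ with the orthogonality $\la\xi_{\Lambda_i},v_{w\Lambda_i}\ra=0$ for $w\Lambda_i\neq\Lambda_i$, which is the standard argument. The paper gives no proof of its own here and only cites \cite[\S 7.2]{GLS}, which rests on this same argument. It still works in this section, where $G$ means $G(\mathcal{K})$, because the Birkhoff decomposition holds for the split group over any field.
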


\begin{lemma}\label{lem: Gaussian decomposition} Suppose $g\in G$ is Gaussian decomposable and $\sigma(g)=g$. Then $\sigma([g]_-)=[g]_-$, $\sigma([g]_0)=[g]_0$, and $\sigma([g]_+)=[g]_+$.
\end{lemma}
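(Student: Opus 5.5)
The plan is to use uniqueness of the Gaussian factorization together with the fact that $\sigma$ preserves each of the three factor groups. The starting observation is that $\sigma$ maps $N^-(\mc{K})$, $H(\mc{K})$ and $N(\mc{K})$ to themselves.

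First I verify this stability. Here $\sigma=\tau\circ{\rm Ad}_{\epsilon^h}$ is combined with the field automorphism $\gamma$. The diagram automorphism $\tau$ preserves $B$, $B^-$ and $H$, since it permutes the pinning $\{x_i,y_i\}$. Conjugation by $\epsilon^h\in H_{\rm ad}$ preserves $H$, $N$ and $N^-$. The automorphism $\gamma$ acts on coefficients and so preserves each subgroup of $G(\mc{K})$ defined over $\rk$. The formulas
\[
\sigma(x_i(f))=x_{\tau(i)}(\epsilon^{\alpha_i(h)}\gamma(f)),\qquad \sigma(y_i(f))=y_{\tau(i)}(\epsilon^{-\alpha_i(h)}\gamma(f))
\]
make this explicit on the generators. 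Since $N(\mc{K})$ and $N^-(\mc{K})$ are generated by root subgroups, and $\sigma$ preserves $H(\mc{K})$, we get $\sigma(N)=N$, $\sigma(N^-)=N^-$ and $\sigma(H)=H$.

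Next, write $g=[g]_-[g]_0[g]_+$. Applying the group automorphism $\sigma$ gives
\[
g=\sigma(g)=\sigma([g]_-)\,\sigma([g]_0)\,\sigma([g]_+),
\]
and the three factors lie in $N^-$, $H$ and $N$ respectively. Uniqueness of the factorization on the Gaussian decomposable locus $B^-B$ then forces each factor to be $\sigma$-fixed. Uniqueness holds because $N^-\cap HN=\{e\}$ and $H\cap N=\{e\}$, and this persists over $\mc{K}$.

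No step is a real obstacle. The only point needing care is that $\sigma$ is semilinear through $\gamma$, so it is an abstract group automorphism of $G(\mc{K})$ rather than an algebraic one. The uniqueness argument needs only the abstract group structure, so this causes no trouble.
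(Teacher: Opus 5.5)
Your proof is correct and follows the paper's argument: the paper also observes that $\sigma$ preserves $N^-$, $H$, and $N$, so $\sigma([g]_-)\sigma([g]_0)\sigma([g]_+)$ is a Gaussian decomposition of $\sigma(g)=g$, and then concludes by uniqueness. Your explicit check that the three subgroups are $\sigma$-stable, and that uniqueness ($N^-\cap B=\{e\}$, $H\cap N=\{e\}$) persists over $\mc{K}$, only fills in details the paper leaves implicit.
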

\begin{proof} Since $\sigma$ preserves $N^-$, $H$, and $N$, $\sigma([g]_-)\sigma([g]_0)\sigma([g]_+)$ is a Gaussian decomposition of $\sigma(g)=g$. The statement then follows from the uniqueness of the Gaussian decomposition.
\end{proof}

Given a triple $B_1\mbox{ --- } A^2\mbox{ --- } B_3$, there exist unique elements $g\in G$ and $y\in N^-$ such that $(g\cdot B_1,g \cdot A^2,g\cdot B_3)=(B,N^-,yB)$. We call $y$ the \emph{angle unipotent} associated with the triple $(B_1,A^2,B_3)$ and denote it by $u(B_1,A^2,B_3)$. Note that if $B_1\xrightarrow{w}B_3$, then $u(B_1,A^2,B_3)$ is an element of $N^-(w):=N^-\cap BwB$.

\begin{lemma}\label{lem: in N^-(w_0)} An element $g\in N^-$ is in $N^-(w_0)$ if and only if $\Delta_{w_0\Lambda_i}(g)\neq 0$ for all $\Lambda_i$.
\end{lemma}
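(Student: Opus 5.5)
The plan is to prove both directions by passing from minors to the representation-theoretic meaning of the Bruhat cell $Bw_0B$. Throughout, $g\in N^-$ is arbitrary (over $\mc{K}$, or any field, since the argument is purely group-theoretic).

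For the forward direction, suppose $g\in N^-(w_0)=N^-\cap Bw_0B$.
\begin{itemize}
\item Write $g=b_1\overline{w}_0b_2$ with $b_1,b_2\in B$.
\item Then $\Delta_{w_0\Lambda_i}(g)=\la \xi_{w_0\Lambda_i}, b_1\overline{w}_0b_2\cdot v_{\Lambda_i}\ra$.
\item Since $b_2\cdot v_{\Lambda_i}$ is a nonzero scalar multiple of $v_{\Lambda_i}$, and $\overline{w}_0 v_{\Lambda_i}=v_{w_0\Lambda_i}$ is a lowest weight vector, we get $b_1\overline{w}_0b_2\cdot v_{\Lambda_i}\in \Lambda_i(b_2)\,b_1\cdot v_{w_0\Lambda_i}$.
\item Acting by $b_1\in B$ on $v_{w_0\Lambda_i}$ gives a nonzero multiple of $v_{w_0\Lambda_i}$ plus vectors of strictly higher weight.
\item The covector $\xi_{w_0\Lambda_i}$ only detects the weight $w_0\Lambda_i$ component.
\end{itemize}
Hence $\Delta_{w_0\Lambda_i}(g)$ is a product of torus characters of $b_1,b_2$, and so it is nonzero.

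For the converse, suppose $g\in N^-$ with $\Delta_{w_0\Lambda_i}(g)\ne 0$ for all $i$. The strategy is to convert to Gaussian decomposability and apply Lemma \ref{lem: Gaussian decomposable}.
\begin{itemize}
\item Consider $g'=\overline{w}_0^{-1}g$. Since $\xi_{w_0\Lambda_i}=\overline{w}_0\cdot\xi_{\Lambda_i}$, we have $\Delta_{\Lambda_i}(g')=\la \overline{w}_0^{-1}\xi_{w_0\Lambda_i}, g\cdot v_{\Lambda_i}\ra=\Delta_{w_0\Lambda_i}(g)\neq 0$.
\item By Lemma \ref{lem: Gaussian decomposable}, $g'\in B^-B$, so $g\in \overline{w}_0B^-B=\overline{w}_0B^-\overline{w}_0^{-1}\cdot\overline{w}_0B=B\overline{w}_0B$.
\end{itemize}
Therefore $g\in N^-\cap Bw_0B=N^-(w_0)$.

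The only point needing care is the sign and normalization of $\xi_{w_0\Lambda_i}$ versus $\overline{w}_0^{-1}$ acting on the dual. Since we only need nonvanishing, scalars are irrelevant. The key fact used is that $\overline{w}_0$ normalizes $H$ and conjugates $B^-$ to $B$. This step is routine; the main obstacle is essentially just keeping the conventions (left versus right actions, dual vectors) consistent. Otherwise the converse is entirely a consequence of Lemma \ref{lem: Gaussian decomposable}, and the forward direction also follows by running the same identity backwards: $g\in Bw_0B$ implies $\overline{w}_0^{-1}g\in B^-B$, so all $\Delta_{\Lambda_i}(\overline{w}_0^{-1}g)\neq0$.
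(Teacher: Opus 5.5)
Your proof is correct and is essentially the paper's argument. The paper uses the identity $\Delta_{w_0\Lambda_i}(g)=\Delta_{\Lambda_i}(\overline{w}_0^{-1}g)$ together with Lemma~\ref{lem: Gaussian decomposable} (an if-and-only-if) and $\overline{w}_0B^-\overline{w}_0^{-1}=B$ to get both directions at once, exactly as in your converse and your closing remark; your direct weight computation for the forward direction is a hands-on version of the same fact. One small slip: the middle term of your identity should be $\la \xi_{\Lambda_i}, \overline{w}_0^{-1}g\cdot v_{\Lambda_i}\ra$ (equivalently $\la \xi_{w_0\Lambda_i}, g\cdot v_{\Lambda_i}\ra$), not $\la \overline{w}_0^{-1}\xi_{w_0\Lambda_i}, g\cdot v_{\Lambda_i}\ra$, which equals $\Delta_{\Lambda_i}(g)$.
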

\begin{proof} Note that 
\[
\Delta_{w_0\Lambda_i}(g)=\la \xi_{w_0\Lambda_i},g\cdot v_{\Lambda_i}\ra=\la \overline{w}_0\cdot \xi_{\Lambda_i},g\cdot v_{\Lambda_i}\ra=\la \xi_{\Lambda_i}, \overline{w}_0^{-1}g\cdot v_{\Lambda_i}\ra=\Delta_i(\overline{w}_0^{-1}g).
\]
Thus, by Lemma \ref{lem: Gaussian decomposable}, $\Delta_{w_0\Lambda_i}(g)\neq 0$ for all $\Lambda_i$ implies that $\overline{w}_0^{-1}g\in B^{-1}B$, which is equivalent to $g\in (w_0B^-w_0)w_0B=Bw_0B$.
\end{proof}





\begin{lemma}[{\cite[Corollary 3.22]{ShenWeng}}]\label{lem: Lusztig variables in terms of A variables} Suppose $\begin{tikzpicture}[scale=0.6,baseline=5] \node (1) at (0,0) [] {$B_1$};
\node (2) at (1,2) [] {$A^2$};
\node (3) at (2,0) [] {$B_3$};
\draw (1) -- (2);
\draw (2) -- (3);
\draw [->] (1) -- node [below] {$s_i$} (3);
\end{tikzpicture}$. Let $A_1\xRightarrow{s_i}A_3$ be compatible lifts of $B_1\xrightarrow{s_i}B_3$. Let $\Delta_j:=\Delta_{\Lambda_j}(A^2,A_1)$ and $\Delta'_j:=\Delta_{\Lambda_j}(A^2,A_3)$. Then
\begin{enumerate}
    \item $\Delta_j=\Delta'_j$ for $j\neq i$.
    \item $u(B_1,A^2,B_3) = y_i(f)$ for
    \[
    f=\frac{\prod_{j\neq i} \Delta_j^{-a_{ji}}}{\Delta_i\Delta'_i}
    \]
    where $a_{ji}$ is the Cartan matrix of $G$.
\end{enumerate}
\end{lemma}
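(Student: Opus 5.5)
We will reduce to an explicit computation in the rank one subgroup attached to $i$. First we normalize: by definition of the angle unipotent there is $g\in G$ with $(g\cdot B_1,g\cdot A^2,g\cdot B_3)=(B,N^-,yB)$, where $y=u(B_1,A^2,B_3)\in N^-$. Since both $\Delta_{\Lambda_j}(\cdot,\cdot)$ and the compatibility relation are invariant under the diagonal $G$-action, we may replace everything by its $g$-translate. Because $B\xrightarrow{s_i}yB$, we get $y\in N^-(s_i)=N^-\cap Bs_iB$. This set is $\{y_i(f)\mid f\neq 0\}$, which proves the shape $u(B_1,A^2,B_3)=y_i(f)$. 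It remains to identify $f$ and to compute the minors.

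Next we parametrize the lifts. Since $A_1$ lies over $B$, we can write $A_1=bN$ with $b\in B=HN$, so $A_1=hN$ for a unique $h\in H$. Then $\Delta_j=\Delta_{\Lambda_j}(N^-,hN)=\Delta_{\Lambda_j}(h)=\Lambda_j(h)$. Compatibility $A_1\xRightarrow{s_i}A_3$ means $A_3=hn\overline{s}_iN$ for some $n\in N$. Modulo right multiplication by $\overline{s}_i^{-1}N\overline{s}_i\cap N$, we can take $n=x_i(c)$, so $A_3=hx_i(c)\overline{s}_iN$. The requirement that $A_3$ lie over $B_3=y_i(f)B$ pins down $c$. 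Using the $\SL_2$ identity $x_i(c)\overline{s}_i=y_i(c^{-1})\,\check\alpha_i(c^{\pm1})\,x_i(\ast)$ (sign and exponent convention to be checked with $\overline{s}_i=x_i(-1)y_i(1)x_i(-1)$), we obtain
\[ hx_i(c)\overline{s}_iB=h\,y_i(c^{-1})B=y_i\bigl(\alpha_i(h)^{-1}c^{-1}\bigr)B .\]
Hence $c\neq 0$ and $f=\alpha_i(h)^{-1}c^{-1}$.

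Now we compute $\Delta'_j$. The same identity gives the Gaussian decomposition
\[ hx_i(c)\overline{s}_i=\bigl(hy_i(c^{-1})h^{-1}\bigr)\cdot h\check\alpha_i(c)\cdot x_i(\ast), \]
so $[hx_i(c)\overline{s}_i]_0=h\check\alpha_i(c)$. Therefore $\Delta'_j=\Lambda_j(h)\,c^{\delta_{ij}}$. This gives $\Delta'_j=\Delta_j$ for $j\neq i$, which is (1), and $\Delta'_i=\Delta_i c$. Finally, writing $\alpha_i=\sum_j a_{ji}\Lambda_j$, so that $\alpha_i(h)=\Delta_i^2\prod_{j\neq i}\Delta_j^{a_{ji}}$, we get
\[ f=\frac{1}{\alpha_i(h)\,c}=\frac{\prod_{j\neq i}\Delta_j^{-a_{ji}}}{\Delta_i\cdot\Delta_i c}=\frac{\prod_{j\neq i}\Delta_j^{-a_{ji}}}{\Delta_i\Delta'_i}, \]
which is (2).

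The only delicate step is the $\SL_2$ computation: fixing the exact signs and the exponent of $c$ in $x_i(c)\overline{s}_i=y_i(c^{-1})\check\alpha_i(c^{\pm1})x_i(\ast)$ for the chosen representative $\overline{s}_i$. We plan to settle this with a direct $2\times 2$ matrix computation; an error there would flip $c\leftrightarrow c^{-1}$ or introduce a sign, so this check must be done carefully. The result does not depend on the choice of lifts, since rescaling $h$ changes the numerator and the denominator of the formula consistently.
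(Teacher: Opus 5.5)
Your proof is correct. The paper gives no proof of this lemma; it quotes it from \cite[Corollary 3.22]{ShenWeng}. Your argument is the rank-one analogue of the paper's own proof of Lemma~\ref{lem: special case}: normalize to $(B,N^-,yB)$, write $A_1=hN$ and $A_3=hx_i(c)\overline{s}_iN$, and read off $f$ and all $\Delta'_j$ from one Gaussian decomposition. So it gives a self-contained justification in the paper's own style.

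The step you flagged works out exactly as you used it. In the rank-one $\SL_2$ one has $\overline{s}_i=x_i(-1)y_i(1)x_i(-1)=\begin{pmatrix}0&-1\\1&0\end{pmatrix}$, hence
\[
x_i(c)\overline{s}_i=\begin{pmatrix}c&-1\\1&0\end{pmatrix}=y_i(c^{-1})\,\check\alpha_i(c)\,x_i(-c^{-1}).
\]
This gives $f=\alpha_i(h)^{-1}c^{-1}$ and $\Delta'_i=c\,\Delta_i$, with no extra signs.

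Two minor points.
\begin{enumerate}
\item The expansion $\alpha_i=\sum_j a_{ji}\Lambda_j$ matches the paper's convention; it is the identity used in the proof of Lemma~\ref{lem: special case}. It needs $G$ simply connected so that the $\Lambda_j$ are characters of $H$, which is the standing assumption there.
\item You do not need the separate claim $N^-\cap Bs_iB=\{y_i(f)\mid f\neq 0\}$. Your computation already shows $B_3=hx_i(c)\overline{s}_iB=y_i\bigl(\alpha_i(h)^{-1}c^{-1}\bigr)B$, and $n\mapsto nB$ is injective on $N^-$, so $y$ is identified directly. Here $c\neq 0$ is forced: $c=0$ would give $B_3=\overline{s}_iB$, which does not have Tits codistance $e$ with $A^2=N^-$ because $\Delta_{\Lambda_i}(\overline{s}_i)=0$.
\end{enumerate}
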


\begin{proposition}\label{prop: vanishing minor implies special angle unipotent} Suppose $a_{ij}=a_{ji}=-1$ in the Cartan matrix and suppose $\begin{tikzpicture}[scale=0.6,baseline=5] \node (1) at (0,0) [] {$B_1$};
\node (2) at (1.5,2) [] {$A^2$};
\node (3) at (3,0) [] {$B_3$};
\draw (1) -- (2);
\draw (2) -- (3);
\draw [->] (1) -- node [below] {$s_is_js_i$} (3);
\end{tikzpicture}$. Let $A_1\xRightarrow{s_is_js_i}A_3$ be compatible lifts of $B_1\xrightarrow{s_is_js_i}B_3$, and let $A_4$, $A_5$, $A_6$, and $A_7$ be decorated flags that fit into the diagram below (the existence and uniqueness of these flags follow from Lemma \ref{lem: relative position}).
\[
\begin{tikzpicture}
    \node (1) at (0,0) [] {$A_1$};
    \node (4) at (1.5,0.5) [] {$A_4$};
    \node (5) at (3,0.5) [] {$A_5$};
    \node (3) at (4.5,0) [] {$A_3$};
    \node (6) at (1.5,-0.5) [] {$A_6$};
    \node (7) at (3,-0.5) [] {$A_7$};
    \draw [-implies, double, double distance=1pt] (1) -- node [above left] {$s_i$} (4);
    \draw [-implies, double, double distance=1pt] (4) -- node [above] {$s_j$} (5);
    \draw [-implies, double, double distance=1pt] (5) -- node [above right] {$s_i$} (3);
    \draw [-implies, double, double distance=1pt] (1) -- node [below left] {$s_j$} (6);
    \draw [-implies, double, double distance=1pt] (6) -- node [below] {$s_i$} (7);
    \draw [-implies, double, double distance=1pt] (7) -- node [below right] {$s_j$} (3);
\end{tikzpicture}
\]
Then the following are equivalent.
\begin{enumerate}
    \item $u(B_1,A^2,B_3)=y_i(1)y_j(-f)y_i(-1)y_j(f)$ for some $f\neq 0$.
    \item $\Delta_{\Lambda_i}(A^2,A_4)=\Delta_{\Lambda_i}(A^2,A_5)=\Delta_{\Lambda_j}(A^2,A_6)=\Delta_{\Lambda_j}(A^2,A_7)=0$.
    \item At least one of $\Delta_{\Lambda_i}(A^2,A_4)$ and $\Delta_{\Lambda_i}(A^2,A_5)$ vanishes, and at least one of $\Delta_{\Lambda_j}(A^2,A_6)$ and $\Delta_{\Lambda_j}(A^2,A_7)$ vanishes.
\end{enumerate}
\end{proposition}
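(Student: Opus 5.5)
We normalize the configuration with the diagonal $G$-action so that $(B_1,A^2,B_3)=(B,N^-,yB)$, where $y=u(B_1,A^2,B_3)\in N^-(s_is_js_i)$. Since $a_{ij}=a_{ji}=-1$, everything happens inside the rank-two Levi subgroup of type $A_2$, so we may compute in $\SL_3$ with $i=1$, $j=2$. We take the compatible lifts $A_1=N$ and $A_3=y\overline{w}N$ with $w=s_is_js_i$, after adjusting $y$ by an element of $B$ on the right. Then $A_4,A_5$ are the unique intermediate decorated flags on the path $s_i,s_j,s_i$ from Lemma \ref{lem: relative position}(2), and $A_6,A_7$ are the ones on the path $s_j,s_i,s_j$. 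The four minors in (2) then become explicit functions on $N^-(w)$, namely $\Delta_{\Lambda_i}(g_4)$, $\Delta_{\Lambda_i}(g_5)$, $\Delta_{\Lambda_j}(g_6)$ and $\Delta_{\Lambda_j}(g_7)$, where $g_k$ is a representative with $A_k=g_kN$.

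The key step is to compute these four minors for a general $y\in N^-(w)$. We write
\[
y=\begin{pmatrix}1&0&0\\ p&1&0\\ q&r&1\end{pmatrix},
\]
with the condition $y\in N^-(w_0)$ given by Lemma \ref{lem: in N^-(w_0)}: $q\neq 0$ and $pr-q\neq 0$. Concretely, $A_4=yx\,\overline{s}_iN$ for the appropriate $x$, and similarly for the others. Alternatively, and more cleanly, we use Lemma \ref{lem: Lusztig variables in terms of A variables} step by step. The generic case is $y=y_i(f_1)y_j(f_2)y_i(f_3)$ with all $f_k\neq 0$; there the chamber minors along the $iji$ path are monomials in the $f_k$. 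I expect to find
\[
\Delta_{\Lambda_i}(A^2,A_4)\sim p,\qquad \Delta_{\Lambda_i}(A^2,A_5)\sim pr-q,\qquad \Delta_{\Lambda_j}(A^2,A_6)\sim r,\qquad \Delta_{\Lambda_j}(A^2,A_7)\sim q-pr \ \text{or} \ q,
\]
up to unit factors and signs. The precise identification (which of $p$, $r$, $q$, $pr-q$ occurs at which flag) is the main obstacle. I would settle it by computing the Gaussian decompositions of $y\,\overline{s}_i$, $y\,\overline{s}_i\overline{s}_j$, and so on, directly in $\SL_3$. The sanity check is that on $N^-(w)$ the values $q$ and $pr-q$ are the nonvanishing $w_0$-minors. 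So the vanishing pair on the $iji$ path must be $\{p,\ldots\}$, with one member identically nonzero, and similarly for the $jij$ path.

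Granting the computation, suppose the minors at $A_4,A_5$ are $p$ and $(pr-q)/q$-type expressions in which only the factor $p$ can vanish, and those at $A_6,A_7$ similarly reduce to $r$. Then (2) and (3) each become equivalent to $p=r=0$. By the matrix in the proof of Lemma \ref{lem: change of word in special case}, $p=r=0$ with $q=f\neq0$ is exactly
\[
y=y_i(1)y_j(-f)y_i(-1)y_j(f).
\]
This gives (1)$\Leftrightarrow$(2)$\Leftrightarrow$(3). The implication (2)$\Rightarrow$(3) is trivial in any case. For (3)$\Rightarrow$(1), the point is that within each pair the two minors vanish simultaneously, since they differ by a nonvanishing factor on $N^-(w)$. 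Finally, I would check that the normalizations of the compatible lifts only rescale the minors by units, so that vanishing is independent of choices.
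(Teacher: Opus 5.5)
Your route is to reduce to the rank-two Levi, normalize $(B_1,A^2,B_3)=(B,N^-,yB)$, and compute the four minors as explicit functions of $y\in N^-(s_is_js_i)$. This differs from the paper's proof, which works with Tits codistances: it uses Lemma \ref{lem: change of word in special case}(2) for (1)$\Rightarrow$(2) and an auxiliary chain $B_8,B_9,B_{10}$ for (3)$\Rightarrow$(1). Your route can be made to work, but as written the decisive step is missing: you never compute the minors, and the values you anticipate would break the argument.

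If $\Delta_{\Lambda_i}(A^2,A_5)\sim pr-q$ and $\Delta_{\Lambda_j}(A^2,A_7)\sim q$ or $q-pr$, these never vanish on $N^-(w)$. Then (2) could never hold and (1)$\Rightarrow$(2) would be false. Your ``sanity check'' that one member of each pair is identically nonzero asserts exactly this. Your closing claim, that the two minors in a pair differ by a unit, contradicts those predictions and is not justified anywhere, yet it is the entire content of (3)$\Rightarrow$(2).

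There are also two smaller slips in the setup. $A_3$ is not $y\overline{w}N$, whose underlying flag is $y\overline{w}B\neq B_3$; it is $x\overline{w}N$ with $x\in N$ and $x\overline{w}B=yB$. And $A_4$ has the form $n\overline{s}_iN$, not $yx\overline{s}_iN$.

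Two facts fill the gap.

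First, if $A_4=g_4N$, compatibility gives $A_5=g_4n\overline{s}_jN$ with $n\in N$. Since $n$ and $\overline{s}_j$ fix $v_{\Lambda_i}$ for $i\neq j$, you get $\Delta_{\Lambda_i}(A^2,A_5)=\Delta_{\Lambda_i}(A^2,A_4)$ identically. Likewise $\Delta_{\Lambda_j}(A^2,A_7)=\Delta_{\Lambda_j}(A^2,A_6)$, so (2)$\Leftrightarrow$(3) is immediate.

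Second, work in $\SL_3$ with $(i,j)=(1,2)$ and $A_1=N$; any other decoration of $B_1$ only rescales the minors by units. Solving $y\overline{w}^{-1}\in xB^-$ gives $x=\begin{pmatrix}1&r/(pr-q)&1/q\\0&1&p/q\\0&0&1\end{pmatrix}$. Factoring $x\overline{w}=x_1(a)\overline{s}_1x_2(b)\overline{s}_2x_1(c)\overline{s}_1=x_2(a')\overline{s}_2x_1(b')\overline{s}_1x_2(c')\overline{s}_2$ then yields $\Delta_{\Lambda_1}(A^2,A_4)=a=r/(pr-q)$ and $\Delta_{\Lambda_2}(A^2,A_6)=a'=p/q$. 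So the $i$-pair is governed by $r$ and the $j$-pair by $p$, the reverse of your guess, which is harmless by symmetry. Since $q\neq 0\neq pr-q$, condition (2) holds iff $p=r=0$, i.e. iff $y=\begin{pmatrix}1&0&0\\0&1&0\\q&0&1\end{pmatrix}=y_1(1)y_2(-q)y_1(-1)y_2(q)$. This is (1) with $f=q$.

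With these filled in, your computational proof is complete and noticeably shorter than the paper's flag-chasing. The cost is that it is coordinate-dependent rather than phrased in the configuration language the paper uses elsewhere.
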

\begin{proof} (1)$\implies$(2). By Lemma \ref{lem: possible Tits codistance}, the Tits codistances from $A^2$ to $A_4$ and $A_5$, respectively, can only be both $e$ or both $s_i$. We claim that they cannot be $e$; otherwise, by Lemma \ref{lem: Lusztig variables in terms of A variables}, there must exist non-zero scalars $f_1$, $f_2$, and $f_3$ such that $u(B_1,A^2,B_4)=y_i(f_1)$, $u(B_4,A^2,B_5)=y_j(f_2)$, and $u(B_5,A^2,B_3)=y_i(f_3)$ (here $B_4$ and $B_5$ are the underlying undecorated flags of $A_4$ and $A_5$). This implies that
\[
y_i(1)y_j(-f)y_i(-1)y_j(f)=u(B_1, A^2,B_3)=y_i(f_1)y_j(f_2)y_i(f_3),
\]
which is impossible by Lemma \ref{lem: change of word in special case} (2).

Now given that $A^2\overset{s_i}{\mbox{ --- }}A_4$, we can act diagonally by some element of $G$ and assume that $(A_1,A^2,A_4)=(tN, N^-,t\overline{s}_i^{-1}N)$. This implies that
\[
\Delta_{\Lambda_i}(A^2,A_4)=\Delta_{\Lambda_i}\left(t\overline{s}_i^{-1}\right)=\Lambda_i(t)\la \overline{s}_i\cdot \xi_{\Lambda_i}, v_{\Lambda_i}\ra=\Lambda_i(t)\la \xi_{s_i\Lambda_i},v_{\Lambda_i}\ra=0.
\]
The vanishing of the other three functions can be proved similarly as well. 

(2)$\implies$(3) is obvious.

(3)$\implies$(1). By the same argument as in the proof of (1)$\implies$(2) part, we know that when (3) is true, we must have the following Tits codistances as shown in the left picture of Figure \ref{fig: special unipotent computation}.
\begin{figure}[H]
\begin{tikzpicture}[scale=1.3]
    \node (1) at (0,0) [] {$B_1$};
    \node (4) at (1.5,0.8) [] {$B_4$};
    \node (5) at (3,0.8) [] {$B_5$};
    \node (3) at (4.5,0) [] {$B_3$};
    \node (6) at (1.5,-0.8) [] {$B_6$};
    \node (7) at (3,-0.8) [] {$B_7$};
    \draw [->] (1) -- node [above] {$s_i$} (4);
    \draw [->] (4) -- node [below] {$s_j$} (5);
    \draw [->] (5) -- node [above] {$s_i$} (3);
    \draw [->] (1) -- node [below left] {$s_j$} (6);
    \draw [->] (6) -- node [below] {$s_i$} (7);
    \draw [->] (7) -- node [below right] {$s_i$} (3);
    \node (2) at (2.25,3.5) [] {$A^2$};
    \draw (1) -- (2) -- node [below left] {$s_i$} (4);
    \draw (2) -- node [below] {\quad$s_j$}(6);
    \draw (2) -- node [below] {$s_j$\quad\quad} (7);
    \draw (5) -- node [below right] {$s_i$} (2) -- (3);
\end{tikzpicture}\hspace{1cm}
\begin{tikzpicture}[scale=1.3]
    \node (1) at (0,0) [] {$B_1$};
    \node (4) at (1.5,0.8) [] {$B_4$};
    \node (5) at (3,0.8) [] {$B_5$};
    \node (3) at (4.5,0) [] {$B_3$};
    \node (6) at (1.5,-0.8) [] {$B_6$};
    \node (7) at (3,-0.8) [] {$B_7$};
    \draw [->] (1) -- node [above] {$s_i$} (4);
    \draw [->] (4) -- node [below] {$s_j$} (5);
    \draw [->] (5) -- node [above] {$s_i$} (3);
    \draw [->] (1) -- node [below left] {$s_j$} (6);
    \draw [->] (6) -- node [below] {$s_i$} (7);
    \draw [->] (7) -- node [below right] {$s_i$} (3);
    \node (2) at (2.25,3.5) [] {$A^2$};
    \draw (1) -- (2) -- node [below left] {$s_i$} (4);
    \draw (2) -- node [below] {\quad$s_j$}(6);
    \draw (2) -- node [below] {$s_j$\quad\quad} (7);
    \draw (5) -- node [below right] {$s_i$} (2) -- (3);
    \node [blue] (8) at (1,0) [] {$B_8$};
    \draw [->,blue] (1) -- node [below right] {$s_i$} (8);
    \draw [blue] (8) -- (2);
    \draw [blue,->] (8) -- node [right] {$s_i$} (4);
    \node [blue] (9) at (2.25,0) [] {$B_9$};
    \node [blue] (10) at (3.5,0) [] {$B_{10}$};
    \draw [blue,->] (8) -- node [below] {$s_j$} (9);
    \draw [blue,->] (9) -- node [above] {$s_i$} (10);
    \draw [blue,->] (10) --node [above left] {$s_j$} (3);
    \draw [blue,->] (10) -- node [left] {$s_j$} (7);
    \draw [blue] (9) -- (2) -- (10);
\end{tikzpicture}
\caption{Illustration of the step (3)$\implies$(1).}\label{fig: special unipotent computation}
\end{figure}
Now let $B_8$ be the unique flag with $A^2\mbox{ --- }B_8$ and $u(B_1,A^2,B_8)=y_i(1)$. Note that $B_8\neq B_4$ because $A^2\overset{s_i}{\mbox{ --- }}B_4$; then by Lemma \ref{lem: possible Tits codistance} (3), we can deduce that $B_8\xrightarrow{s_i}B_4$. Then by Lemma \ref{lem: relative position} (1), we know that $B_8\xrightarrow{s_is_js_i}B_3$. Now let $B_9$ and $B_{10}$ be the unique flags that satisfy $B_8\xrightarrow{s_j}B_9\xrightarrow{s_i}B_{10}\xrightarrow{s_j}B_3$. 

We claim that the Tits codistance from $A^2$ to either of $B_9$ and $B_{10}$ is $e$. To see this, suppose $A^2\overset{s_j}{\mbox{ --- }}B_{10}$. Then by Lemma \ref{lem: relative position} (3), we have $B_{10}=B_7$; but then the Tits distance from $B_1$ to $B_{10}=B_7$ is $s_is_js_i$ if we go through $B_8$ and $B_9$, and is $s_js_i$ if we go through $B_6$, which is absurd. Thus, we must have $A^2\mbox{ --- }B_{10}$ and hence $A^2\mbox{ --- }B_9$ as well.

Since the chain $B_8\xrightarrow{s_j}B_9\xrightarrow{s_i} B_{10}\xrightarrow{s_j}B_3$ cannot be switched to a chain with reduced word $s_is_js_i$ while maintaining the general position between $A^2$ and the intermediate flags, we know that the angle unipotents $u(B_8,A^2,B_9)=y_j(-f)$ and $u(B_{10},A^2,B_3)=y_j(f)$ for some non-zero scalar $f$ (Proposition \ref{prop: Lusztig transform}).

Lastly, by the same argument as for the Tits distance from $B_8$ to $B_4$, we know that $B_7\xrightarrow{s_j}B_{10}$. But then we observe that the chain $B_1\xrightarrow{s_i}B_8\xrightarrow{s_j}B_9\xrightarrow{s_i}B_{10}$ cannot be switched to a chain with reduced word $s_js_is_j$ while maintaining the general position between $A^2$ and the intermediate flags. Thus, $u(B_9,A^2,B_{10})=u(B_1,A^2,B_8)^{-1}=y_i(-1)$. By multiplying the angle unipotents together, we can conclude that
\begin{align*}
u(B_1,A^2,B_3)=&u(B_1,A^2,B_8)\cdot u(B_8,A^2,B_9)\cdot u(B_9,A^2,B_{10})\cdot u(B_{10},A^2,B_3)\\
=&y_i(1)y_j(-f)y_i(-1)y_j(f)
\end{align*}
for some scalar $f\neq 0$.
\end{proof}

\begin{lemma}\label{lem: special case} Suppose $a_{ij}=a_{ji}=-1$ in the Cartan matrix and suppose $\begin{tikzpicture}[scale=0.6,baseline=5] \node (1) at (0,0) [] {$B_1$};
\node (2) at (1.5,2) [] {$A^2$};
\node (3) at (3,0) [] {$B_3$};
\draw (1) -- (2);
\draw (2) -- (3);
\draw [->] (1) -- node [below] {$s_is_js_i$} (3);
\end{tikzpicture}$ with
\[
u(B_1,A^2,B_3)=y_i(1)y_j(-f)y_i(-1)y_j(f)
\]
for some non-zero scalar $f$. Let $A_1\xRightarrow{s_is_js_i}A_3$ be compatible lifts of $B_1\xrightarrow{s_is_js_i}B_3$. Let $\Delta_j:=\Delta_{\Lambda_j}(A^2,A_1)$ and $\Delta'_j:=\Delta_{\Lambda_j}(A^2,A_3)$. Then
\begin{enumerate}
    \item $\Delta_k=\Delta'_k$ for $k\neq i,j$.
    \item $f=\dfrac{\prod_{k\neq i,j}\Delta_k^{-a_{ki}-a_{kj}}}{\Delta_j\Delta'_i}=-\dfrac{\prod_{k\neq i,j}\Delta_k^{-a_{ki}-a_{kj}}}{\Delta_i\Delta'_j}$.
\end{enumerate}
\end{lemma}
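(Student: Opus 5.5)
We reduce to an explicit computation by normalizing the configuration. Acting diagonally by $G$, we may assume $A^2=N^-$ and $B_1=B$. Then $B_3=yB$ with $y=u(B_1,A^2,B_3)=y_i(1)y_j(-f)y_i(-1)y_j(f)$. The compatible lifts are $A_1=tN$ for some $t\in H$ (any decoration of $B$ compatible with the rest), and $A_3=y\,t'N$, where $t'\in H$ is determined by the compatibility condition $A_1\xRightarrow{s_is_js_i}A_3$, i.e. $(tN)^{-1}yt'N\subset N\overline{s_is_js_i}N$. By Lemma~\ref{lem: change of word in special case}(3) and its proof, we can write
\[
y=x(1/f)\,\overline{s}_i\overline{s}_j\overline{s}_i\,b, \qquad b\in B,\quad b=[b]_0[b]_+ .
\]
Compatibility then forces $t'=[b]_0^{-1}\,t$ up to the $\overline{w}$-conjugation twist, more precisely $t'={}^{(s_is_js_i)^{-1}}\!t\cdot[b]_0^{-1}$. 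We therefore get $\Delta_k=\Lambda_k(t)$ and $\Delta'_k=\Delta_{\Lambda_k}(yt')=\Lambda_k([yt']_0)$.

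The key computation is the torus part $[b]_0$. Everything happens inside the rank-two Levi of type $A_2$ (the $\SL_3$ generated by $i,j$), with the torus of $G$ acting through characters. In $\SL_3$ we have $y=\begin{pmatrix}1&0&0\\0&1&0\\ f&0&1\end{pmatrix}$. Factoring $y=x(1/f)\overline{s}_1\overline{s}_2\overline{s}_1 b$ explicitly gives $[b]_0=\alpha_1^\vee(c_1)\alpha_2^\vee(c_2)$ with $c_1,c_2$ equal to $\pm f^{\pm1}$; concretely the diagonal entries are $(f,1,f^{-1})$ up to signs. Since $[b]_0$ lies in the image of $\check\alpha_i,\check\alpha_j$, we get $\Lambda_k([b]_0)=1$ for $k\ne i,j$. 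Moreover $s_is_js_i$ fixes $\Lambda_k$ for $k\neq i,j$. Together these give $\Delta'_k=\Delta_k$ for $k\neq i,j$, which proves (1).

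For (2), write $w=s_is_js_i$. We have $w\Lambda_i=\Lambda_i-\alpha_i-\alpha_j$ and $w\Lambda_j=\Lambda_j-\alpha_i-\alpha_j$. Expressing $\alpha_i+\alpha_j$ through fundamental weights via the Cartan matrix gives
\[
\alpha_i+\alpha_j=\Lambda_i+\Lambda_j+\sum_{k\ne i,j}(a_{ki}+a_{kj})\Lambda_k ,
\]
using $a_{ii}+a_{ji}=1$. Hence
\[
\Delta'_i=\pm f^{\pm1}\,\Delta_j^{-1}\prod_{k\ne i,j}\Delta_k^{-a_{ki}-a_{kj}},
\]
and similarly for $\Delta'_j$. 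Solving yields the two expressions for $f$. The relative sign between them comes from $[b]_0$ having $\Lambda_i$- and $\Lambda_j$-values $f$ and $-f$ respectively (or their inverses).

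We expect the main obstacle to be exactly this bookkeeping of signs and of $f$ versus $f^{-1}$. It requires carefully computing the Gauss factorization of $\overline{w}^{-1}x(1/f)^{-1}y$ in $\SL_3$ with the fixed representatives $\overline{s}_i=x_i(-1)y_i(1)x_i(-1)$. As a cross-check, one can instead use the decomposition through $B_8,B_9,B_{10}$ from the proof of Proposition~\ref{prop: vanishing minor implies special angle unipotent}. There one applies Lemma~\ref{lem: Lusztig variables in terms of A variables} to each step: $y_i(1)$, $y_j(-f)$, $y_i(-1)$, $y_j(f)$, all with $A^2$ in general position. Chaining the four formulas, and using part (1) of that lemma at each step, expresses $f$ through $\Delta_j,\Delta'_i$ (respectively $\Delta_i,\Delta'_j$). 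The intermediate minors cancel, giving both identities in (2), with the sign arising from $y_j(-f)$ versus $y_j(f)$.
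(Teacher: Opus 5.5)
Your main argument is the paper's proof. Both normalize to $(A_1,A^2,B_1)=(tN,N^-,B)$, factor $u(B_1,A^2,B_3)$ through $x(1/f)\overline{s}_i\overline{s}_j\overline{s}_i$, and read off $\Delta'_k=\Lambda_k\bigl(\check{\alpha}_i(1/f)\check{\alpha}_j(-1/f)\cdot s_is_js_i(t)\bigr)$, then finish with the expansion of $\alpha_i+\alpha_j$ in fundamental weights. The $\SL_3$ computation you deferred does come out as you predicted. With $y=y_i(1)y_j(-f)y_i(-1)y_j(f)$, the matrix $y^{-1}x(1/f)\overline{s}_i\overline{s}_j\overline{s}_i$ is upper triangular with diagonal $(1/f,-1,-f)$, so $[b]_0=\check{\alpha}_i(f)\check{\alpha}_j(-f)$, whose $\Lambda_i$- and $\Lambda_j$-values are exactly $f$ and $-f$ (no inverses). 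Reading both identities off this single factorization is slightly more direct than the paper, which gets the second identity from Lemma~\ref{lem: change of word in special case}(1) and the $i\leftrightarrow j$ symmetry.

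Do not rely on the $B_8,B_9,B_{10}$ cross-check as stated. Since $s_is_js_is_j$ is not reduced, chaining compatible lifts from $A_1$ ends at a decoration of $B_3$ different from $A_3$. Its $\Lambda_i$-minor happens to equal $\Delta'_i$, but its $\Lambda_j$-minor is $\alpha_i(t)\Delta'_j$. So the intermediate minors do not simply cancel to give the second identity.
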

\begin{proof} Let $y(f)=y_i(1)y_j(-f)y_i(-1)y_j(f)$ and $x(f)=x_j(1)x_i(-f)x_j(-1)x_i(f)$. Then there exists some $n\in N$ such that
\[
x(1/f)\overline{s}_i\overline{s}_j\overline{s}_i=y(f)\cdot \check{\alpha}_i(1/f)\cdot\check{\alpha}_j(-1/f) \cdot n.
\]
Without loss of generality, we may assume that 
\[
(A_1,A^2,A_3)=(tN, N^-,x(1/f)\overline{s}_i\overline{s}_j\overline{s}_it'N),
\]
with $t'=s_is_js_i(t)$. Note that $\Delta_k=\Lambda_k(t)$ and $\Delta'_k=\Lambda_k\left(\check{\alpha}_i(1/f)\cdot\check{\alpha}_j(-1/f) \cdot t'\right)$

Now for $k\neq i,j$,
\[
\Delta_k=\Lambda_k(t)=\Lambda_k(s_is_js_i(t))=\Lambda_k(t')=\Lambda_k\left(\check{\alpha}_i(1/f)\cdot\check{\alpha}_j(-1/f) \cdot t'\right)=\Delta'_k,
\]
proving claim (1). 

For claim (2), note that
\[
\Delta'_i=\Lambda_i\left(\check{\alpha}_i(1/f)\cdot\check{\alpha}_j(-1/f) \cdot t'\right)=\frac{1}{f}\Lambda_i(s_is_js_i(t))=\frac{1}{f}\Lambda_i(t)\alpha_i(t^{-1})\alpha_j(t^{-1}).
\]
In total, we have
\[
\frac{\prod_{k\neq i,j}\Delta_k^{-a_{ki}-a_{kj}}}{\Delta_j\Delta'_i}=\frac{\prod_{k\neq i,j} \Lambda_k(t)^{-a_{ki}-a_{kj}}}{\frac{1}{f}\Lambda_j(t)\Lambda_i(t)\alpha_i(t^{-1})\alpha_j(t^{-1})}=f,
\]
where the last equality follows from the fact that 
\begin{align*}
\alpha_i+\alpha_j-\Lambda_i-\Lambda_j-\sum_{k\neq i,j}(a_{ki}+a_{kj})\Lambda_k=&\left(\sum_k(a_{ki}+a_{kj})\Lambda_k\right)-\Lambda_i-\Lambda_j-\sum_{k\neq i,j}(a_{ki}+a_{kj})\Lambda_k\\=&0.
\end{align*}

The identity $f=-\dfrac{\prod_{k\neq i,j}\Delta_k^{-a_{ki}-a_{kj}}}{\Delta_i\Delta'_j}$ in (2) then follows by first applying Lemma \ref{lem: change of word in special case} (1) to $u(B_1,A^2,B_3)$ and then doing the same proof with $i$ and $j$ switched.
\end{proof}


\subsection{Twist map and its properties} Next, we give an alternative description of the twist map of Berenstein and Zelevinsky in terms of configurations of flags, and provide new geometric proofs of a few identities (e.g. \cite[Theorem 4.4]{Kam} and \cite[Proposition 5.4]{BZ}) that will be useful to us. Let us start by introducing a modified version of transposition on $G$.

\begin{definition} We define an anti-involution $t:g\mapsto g^t$ on $G$ called the \emph{(modified) transposition}. It is defined on the generators of $G$ by
\begin{align*}
x_i(f)^t:=&\left\{\begin{array}{ll} y_i(f) & \text{if $\left(X_N^{(r)},m,i\right)\neq \left(A_{2l}^{(2)},4,l\right)$},\\
y_i(-f) & \text{if $\left(X_N^{(r)},m,i\right)= \left(A_{2l}^{(2)},4,l\right)$},\end{array}\right.\\
h^t:=&h \quad \text{for all $h\in H$}.
\end{align*}
\end{definition}

\begin{remark} Note that except for the case $A_{2l}^{(2)}$ with $m=4$, our modified transposition coincides with the usual transposition on $G$. Thus, we will drop the word ``modified'' and refer to $g^t$ simply as the ``transposition'' of $g$.
\end{remark}

\begin{lemma}\label{lem: transposition of Weyl group element lift} For each Weyl group element $w$, there exists some element $h_w\in H(k)$ such that $\overline{w}^t=h_w\overline{w}^{-1}$. Moreover, if any cases other than $A_{2l}^{(2)}$ with $m=4$, $h_w=e$ for all $w$.
\end{lemma}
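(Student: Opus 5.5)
The plan is to reduce everything to simple reflections and then use that $g\mapsto g^t$ is an anti-homomorphism, so that $(gh)^t=h^tg^t$. Since $\overline{w}$ is defined as the product $\overline{s}_{i_1}\cdots\overline{s}_{i_k}$ along a reduced word of $w$, we get
\[
\overline{w}^t=\overline{s}_{i_k}^t\cdots\overline{s}_{i_1}^t .
\]
It therefore suffices to show that $\overline{s}_i^t=c_i\,\overline{s}_i^{-1}$ for some $c_i\in H(\rk)$, with $c_i=e$ unless we are in the case $A_{2l}^{(2)}$, $m=4$, and $i=l$.

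For a single reflection, apply the definition to $\overline{s}_i=x_i(-1)y_i(1)x_i(-1)$. The transposition sends $x_i(f)\mapsto y_i(\pm f)$, and it sends $y_i(f)\mapsto x_i(\pm f)$, since $t$ is an involution. Reversing the order of the three factors gives
\[
\overline{s}_i^t=y_i(\mp1)\,x_i(\pm1)\,y_i(\mp1).
\]
Everything now takes place inside the image of $\SL_2$ under $\varphi_i$, where $x_i(f)=\begin{pmatrix}1&f\\0&1\end{pmatrix}$ and $y_i(f)=\begin{pmatrix}1&0\\f&1\end{pmatrix}$.

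In the ordinary case the matrix computation gives
\[
\overline{s}_i=\begin{pmatrix}0&-1\\1&0\end{pmatrix},\qquad y_i(-1)x_i(1)y_i(-1)=\begin{pmatrix}0&1\\-1&0\end{pmatrix}=\overline{s}_i^{-1}.
\]
In the modified case, where the signs flip, we get
\[
y_i(1)x_i(-1)y_i(1)=\begin{pmatrix}0&-1\\1&0\end{pmatrix}=\overline{s}_i=\check{\alpha}_i(-1)\,\overline{s}_i^{-1}.
\]
So $c_l=\check{\alpha}_l(-1)$. Before using this I will verify that the modified $t$ is well defined, i.e. that it respects the relations of $G$. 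The point is that $x_l(f)\mapsto y_l(-f)$ is ordinary transposition composed with conjugation by a suitable element of $H$ of order dividing $4$, which acts by $-1$ on the $\alpha_l$ root subgroups. I take that as given from the definition.

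The one real issue is combining the factors: the torus elements must be moved past the $\overline{s}$'s. We have
\[
\overline{w}^t=\overline{s}_{i_k}^{-1}c_{i_k}\cdots \overline{s}_{i_1}^{-1}c_{i_1}.
\]
Using $c\,\overline{s}_j^{-1}=\overline{s}_j^{-1}\,s_j(c)$, each $c_{i_r}$ can be pushed to the left, where it picks up a Weyl conjugate. The result has the form $h_w\overline{w}^{-1}$ with
\[
h_w=\prod_r (s_{i_k}\cdots s_{i_{r+1}})(c_{i_r}).
\]
This is an element of $H(\rk)$. Since $\overline{w}$ is independent of the reduced word, $h_w=\overline{w}^t\,\overline{w}$ is also independent of the word, so no consistency check is needed.

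Outside the $A_{2l}^{(2)}$, $m=4$ case, every $c_i=e$, and hence $h_w=e$. I expect the main obstacle to be the well-definedness of the modified transposition as an anti-involution. Once that is granted, the rest is the $\SL_2$ computation above.
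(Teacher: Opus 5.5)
Your proposal is correct and follows the paper's proof. Both compute $\overline{s}_i^t$ inside the rank-one subgroup, which gives $\overline{s}_i^{-1}$ in general and $\overline{s}_l=\check{\alpha}_l(-1)\overline{s}_l^{-1}$ in the modified case, then reverse the product along a reduced word and push the torus factors to the left. Your switch from $c_i\overline{s}_i^{-1}$ to $\overline{s}_i^{-1}c_i$ when writing $\overline{w}^t$ is harmless, because $c_l=\check{\alpha}_l(-1)=\overline{s}_l^{2}$ commutes with $\overline{s}_l$.
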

\begin{proof} On the one hand, when $\left(X_N^{(r)},m,i\right)\neq \left(A_{2l}^{(2)},4,l\right)$, we have
\[
\overline{s}_i^t=\left(x_i(-1)y_i(1)x_i(-1)\right)^t=y_i(-1)x_i(1)y_i(-1)=\overline{s}_i^{-1}.
\]
On the other hand, when $\left(X_N^{(r)},m,i\right)=\left(A_{2l}^{(2)},4,l\right)$, we have
\[
\overline{s}_l^t=y_l(1)x_l(-1)y_l(1)=\overline{s}_l=\check{\alpha}_l(-1)\overline{s}_l^{-1}.
\]
Let $(i_1,i_2,\dots, i_k)$ be a reduced word for $w$. Then
\[
\overline{w}^t=\overline{s}_{i_l}^t\overline{s}_{i_{l-1}}^t\cdots \overline{s}_{i_1}^t=h_w\overline{s}_{i_l}^{-1}\overline{s}_{i_{l-1}}^{-1}\cdots \overline{s}_{i_1}^{-1}=h_w\overline{w}^{-1},
\]
where $h_w$ is obtained by pushing the possible factors of the form $\check{\alpha}_l(-1)$ all the way to the left.
\end{proof}

\begin{lemma} The Tits (co)distances satisfy the following properties under transposition:
\begin{enumerate}
    \item $B_1\xrightarrow{w}B_2$ if and only if $(B_2)^t\xrightarrow{w^{-1}}(B_1)^t$;
    \item $B^1 \mbox{ --- } B_1$ if and only if $(B_1)^t  \mbox{ --- } (B^1)^t$;
    \item $A^1 \varequals{10pt} A_1$ if and only if $(A_1)^t\varequals{10pt}(A^1)^t$.
\end{enumerate}
\end{lemma}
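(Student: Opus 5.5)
First I will fix what transposition does to flags. Since $g\mapsto g^t$ is an anti-involution fixing $H$ and exchanging $N$ with $N^-$, and hence $B$ with $B^-$, I define $(xB)^t:=B^-x^t$ and $(B^-x)^t:=x^tB$. I define the decorated versions $(xN)^t:=N^-x^t$ and $(N^-x)^t:=x^tN$ in the same way. These are well defined: $(xb)^t=b^tx^t$ with $b^t\in B^-$, and the same holds for $N$. In the modified case $A_{2l}^{(2)}$, $m=4$, the image of $N$ is still $N^-$, because $x_l(f)^t=y_l(-f)$ still lies in $N^-$. So all three claims should reduce to checking that transposition carries each of the double cosets used in the definitions to the appropriate double coset.

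For (1), write $B_1=xB$ and $B_2=yB$, so $B_1\xrightarrow{w}B_2$ means $x^{-1}y\in BwB$. Then $(B_2)^t=B^-y^t$ and $(B_1)^t=B^-x^t$. The relation $(B_2)^t\xrightarrow{w^{-1}}(B_1)^t$ means $y^t(x^t)^{-1}=(x^{-1}y)^t\in B^-w^{-1}B^-$. So it suffices to show $(BwB)^t=B^-w^{-1}B^-$. Using $B^t=B^-$ and Lemma \ref{lem: transposition of Weyl group element lift}, we get $(B\overline{w}B)^t=B^-\overline{w}^tB^-=B^-h_w\overline{w}^{-1}B^-=B^-w^{-1}B^-$, since $h_w\in H\subset B^-$. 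Since transposition is a bijection, the equivalence follows.

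For (2), take $B^1=B^-x$ and $B_1=yB$; the codistance condition is $xy\in B^-B$. The transposes are $(B_1)^t=B^-y^t$ and $(B^1)^t=x^tB$, so the condition to check is $y^tx^t=(xy)^t\in B^-B$. This holds because $(B^-B)^t=B^tB^{-t}=B^-B$.

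For (3), I will use that a pinning $A^1\varequals{10pt}A_1$ means there is $g$ with $A^1=N^-g^{-1}$ and $A_1=g^{-1}N$ (because $g$ acts on the right coset by $g^{-1}$). Transposing gives $(A_1)^t=N^-(g^{-1})^t$ and $(A^1)^t=(g^{-1})^tN$. Set $g'=((g^{-1})^t)^{-1}$. Then $g'\cdot(A_1)^t=N^-$ and $g'\cdot(A^1)^t=N$, so $(A_1)^t\varequals{10pt}(A^1)^t$. The converse follows by applying the same argument again, since $t$ is an involution. The only point needing care is the modified case, and there the only inputs used are $N^t=N^-$, $H^t=H$, and the existence of $h_w\in H$, all of which are already available.
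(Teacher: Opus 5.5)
\paragraph{Verdict.} Your strategy is the same as the paper's, which just notes that $t$ swaps $B\leftrightarrow B^-$ and $N\leftrightarrow N^-$ and that $\overline{w}^t=h_w\overline{w}^{-1}$ lifts $w^{-1}$. Your arguments for (1) and (2) are correct. Part (3), however, contains a coset error, so as written it does not prove the statement.

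\paragraph{The error in (3).} Under the paper's convention $g\cdot(N^-x)=N^-xg^{-1}$, the condition $g\cdot A^1=N^-$ forces $A^1=N^-g$, not $N^-g^{-1}$. Your choice gives $g\cdot A^1=N^-g^{-2}$. Likewise, your element is $g'=((g^{-1})^t)^{-1}=g^t$, and it acts on $(A_1)^t=N^-(g^{-1})^t$ to give $g'\cdot(A_1)^t=N^-(g^{-1})^t(g^{-1})^t=N^-(g^{-2})^t$. This is not $N^-$ in general. In both places you in effect let $g$ act on $N^-\backslash G$ by right multiplication by $g$ rather than by $g^{-1}$. The two slips cancel, so your computation is internally consistent. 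But what it proves is that transposition preserves the relation ``$A^1=N^-h$ and $A_1=hN$ for some $h$'', and this is not the pinning relation. It is not even stable under the diagonal $G$-action: $(N^-,N)$ satisfies it, but $k\cdot(N^-,N)=(N^-k^{-1},kN)$ does not when $k\in H$ has $k^2\neq e$.

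\paragraph{The fix.} From $A^1=N^-g$ and $A_1=g^{-1}N$ you get $(A_1)^t=N^-(g^{-1})^t$ and $(A^1)^t=g^tN$. Take $g':=(g^t)^{-1}=(g^{-1})^t$. Then $g'\cdot(A_1)^t=N^-(g^{-1})^tg^t=N^-(gg^{-1})^t=N^-$ and $g'\cdot(A^1)^t=(g^t)^{-1}g^tN=N$.

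A cleaner route mirrors (2). Write $A^1=N^-x$ and $A_1=yN$. A pinning is exactly the condition $xy\in N^-N$: if $xy=n^-n$, take $g=(n^-)^{-1}x$. Then (3) follows word for word as in (2), using $(xy)^t=y^tx^t$ and $(N^-N)^t=N^t(N^-)^t=N^-N$.
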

\begin{proof} These follow from the fact that $\overline{w}^t$ is a lift of $w^{-1}$, and transposition swaps $B$ with $B^-$, and swaps $N$ with $N^-$. 
\end{proof}

\begin{lemma}\label{lem: transposition} The automorphism $\sigma$ commutes with the transposition $t$.
\end{lemma}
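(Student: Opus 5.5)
Since $t$ is an anti-involution and $\sigma$ is a group automorphism of $G(\mc{K})$, both $\sigma\circ t$ and $t\circ\sigma$ are anti-automorphisms. It therefore suffices to check that they agree on a generating set of $G(\mc{K})$. I take the generators $x_i(f)$, $y_i(f)$ for $i\in I$, $f\in\mc{K}$, together with $H(\mc{K})$. Here I extend $t$ to $G(\mc{K})$ by base change, acting trivially on scalars. Since $t$ is an anti-involution with $x_i(f)^t$ prescribed, we also have $y_i(f)^t=x_i(f)$, respectively $x_l(-f)$ in the exceptional case $(A_{2l}^{(2)},4,l)$.

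\textbf{The torus.} For $h'\in H(\mc{K})$ we have $(h')^t=h'$. The map $\sigma=\tau\circ\mathrm{Ad}_{\epsilon^h}\circ\gamma$ preserves $H(\mc{K})$, and $\mathrm{Ad}_{\epsilon^h}$ is trivial on $H$. Hence $\sigma(h')^t=\sigma(h')=\sigma((h')^t)$.

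\textbf{The unipotent generators.} Using the explicit formula $\sigma(x_i(f))=x_{\tau(i)}(\epsilon^{\alpha_i(h)}\gamma(f))$, we get in the non-exceptional case
\[
\sigma(x_i(f))^t=y_{\tau(i)}\bigl(\epsilon^{\alpha_i(h)}\gamma(f)\bigr),\qquad \sigma(x_i(f)^t)=\sigma(y_i(f))=y_{\tau(i)}\bigl(\epsilon^{-\alpha_i(h)}\gamma(f)\bigr).
\]
So commutation on $x_i$ amounts to the condition
\[
\epsilon^{2\alpha_i(h)}=c_i c_{\tau(i)},
\]
where $c_i\in\{\pm1\}$ is the sign in $x_i(f)^t=y_i(c_if)$. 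The same computation handles the $y_i$. I verify this condition case by case using \eqref{eq: h}.

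\emph{Case $\kappa=0$.} Then $h=0$ and all $c_i=1$, so the condition holds trivially.

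\emph{Cases $A_{2\ell-1}^{(2)}$ and $D_{\ell+1}^{(2)}$ with $\kappa\neq0$.} Here $m=2$ and $\epsilon=-1$, so $\epsilon^{2\alpha_i(h)}=1$. Again all $c_i=1$.

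\emph{Case $A_{2l}^{(2)}$, $m=4$.} Here $\epsilon$ is a primitive 4th root of unity, $\epsilon^2=-1$, and $h=\check\omega_l+\check\omega_{l+1}$. Then $\alpha_i(h)=1$ exactly for $i\in\{l,l+1\}$ and $0$ otherwise. For $i\notin\{l,l+1\}$ both sides of the condition equal $1$. For $i\in\{l,l+1\}$ the left side is $-1$, and the right side is $c_lc_{l+1}=(-1)(1)=-1$. This is precisely why the transposition was modified by a sign at $l$ but not at $l+1$.

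\textbf{Expected obstacle.} The main point needing care is bookkeeping. One must confirm that $\gamma$ commutes with $t$, which holds because $t$ is defined over $\rk$. One must also check that the sign convention for $y_l$ matches after applying $\tau$, which swaps $l$ and $l+1$: the product $c_lc_{\tau(l)}$ is symmetric, so the check for $i=l$ and $i=l+1$ is the same. The final step is to note that the two anti-automorphisms agree on generators, hence everywhere.
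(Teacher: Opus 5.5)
Your proposal is correct and is essentially the paper's proof. Both check $\sigma\circ t=t\circ\sigma$ on $H$ and on the generators $x_i(f),y_i(f)$, splitting into cases according to $m$ (equivalently, according to whether $h=0$). The only nontrivial case is $A_{2l}^{(2)}$ with $m=4$ and $i\in\{l,l+1\}$, which works because $\epsilon^2=-1$ matches the single sign built into the modified transposition. Your condition $\epsilon^{2\alpha_i(h)}=c_ic_{\tau(i)}$ is simply a compact way of organizing the paper's explicit computations.
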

\begin{proof} This identity is obviously true on $H$. Thus, it suffices to verify it on the generators $x_i(f)$ and $y_i(f)$. When $m=2$, we have $\epsilon=-1$ and therefore
\begin{align*}
\sigma(x_i(f)^t)=\sigma(y_i(f))=&y_{\tau(i)}\left((-1)^{-\alpha_i(h)}\gamma(f)\right)=y_{\tau(i)}\left((-1)^{\alpha_i(h)}\gamma(f)\right)\\
=&x_{\tau(i)}\left((-1)^{\alpha_i(h)}\gamma(f)\right)^t=\left(\sigma(x_i(f))\right)^t.
\end{align*}

When $m=3$, $h=0$ by Eq. \eqref{eq_h}, and therefore $\alpha_i(h)=0$ for all $i$. It then follows that
\[
\sigma(x_i(f)^t)=\sigma(y_i(f))=y_{\tau(i)}(\gamma(f))=x_{\tau(i)}(\gamma(f))^t=(\sigma(x_i(f)))^t.
\]

When $m=4$, $\alpha_i(h)=0$ for all $i$ except $i=l,l+1$. Thus, for $i\neq l,l+1$, we can show $\sigma(x_i(f))^t=(\sigma(x_i(f)))^t$ in the same way as the $m=3$ case above. For $i=l$, we have
\[
\sigma(x_l(f)^t)=\sigma(y_l(-f))=y_{l+1}(-\epsilon^{-1}\gamma(f))=x_{l+1}(\epsilon\gamma(f))^t=(\sigma(x_l(f)))^t,
\]
and for $i=l+1$, we have
\[
\sigma(x_{l+1}(f)^t)=\sigma(y_{l+1}(f))=y_{l}(\epsilon^{-1}\gamma(f))=x_l(\epsilon\gamma(f))^t=(\sigma(x_{l+1}(f)))^t.
\]

Similar computations prove that $\sigma(y_i(f)^t)=\left(\sigma(y_i(f))\right)^t$ in all cases as well.
\end{proof}

\begin{proposition}\label{prop: rotation} Suppose we have the following configuration of a quadruple of flags associated with $G$.
\[
\begin{tikzpicture}[scale=1.2]
    \node (1) at (2,2) [] {$A^1$};
    \node (2) at (2,0) [] {$A_2$};
    \node (3) at (0,0) [] {$B_3$};
    \node (4) at (0,2) [] {$B^4$};
\draw [double, double distance=1pt] (2) -- (1);
    \draw [->] (3) -- node [below] {$w$} (2);
    \draw [->] (4) -- node [above] {$w^{-1}$} (1);
    \draw (3) -- node [left] {$w^{-1}$} (4);
    \draw (1) -- (3);
    \draw (2) -- (4);
\end{tikzpicture}
\]
Let $g:=u(B_3,A^1,B_2)$ and $y:=u\left((B^1)^t, (A_2)^t, (B^4)^t\right)$. Then 
\begin{enumerate}
  \item $B^4$ is uniquely determined by the pair $(B^1,B_3)$.
  \item $B_3$ is uniquely determined by the pair $(B^4, B_2)$.
    \item both $y$ and $g$ are in $N^-(w)$ and they are related by
\[
y=[g^tw]_- \quad \text{and} \quad g=\left([wy^{-1}]_+^t\right)^{-1}.
\]
\end{enumerate}
\end{proposition}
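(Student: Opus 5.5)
The plan is to normalize the configuration using the diagonal $G$-action, read off both angle unipotents in terms of explicit group elements, and then compare them. Parts (1) and (2) come first, from Lemma \ref{lem: relative position}.

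\textbf{Parts (1) and (2).} For (1), use Lemma \ref{lem: relative position}(4), with the decomposition $w^{-1}=w^{-1}\cdot e$ on the left side of the configuration. Here $B^1$ denotes the flag underlying $A^1$. We know $B^1\mbox{ --- }B_3$ (codistance $e$), and $B^4$ sits in the chain $B^4\xrightarrow{w^{-1}}B^1$ with $B^4\overset{w^{-1}}{\mbox{ --- }}B_3$. After reversing the arrow, this is exactly the situation in which the lemma produces an intermediate flag, and that flag is unique. So $B^4$ is determined by $(B^1,B_3)$.

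For (2), argue symmetrically with Lemma \ref{lem: relative position}(3). Use the pair $B^4\mbox{ --- }B_2$, where $B_2$ is the flag underlying $A_2$, the codistance $w^{-1}$ between $B^4$ and $B_3$, and $B_3\xrightarrow{w}B_2$. The length additivity $l(w^{-1})=l(w^{-1})+l(e)$ is trivial, so the lemma applies directly. The only care needed is to match the orientation conventions of the lemma, which I would check by writing the cosets out.

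\textbf{Normalization for part (3).} Since $A^1\varequals{10pt}A_2$ is a pinning, we may assume $(A^1,A_2)=(N^-,N)$ after acting by some element of $G$. The stabilizer of this pair is trivial, so the normalization is rigid. The flag $B_3$ is in general position with $N^-$ and satisfies $B_3\xrightarrow{w}B$.

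To compute $g=u(B_3,A^1,B_2)$, translate by an element $x$ with $x\cdot(B_3,N^-,B)=(B,N^-,gB)$. Equivalently, $B_3=g^{-1}B$ with $g\in N^-\cap BwB=N^-(w)$, and the stabilizer computation forces $x=g$. Thus $B_3=g^{-1}B$.

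Next, find $B^4$ explicitly. It is the unique right $B^-$-coset with $B^4\xrightarrow{w^{-1}}B^-$, with $B^4$ in general position with $B$, and with codistance $w^{-1}$ to $g^{-1}B$. I expect $B^4=B^-\,\overline{w}^{-1}n$ for a suitable $n\in N$. The conditions translate to $\overline{w}^{-1}n g^{-1}\in B^-w^{-1}B$, and $n$ is pinned down by the Gauss decomposition of $\overline{w}^{-1}$ times $g^{-1}$.

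\textbf{Transposing and comparing.} Apply the transposition. By the lemma on Tits (co)distances under transposition, $(B^1)^t=B$, $(A_2)^t=N^-$ and $(B^4)^t=n^t\,(\overline{w}^{-1})^tB$. By Lemma \ref{lem: transposition of Weyl group element lift}, this equals $n^t h\,\overline{w}B$ for some $h\in H$. The triple is therefore already normalized as $(B,N^-,yB)$ with $y\in N^-$, and we need $y\in N^-\cap n^t\overline{w}B$.

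Writing $g^t\overline{w}$ and Gauss-decomposing it should identify $y=[g^t\overline{w}]_-$. For this I would use that the condition defining $n$ is the transpose of the condition that $g^t\overline{w}\in N^-y^{-1}\cdots$, and I expect the $H$-factor $h_w$ to be absorbed into $B$. Membership $y\in N^-(w)$ follows from the Tits distance $B\xrightarrow{w}(B^4)^t$.

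The inverse formula $g=([\overline{w}y^{-1}]_+^t)^{-1}$ follows by running the same argument from the other side, or by solving $y=[g^t\overline{w}]_-$. Concretely, from $g^t\overline{w}=y\,b$ we get $\overline{w}y^{-1}\cdots$. Alternatively it follows from the symmetry of the configuration under transposition combined with (1) and (2).

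\textbf{Main obstacle.} I expect the hardest part to be the bookkeeping of left versus right cosets and of the $H$-factor $h_w$ in the modified transposition, which differs in the $A_{2l}^{(2)}$, $m=4$ case. The step to be most careful with is verifying that $[g^t\overline{w}]_-$ lands in $BwB$ and that the torus part does not change the $N^-$-part. I would check this first in $\SL_2$ with $w=s_i$, where $g=y_i(f)$ and everything is explicit.
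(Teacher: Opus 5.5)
Parts (1) and (2) are fine. Lemma \ref{lem: relative position}(4), resp.\ (3), applied with the trivial factorization of $w^{-1}$, gives exactly the existence and uniqueness that the paper obtains by citing \cite[Lemma 2.5]{ShenWeng}. Your normalization $(A^1,A_2,B_3)=(N^-,N,g^{-1}B)$ with $g\in N^-(w)$ is also correct.

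The gap is in (3), at the step that carries all the content. Your ansatz $B^4=B^-\overline{w}^{-1}n$ with $n\in N$ cannot hold for $w\neq e$. Since $\overline{w}^{-1}n\in\overline{w}^{-1}B\subset B^-w^{-1}B$, such a coset has Tits codistance $w^{-1}$ to $B_2=B$, which contradicts the hypothesis $B^4\mbox{ --- }B_2$. The same defect reappears after transposing. The flag $n^t\overline{w}B\subset N^-\overline{w}B$ is at codistance $w$ from $(A_2)^t=N^-$, so the transposed triple is not of the form $(B,N^-,yB)$ at all, and $y\in N^-\cap n^t\overline{w}B$ is empty. Beyond this, $n$ is never determined, and the identification $y=[g^t\overline{w}]_-$ is only stated as an expectation.

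The missing idea is to pin down $B^4$ using part (1). In your frame, the coset $B^-\overline{w}^{-1}g$ satisfies two conditions. First, $B^-\overline{w}^{-1}g\xrightarrow{w^{-1}}B^-$, because $g\in N^-$. Second, it has codistance $w^{-1}$ to $g^{-1}B$, because $\overline{w}^{-1}gg^{-1}=\overline{w}^{-1}$. So by (1), $B^4=B^-\overline{w}^{-1}g$; the correcting factor is $g\in N^-$, not an element of $N$.

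Transposing with Lemma \ref{lem: transposition of Weyl group element lift} then gives $(B^4)^t=g^t\overline{w}h_w^{-1}B=g^t\overline{w}B$ with $g^t\in N$, while $(B^1)^t=B$ and $(A_2)^t=N^-$. The hypothesis $B^4\mbox{ --- }B_2$, i.e.\ $\overline{w}^{-1}g\in B^-B$, transposes to $g^t\overline{w}\in B^-B$. Hence $y=[g^t\overline{w}]_-\in N^-\cap g^t\overline{w}B\subset N^-(w)$. This is the paper's argument translated by $g^{-1}$. The paper instead normalizes to $(A^1,A_2,B_3,B^4)=(N^-,gN,B,B^-w^{-1})$, where $B^4$ is evident, and then acts by $g^t$.

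For the inverse formula, ``solving'' $g^t\overline{w}=yb$ does not directly give the Gauss decomposition of $\overline{w}y^{-1}$. You have two working options:
\begin{enumerate}
\item Use the mirror normalization $(N^-(y^t)^{-1},N,w^{-1}B,B^-)$, as the paper does.
\item Make your symmetry remark precise. The transposed quadruple $((A_2)^t,(A^1)^t,(B^4)^t,(B_3)^t)$ is a configuration of the same shape for $w^{-1}$, and its angle unipotents are $y^{-1}$ and $g^{-1}$. Applying the first formula to it gives $g^{-1}=[(\overline{w}y^{-1})^t]_-=[\overline{w}y^{-1}]_+^t$. Here the torus discrepancies ($h_w$, and $\overline{w^{-1}}$ versus $\overline{w}^{-1}$) occur as right factors in $H$ and do not affect the Gauss parts.
\end{enumerate}
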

\begin{proof} (1) and (2) are proved in \cite[Lemma 2.5]{ShenWeng}. For (3), to prove $y=[g^tw]_-$, without loss of generality, let us start with the configuration 
\[
(A^1,A_2,B_3,B^4) = (N^-, gN, B, B^-w^{-1}).
\]
It is clear that $u(B_3,A^1,B_2)=g$. After taking transposition, we see that $((B^1)^t, (A_2)^t, (B^4)^t)=(B, N^-g^t, wB)$; then we apply the action by $g^t$ and get
\[
g^t\cdot ((B^1)^t, (A_2)^t, (B^4)^t)=(B,N^-,g^twB),
\]
which yields $u((B^1)^t, (A_2)^t, (B^4)^t)=[g^tw]_-$.

Conversely, to prove $g=\left([wy^{-1}]^t_+\right)^{-1}$, we start with the configuration
\[
(A^1,A_2,B_3,B^4)=\left(N^-(y^t)^{-1},N,w^{-1}B,B^-\right).
\]
It is clear that $u\left((B^1)^t, (A_2)^t,(B^4)^t\right)=y$. Now by acting on $(B_3,A^1,B_2)=(w^{-1}B,N^-(y^t)^{-1},B)$ by $(y^t)^{-1}$, we get
\[
(y^t)^{-1}\cdot (B_3,A^1,B_2)=((y^t)^{-1}w^{-1}B,N^-,B).
\]
Thus,
\[
g:=u(B_3,A^1,B_2)=\left[(y^t)^{-1}w^{-1}\right]_-^{-1}=\left[(wy^{-1})^t\right]_-^{-1}=\left([wy^{-1}]_+^t\right)^{-1}. \qedhere
\]
\end{proof}

\begin{remark} The expressions $\left[g^tw\right]_-$ and $[wy^{-1}]_+$ are well-defined because different lifts of $w$ differ only by multiplication by an element of $H$ on the left or right, which does not affect the corresponding Gaussian decomposition.
\end{remark}

\begin{definition} The \emph{twist map} on $N^-(w)$ is defined by
\begin{align*}
    \eta_w:N^-(w)&\longrightarrow N^-(w)\\
    g & \longmapsto y=[g^tw]_-.
\end{align*}
Its \emph{inverse twist map} is then defined by
\begin{align*}
    \eta_w^{-1}:N^-(w)&\longrightarrow N^-(w)\\
    y & \longmapsto g=\left([wy^{-1}]_+^t\right)^{-1}.
\end{align*}
\end{definition}

\begin{lemma}\label{lem: sigma invariant} If $g,y\in N^-(w)(\mathcal{K})$ satisfy $y=\eta_w(g)$, then $\sigma(g)=g$ if and only if $\sigma(y)=y$.
\end{lemma}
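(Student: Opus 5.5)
We need to show: for $g,y\in N^-(w)(\mathcal{K})$ with $y=\eta_w(g)=[g^tw]_-$, we have $\sigma(g)=g$ if and only if $\sigma(y)=y$. The plan is to show that $\eta_w$ commutes with $\sigma$, namely $\sigma(\eta_w(g))=\eta_w(\sigma(g))$. Since $\eta_w$ is a bijection on $N^-(w)$ with inverse $\eta_w^{-1}$, this gives both directions at once. If $\sigma(g)=g$, then $\sigma(y)=\eta_w(\sigma(g))=\eta_w(g)=y$. Conversely, if $\sigma(y)=y$, then $\eta_w(\sigma(g))=\sigma(y)=y=\eta_w(g)$, and injectivity gives $\sigma(g)=g$.

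Here $w$ should be taken in $W^\tau$, which is the only situation in which $\sigma$ preserves $N^-(w)$. First note that $\sigma$ preserves $B$, $B^-$, $N$, $N^-$ and $H$. It sends $BwB$ to $B\tau(w)B=BwB$, because $\mathrm{Ad}_{\epsilon^h}$ preserves each Bruhat cell and $\tau$ acts on $W$. Hence $\sigma$ maps $N^-(w)$ to itself.

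Next I compute $\sigma(g^t\overline{w})$. By Lemma \ref{lem: transposition}, $\sigma(g^t)=\sigma(g)^t$. For the Weyl lift, recall $\tau(\overline{w})=\overline{\tau(w)}=\overline{w}$, so
\[
\sigma(\overline{w})=\epsilon^h\,\overline{w}\,\epsilon^{-h}=\overline{w}\cdot\bigl(w^{-1}(\epsilon^h)\,\epsilon^{-h}\bigr)\in \overline{w}H .
\]
Thus $\sigma(\overline{w})=\overline{w}h'$ for some $h'\in H$, and therefore
\[
\sigma(g^t\overline{w})=\sigma(g)^t\,\overline{w}\,h' .
\]

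Finally, since $\sigma$ preserves $N^-$, $H$ and $N$, it respects Gaussian decompositions, exactly as in the proof of Lemma \ref{lem: Gaussian decomposition}. So $\sigma([x]_-)=[\sigma(x)]_-$ for any Gaussian decomposable $x$. Moreover, right multiplication by an element of $H$ does not change $[\,\cdot\,]_-$, as noted in the remark after Proposition \ref{prop: rotation}. Combining these,
\[
\sigma(y)=\sigma([g^t\overline{w}]_-)=[\sigma(g)^t\overline{w}h']_-=[\sigma(g)^t\overline{w}]_-=\eta_w(\sigma(g)).
\]
The Gaussian decomposability of $g^t\overline{w}$ for $g\in N^-(w)$ is already built into the well-definedness of $\eta_w$, so nothing further is needed there.

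The main point to check carefully is the claim $\sigma(\overline{w})\in\overline{w}H$ in the modified-transposition case $A_{2l}^{(2)}$ with $m=4$. There $\overline{w}^t\neq\overline{w}^{-1}$, but we never transpose $\overline{w}$; only $\sigma(g^t)=\sigma(g)^t$ from Lemma \ref{lem: transposition} is used, and that lemma covers this case. Everything else is formal.
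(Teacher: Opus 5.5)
Your proof is correct and is essentially the paper's argument, which just notes that $\sigma$ commutes with inversion, transposition (Lemma \ref{lem: transposition}) and Gaussian decomposition (Lemma \ref{lem: Gaussian decomposition}), and hence with both $\eta_w$ and the explicit formula for $\eta_w^{-1}$. You differ in two small ways: you get the converse from injectivity of $\eta_w$ rather than from that inverse formula, and you make explicit the step $\sigma(\overline{w})\in\overline{w}H$, which the paper leaves implicit (this step needs $w\in W^\tau$, but for $w\notin W^\tau$ the statement is vacuous since $N^-(w)\cap N^-(\tau(w))=\emptyset$).
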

\begin{proof} This follows from the fact that the automorphism $\sigma$ commutes with group inverse (since it is a group automorphism), transposition (Lemma \ref{lem: transposition}), and Gaussian decomposition (Lemma \ref{lem: Gaussian decomposition}).
\end{proof}

\begin{proposition}\label{prop: minor recursive formula} Let $i_\bullet=(i_1,i_2,\dots, i_m)$ be a reduced word of $w$. Let $a_1,a_2,\dots, a_m$ be elements in $\mc{K}$ and set
\[
y=y_{i_1}(a_1) y_{i_2}(a_2)\cdots y_{i_m}(a_m)\in N^-(w).
\]
Let $g:=\eta_w^{-1}(y)$. For $0\leq k\leq m$, define $w^{i_\bullet}_k:=s_{i_1}s_{i_2}\cdots s_{i_k}$. Then
\begin{enumerate}
    \item 
    \[a_k=\lambda^{i_\bullet}_k\dfrac{\prod_{j\neq i_k}\Delta_{w^{i_\bullet}_k\Lambda_j}(g)^{-a_{ji_k}}}{\Delta_{w^{i_\bullet}_{k-1}\Lambda_{i_k}}(g)\Delta_{w^{i_\bullet}_k\Lambda_{i_k}}(g)}
    \]
    for some non-zero constant $\lambda^{i_\bullet}_k\in k^\times$ that only depends on the reduced word $i_\bullet$ and the position $k$.
    \item If $(i_{s-1},i_s,i_{s+1})=(i,j,i)$ for $i$ and $j$ with $a_{ij}=a_{ji}=-1$, and we replace the product $y_{i_{s-1}}(a_{s-1})y_{i_s}(a_s)y_{i_{s+1}}(a_{s+1})$ in the definition of $y$ with $y_i(1)y_j(-a)y_i(-1)y_j(a)$, then (1) will still hold for all $k\neq s-1,s,s+1$ and 
    \[
    a=\lambda \dfrac{\prod_{r\neq i,j}\Delta_{w^{i_\bullet}_s\Lambda_r}(g)^{-a_{ri}-a_{rj}}}{\Delta_{w^{i_\bullet}_{s-2}\Lambda_i}(g)\Delta_{w^{i_\bullet}_{s+1}\Lambda_j}(g)}
    \]
    for some non-zero constant $\lambda\in k^\times$ that only depends on the reduced word $i_\bullet$ and the position $s$.
\end{enumerate}

\end{proposition}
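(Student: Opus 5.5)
We will prove Proposition \ref{prop: minor recursive formula} through the configuration picture of Proposition \ref{prop: rotation}, avoiding direct matrix computation. Start with $y=y_{i_1}(a_1)\cdots y_{i_m}(a_m)\in N^-(w)$ and $g=\eta_w^{-1}(y)$, and realize the quadruple of Proposition \ref{prop: rotation} as
\[
(A^1,A_2,B_3,B^4)=\bigl(N^-(y^t)^{-1},N,w^{-1}B,B^-\bigr),
\]
so that $u\bigl((B^1)^t,(A_2)^t,(B^4)^t\bigr)=y$. After transposing, the factorization of $y$ becomes a chain of flags $B=C_0\xrightarrow{s_{i_1}}C_1\xrightarrow{s_{i_2}}\cdots\xrightarrow{s_{i_m}}C_m=wB$, where $C_k=y_{i_1}(a_1)\cdots y_{i_k}(a_k)B$. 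Every $C_k$ is in general position with $N^-$, and $u(C_{k-1},N^-,C_k)=y_{i_k}(a_k)$. Lift the chain to compatible decorated flags $A_{(k)}:=y_{i_1}(a_1)\cdots y_{i_k}(a_k)\overline{w^{i_\bullet}_k}^{\,?}N$. More intrinsically, take the unique $A_{(k)}$ lying over $C_k$ with $A_{(0)}=N$ and $A_{(k-1)}\xRightarrow{s_{i_k}}A_{(k)}$; Lemma \ref{lem: relative position}(2) gives existence and uniqueness. Lemma \ref{lem: Lusztig variables in terms of A variables} then yields
\[
a_k=\frac{\prod_{j\neq i_k}\Delta_{\Lambda_j}(N^-,A_{(k)})^{-a_{ji_k}}}{\Delta_{\Lambda_{i_k}}(N^-,A_{(k-1)})\,\Delta_{\Lambda_{i_k}}(N^-,A_{(k)})},
\]
where for $j\ne i_k$ we used $\Delta_j=\Delta'_j$.

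The key step is to identify $\Delta_{\Lambda_j}(N^-,A_{(k)})$ with $\Delta_{w^{i_\bullet}_k\Lambda_j}(g)$ up to a nonzero constant depending only on $(i_\bullet,k,j)$. For this, translate back to the untransposed picture. Explicitly, $A_{(k)}=y_{i_1}(a_1)\cdots y_{i_k}(a_k)\,\overline{s}_{i_1}^{-1}\cdots$ corresponds, by Proposition \ref{prop: rotation} and $g=([wy^{-1}]_+^t)^{-1}$, to the decorated flag $g\,\overline{w^{i_\bullet}_k}N$ paired against $N^-$ in $G/N$. The cleanest way to see this is to write $y_{i_1}(a_1)\cdots y_{i_k}(a_k)$ as the unipotent part of $C_k$, use uniqueness in Proposition \ref{prop: rotation}(1),(2) for the truncated words $w^{i_\bullet}_k$, and then apply Lemma \ref{lem: transposition of Weyl group element lift} to handle $\overline{w}^t=h_w\overline w^{-1}$.

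This gives $\Delta_{\Lambda_j}(N^-,A_{(k)})=c\cdot\Delta_{\Lambda_j}(g\overline{w^{i_\bullet}_k})=c\,\Delta_{w^{i_\bullet}_k\Lambda_j}(g)$. The constant $c$ is a character value of the fixed torus elements $h_w$, and it depends only on the word. Since $w^{i_\bullet}_k\Lambda_j=w^{i_\bullet}_{k-1}\Lambda_j$ for $j\neq i_k$, the formula in (1) follows, with $\lambda^{i_\bullet}_k$ the product of these constants. This identification, especially bookkeeping the transposition and the $h_w$ signs in the modified $A_{2l}^{(2)}$, $m=4$ case, is the main obstacle. I would first verify it for $k=m$, where $A_{(m)}$ is pinned by $A^1$ and $A_2$. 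Then I would do a downward induction using the uniqueness statements of Lemma \ref{lem: relative position}(3),(4), applied to the $w^{-1}$ codistance edge.

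For (2), run the same chain, but replace the three steps $s_i,s_j,s_i$ by the four-step path through $B_8,B_9,B_{10}$ of Proposition \ref{prop: vanishing minor implies special angle unipotent}. Outside positions $s-1,s,s+1$ the decorated flags $A_{(k)}$ are unchanged, so (1) persists there. For the special block, apply Lemma \ref{lem: special case}(2) with $A_1=A_{(s-2)}$ and $A_3=A_{(s+1)}$, giving
\[
a=\frac{\prod_{r\neq i,j}\Delta_r^{-a_{ri}-a_{rj}}}{\Delta_j\Delta'_i}.
\]
Here $\Delta_j$ is taken at $A_{(s-2)}$ and $\Delta'_i$ at $A_{(s+1)}$. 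Lemma \ref{lem: special case}(1) lets us evaluate $\Delta_r$ at any of the block's endpoints, e.g.\ at $w^{i_\bullet}_s$. Converting each factor via the identification above gives the stated formula: the $\Delta_j$ factor of $A_{(s-2)}$ equals $\Delta_{w^{i_\bullet}_{s-2}\Lambda_j}(g)$, and one may swap to $\Lambda_i$ using the second expression in Lemma \ref{lem: special case}(2), consistent with the statement's indexing. The nonzero constant $\lambda$ again collects the word-dependent torus characters.
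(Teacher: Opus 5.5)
Your architecture is the paper's: transpose the configuration of Proposition~\ref{prop: rotation}, subdivide the triangle along $i_\bullet$, apply Lemma~\ref{lem: Lusztig variables in terms of A variables} (resp.\ Lemma~\ref{lem: special case}) to each piece, and convert the resulting $A$-coordinates into minors of $g$. However, the conversion step, which is the only place $g$ enters, is stated incorrectly and is not proved.

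You identify $A_{(k)}$ with $g\,\overline{w^{i_\bullet}_k}N$ paired against $N^-$, and you assert $\Delta_{\Lambda_j}(g\overline{w^{i_\bullet}_k})=\Delta_{w^{i_\bullet}_k\Lambda_j}(g)$. This is wrong in two ways:
\begin{enumerate}
\item $\Delta_{\Lambda_j}(x\overline{u})=\la \xi_{\Lambda_j},x\cdot v_{u\Lambda_j}\ra$, which is not $\la\xi_{u\Lambda_j},x\cdot v_{\Lambda_j}\ra$.
\item Since $g^{-1}\in N^-$ fixes $\xi_{\Lambda_j}$, we get $\Delta_{\Lambda_j}(g\overline{w^{i_\bullet}_k})=\Delta_{\Lambda_j}(\overline{w^{i_\bullet}_k})$. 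This does not depend on $g$ at all, and it vanishes unless $w^{i_\bullet}_k\Lambda_j=\Lambda_j$.
\end{enumerate}
Your fallback plan does not repair this. After transposing, the decorated flag pinned to the top vertex $(A_2)^t$ is $(A^1)^t$, which lies over the \emph{initial} flag $C_0$. So the trivial case is $k=0$ (where both sides are $1$), not $k=m$. No inductive step is actually given either. A smaller slip: in your normalization $C_m=yB$, not $wB$.

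The missing idea is simply the right normalization, which makes the identification a one-line computation. The paper starts from $(B_3,A^1,A_2)=(B,N^-,gN)$. The transposed triangle is then $(B,N^-g^t,wB)$, and the compatible lift of the chain from $N$ to $\overline{w}N$ is just $\overline{w^{i_\bullet}_k}N$. The $A$-coordinates are therefore literally $\Delta_{\Lambda_j}(g^t\overline{w^{i_\bullet}_k})$.

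In your normalization (top vertex $N^-$, $A_{(0)}=N$), the same statement reads $A_{(k)}=g^t\overline{w^{i_\bullet}_k}N$, with $g^t$ rather than $g$. To see this:
\begin{enumerate}
\item $g^t\in N$ fixes $N$ and $B$.
\item $g^t\overline{w}B=[g^tw]_-B=yB$.
\item The uniqueness in Lemma~\ref{lem: relative position}(1),(2) then forces the compatible lifts to coincide.
\end{enumerate}
You then conclude with
\[
\Delta_{\Lambda_j}(g^t\overline{u})=\Delta_{\Lambda_j}(\overline{u}^tg)=\Delta_{\Lambda_j}(h_u\overline{u}^{-1}g)=\Lambda_j(h_u)\Delta_{u\Lambda_j}(g),
\]
using transposition-invariance of $\Delta_{\Lambda_j}$ and Lemma~\ref{lem: transposition of Weyl group element lift}.

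With this correction, your treatment of (2) matches the paper's. Apply Lemma~\ref{lem: special case} between $A_{(s-2)}$ and $A_{(s+1)}$, using the second expression in its part (2), to get the denominator $\Delta_{w^{i_\bullet}_{s-2}\Lambda_i}(g)\Delta_{w^{i_\bullet}_{s+1}\Lambda_j}(g)$.
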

\begin{proof} By following the proof of Proposition \ref{prop: rotation}, if we start with the configuration $(B_3,A^1,A_2)=(B, N^-, gN)$, then $((B^1)^t,(A_2)^t,(B^4)^t)=(B, N^-g^t, wB)$. We lift $B\xrightarrow{w}wB$ to the compatible pair $N\xRightarrow{\overline{w}}\overline{w}N$, and divide the triangle spanned by $(A^1)^t$, $(A_2)^t$, and $(A^4)^t$ into smaller triangles as below. Each small triangle now has an angle unipotent $y_{i_k}(a_k)$ for $k=1,2,\dots, m$. 
\[
\begin{tikzpicture}[baseline=20]
    \node (1) at (0,0) [] {$(A^1)^t$};
    \node (2) at (1,2) [] {$(A_2)^t$};
    \node (4) at (2,0) [] {$(A^4)^t$};
    \draw [-implies, double, double distance=1pt] (1) -- node [below] {$w$} (4);
    \draw [double, double distance=1pt](1) -- (2);
    \draw (2) -- (4);
\end{tikzpicture} =\begin{tikzpicture}[baseline=20]
\node (t) at (4,2) [] {$N^-g^t$};
\node (0) at (0,0) [] {$N$};
\node (1) at (2,0) [] {$\overline{s}_{i_1}N$};
\node (2) at (4,0) [] {$\overline{s}_{i_1}\overline{s}_{i_2}N$};
\node (3) at (6,0) [] {$\cdots$};
\node (4) at (8,0) [] {$\overline{w}N$};
\draw [-implies, double, double distance=1pt] (0) -- node [below] {$s_{i_1}$} (1);
\draw [-implies, double, double distance=1pt] (1) -- node [below] {$s_{i_2}$} (2);
\draw [-implies, double, double distance=1pt] (2) --  (3);
\draw [-implies, double, double distance=1pt] (3) --  (4);
\draw [double, double distance=1pt] (t) -- (0);
\foreach \i in {1,2,4}
{
\draw (t) -- (\i);
}
\end{tikzpicture}
\]
By applying Lemma \ref{lem: Lusztig variables in terms of A variables} to each small triangle, we get that
\[
a_k=\frac{\prod_{j\neq i_k}\Delta_{\Lambda_j}\left(g^t\overline{w^{i_\bullet}_k}\right)^{-a_{ji_k}}}{\Delta_{\Lambda_{i_k}}\left(g^t\overline{w^{i_\bullet}_{k-1}}\right)\Delta_{\Lambda_{i_k}}\left(g^t\overline{w^{i_\bullet}_k}\right)}.
\]
Now (1) follows from the fact that for any $u\in W$ and fundamental weight $\Lambda_i$, 
\[
\Delta_{\Lambda_i}(g^t\overline{u})=\Delta_{\Lambda_i}(\overline{u}^tg)=\Delta_{\Lambda_i}(h_u\overline{u}^{-1}g)=\la \xi_{\Lambda_i}, h_u\overline{u}^{-1}g\cdot v_{\Lambda_i}\ra=\Lambda_i(h_u)\la \xi_{u\Lambda_i},g\cdot v_{\Lambda_i}\ra=\Lambda_i(h_u)\Delta_{u\Lambda_i}(g),
\]
where the maximal torus element $h_u\in T(k)$ exists by Lemma \ref{lem: transposition of Weyl group element lift}.

The formula for $a$ in claim (2) can be obtained in a similar way by replacing Lemma \ref{lem: Lusztig variables in terms of A variables} with Lemma \ref{lem: special case}. 
\end{proof}

\begin{proposition}\label{prop: y=y2y1} Suppose $w=w_1w_2$ with $l(w)=l(w_1)+l(w_2)$. Let $y_1\in N^-(w_1)$ and $y_2\in N^-(w_2)$. Then $y:=y_1y_2\in N^-(w)$. Let $g=\eta_{w}^{-1}(y)$. Then $g=g_2g_1$ with $g_1=\eta_{w_1}^{-1}(y_1)$ and $g_2\in N^-\cap \overline{w}_1N^-\overline{w}_1^{-1}$.
\end{proposition}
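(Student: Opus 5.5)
We will work directly with the explicit formula $\eta_w^{-1}(y)=\left([\overline{w}y^{-1}]_+^t\right)^{-1}$. This expression does not depend on the lift of $w$, so we may take the lift $\overline{w}=\overline{w}_1\overline{w}_2$; the representatives $\overline{s}_i$ satisfy the braid relations and the lengths add. First, $y_1y_2\in N^-(w)$: indeed $y_1\in B\overline{w}_1B$ and $y_2\in B\overline{w}_2B$, and since $l(w)=l(w_1)+l(w_2)$ we have $B\overline{w}_1B\cdot B\overline{w}_2B=B\overline{w}B$.

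Set $g_1=\eta_{w_1}^{-1}(y_1)$ and $g=\eta_w^{-1}(y)$, and define $g_2:=gg_1^{-1}$. Then $g=g_2g_1$ holds by construction, and $g_2\in N^-$ because $g,g_1\in N^-$. The whole content of the statement is therefore the single claim
\[
\overline{w}_1^{-1}g_2\overline{w}_1\in N^-.
\]

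To verify this, apply the transpose $t$ (an anti-involution) to the defining formula. This gives $(g^t)^{-1}=[\overline{w}y^{-1}]_+$, so $\overline{w}y^{-1}\in B^-(g^t)^{-1}$, that is, $g^t\,\overline{w}\,y^{-1}\in B^-$. In the same way, $g_1^t\,\overline{w}_1\,y_1^{-1}\in B^-$.

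Now expand, using $y^{-1}=y_2^{-1}y_1^{-1}$ and $\overline{w}=\overline{w}_1\overline{w}_2$:
\[
g^t\overline{w}y^{-1}=g_1^t\,g_2^t\,\overline{w}_1\overline{w}_2\,y_2^{-1}y_1^{-1}.
\]
Write $b_1:=g_1^t\overline{w}_1y_1^{-1}\in B^-$, so that $g_1^t=b_1y_1\overline{w}_1^{-1}$. Substituting, we obtain
\[
b_1\cdot y_1\cdot\left(\overline{w}_1^{-1}g_2^t\overline{w}_1\right)\cdot\overline{w}_2y_2^{-1}\cdot y_1^{-1}\in B^-.
\]
Since $b_1\in B^-$ and $y_1\in N^-\subset B^-$, this is equivalent to
\[
\left(\overline{w}_1^{-1}g_2^t\overline{w}_1\right)\,\overline{w}_2\,y_2^{-1}\in B^-.
\]
Put $n:=\overline{w}_1^{-1}g_2^t\overline{w}_1$. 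Here $g_2^t\in N$, because $t$ maps $N^-$ to $N$ (up to the sign twist, which preserves $N$). Hence $n\in\overline{w}_1^{-1}N\overline{w}_1$.

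The key step, and the main obstacle, is to deduce $n\in N$ from the condition $n\,\overline{w}_2y_2^{-1}\in B^-$.

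For this we use the factorization $N=(N\cap\overline{w}_1N\overline{w}_1^{-1})(N\cap\overline{w}_1N^-\overline{w}_1^{-1})$. Conjugating it by $\overline{w}_1^{-1}$, we can write $n=n'n''$ with $n'\in\overline{w}_1^{-1}N\overline{w}_1\cap N$ and $n''\in\overline{w}_1^{-1}N\overline{w}_1\cap N^-$. The lengths add, so $N\cap w_1^{-1}N^-w_1\subset N\cap w_2w^{-1}N^-ww_2^{-1}$. Using this, together with $\overline{w}_2y_2^{-1}\in B\overline{w}_2B$, we will track the cell
\[
B^-\backslash B^-n''\overline{w}_2y_2^{-1}
\]
in the Birkhoff decomposition. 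Suppose $n''\neq e$. Then the relative position argument of Lemma \ref{lem: relative position} (3),(4) (the uniqueness of the intermediate flag) shows that this element cannot lie in $B^-$. The main thing to check is how the condition on $y_2$ interacts with $n''$; this can be phrased via the configuration of Proposition \ref{prop: rotation}, in which $B_3$ is uniquely determined by the other flags.

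Granting this, $n''=e$, so $n\in N$. Transposing back, $\overline{w}_1^{-1}g_2\overline{w}_1$ (up to the torus elements $h_{w_1}$ from Lemma \ref{lem: transposition of Weyl group element lift}, which are harmless) lies in $N^-$. This is exactly the claim $g_2\in N^-\cap\overline{w}_1N^-\overline{w}_1^{-1}$.
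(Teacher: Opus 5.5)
\textbf{There is a genuine gap, and in fact an error at the first step of your computation.} From $(g^t)^{-1}=[\overline{w}y^{-1}]_+$ you correctly get $\overline{w}y^{-1}\in B^-(g^t)^{-1}$. Multiplying on the right by $g^t$, this says $\overline{w}y^{-1}g^t\in B^-$. It does not say $g^t\overline{w}y^{-1}\in B^-$, and that statement is false in general.

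Take $G=\SL_2$, $w=s$, $y=y(a)$ with $a\neq 0$. Then $g=\eta_s^{-1}(y)=y(1/a)$, and
\[
\overline{s}y^{-1}g^t=\begin{pmatrix} a & 0\\ 1 & 1/a\end{pmatrix}\in B^-,\qquad g^t\overline{s}y^{-1}=\begin{pmatrix} a+1/a & -1\\ 1 & 0\end{pmatrix}\notin B^-.
\]
Now specialize your reduction to $w_1=e$, $w_2=s$. Then $g_1=e$ and $n=g^t$, so your final hypothesis $n\overline{w}_2y_2^{-1}\in B^-$ is exactly this false statement.

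With the correct identity, the peeling you rely on no longer works. Put $c_1:=\overline{w}_1y_1^{-1}g_1^t\in B^-$. Then
\[
\overline{w}_1\overline{w}_2y_2^{-1}\overline{w}_1^{-1}\,c_1\,g_2^t\in B^-.
\]
Here the factor $c_1\in B^-$ sits between $\overline{w}_1^{-1}$ and $g_2^t$, so it cannot be discarded, and no condition of the form $n\,\overline{w}_2y_2^{-1}\in B^-$ results.

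\textbf{The crux is also left unproved.} Independently of the error, the step you yourself call the main obstacle, deducing $n\in N$, is never carried out; ``Granting this'' stands in for the only nontrivial content of the proposition. The inclusion you cite from length additivity, $N\cap w_1^{-1}N^-w_1\subset N\cap w_2w^{-1}N^-ww_2^{-1}$, is a tautology, since $w_2w^{-1}=w_1^{-1}$, so it carries no information.

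Your setup $g_2:=gg_1^{-1}\in N^-$ is fine. Given $g_2\in N^-$, the claim is equivalent to the equality of flags
\[
B^-\overline{w}_1^{-1}g=B^-\overline{w}_1^{-1}g_1,
\]
that is, $\overline{w}_1^{-1}g_2\overline{w}_1\in B^-$. This equality is what the paper proves, using flags rather than algebra:
\begin{itemize}
\item $g$, $g_1$ and $g_2=u(B_6,(A_1)^t,B_7)$ are realized as angle unipotents in a single configuration, so $g=g_2g_1$ and $g_2\in N^-$ are automatic.
\item The missing idea is that $B_6$ and $B_7$ share the same intermediate flag $(B_2)^t$, which is unique by Lemma \ref{lem: relative position}.
\item Normalizing $(A_1)^t=N^-$ and $B_6=B$ forces $(B_2)^t=B^-\overline{w}_1^{-1}$, and $g_2$ must fix this flag.
\end{itemize}
You need some such uniqueness argument to close the proof, or an algebraic equivalent starting from the correct characterization $g\in N^-\cap B\,\overline{w}\,y^t$.
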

\begin{proof} Let us consider the starting configuration in the left picture below. We apply the transposition to obtain the picture on the right.
\[
\begin{tikzpicture}[baseline=20]
    \node (1) at (0,0) [] {$A_1$};
    \node (2) at (2,0) [] {$B_2$};
    \node (3) at (4,0) [] {$B_3$};
    \node (4) at (2,2) [] {$A^4$};
    \draw [double, double distance=1pt] (4) -- (1);
    \draw [->] (1) -- node [below] {$w_1$} (2);
    \draw 
    [->] (2) -- node [below] {$w_2$} (3);
    \draw (4) -- (2);
    \draw (4) -- (3);
    \draw [red, ->] (1.5,1.25) arc (-135:-90:0.5) node [below left] {$y_1$};
    \draw [red, <-] (2.5,1.25) arc (-45:-90:0.5) node [below right] {$y_2$};
\end{tikzpicture} \quad \rightsquigarrow \quad
\begin{tikzpicture}[baseline=20]
    \node (1) at (4,2) [] {$(A_1)^t$};
    \node (2) at (2,2) [] {$(B_2)^t$};
    \node (3) at (0,2) [] {$(B_3)^t$};
    \node (4) at (2,0) [] {$(A^4)^t$};
    \draw [double, double distance=1pt] (4) -- (1);
    \draw [<-] (1) -- node [above] {$w_1^{-1}$} (2);
    \draw [<-] (2) -- node [above] {$w_2^{-1}$} (3);
    \draw (4) -- (2);
    \draw (4) -- (3);
\end{tikzpicture}
\]
We then find the unique $B_5$ between $(B_3)^t$ and $(A^4)^t$ and the unique $B_6$ between $(B_2)^t$ and $B_5$ that satisfy the Tits (co)distance conditions below. Then $g=\eta_{w}^{-1}(y)=u(B_6, (A_1)^t, (B^4)^t)$.
\[
\begin{tikzpicture}[baseline=20,scale=1.5]
    \node (1) at (4,2) [] {$(A_1)^t$};
    \node (2) at (2,2) [] {$(B_2)^t$};
    \node (3) at (0,2) [] {$(B_3)^t$};
    \node (4) at (2,0) [] {$(A^4)^t$};
    \node (5) at (0,0) [] {$B_5$};
    \node (6) at (1,1) [] {$B_6$};
    \draw [double, double distance=1pt] (4) -- (1);
    \draw [<-] (1) -- node [above] {$w_1^{-1}$} (2);
    \draw [<-] (2) -- node [above] {$w_2^{-1}$} (3);
    \draw (4) -- (2);
    \draw [->] (5) -- node [below] {$w_2$} (4);
    \draw (3) -- node [left] {$w_2^{-1}$} (5);
    \draw [->] (6) -- node [right] {$w_1$} (5);
    \draw (2) -- node [left] {$w_1^{-1}$} (6);
    \draw (1) -- (6);
    \draw [red, ->] (2.5,1.3) arc (-170:-120:0.5) node [above] {$g$};
\end{tikzpicture}
\]
On the other hand, we can break the edge $(B_2)^t\mbox{ --- } (A^4)^t$ in half to compute $g_1=\eta_{w_1}^{-1}(y_1)=u(B_7,(A_1)^t,(A^4)^t)$.
\[
\begin{tikzpicture}[baseline=20,scale=2]
    \node (1) at (4,2) [] {$(A_1)^t$};
    \node (2) at (2,2) [] {$(B_2)^t$};
    \node (3) at (0,2) [] {$(B_3)^t$};
    \node (4) at (2,0) [] {$(A^4)^t$};
    \node (5) at (0,0) [] {$B_5$};
    \node (6) at (1,1) [] {$B_6$};
    \node (7) at (2,0.8) [] {$B_7$};
    \draw [double, double distance=1pt] (4) -- (1);
    \draw [<-] (1) -- node [above] {$w_1^{-1}$} (2);
    \draw [<-] (2) -- node [above] {$w_2^{-1}$} (3);
    \draw [->] (7) -- node [left] {$w_1$} (4);
    \draw (2) -- (7);
    \node at (2.2,1.7) [] {$w_1^{-1}$};
    \draw [->] (5) -- node [below] {$w_2$} (4);
    \draw (3) -- node [left] {$w_2^{-1}$} (5);
    \draw [->] (6) -- node [right] {$w_1$} (5);
    \draw (2) -- node [left] {$w_1^{-1}$} (6);
    \draw (1) -- (6);
    \draw (1) -- (7);
    \draw [red, ->] (2.4,0.95) arc (-140:-110:0.5) node [above] {$g_1$};
    \draw [red, ->] (2.2,1.3) arc (-170:-140:0.5) node [above] {$\quad g_2$};
\end{tikzpicture}
\]
By comparing the pictures we see that $g=g_2g_1$ with $g_2=u(B_6,(A_1)^t,B_7)\in N^-$. Now it remains to show that $g_2\in \overline{w}_1N^-\overline{w}_1^{-1}$. To see this, consider the following subgraph on the left.
\[
\begin{tikzpicture}[scale=1.5, baseline=40]
    \node (7) at (0,0) [] {$B_6$};
    \node (6) at (2,0) [] {$B_7$};
    \node (2) at (1,1) [] {$(B_2)^t$};
    \node (3) at (1,2) [] {$(A_1)^t$};
    \draw (7) -- node [left] {$w_1^{-1}$} (2);
    \draw (6) -- node [right] {$w_1^{-1}$} (2);
    \draw [->] (2) -- node [right] {$w_1^{-1}$} (3);
    \draw (7) to [out=120,in=-180] (3); 
    \draw (6) to [out=60,in=0] (3);
\end{tikzpicture}\quad \sim \quad 
\begin{tikzpicture}[scale=1.5, baseline=40]
    \node (7) at (0,0) [] {$B$};
    \node (6) at (2,0) [] {$g_2B$};
    \node (2) at (1,1) [] {$B^-w_1^{-1}$};
    \node (3) at (1,2) [] {$N^-$};
    \draw (7) -- node [left] {$w_1^{-1}$} (2);
    \draw (6) -- node [right] {$w_1^{-1}$} (2);
    \draw [->] (2) -- node [right] {$w_1^{-1}$} (3);
    \draw (7) to [out=120,in=-180] (3); 
    \draw (6) to [out=60,in=0] (3);
\end{tikzpicture}
\]
Up to the action of a suitable element $G$, we can move the configuration so that $(A_1)^t$ becomes $N^-$ and $B_6$ becomes $B$. This forces $(B_2)^t$ to become $B^-w_1^{-1}$ and $B_7$ to become $g_2B$, as shown in the picture on the right. Since both $B_6\mbox{ --- } (A_1)^t$ and $B_7 \mbox{ --- } (A_1)^t$ share the same intermediate flag $(B_2)^t=B^-w_1^{-1}$, it follows that $g_2\cdot (B^-w_1^{-1})=B^-w_1^{-1}g_2^{-1}=B^-w_1^{-1}$, which is equivalent to $g_2\in w_1B^-w_1^{-1}$. But since $g_2$ is unipotent, it follows that $g_2\in N^-\cap \overline{w}_1N^-\overline{w}_1^{-1}$.
\end{proof}

\section{Parametrization of twisted MV cycles}\label{Section_para_MV}
In this section, we explicitly construct twisted MV cycles via the twist map on configuration spaces of decorated flags (see \cite{GS} for more details), following the approach of Kamnitzer \cite{Kam}. 

Throughout this section, $G$ is a simply-connected simple algebraic group.

\subsection{Parametrization}\label{ss:parametrization} In this subsection, we 
fix a reduced word $\imath_\bullet=(\imath_1,\imath_2,\dots, \imath_N)$ of $w_0\in W^\tau$ and give a surjective parametrization of $A^{\imath_\bullet}(n_\bullet)$ via the twist map. We  prove that $A^{\imath_\bullet}(n_\bullet)$ is irreducible and consequently, its closure gives an MV cycle.

Recall that, for each $\imath\in I_\tau$, we have  $m_\imath:=\frac{m}{|\imath|}$, $\mc{K}_\imath:=k\dbp{t^{m_\imath}}$, and $\mc{O}_\imath:=k\dbb{t^{m_\imath}}$. Denote by $\mc{O}^\times_\imath$ the multiplicative group of invertible elements inside $\mc{O}_\imath$, consisting precisely of those elements with a nonzero constant term.

\begin{definition}\label{defn: A} Given $\imath\in I_\tau$ and a non-negative integer $n$, we define the scheme $\mc{A}^\imath(n)$ as follows.
\begin{itemize}
    \item If $\imath$ is standard, then $\mc{A}^\imath(n):=t^{nm_\imath}\mc{O}_\imath^\times$.
    \item If $\imath$ is exceptional with $m=2$, then depending on the parity of $n$, we have two subcases:
    \begin{itemize}
        \item if $n=2k$, then $\mc{A}^\imath(2k):=t^{k}\mc{O}^\times\times t^{2k}\mc{O}^\gamma$;
        \item if $n=2k+1$, then $\mc{A}^\imath(2k+1):=t^{(k+1)}\mc{O}\times t^{2k}(\mc{O}^\times)^\gamma$.
    \end{itemize}
    \item If $\imath$ is exceptional with $m=4$, then depending on the parity of $n$, we have two subcases:
    \begin{itemize}
        \item if $n=2k$, then $\mc{A}^\imath(2k):=t^{2k}\mc{O}_\imath\times t^{4k}(\mc{O}_\imath^\times)^\gamma$;
        \item if $n=2k+1$, then $\mc{A}^\imath(2k+1):=t^{2k}\mc{O}_\imath^\times \times t^{4(k+1)}\mc{O}_\imath^\gamma$.
    \end{itemize}
\end{itemize}
For an $N$-tuple $n_\bullet=(n_1,\dots, n_N)$ of non-negative integers, we define 
\[
\mc{A}^{\imath_\bullet}(n_\bullet)=\prod_{k=1}^N \mc{A}^{\imath_k}(n_k).
\]
\end{definition}

\begin{proposition}\label{prop: A2n valuation} For the  $a$ defined in \eqref{eq: a} for an exceptional $\imath$, $\val(a)=nm_\imath$.
\end{proposition}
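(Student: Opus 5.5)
We need $\val(a)=nm_\imath$ for the exceptional orbit, where $(b,c)\in\mc{A}^\imath(n)$ and $a$ is given by \eqref{eq: a}. We work case by case through the four subcases of Definition \ref{defn: A}. In each subcase we compute the valuations of the two summands of $a$ separately. We show that one summand has valuation exactly $nm_\imath$ and the other has strictly larger valuation. The case $p=2$ does not occur, since $p\neq r=2$, so the factor $1/2$ is harmless. Throughout we use $\val(\gamma(b))=\val(b)$, because $\gamma(t)=\epsilon^{-1}t$ only rescales coefficients.

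\textbf{Case $m=2$.} Here $|\imath|=2$, so $m_\imath=1$, $\mc{K}_\imath=\mc{K}$, $\gamma(t)=-t$, and $\mc{O}^\gamma=\rk\dbb{t^2}$.
\begin{itemize}
\item If $n=2k$, then $\val(b)=k$, so $\val(b\gamma(b))=2k$, while $\val(tc)\ge 2k+1$. Hence $\val(a)=2k=n$.
\item If $n=2k+1$, then $c\in t^{2k}(\mc{O}^\times)^\gamma$ has valuation exactly $2k$, so $\val(tc)=2k+1$. Also $\val(b)\ge k+1$, so $\val(b\gamma(b))\ge 2k+2$. Hence $\val(a)=2k+1=n$.
\end{itemize}
We must make sure the leading terms cannot cancel. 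This is automatic because the two valuations differ. (Parity also separates them: $b\gamma(b)$ is $\gamma$-invariant, hence lies in $\rk\dbp{t^2}$, while $tc$ lies in $t\rk\dbp{t^2}$.)

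\textbf{Case $m=4$.} Here $m_\imath=2$, $\mc{O}_\imath=\rk\dbb{t^2}$, and on it $\gamma(t^2)=-t^2$. So $(\mc{O}_\imath)^\gamma=\rk\dbb{t^4}$. We need $\val(a)=2n$.
\begin{itemize}
\item If $n=2k$, then $\val(c)=4k$ exactly. Also $\val(b)\ge 2k$, so $\val(t^2b\gamma(b))\ge 4k+2$. Hence $\val(a)=4k=2n$.
\item If $n=2k+1$, then $\val(b)=2k$ exactly, so $\val(t^2b\gamma(b))=4k+2$. Also $\val(c)\ge 4(k+1)$. Hence $\val(a)=4k+2=2n$.
\end{itemize}

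Every step is an elementary valuation estimate, so there is no serious obstacle. The one point needing care is that the leading coefficient of $b\gamma(b)$ is nonzero. This holds because it equals $\epsilon^{-\val(b)}$ times the square of the leading coefficient of $b$, which is nonzero in the field $\rk$. The same observation is exactly what makes the strict inequalities above strict.
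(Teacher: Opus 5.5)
Your proof is correct and follows essentially the same route as the paper: in each of the four subcases of Definition~\ref{defn: A} you compare the valuations of the two summands of $a$ and check that they cannot cancel. The only difference is that the paper rules out cancellation by parity (resp.\ residues mod $4$) and then reads off $\min\{2\val(b),\val(c)+1\}$ (resp.\ $\min\{2\val(b)+2,\val(c)\}$), whereas you get the strict inequality directly from the defining bounds, which amounts to the same argument.
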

\begin{proof} Let us first consider the case with $m=2$. Since $a=(b\gamma(b)+tc)/2$, $\val(b\gamma(b))$ is always even, and $\val(tc)$ is always odd, it follows that 
\[
\val(a)=\min\{2\val(b),\val(c)+1\}=\left\{\begin{array}{ll}2\val(b)=2k & \text{if $n=2k$,}\\
\val(c)+1=2k+1 & \text{if $n=2k+1$}.
\end{array}\right.
\]
For the case with $m=4$,  since $a=(-t^2b\gamma(b)+c)/2$, $\val(t^2b\gamma(b))\equiv 2 \mod 4$, and $\val(c)\equiv 0 \mod 4$, we get
\[
\val(a)=\min\{2\val(b)+2,\val(c)\}=\left\{\begin{array}{ll}\val(c)=4k & \text{if $n=2k$},\\
2\val(b)+2=4k+2 & \text{if $n=2k+1$}.\end{array}\right. \qedhere
\]
\end{proof}

\begin{definition}\label{def: twist image} Let  $a_\bullet\in \mc{A}_{\imath_\bullet}(n_\bullet)$. We define
\[
y_{\imath_\bullet}=y_{\imath_1}y_{\imath_2}\cdots y_{\imath_N},
\]
where $y_{\imath_k}:=y_{\imath_k}(a_k)$ if $\imath_k$ is standard, and $y_{\imath_k}:=y_{\imath_k}(b_k,c_k)$ if $\imath_k$ is exceptional. By the fact that $y_i(f)\in N^-(s_i)(\mc{K})$ and $y_\imath(0,c)\in N^-(s_\imath)(\mc{K})$ for exceptional $\imath$ (Lemma \ref{lem: change of word in special case} (3)), we know that $y_{\imath_\bullet}\in N^-(w_0)(\mc{K})$. We define $g_{\imath_\bullet}:=\eta_{w_0}^{-1}(y_{\imath_\bullet})$. Since $y_{\imath_k}$ is $\sigma$-invariant for each $k$ (Proposition \ref{prop: sigma invariant unipotent}), by Lemma \ref{lem: sigma invariant}, we know that $g_{\imath_\bullet}$ is also $\sigma$-invariant and hence $[g_{\imath_\bullet}]$ is an element of $\mGr$. We denote the resulting map by 
\[
\Phi:\mc{A}^{\imath_\bullet}(n_\bullet)\rightarrow \mGr.
\]
Lemma \ref{lem: limit} implies that, despite there being two ways of defining $y_\imath$ for an exceptional $\imath$ depending on whether $b$ vanishes or not, the resulting map $\Phi$ is still algebraic.
\end{definition}

\begin{lemma}\label{lem: ratio}  Let $\imath_\bullet=(\imath_1, \dots, \imath_N)$ be a reduced word of $w_0\in W^\tau$, and set $w^{\imath_\bullet}_k:=s_{\imath_1}\cdots s_{\imath_k}$.  Let $y_{\imath_\bullet}$ and $g_{\imath_\bullet}$ be as in Definition \ref{def: twist image}. For any $\Lambda_i$ and $v\in V(\Lambda_i)$, we  have, for $1\leq k\leq N$,
\[
\val(\la \xi_{w^{\imath_\bullet}_k\Lambda_i}, g_{\imath_\bullet}\cdot v\ra)\geq \val(\Delta_{w^{\imath_\bullet}_k\Lambda_i}(g_{\imath_\bullet})).
\]
\end{lemma}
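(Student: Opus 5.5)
The plan is to adapt the argument of \cite[Theorem 4.5]{Kam}, which in the untwisted setting shows that the minor $\Delta_{w\Lambda_i}(g)$ attains the minimal valuation among all matrix coefficients $\la \xi_{w\Lambda_i}, g\cdot v\ra$. The key is to write $g_{\imath_\bullet}$ as a product of two pieces: one that fixes $\xi_{w^{\imath_\bullet}_k\Lambda_i}$ up to a controlled factor, and one whose matrix coefficients we can compute directly from the Lusztig factors.

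First, fix $k$ and split the reduced word as $w_0=w_1w_2$ with $w_1=w^{\imath_\bullet}_k$. Accordingly split $y_{\imath_\bullet}=y_1y_2$, where $y_1=y_{\imath_1}\cdots y_{\imath_k}\in N^-(w_1)$ and $y_2$ is the remaining product. Lemma \ref{lem: change of word in special case}(3) guarantees $y_1\in N^-(w_1)$ even when exceptional factors with $b=0$ occur. Next, apply Proposition \ref{prop: y=y2y1} to write
\[
g_{\imath_\bullet}=g_2g_1,\qquad g_1=\eta_{w_1}^{-1}(y_1),\qquad g_2\in N^-\cap \overline{w}_1N^-\overline{w}_1^{-1}.
\]
Since $\overline{w}_1^{-1}g_2\overline{w}_1\in N^-$ and $\xi_{\Lambda_i}$ is a lowest weight vector of $V(\Lambda_i)^*$, fixed by $N^-$ acting on the right, we get $\xi_{w_1\Lambda_i}\cdot g_2=\xi_{w_1\Lambda_i}$. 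Hence
\[
\la \xi_{w_1\Lambda_i}, g_{\imath_\bullet}\cdot v\ra=\la \xi_{w_1\Lambda_i}, g_1\cdot v\ra
\quad\text{and}\quad
\Delta_{w_1\Lambda_i}(g_{\imath_\bullet})=\Delta_{w_1\Lambda_i}(g_1).
\]
This reduces the claim to $g_1$ with $w=w_1$, that is, to the case $k=N$ applied to a shorter word. One must check that $g_2$ has entries in $\mc{K}$; it does, being defined over $\mc{K}$.

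For $g_1$, use the explicit formula $g_1=([\overline{w}_1y_1^{-1}]_+^t)^{-1}$. Then
\[
\la \xi_{w_1\Lambda_i}, g_1\cdot v\ra=\la \xi_{\Lambda_i}, \overline{w}_1^{-1}g_1 \cdot v\ra,
\]
and we expand $\overline{w}_1^{-1}g_1$ via the Gaussian decomposition. Because $\overline{w}_1y_1^{-1}=[\,\cdot\,]_-[\,\cdot\,]_0[\,\cdot\,]_+$, we obtain $g_1^{-1}$ transposed in terms of $y_1$, $\overline{w}_1$ and $N^-$, $H$ factors. The covector $\xi_{\Lambda_i}\overline{w}_1^{-1}g_1$ then equals, up to a transposed $N$-factor acting trivially on the highest covector, $\Lambda_i([\overline{w}_1y_1^{-1}]_0)^{-1}$ times $\xi_{\Lambda_i}$ transported by $(y_1^{t})^{-1}$-type unipotents. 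The precise form is to be determined by unwinding transposes.

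Concretely, the aim is the identity
\[
\la \xi_{w_1\Lambda_i}, g_1 v\ra=\Delta_{w_1\Lambda_i}(g_1)\cdot \la \xi_{\Lambda_i}, u\cdot v\ra,
\]
with $u$ a product of one-parameter unipotents $x_{j}(c_\ell)$ (transposes of the $y$ factors, possibly rescaled). The ratios $c_\ell$ should be expressible by Proposition \ref{prop: minor recursive formula} as ratios of $a$'s. In Kamnitzer's argument these coefficients are ratios of the form $a_{\ell+1}/a_\ell$-products that lie in $\mc{O}$ because the valuations of the Lusztig coordinates $a_\ell$ have prescribed nonnegative values. Here one would instead use that each $a_\ell$ has valuation $n_\ell m_{\imath_\ell}\geq0$ (Proposition \ref{prop: A2n valuation} in the exceptional case), together with an induction along the word: the valuation of $\la\xi_{w^{\imath_\bullet}_k\Lambda_i},g\cdot v\ra$ is compared step by step with that for $k-1$ via $s_{\imath_k}$, using that $y_{\imath_k}$ has entries in $\mc{O}$.

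The main obstacle is this last step: extracting integrality (entries in $\mc{O}$) of the normalized matrix coefficients, especially for the exceptional factors $y_\imath(b,c)$, whose individual entries $a/b$ and $\gamma(a)/b$ need not be integral. For these, I would handle the triple $y_ly_{l+1}y_l$ as a single $\SL_3$-block as in Lemma \ref{lem: limit}, whose matrix entries $b,\gamma(b),a$ (respectively $tb,t\gamma(b),a$) are integral. I would then argue block by block, with induction on $k$ through these rank-one and $\SL_3$ steps.
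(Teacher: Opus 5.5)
Your first reduction matches the paper's proof exactly: split at $w_1=w^{\imath_\bullet}_k$, apply Proposition \ref{prop: y=y2y1} to get $g_{\imath_\bullet}=g_2g_1$, and discard $g_2$ because it fixes $\xi_{w_1\Lambda_i}$. There is, however, a genuine gap at the step the whole lemma rests on. You state the identity $\la \xi_{w_1\Lambda_i}, g_1 v\ra=\Delta_{w_1\Lambda_i}(g_1)\la \xi_{\Lambda_i},u\cdot v\ra$ only as an aim, with $u$ ``to be determined'', and the completion you sketch would not work:
\begin{itemize}
\item You propose that the parameters of $u$ are ratios of Lusztig coordinates coming from Proposition \ref{prop: minor recursive formula}, integral because each $a_\ell$ has non-negative valuation. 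But a ratio of elements of non-negative valuation need not have non-negative valuation.
\item Proposition \ref{prop: minor recursive formula} only relates the $a_\ell$ to ratios of minors of $g$. It gives no control over the general matrix coefficients $\la\xi_{w_1\Lambda_i},g\cdot v\ra$, which is what the lemma is about.
\item The ``step-by-step induction along the word'' is never specified.
\item Starting from $g_1=([\overline{w}_1y_1^{-1}]_+^t)^{-1}$ makes the bookkeeping harder. Unwinding it gives $\overline{w}_1^{-1}g_1=(\overline{w}_1^{-1}h\,n_-^t\,\overline{w}_1)\,h_{w_1}^{-1}y_1^t$, and you would still have to prove that the bracketed factor lies in $B^-$ before $\xi_{\Lambda_i}$ can absorb it.
\end{itemize}

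The missing idea is to use the forward twist map instead. Since $y_1=\eta_{w_1}(g_1)=[g_1^t\overline{w}_1]_-$, we have $g_1^t\overline{w}_1=y_1b$ with $b\in B$. Transposing and using Lemma \ref{lem: transposition of Weyl group element lift} gives $g_1=\overline{w}_1\,l\,t\,y_1^t$ with $l\in N^-$ and $t\in H$. Hence
\[
\la\xi_{w_1\Lambda_i},g_{\imath_\bullet}\cdot v\ra=\la\xi_{\Lambda_i},l\,t\,y_1^t\cdot v\ra=\Lambda_i(t)\,\la\xi_{\Lambda_i},y_1^t\cdot v\ra .
\]
Taking $v=v_{\Lambda_i}$, which is fixed by $y_1^t\in N$, gives $\Delta_{w_1\Lambda_i}(g_{\imath_\bullet})=\Lambda_i(t)$. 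So $u=y_1^t$ exactly, with no rescaling and no ratios. The inequality then reduces to $y_1\in N^-(\mc{O})$, which is immediate:
\begin{itemize}
\item standard factors have $a_s\in\mc{O}$;
\item exceptional factors, inside the $\SL_3$ block, are lower unitriangular matrices with entries $\gamma(b),b,a$ (resp.\ $-t\gamma(b),tb,a$, or $0,0,a$ when $b=0$), all in $\mc{O}$.
\end{itemize}
The second point is the observation you make at the end of your proposal. With this factorization in place, your proposal becomes the paper's proof; without it, the central claim remains unproved.
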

\begin{proof} Let $w_1:=w^{\imath_\bullet}_k$ and $w_2:=w_1^{-1}w$. Let $y_1:=y_{\imath_1}\cdots y_{\imath_k}$ and let $y_2:=y_1^{-1}y$. Then by Proposition \ref{prop: y=y2y1}, we can write $g=g_2g_1$ with $g_1=\eta_{w_1}^{-1}(y_1)$ and $g_2\in N^-\cap \overline{w}_1N^-\overline{w}_1^{-1}$. On the one hand, since $y_1=\eta_{w_1}(g_1)$, it follows that $y_1b=g_1^t\overline{w}_1$ for some $b\in B$, which is equivalent to $g_1=\overline{w}_1lty_1^t$ for some $l\in N^-$ and $t\in T$. On the other hand, $g_2\in N^-\cap \overline{w}_1N^-\overline{w}_1^{-1}$ implies that $g_2^{-1}\cdot \xi_{w_1\Lambda_i}=\xi_{w_1\Lambda_i}$. By combining these two facts, we get
\[
g^{-1}\cdot \xi_{w_1\Lambda_i}=g_1^{-1}g_2^{-1}\cdot \xi_{w_1\Lambda_i}=g_1^{-1}\cdot \xi_{w_1\Lambda_i}=(y_1^t)^{-1}t^{-1}l^{-1}\overline{w}_1^{-1}\xi_{w_1\Lambda_i}=\Lambda_i(t)(y_1^t)^{-1}\cdot \xi_{\Lambda_i}.
\]
Thus, we have
\[
\Delta_{w_1\Lambda_i}(g)=\la \xi_{w_1\Lambda_i},g\cdot v_{\Lambda_i}\ra=\la g^{-1}\cdot \xi_{w_1\Lambda_i},v_{\Lambda_i}\ra=\Lambda_i(t)\la \xi_{\Lambda_i}, y_1^t\cdot v_{\Lambda_i}\ra=\Lambda_i(t),
\]
and
\[
\la\xi_{w_1\Lambda_i}, g\cdot v\ra=\la g^{-1}\cdot \xi_{w_1\Lambda_i}, v\ra=\Lambda_i(t)\la \xi_{\Lambda_i}, y_1^t\cdot v\ra=\la \xi_{\Lambda_i}, y_1^t\cdot v\ra\Delta_{w_1\Lambda_i}(g).
\]
It remains to argue that $\val(\la \xi_{\Lambda_i}, y_1^t\cdot v\ra)\geq 0$. Let $s$ be an index with $1\leq s\leq k$. When $\imath_s$ is standard, $\la \xi_{\Lambda_i}, y_{\imath_s}^t\cdot v\ra$ is a polynomial (over $\mc{O}$) in $a_s$, which has a non-negative valuation. When $\imath_s$ is exceptional,  $\la \xi_{\Lambda_i}, y_{\imath_s}^t\cdot v\ra$ is a polynomial (over $\mc{O}$) in $a_s$, $b_s$, and $\gamma(b_s)$ if $b_s\neq 0$, and is a polynomial (over $\mc{O}$) in $a_s$ if $b_s=0$ (c.f. Proof of Lemma \ref{lem: limit}); in either case, $\la \xi_{\Lambda_i}, y_{\imath_s}^t\cdot v\ra$ has a non-negative valuation as well. Thus, we can conclude that $\val(\la \xi_{\Lambda_i}, y_1^t\cdot v\ra)\geq 0$ and hence the lemma is true.
\end{proof}

\begin{proposition} The image of the map $\Phi: \mc{A}^{\imath_\bullet}(n_\bullet)\rightarrow \mGr$ defined in Definition \ref{def: twist image} lies in $A^{\imath_\bullet}(n_\bullet)$. 
\end{proposition}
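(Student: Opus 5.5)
We need to show that $L=[g_{\imath_\bullet}]$ satisfies the characterization of $A^{\imath_\bullet}(n_\bullet)$ in Proposition \ref{prop: A(n)}. That is, we compute $D_{w^{\imath_\bullet}_k\Lambda_\imath}(L)$ for all $k$ and all $\imath$, and check that these values are $\la w^{\imath_\bullet}_k\Lambda_\imath, a(\mu^{\imath_\bullet}_k)\ra$. The plan has three steps.

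\textbf{Step 1: reduce $D$ to a minor.} By Lemma \ref{lem: ratio}, for every $v\in\overline{V}(\Lambda_\imath)\subset V(\Lambda_i)$ we have $\val\la\xi_{w_k\Lambda_i},g\cdot v\ra\ge \val\Delta_{w_k\Lambda_i}(g)$. Equality is attained at $v=v_{\Lambda_i}\in\overline{V}(\Lambda_\imath)$. Hence
\[D_{w_k\Lambda_\imath}(L)=\val\Delta_{w_k\Lambda_i}(g_{\imath_\bullet}).\]
For $k=0$, $g\in N^-$ gives $\Delta_{\Lambda_i}(g)=1$, so $D_{\Lambda_\imath}(L)=0$. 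Every $w\in W^\tau$ we need is some $w^{\imath_\bullet}_k$, so only these minors have to be evaluated.

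\textbf{Step 2: recursion for the minors.} Expand each $y_{\imath_s}$ into ordinary root-subgroup factors along the unfolded reduced word of $w_0\in W$:
\begin{itemize}
\item for standard $\imath_s$, the factors are $y_{\tau^j(i)}(\cdot)$, which commute;
\item for exceptional $\imath_s$ with $b_s\ne0$, the factors are $y_l y_{l+1} y_l$;
\item for exceptional $\imath_s$ with $b_s=0$, use the special four-factor form, handled by Proposition \ref{prop: minor recursive formula}(2).
\end{itemize}
Proposition \ref{prop: minor recursive formula} then expresses each argument in terms of minors of $g$ at the intermediate prefixes, up to nonzero constants. Taking valuations gives linear relations. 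I will do an induction on $k$: assuming the values for $w_{k-1}$ are as predicted, I solve for $\val\Delta_{w_k\Lambda_i}$.

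For standard $\imath_k$, the factors $y_{\tau^j(i)}$ commute and intermediate minors for distinct orbit members decouple. Lemma \ref{lem: minor under gamma action} shows that all $\Delta_{w\Lambda_{\tau^j(i)}}$ have equal valuation. The relation then reads
\[\val a_k=nm_\imath=-\val\Delta_{w_{k-1}\Lambda_i}-\val\Delta_{w_k\Lambda_i}-\sum_{j\ne i}a_{ji}\val\Delta_{w_{k}\Lambda_j}.\]
Here $\Delta_{w_k\Lambda_j}=\Delta_{w_{k-1}\Lambda_j}$ for $j\notin\imath$. Grouping the $j$ by orbits turns the sum into $\sum_{\jmath}a_{\jmath\imath_k}D_{w_{k-1}\Lambda_\jmath}$; the $1/|\jmath|$ in \eqref{eq: folded cartan} does not appear because we sum over individual $j$ and pick one $i\in\imath$. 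This is exactly the recursion of Proposition \ref{prop: A(n)}.

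For exceptional $\imath_k$, the three arguments are $\gamma(a)/b$, $b$ and $a/b$ (with extra factors of $t$ when $m=4$). The middle minors $\Delta_{w_{k-1}s_l\Lambda_l}$ and $\Delta_{w_{k-1}s_{l+1}\Lambda_{l+1}}$ have valuation determined by $\val b$. Combining the three relations and using Proposition \ref{prop: A2n valuation} ($\val a=nm_\imath$), the middle minors cancel. One obtains the relation with $-\sum a_{st}$ summed over both $l$ and $l+1$, which matches \eqref{eq: folded cartan} for exceptional $\jmath$. When $b=0$, Proposition \ref{prop: minor recursive formula}(2) gives the relation directly with $\val a$.

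\textbf{Main obstacle.} The hardest part is the exceptional case, and it has two difficulties.
\begin{itemize}
\item The valuations of the middle minors must be consistent. When $m=4$ this requires the parity constraints of Corollaries \ref{cor: A2l with m=4} and \ref{cor: A2n m=4 middle minor}, and the normalizations in Definition \ref{defn: A} are chosen precisely for this.
\item Valuations of sums can jump if leading terms cancel. To avoid this, I would argue only with the multiplicative formulas of Proposition \ref{prop: minor recursive formula}, which are exact up to constants in $\rk^\times$, so no cancellation issue arises.
\end{itemize}
With both points handled, the induction closes, and Proposition \ref{prop: A(n)} gives $L\in A^{\imath_\bullet}(n_\bullet)$.
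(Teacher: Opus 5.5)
Your proposal is correct and follows essentially the same route as the paper. The paper likewise uses Proposition \ref{prop: minor recursive formula} to write $a_k\doteq\prod_{\jmath\ne\imath_k}\Delta_{w_{k-1}\Lambda_j}^{-a_{\jmath\imath_k}}/(\Delta_{w_{k-1}\Lambda_{i_k}}\Delta_{w_k\Lambda_{i_k}})$: for exceptional $\imath_k$ it multiplies the second and third relations when $b_k\neq0$, and uses part (2) when $b_k=0$. It then takes valuations and identifies $D_{w_k\Lambda_{\imath_k}}([g])$ with $\val\Delta_{w_k\Lambda_{i_k}}(g)$ via the trivial inequality together with Lemma \ref{lem: ratio}.

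Your first ``obstacle'' does not actually arise for this statement. The middle minor $\Delta_{w_{k-1}s_l\Lambda_l}$ cancels exactly, so Corollaries \ref{cor: A2l with m=4} and \ref{cor: A2n m=4 middle minor} are not needed here; the paper uses them only for the surjectivity in Theorem \ref{prop: surjective}.
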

\begin{proof} For convenience, we write $x\doteq y$ if $x, y \in \mc{K}^\times$ and $\val(x)=\val(y)$.
Note that multiplying by a nonzero element in $k$, or changing a factor under the action of a power of $\gamma$, does not change the valuation. 


Let $g:= g_{\imath_\bullet}$. For each $\jmath\in I_\tau$, let $\Lambda_j$ be a lift of $\Lambda_\jmath$; since $g\in N^-(w_0)(\mc{K})^\sigma\subset N^-(\mc{K})^\sigma$, we have
\begin{align*}
D_{\Lambda_\jmath}([g])=&\min_{v\in \overline{V}_{\Lambda_\jmath}}\val(\la \xi_{\Lambda_j},g\cdot v\ra)=\min_{v\in \overline{V}_{\Lambda_\jmath}} \val(\langle g^{-1}\cdot\xi_{\Lambda_j}, v\rangle )\\
=&\min_{v\in \overline{V}_{\Lambda_\jmath}}\val(\langle  \xi_{\Lambda_j},v\rangle)=\val(\langle  \xi_{\Lambda_j},v_{\Lambda_j}\rangle)=0=\val(\Delta_{\Lambda_j}(g)).
\end{align*}

For the rest of the chamber weights of the form $w^{\imath_\bullet}_{k}\Lambda_{\imath_k}$ with standard $\imath_k$, Lemma \ref{lem: minor under gamma action} and Proposition \ref{prop: minor recursive formula} imply that, for the chosen representative $i_k$ of $\imath_k$, 
\[
a_k=\lambda^{\imath_\bullet}_k\frac{\prod_{j\neq i_k}\Delta_{w^{\imath_\bullet}_{k}\Lambda_j}(g)^{-a_{ji_k}}}{\Delta_{w^{\imath_\bullet}_{k-1}\Lambda_{i_k}}(g)\Delta_{w^{\imath_\bullet}_{k}\Lambda_{i_k}}(g)}\doteq \frac{\prod_{\jmath\neq \imath_k}\Delta_{w^{\imath_\bullet}_{k-1}\Lambda_j}(g)^{-a_{\jmath\imath_k}}}{\Delta_{w^{\imath_\bullet}_{k-1}\Lambda_{i_k}}(g)\Delta_{w^{\imath_\bullet}_{k}\Lambda_{i_k}}(g)},
\]
where the $j$'s in the last expression are the chosen representatives of $\jmath$.

For the chamber weights of the form $w^{\imath_\bullet}_{ k}\Lambda_{\imath_k}$ with exceptional $\imath_k=\{l,l+1\}$ in the case of $A_{2l}^{(2)}$, let us consider the following.
\begin{figure}[H]
    \centering
    \begin{tikzpicture}
        \foreach \i in {1,2,3}
        {
        \draw [gray] (0,\i) -- (6,\i);
        }
        \draw [gray](1.5,1) -- node [black] {$s_l$} (1.5,2);
        \draw [gray](3,2) -- node [black] {$s_{l+1}$} (3,3);
        \draw [gray](4.5,1) -- node [black] {$s_l$} (4.5,2);
        \node at (0,1.5) [] {$\Delta_{w^{\imath_\bullet}_{k-1}\Lambda_l}$};
        \node at (3,1.5) [] {$\Delta_{w^{\imath_\bullet}_{k-1}s_i\Lambda_l}$};
        \node at (6,1.5) [] {$\Delta_{w^{\imath_\bullet}_{k}\Lambda_l}$};
        \node at (1,2.5) [] {$\Delta_{w^{\imath_\bullet}_{k-1}\Lambda_{l+1}}$};
        \node at (5,2.5) [] {$\Delta_{w^{\imath_\bullet}_{k}\Lambda_{l+1}}$};
        \node at (3,0.5) [] {$\Delta_{w^{\imath_\bullet}_{k}\Lambda_{l-1}}$};
        \node at (3,3.5) [] {$\Delta_{w^{\imath_\bullet}_{k}\Lambda_{l+2}}$};
    \end{tikzpicture}
    \caption{Local picture of generalized minors near an exceptional $\imath_k$ in $A_{2l}^{(2)}$.}
\end{figure}
If $b_k\neq 0$, by Proposition \ref{prop: minor recursive formula} (1) and Eq. \eqref{eq: root subgroup element for A2n}, we see that there are non-zero constants $\lambda,\lambda'\in k^\times$ such that,
\[
\lambda\frac{\Delta_{w^{\imath_\bullet}_{k}\Lambda_{l+2}}(g)\Delta_{w^{\imath_\bullet}_{k-1}s_l\Lambda_l}(g)}{\Delta_{w^{\imath_\bullet}_{k-1}\Lambda_{l+1}}(g)\Delta_{w^{\imath_\bullet}_{k}\Lambda_{l+1}}(g)}=\left\{\begin{array}{ll}
    b_k & \text{if $m=2$},  \\
    tb_k & \text{if $m=4$},
\end{array}\right.
\]
and
\[
\lambda'\frac{\Delta_{w^{\imath_\bullet}_{k}\Lambda_{l-1}}(g)\Delta_{w^{\imath_\bullet}_{k}\Lambda_{l+1}}(g)}{\Delta_{w^{\imath_\bullet}_{k-1}s_l\Lambda_l}(g)\Delta_{w^{\imath_\bullet}_{k}\Lambda_l}(g)}=\left\{\begin{array}{ll} a_k/b_k & \text{if $m=2$},\\
a_k/(tb_k) & \text{if $m=4$}. \end{array}\right.
\]
Thus, regardless of whether $m=2$ or $m=4$, we have
\[
a_k=\lambda\lambda'\frac{\Delta_{w^{\imath_\bullet}_{k}\Lambda_{l-1}}(g)\Delta_{w^{\imath_\bullet}_{k}\Lambda_{l+2}}(g)}{\Delta_{w^{\imath_\bullet}_{k-1}\Lambda_{l+1}}(g)\Delta_{w^{\imath_\bullet}_{k}\Lambda_{l}}(g)}\doteq\frac{\prod_{\jmath\neq \imath_k}\Delta_{w^{\imath_\bullet}_{k-1}\Lambda_j}(g)^{-a_{\jmath\imath_k}}}{\Delta_{w^{\imath_\bullet}_{k-1}\Lambda_l}(g)\Delta_{w^{\imath_\bullet}_{k}\Lambda_{l}}(g)}.
\]

If $b_k=0$, by Proposition \ref{prop: minor recursive formula} (2) and Eq. \eqref{eq: root subgroup element for A2n}, we can also conclude that for some non-zero constant $\lambda\in k^\times$,
\[
a_k=\lambda\frac{\Delta_{w^{\imath_\bullet}_{k}\Lambda_{l-1}}(g)\Delta_{w^{\imath_\bullet}_{k}\Lambda_{l+2}}(g)}{\Delta_{w^{\imath_\bullet}_{k-1}\Lambda_l}(g)\Delta_{w^{\imath_\bullet}_{k}\Lambda_{l+1}}(g)}\doteq\frac{\prod_{\jmath\neq \imath_k}\Delta_{w^{\imath_\bullet}_{k-1}\Lambda_j}(g)^{-a_{\jmath\imath_k}}}{\Delta_{w^{\imath_\bullet}_{k-1}\Lambda_l}(g)\Delta_{w^{\imath_\bullet}_{k}\Lambda_{l}}(g)}.
\]

These relations between $a_k$'s and the generalized minors of $g$ allow us to compute $\val(\Delta_{w^{\imath_\bullet}_{k}\Lambda_{i_k}}(g))$ recursively as:
\begin{align*}
\val\left(\Delta_{w^{\imath_\bullet}_{k}\Lambda_{i_k}}(g)\right)=&-\left(\sum_{\jmath\neq \imath_k}a_{\jmath\imath_k}\val\left(\Delta_{w^{\imath_\bullet}_{k-1}\Lambda_j}(g)\right)\right)-\val\left(\Delta_{w^{\imath_\bullet}_{k-1}\Lambda_{i_k}}(g)\right)-\val\left(a_k\right)\\
=&-\left(\sum_{\jmath\neq \imath_k}a_{\jmath\imath_k}\val\left(\Delta_{w^{\imath_\bullet}_{k-1}\Lambda_j}(g)\right)\right)-\val\left(\Delta_{w^{\imath_\bullet}_{k-1}\Lambda_{i_k}}(g)\right)-n_km_{\imath_k}.
\end{align*}
By comparison with Proposition \ref{prop: A(n)}, we see that it suffices to prove $D_{w^{\imath_\bullet}_{k} \Lambda_{\imath_k}}([g])=\val\left(\Delta_{w^{\imath_\bullet}_{k}\Lambda_{i_k}}(g)\right)$ for all $1\leq k\leq N$. By definition of $D_\gamma$ (Equation \eqref{D_def}), we see that on the one hand,
\[
D_{w^{\imath_\bullet}_{k}\Lambda_{\imath_k}}([g])= \min_{v\in \overline{V}(\Lambda_{\imath_k})}\val\left(\left\la \xi_{w^{\imath_\bullet}_{k}\Lambda_{i_k}},g\cdot v\right\ra\right)\leq \val\left(\Delta_{w^{\imath_\bullet}_{k} \Lambda_{i_k}}(g)\right).
\]
On the other hand, Lemma \ref{lem: ratio} gives the reverse inequality. Thus, we can conclude that 
\[
D_{w^{\imath_\bullet}_{k} \Lambda_{\imath_k}}([g])=\val(\Delta_{w^{\imath_\bullet}_{ k}\Lambda_{i_k}}(g))
\]
for all $1\leq k\leq N$, which shows that the image $[g]$ lands inside $A^{\imath_\bullet}(n_\bullet)$.
\end{proof}

\begin{theorem}\label{prop: surjective} The map $\Phi:\mc{A}^{\imath_\bullet}(n_\bullet)\rightarrow A^{\imath_\bullet}(n_\bullet)$ in Definition \ref{def: twist image} is surjective.
\end{theorem}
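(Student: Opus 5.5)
Given $L\in A^{\imath_\bullet}(n_\bullet)$, I will build $a_\bullet\in\mc{A}^{\imath_\bullet}(n_\bullet)$ with $\Phi(a_\bullet)=L$, following the proof of \cite[Theorem 4.5]{Kam}.

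\textbf{Step 1: a good representative.} Since $L\in T_0$, write $L=[g_0]$ with $g_0\in N^-(\mc{K})^\sigma$. By Lemma \ref{lem: representative}, choose $h\in(N^-)^\sigma\subset G(\mc{O})^\sigma$ and put $g=g_0h$. Then $[g]=L$ and
\[
\val(\Delta_{w\Lambda_i}(g))=D_{w\Lambda_\imath}(L)\quad\text{for all } w\in W^\tau.
\]
In particular $\Delta_{w_0\Lambda_i}(g)\neq 0$ for all $i$. Here I use that the $\Delta_{w_0\Lambda_i}$ with $i\in\imath$ differ by $\sigma$ (Lemma \ref{lem: minor under gamma action}), so they have equal valuation. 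Lemma \ref{lem: in N^-(w_0)} then gives $g\in N^-(w_0)(\mc{K})$, and by construction $g$ is $\sigma$-invariant. Set $y:=\eta_{w_0}(g)$; it is $\sigma$-invariant by Lemma \ref{lem: sigma invariant}.

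\textbf{Step 2: factor $y$ along the reduced word.} Refine $\imath_\bullet$ to a reduced word $i_\bullet$ of $w_0\in W$, replacing each standard $\imath$ by its commuting letters and each exceptional $\imath$ by $l,l+1,l$. Every relevant chamber minor $\Delta_{w^{i_\bullet}_k\Lambda_j}(g)$ is nonzero, because its valuation equals a finite $D$-value. Lemma \ref{lem: Lusztig variables in terms of A variables} and Proposition \ref{prop: minor recursive formula} then give the factorization $y=\prod y_{i_k}(a_k)$, with each $a_k$ an explicit ratio of minors. One point needs care at an intermediate minor $\Delta_{w s_l\Lambda_l}(g)$ inside an exceptional block, since it is not constrained by the $D$-data. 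If it vanishes, Proposition \ref{prop: vanishing minor implies special angle unipotent} shows that the block has the form $y_l(1)y_{l+1}(-f)y_l(-1)y_{l+1}(f)$, and Lemma \ref{lem: special case} expresses $f$ through the minors. This is the $b=0$ case.

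\textbf{Step 3: identify the coordinates.} For a standard $\imath$, $\sigma$-invariance of $y$ and uniqueness of the factorization force the letters of the block to be $y_\imath(a)$ as in \eqref{eq: root subgroup elements}. Corollary \ref{prop: quotient in K} puts $a\in\mc{K}_\imath$, and the valuation recursion of Proposition \ref{prop: A(n)} gives $\val(a)=n m_\imath$, so $a\in\mc{A}^\imath(n)$.

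For an exceptional block with $b\neq0$, I define $b$ as the middle parameter (divided by $t$ when $m=4$). I then solve for $a$ from the outer parameters, using $\sigma$-invariance of the block as in Proposition \ref{prop: sigma invariant unipotent}. Finally I define $c$ by inverting \eqref{eq: a}. The relation $a+\gamma(a)=b\gamma(b)$ (respectively $a-\gamma(a)=-t^2b\gamma(b)$) should make $c$ $\gamma$-invariant. Corollaries \ref{cor: A2l with m=4} and \ref{cor: A2n m=4 middle minor} place $b$ in $\mc{K}_\imath$ with the right parity of valuation.

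\textbf{Step 4: valuations, the main obstacle.} I then must check that $(b,c)$ lies in $\mc{A}^\imath(n)$, not just that $\val(a)=nm_\imath$. This requires, in each parity case, that $b$ and $c$ are integral of the prescribed valuations, with exactly one of them a unit times the given power. I would argue by the parity splitting in the proof of Proposition \ref{prop: A2n valuation}: $\val(a)=\min\{2\val(b),\val(c)+1\}$ (respectively $\min\{2\val(b)+2,\val(c)\}$), with the two terms in different residue classes.

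Integrality of $b$ itself is the hard part. For this I plan to use $\val(\Delta_{ws_l\Lambda_l}(g))\ge D_{ws_l\Lambda_l}$-type bounds coming from Lemma \ref{lem: ratio}, applied to the refined word of $W$. Once these hold, $\Phi(a_\bullet)=[\eta_{w_0}^{-1}(y)]=[g]=L$, which proves the theorem.
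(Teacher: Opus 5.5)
Your Steps 1--2 follow the paper's argument, and the overall plan is right. Step 3 reaches the shape of the blocks by a slightly different route. The partial products $y_{\imath_1}\cdots y_{\imath_k}$ are the unique elements of $N^-$ lying over the intermediate flags of Lemma \ref{lem: relative position}(1), so the block factorization of $y$ is unique. Hence $\sigma(y)=y$ makes every block $\sigma$-invariant. One Lusztig move then reads off the form $y_\imath(a)$, resp.\ $y_\imath(b,c)$ with $a+\gamma(a)=b\gamma(b)$ (resp.\ $a-\gamma(a)=-t^2b\gamma(b)$).

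This is valid. It also makes explicit the relation the paper uses tacitly when it writes $c_k=(a_k-\gamma(a_k))/t$. The paper instead works with the minor expressions and Lemma \ref{lem: minor under gamma action}, and for $b=0$ it uses \eqref{eq: minor identity} together with Lemma \ref{lem: pairing mod 4}.

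You still need to treat $b=0$ explicitly in Step 3.
\begin{itemize}
\item Your method gives $\gamma(f)=-f$ for $m=2$ and $\gamma(f)=f$ for $m=4$. Apply $\sigma$ to $y_l(1)y_{l+1}(-f)y_l(-1)y_{l+1}(f)$ and compare $(3,1)$-entries as in Lemma \ref{lem: change of word in special case}.
\item To invoke Proposition \ref{prop: vanishing minor implies special angle unipotent} you also need $\Delta_{ws_{l+1}\Lambda_{l+1}}(g)=0$. This follows from $\Delta_{ws_l\Lambda_l}(g)=0$ by Lemma \ref{lem: minor under gamma action}.
\end{itemize}

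In Step 4, what you call the main obstacle is not one, and the route you propose for it would not work. First, $ws_l\notin W^\tau$, so there is no $D$-function for $ws_l\Lambda_l$ whose value is fixed on $A^{\imath_\bullet}(n_\bullet)$. Second, Lemma \ref{lem: ratio} bounds pairings $\la\xi,g\cdot v\ra$ from below by a minor, not a minor from below.

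Integrality of $b$ already follows from the parity observation you made. First record $\val(a)=nm_\imath$ for an exceptional block:
\begin{itemize}
\item When $b\neq0$, $a$ is the product of the middle and last parameters of the block. In this product the unconstrained minor $\Delta_{w_{k-1}s_l\Lambda_l}(g)$ cancels, leaving a ratio of chamber minors whose valuation is fixed by Step 1 and Proposition \ref{prop: A(n)}.
\item When $b=0$, use Proposition \ref{prop: minor recursive formula}(2).
\end{itemize}
Now $2\val(b)$ and $\val(c)+1$ (resp.\ $2\val(b)+2$ and $\val(c)$) lie in different residue classes, so they cannot cancel and $nm_\imath=\val(a)$ is their minimum. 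Hence both are $\ge nm_\imath$, with equality for exactly the one singled out by the parity of $n$. That is precisely membership in $\mc{A}^\imath(n)$, and it is how the paper concludes.
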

\begin{proof} 
Suppose $L=[g]$ for some $g\in N^-(\mathcal{K})^\sigma$. By Lemma \ref{lem: representative} we may assume without loss of generality that $D_{w^{\imath_\bullet}_k\Lambda_{\imath_k}}([g])=\val(\Delta_{w^{\imath_\bullet}_{k}\Lambda_{i_k}}(g))$ for all $1\leq k\leq N$. 
Note that except for the generalized minor $\Delta_{w^{\imath_\bullet}_{k-1}s_l\Lambda_l}(g)$ when $\imath_k=\{l,l+1\}$ is exceptional, all other generalized minor of $g$ appear in Proposition \ref{prop: minor recursive formula} are $\doteq\Delta_{w^{\imath_\bullet}_s\Lambda_{i_s}}(g)$ for some $s$, and hence are non-zero because they have finite valuations. In particular, $\Delta_{w_0\Lambda_i}(g)\neq 0$ for all $\Lambda_i$ and therefore $g\in N^-(w_0)(\mc{K})$ (Lemma \ref{lem: in N^-(w_0)}). 

Let $y:=\eta_{w_0}(g)$. We claim that $y$ admits a factorization $y=y_{\imath_1}y_{\imath_2}\cdots y_{\imath_N}$, where each $y_{\imath_k}$ carries a parameter $a_k$ if $\imath_k$ is standard, and two parameters $b_k$ and $c_k$ if $\imath_k$ is exceptional. To see this, note that by Lemma \ref{lem: relative position} (1), there exists intermediate $B_1,B_2,\dots, B_{N-1}$ that fit compatibly in between $B\xrightarrow{w_0}yB$ as 
\[
B=B_0\xrightarrow{s_{\imath_1}}B_1\xrightarrow{s_{\imath_2}}B_2\xrightarrow{s_{\imath_3}} \cdots \xrightarrow{s_{\imath_{N-1}}} B_{N-1}\xrightarrow{s_{\imath_N}}yB=B_N,
\]
and even more decorated flags that sit within each $B_{k-1}\xrightarrow{s_{\imath_k}}B_k$ depending on the factorization of $s_{\imath_k}$ (Eq. \eqref{eq: s_i}). Moreover, up to a diagonal $G$-action, we can move (and lift) these flags into a compatible configuration of decorated flags as in the proof of Proposition \ref{prop: minor recursive formula}. Now the non-vanishing of $\Delta_{w^{\imath_\bullet}_s\Lambda_{i_s}}(g)$ for all $1\leq s\leq N$ implies that $N^-\mbox{ --- }B_k$ for every $k$, and therefore the angle unipotents $u(B_{k-1},N^-,B_k)$ are well-defined. Moreover, if $\imath_k$ is standard, the angle unipotent $u(B_{k-1},N^-,B_k)$ must be of the form $y_{\imath_k}(a_k)$ for some $a_k$ (Lemmas \ref{lem: minor under gamma action} and \ref{lem: Lusztig variables in terms of A variables}); if $\imath_k$ is exceptional, the angle unipotent $u(B_{k-1},N^-B_k)$ must be of the form $y_{\imath_k}(b_k,c_k)$ for some $b_k$ and $c_k$ (Lemmas \ref{lem: minor under gamma action} and \ref{lem: Lusztig variables in terms of A variables} when $\Delta_{w^{\imath_\bullet}_{k-1}s_l\Lambda_l}(g)\neq 0$ and Proposition \ref{prop: vanishing minor implies special angle unipotent} and Lemma \ref{lem: special case} when $\Delta_{w^{\imath_\bullet}_{k-1}s_l\Lambda_l}(g)= 0$). This proves the claim of the factorization $y=y_{\imath_1}y_{\imath_2}\cdots y_{\imath_N}$.

Next, we need to argue that the parameters $a_k$, $b_k$, and $c_k$ are from the correct $\mc{A}^{\imath_k}(n_k)$. Note that if $\imath_k$ is standard, we know that $a_k\in \mc{K}_{\imath_k}$ by Corollary \ref{prop: quotient in K}, and Proposition \ref{prop: minor recursive formula} dictates that $\val(a_k)=n_km_{\imath_k}$, and therefore $a_k\in t^{n_km_{\imath_k}}\mc{O}_{\imath_k}^\times$.


It remains to consider the case of $A_{2l}^{(2)}$ with exceptional $\imath_k=\{l,l+1\}$. 
Let us first consider the case with $m=2$, for which $m_{\imath_k}=1$ and hence $\mc{K}_{\imath_k}=\mc{K}$. 
On the one hand, if $\Delta_{w^{\imath_\bullet}_{k-1}s_l\Lambda_l}(g)\neq 0$, then both 
\[
b_k=\lambda\dfrac{\Delta_{w^{\imath_\bullet}_{k}\Lambda_{l+2}}(g)\Delta_{w^{\imath_\bullet}_{k-1}s_l\Lambda_l}(g)}{\Delta_{w^{\imath_\bullet}_{k-1}\Lambda_{l+1}}(g)\Delta_{w^{\imath_\bullet}_{k}\Lambda_{l+1}}(g)} \quad \text{and} \quad
\frac{a_k}{b_k}=\lambda'\frac{\Delta_{w^{\imath_\bullet}_{k}\Lambda_{l+1}}(g)\Delta_{w^{\imath_\bullet}_{k}\Lambda_{l-1}}(g)}{\Delta_{w^{\imath_\bullet}_{k-1}s_l\Lambda_l}(g)\Delta_{w^{\imath_\bullet}_k\Lambda_l}(g)} 
\]
are non-zero, and hence we can compute $a_k$ by taking their product and then compute $c_k$ from Eq. \eqref{eq: a}. In particular, since $c_k=(2a_k-b_k\gamma(b_k))/t=(a_k-\gamma(a_k))/t$, $c_k$ must be an element of $\mc{K}^\gamma$. 

On the other hand, if $\Delta_{w^{\imath_\bullet}_{k-1}s_l\Lambda_l}(g)=0$, then $\gamma\left(\Delta_{w^{\imath_\bullet}_{k-1}s_l\Lambda_l}(g)\right)=0$. It then follows from Propositions \ref{prop: vanishing minor implies special angle unipotent} and \ref{prop: minor recursive formula} (2) that the angle unipotent element $y_{\imath_k}=y_l(1)y_{l+1}(-a_k)y_l(-1)y_{l+1}(a_k)$ for
\[
a_k=\lambda\frac{\Delta_{w^{\imath_\bullet}_{k}\Lambda_{l-1}}(g)\Delta_{w^{\imath_\bullet}_{k}\Lambda_{l+2}}(g)}{\Delta_{w^{\imath_\bullet}_{k-1}\Lambda_l}(g)\Delta_{w^{\imath_\bullet}_{k}\Lambda_{l+1}}(g)}.
\]
Note that the numerator is $\gamma$-invariant since $l+2=\tau(l-1)$. For the denominator, by Eq. \eqref{eq: minor identity}, we can deduce that 
\[
\Delta_{w^{\imath_\bullet}_{k-1}\Lambda_l}(g)\Delta_{w^{\imath_\bullet}_{k}\Lambda_{l+1}}(g)=-\Delta_{w^{\imath_\bullet}_{k-1}\Lambda_{l+1}}(g)\Delta_{w^{\imath_\bullet}_{k}\Lambda_l}(g)
\]
But then by Lemmas \ref{lem: minor under gamma action} and \ref{lem: pairing mod 4} (1), we know that
\begin{align*}
\Delta_{w^{\imath_\bullet}_{k-1}\Lambda_{l+1}}(g)\Delta_{w^{\imath_\bullet}_{k}\Lambda_l}(g)=&(-1)^{\la h, w^{\imath_\bullet}_{k-1}\Lambda_l-\Lambda_l+w^{\imath_\bullet}_{k}\Lambda_{l+1}-\Lambda_{l+1}\ra}\gamma\left(\Delta_{w^{\imath_\bullet}_{k-1}\Lambda_l}(g)\Delta_{w^{\imath_\bullet}_{k}\Lambda_{l+1}}(g)\right)\\
=&\gamma\left(\Delta_{w^{\imath_\bullet}_{k-1}\Lambda_l}(g)\Delta_{w^{\imath_\bullet}_{k}\Lambda_{l+1}}(g)\right).
\end{align*}
Thus, the denominator of $a_k$ is $\gamma$-anti-invariant and therefore $\gamma(a_k)=-a_k$. Comparing this with Eq. \eqref{eq: a}, we see that $b_k=0$ and $c_k=2a_k/t\in \mc{K}^\gamma$.

In either subcase of the $A_{2l}^{(2)}$ with $m=2$ case, we have $\val(a_k)=n_k$ by Proposition \ref{prop: minor recursive formula}. Since both $2\val(b_k)$ and $\val(c_k)$ are even numbers, $n_k=\val(a_k)=\min\{2\val(b_k),\val(c_k)+1\}$, which equals $2\val(b_k)$ if $n_k$ is even, and $\val(c_k)+1$ if $n_k$ is odd, yielding the correct $\mc{A}^{\imath_k}(n_k)$. 

For the case of $A_{2l}^{(2)}$ with $m=4$, we have $m_{\imath_k}=2$ and hence $\mc{K}_{\imath_k}=k\dbp{t^2}$. If $\Delta_{w^{\imath_\bullet}_{k-1}s_l\Lambda_l}(g)\neq 0$, then we can compute $a_k, b_k, c_k$ similar to the $m=2$ case above; in particular, by Corollaries \ref{cor: A2l with m=4} and \ref{cor: A2n m=4 middle minor}, we know that $a_k,b_k\in \mc{K}_{\imath_k}$, and hence $c_k=2a_k+t^2b_k\gamma(b_k)=a_k+\gamma(a_k)\in \mc{K}_{\imath_k}^\gamma$.

If $\Delta_{w^{\imath_\bullet}_{k-1}s_l\Lambda_l}(g)=0$, then we again set $a_k=\lambda\dfrac{\Delta_{w^{\imath_\bullet}_{k}\Lambda_{l-1}}(g)\Delta_{w^{\imath_\bullet}_{k}\Lambda_{l+2}}(g)}{\Delta_{w^{\imath_\bullet}_{k-1}\Lambda_l}(g)\Delta_{w^{\imath_\bullet}_{k}\Lambda_{l+1}}(g)}$ as in Proposition \ref{prop: minor recursive formula} (2). Note that the numerator is again $\gamma$-invariant, and by Eq. \eqref{eq: minor identity}, we again have $\Delta_{w^{\imath_\bullet}_{k-1}\Lambda_l}(g)\Delta_{w^{\imath_\bullet}_{k}\Lambda_{l+1}}(g)=-\Delta_{w^{\imath_\bullet}_{k-1}\Lambda_{l+1}}(g)\Delta_{w^{\imath_\bullet}_{k}\Lambda_l}(g)$.
But this time, Lemmas \ref{lem: minor under gamma action} and \ref{lem: pairing mod 4} (2) implies that
\begin{align*}
\Delta_{w^{\imath_\bullet}_{k-1}\Lambda_{l+1}}(g)\Delta_{w^{\imath_\bullet}_{k}\Lambda_l}(g)=&\epsilon^{\la h, w^{\imath_\bullet}_{k-1}\Lambda_l-\Lambda_l+w^{\imath_\bullet}_{k}\Lambda_{l+1}-\Lambda_{l+1}\ra}\gamma\left(\Delta_{w^{\imath_\bullet}_{k-1}\Lambda_l}(g)\Delta_{w^{\imath_\bullet}_{k}\Lambda_{l+1}}(g)\right)\\
=&-\gamma\left(\Delta_{w^{\imath_\bullet}_{k-1}\Lambda_l}(g)\Delta_{w^{\imath_\bullet}_{k}\Lambda_{l+1}}(g)\right).
\end{align*}
Thus, we can conclude that the denominator of $a_k$, and hence $a_k$ itself, is $\gamma$-invariant. Comparing this with Eq. \eqref{eq: a}, we see that $b_k=0$ again and $c_k=2a_k\in \mc{K}_{\imath_k}^\gamma$.

In either subcase of the $A_{2l}^{(2)}$ with $m=4$ case, we have $\val(a_k)=2n_k$ by Proposition \ref{prop: minor recursive formula}. Since both $2\val(b_k)$ and $\val(c_k)$ are multiples of $4$, $2n_k=\val(a_k)=\min\{2\val(b_k)+2,\val(c_k)\}$, which equals $\val(c_k)$ if $n_k$ is even, and $2\val(b_k)+2$ if $n_k$ is odd, yielding the correct $\mc{A}^{\imath_k}(n_k)$ again. 
\end{proof}

\begin{corollary} 
\label{cor:irred}
$A^{\imath_\bullet}(n_\bullet)$ is irreducible. 
\end{corollary}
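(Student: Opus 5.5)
The plan is to deduce irreducibility directly from Theorem \ref{prop: surjective}: $A^{\imath_\bullet}(n_\bullet)$ is the image of $\mc{A}^{\imath_\bullet}(n_\bullet)$ under $\Phi$, and the continuous image of an irreducible space is irreducible. So two things need to be checked: that $\mc{A}^{\imath_\bullet}(n_\bullet)$ is irreducible, and that $\Phi$ is a morphism (hence continuous) in the appropriate sense.

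For irreducibility of the source, I will use the factor-by-factor description in Definition \ref{defn: A}. Each factor is of the form $t^{d}\mc{O}'$, $t^{d}(\mc{O}')^\times$, or a product of two such, where $\mc{O}'$ is one of $\mc{O}$, $\mc{O}_\imath$, $\mc{O}^\gamma$, or $\mc{O}_\imath^\gamma$. Each of these is a power series ring in a single variable of the form $t^{c}$. As a scheme, $t^d\mc{O}'$ is then an infinite-dimensional affine space $\prod_{j\ge 0}\mathbb{A}^1$, with coordinates given by the coefficients. The set $t^d(\mc{O}')^\times$ is the open subscheme $\mathbb{G}_m\times\prod_{j\ge1}\mathbb{A}^1$, where the leading coefficient is required to be nonzero. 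Both are irreducible, being inverse limits of irreducible affine varieties along surjective projections with irreducible fibers. A finite product of such schemes over the algebraically closed field $\rk$ is again of this form, so $\mc{A}^{\imath_\bullet}(n_\bullet)$ is irreducible.

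For continuity, I will argue that $\Phi$ is algebraic. The map $a_\bullet\mapsto y_{\imath_\bullet}$ is polynomial in the coefficients for standard $\imath_k$. For exceptional $\imath_k$, the map is algebraic across the locus $b_k=0$ by Lemma \ref{lem: limit}, as already noted after Definition \ref{def: twist image}.

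Next, $\eta_{w_0}^{-1}(y)=([w_0y^{-1}]_+^t)^{-1}$ is a regular map on $N^-(w_0)$. The Gaussian factor $[\cdot]_+$ is given by ratios of minors, which are nonvanishing on the relevant locus by Lemma \ref{lem: in N^-(w_0)}.

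To handle the infinite-dimensionality, I will use the fact that $A^{\imath_\bullet}(n_\bullet)\subset S_\mu\cap T_0$ lies in a finite-dimensional closed subvariety of $\mGr$, for instance some $\overline{\mGr}_\lambda$. The map to $\mGr$ then factors through a finite truncation: only finitely many coefficients of the $a_k$ affect the lattice $[g]$, since $g$ modulo $G(\mc{O})^\sigma$ is determined by finitely many coefficients once valuations are bounded. The image is therefore the image of an irreducible finite-dimensional variety (a truncation of $\mc{A}^{\imath_\bullet}(n_\bullet)$) under a morphism, and so it is irreducible.

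I expect the main obstacle to be exactly this finiteness and truncation step, justifying that $\Phi$ factors through a finite jet quotient. I would first try to bound the pole orders of the entries of $g_{\imath_\bullet}$ uniformly in terms of $n_\bullet$ using Proposition \ref{prop: minor recursive formula}. Alternatively, it suffices to note that irreducibility is purely topological: $\Phi$ is continuous for the ind-Zariski topology on $\mGr$ and the inverse limit topology on $\mc{A}^{\imath_\bullet}(n_\bullet)$. The image of an irreducible space under a continuous map is irreducible, and this alone suffices.
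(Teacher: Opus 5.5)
Your proposal is correct and is essentially the paper's proof: the paper deduces irreducibility of $A^{\imath_\bullet}(n_\bullet)$ in one line from two facts, namely that $\mc{A}^{\imath_\bullet}(n_\bullet)$ is irreducible and that $\Phi$ is surjective (Theorem~\ref{prop: surjective}). Your additional checks are sound details that the paper leaves implicit: each factor of $\mc{A}^{\imath_\bullet}(n_\bullet)$ is an open piece of an infinite-dimensional affine space and hence irreducible, and $\Phi$ is continuous, for instance via a finite-jet truncation, since $S_\mu\cap T_0$ lies in finitely many $\overline{\mGr}_\nu$.
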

\begin{proof} Since the image of an irreducible space is irreducible, the corollary follows from the fact that $\mc{A}^{\imath_\bullet}(n_\bullet)$ is irreducible and the map $\Phi:\mc{A}^{\imath_\bullet}(n_\bullet)\rightarrow A^{\imath_\bullet}(n_\bullet)$ is surjective (Theorem \ref{prop: surjective}).
\end{proof}

Given an element $\mu\in   \sum_k  \mathbb{Z}_{\geq 0} \gamma_k\subset X_*(H)_\tau$ and a fixed reduced expression $\imath_\bullet$ of $w_0\in W^\tau$, by Lemma \ref{lem:semiinfinite-int}, using  similar argument as in \cite[Proposition 4.1]{Kam},  we have the following decomposition
\begin{equation}
    \label{eq:ST-decomp}
 S_\mu\cap  T_0 =  \bigsqcup_{ n_\bullet }  A^{\imath_\bullet}(n_\bullet) ,  \end{equation} 
where $n_\bullet$ satisfies $\mu=\sum_s  n_s\beta^{\imath_\bullet}_s$ and $n_s\geq 0$. 

\begin{theorem}
\label{thm_MV_constr}
In the decomposition (\ref{eq:ST-decomp}), for each $A^{\imath_\bullet}(n_\bullet)$ 
the closure $\overline{A^{\imath_\bullet}(n_\bullet)}$ is an irreducible component of  $\overline{S_\mu\cap T_0}$ and each component of $\overline{S_\mu\cap T_0}$ arises exactly once this way. In particular, the variety $\overline{A^{\imath_\bullet}(n_\bullet)}$ has dimension $\la \mu, \rho \ra$.  
\end{theorem}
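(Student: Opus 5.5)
The plan is to follow Kamnitzer's argument \cite[Theorem 4.5, Proposition 4.1]{Kam}, combining the decomposition (\ref{eq:ST-decomp}) with the dimension count from Proposition \ref{T_S_prop}. By Proposition \ref{T_S_prop} (after translation, $S_\mu\cap T_0\simeq S_{\lambda}\cap T_{\lambda-\mu}$ with both coweights sufficiently dominant), $S_\mu\cap T_0$ is of pure dimension $\la\mu,\rho\ra$. The decomposition (\ref{eq:ST-decomp}) writes $S_\mu\cap T_0$ as a \emph{finite} disjoint union of the strata $A^{\imath_\bullet}(n_\bullet)$. Finiteness holds because the $n_s\ge 0$ satisfy $\mu=\sum_s n_s\beta^{\imath_\bullet}_s$, and each $\beta^{\imath_\bullet}_s$ is a positive root of $\check{G}^\tau$. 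By Corollary \ref{cor:irred}, each stratum is irreducible.

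The key step is to show that every nonempty stratum has dimension exactly $\la\mu,\rho\ra$. I will first try to obtain this directly from the parametrization. The map $\Phi:\mc{A}^{\imath_\bullet}(n_\bullet)\to A^{\imath_\bullet}(n_\bullet)$ is surjective by Theorem \ref{prop: surjective}. It should also be injective on suitable quotients: the $a_k$ (resp.\ $b_k,c_k$) are recovered from $g$ up to the finitely many minors, via Proposition \ref{prop: minor recursive formula}. However, the fibers involve the choice of representative in $N^-(\mc{K})^\sigma$ modulo $N^-(\mc{O})^\sigma$, so a direct dimension count of an infinite-dimensional $\mc{A}$ is awkward.

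Instead, I would use the following topological argument, which suffices.
\begin{enumerate}
\item Since $S_\mu\cap T_0$ is a finite union of the irreducible locally closed pieces $A^{\imath_\bullet}(n_\bullet)$ and has pure dimension $\la\mu,\rho\ra$, each irreducible component of $\overline{S_\mu\cap T_0}$ equals $\overline{A^{\imath_\bullet}(n_\bullet)}$ for at least one $n_\bullet$.
\item The strata are disjoint, so distinct components come from distinct $n_\bullet$.
\item It remains to show the converse: every $\overline{A^{\imath_\bullet}(n_\bullet)}$ is a component, i.e.\ no stratum is lower-dimensional and contained in the closure of another.
\end{enumerate}

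For step 3, I will compare counts. On one side, the number of components of $\overline{S_\mu\cap T_0}$ with $\lambda$ large equals $\dim V(\lambda)_{\lambda-\mu}$ for $\check{G}^\tau$, by Theorem \ref{thm_ramified_Satake} and Corollary \ref{cor_MV_Gr}. For $\lambda$ sufficiently dominant, this is the Kostant partition number of $\mu$ in positive roots of $\check{G}^\tau$. On the other side, the number of tuples $n_\bullet\in\mb{Z}_{\ge0}^N$ with $\sum n_s\beta_s^{\imath_\bullet}=\mu$ is exactly this partition number, since $\{\beta_s^{\imath_\bullet}\}$ enumerates the positive roots of $\check{G}^\tau$ exactly once. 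Every nonempty stratum contributes at most one component, and the number of components equals the number of possible $n_\bullet$. Hence every stratum is nonempty and its closure is a component, of dimension $\la\mu,\rho\ra$ by purity.

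The main obstacle is ensuring that the components counted in the Satake step are exactly those of $\overline{S_\mu\cap T_0}$, not merely those contained in $\overline{\mGr}_\lambda$. Here Proposition \ref{T_S_prop} gives $\overline{S_\lambda\cap T_{\lambda-\mu}}=\overline{\mGr_\lambda\cap T_{\lambda-\mu}}$ for $\lambda$ sufficiently dominant. A second subtlety is that $\mc{A}^{\imath_\bullet}(n_\bullet)$ is nonempty for every $n_\bullet$, so every stratum is nonempty; nonemptiness also follows directly from the image of $\Phi$.
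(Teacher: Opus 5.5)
Your proposal is correct and is essentially the paper's proof. The paper counts the components of $\overline{S_\mu\cap T_0}$ as the Kostant partition number $k(\mu)$, using Proposition~\ref{T_S_prop} and Theorem~\ref{thm_ramified_Satake} on a sufficiently dominant translate, matches this with the number of strata in (\ref{eq:ST-decomp}), invokes irreducibility from Corollary~\ref{cor:irred}, and concludes by \cite[Lemma 4.3]{Kam}, which is exactly the topological counting argument of your steps 1--3 (your aside on a direct dimension count via $\Phi$ is not needed).
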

\begin{proof}
Let $k(\mu)$ be the Kostant partition function for the Satake group $(\check{G})^\tau$.

By Proposition \ref{T_S_prop}, 
we can choose a sufficient dominant $\eta\in X_*(H)^+_\tau$, such that
\[ \overline{ S_{\mu+\eta} \cap T_{ \eta}  }= \overline{ \mGr_{\mu+\eta} \cap T_{\eta } }.\]
By Theorem \ref{thm_ramified_Satake}, the number of irreducible components of $\overline{ \mGr_{\mu+\eta} \cap T_{\eta } }$ equals $\dim V_{\eta+\mu}(\eta)$. It is well-known that when $\eta$ is sufficiently dominant, $\dim V_{\eta+\mu}(\eta)=k(\mu)$. 
Note that, $\overline{S_\mu\cap T_0}$ is a translation of $ \overline{ S_{\mu+\eta} \cap T_{ \eta}  }$. 
Thus, $\overline{S_\mu\cap T_0}$ has $k(\mu)$ many irreducible components. 

By \eqref{eq:ST-decomp}, $S_\mu\cap  T_0$ is a disjoint union of $k(\mu)$ many strata $A^{\imath_\bullet}(n_\bullet)$. 
By Corollary \ref{cor:irred}, $A^{\imath_\bullet}(n_\bullet)$ is irreducible.
  By \cite[Lemma 4.3]{Kam},  all $\overline{A^{\imath_\bullet}(n_\bullet)}$ are distinct irreducible components of   $\overline{S_\mu\cap T_0}$ of dimension  $\la \mu, \rho \ra$.
\end{proof}

\subsection{Braid moves and Lusztig transformations} Recall that if $\imath_\bullet$ and $\imath'_\bullet$ are two reduced words of $w\in W^\tau$ spelled with $s_{\imath}$'s, then there exists a sequence of braid moves involving $s_\imath$'s that turns one reduced word to the other. Thus, in order to relate $A^{\imath_\bullet}(n_\bullet)$ and $A^{\imath'_\bullet}(n'_\bullet)$, we need to investigate how the Lusztig data $n_\bullet$ and $n'_\bullet$ for the same MV cycle are related under a braid move.

On the other hand, following \cite[Section 2.3]{Hong-Shen-2015}, we can define
\[
\check{\gamma}_\imath=\left\{\begin{array}{ll}
    \sum_{i\in \imath}\alpha_i & \text{if $\imath$ is standard}, \\
    2(\alpha_i+\alpha_j) & \text{if $\imath$ is exceptional},
\end{array}\right.
\]
and $\gamma_\imath=\overline{\check{\alpha}_i}$. By setting 
\begin{equation}\label{eq: cartan}
c_{\imath\jmath}:=\la \check{\gamma}_\imath,\gamma_\jmath\ra,
\end{equation}
we see that $c_{\imath\jmath}=a_{\imath\jmath}$ except when one of $\imath$ and $\jmath$ is exceptional, for which $c_{\imath\jmath}=a_{\jmath\imath}$.

We divide our discussion on the relation between $n_\bullet$ and $n'_\bullet$ based on the values of $c_{\imath\jmath}$ and $c_{\jmath\imath}$.

\medskip

\textbf{(o).} $c_{\imath\jmath}=c_{\jmath\imath}=0$, which corresponds to the braid move $s_\imath s_\jmath=s_\jmath s_\imath$. In this case, it is necessary that $a_{ij}=0$ for all $i\in \imath$ and $j\in \jmath$. As a result, we have
\begin{proposition}\label{prop: A1xA1} If $y_\imath(a'_1)y_\jmath(a'_2)=y_\jmath(a_1)y_\imath(a_2)$, then
    \[
    a'_1=a_2 \quad \text{and} \quad a'_2=a_1,
    \]
    and therefore
    \[
    n'_1=n_2 \quad \text{and} \quad n'_2=n_1.
    \]
    
\end{proposition}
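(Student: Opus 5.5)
The statement concerns case (o), where $c_{\imath\jmath}=c_{\jmath\imath}=0$. The plan is to show first that the two elements $y_\imath$ and $y_\jmath$ commute, and then to extract the parameters by uniqueness of factorization. Since $a_{ij}=0$ for all $i\in\imath$ and $j\in\jmath$, every root subgroup $U_{-\alpha_i}$ with $i\in\imath$ commutes with every $U_{-\alpha_j}$ with $j\in\jmath$. This holds because $\alpha_i+\alpha_j$ is not a root, so the Chevalley commutator formula gives trivial commutators. Both orbits are then standard: the exceptional orbit $\{l,l+1\}$ has nonzero pairing with its neighbours $l-1$ and $l+2$, which would force $c\neq 0$. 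Hence $y_\imath(a)$ is a product of $y_i$'s over $i\in\imath$, and $y_\jmath(a)$ is a product of $y_j$'s over $j\in\jmath$, and the two commute. Therefore
\[
y_\imath(a'_1)y_\jmath(a'_2)=y_\jmath(a_1)y_\imath(a_2)=y_\imath(a_2)y_\jmath(a_1).
\]

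Next we compare parameters. The group generated by the $U_{-\alpha_i}$ ($i\in\imath$) and the $U_{-\alpha_j}$ ($j\in\jmath$) is the direct product of commuting one-parameter subgroups indexed by the distinct simple roots in $\imath\cup\jmath$. Within it, the coordinate of each $-\alpha_k$ is uniquely determined by the element. This can be read off from the Lie algebra or from the matrix entries $\langle \xi_{s_k\Lambda_k}, \cdot\, v_{\Lambda_k}\rangle$, since $y_k(f)v_{\Lambda_k}=v_{\Lambda_k}+f\,v'$ and the other factors act trivially on $v_{\Lambda_k}$.

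Fix the representative $i\in\imath$. The coefficient of $y_i$ on the left side is $a'_1$, and on the right side it is $a_2$, so $a'_1=a_2$. Similarly $a'_2=a_1$. The remaining coordinates, namely the $\gamma^k$-twists, agree automatically.

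The statement about Lusztig data then follows from the valuation definition $\mathcal{A}^{\imath}(n)=t^{nm_\imath}\mathcal{O}_\imath^\times$. Since $\val(a'_1)=\val(a_2)$, we get $n'_1m_\imath=n_2m_\imath$, that is, $n'_1=n_2$. By the same argument, $n'_2=n_1$. Via Theorem \ref{prop: surjective}, the same element $y$ parametrizes points of both strata, so this is the correct transformation rule.

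The only step requiring care is confirming that in case (o) neither orbit is exceptional and that $a_{ij}=0$ across the orbits; this is the fact stated just before the proposition, and it can be verified case by case on the folded diagrams. The rest is routine.
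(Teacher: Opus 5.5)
Your strategy is the same as the paper's, which offers nothing beyond the remark that $a_{st}=0$ for all $s\in\imath$, $t\in\jmath$: commute the two factors, then read off the parameters. The gap is in the step you yourself single out. It is not true that both orbits are standard in case (o).

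In $A_{2l}^{(2)}$ with $l\geq 3$, the exceptional orbit $\imath=\{l,l+1\}$ is adjacent in the folded diagram only to $\{l-1,l+2\}$. So for $\jmath=\{k,2l+1-k\}$ with $k\leq l-2$ (e.g.\ $\{1,6\}$ and $\{3,4\}$ in $A_6^{(2)}$), all $a_{st}$ across the two orbits vanish and $c_{\imath\jmath}=c_{\jmath\imath}=0$. Such commutation moves do occur between reduced words of $w_0$ in $W^\tau$ of type $B_l$/$C_l$. Also, the notation $y_\imath(a)$ in these braid-move propositions covers $y_\imath(b,c)$ as well (compare Proposition \ref{prop: B2}).

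Your commutation step survives in this case, since every factor $y_l(\cdot)$, $y_{l+1}(\cdot)$ of $y_\imath(b,c)$ commutes with every factor of $y_\jmath$. Your parameter extraction does not:
\begin{itemize}
\item $U_{-\alpha_l}$ and $U_{-\alpha_{l+1}}$ do not commute, so the group generated is not a direct product of one-parameter subgroups;
\item $y_l$ occurs more than once in $y_\imath(b,c)$, so there is no single ``coefficient of $y_l$'' to compare;
\item the datum is the pair $(b,c)$, not a single $a$.
\end{itemize}

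To repair it, let $U_\imath$ and $U_\jmath$ be the subgroups generated by the $U_{-\alpha_s}$ with $s\in\imath$, respectively $s\in\jmath$. Each is the product of the root subgroups $U_{-\beta}$ over the positive roots of its sub-root-system, and these two root sets are disjoint. The two subgroups commute elementwise, and factorization in $N^-$ is unique, so they meet trivially. Hence $x_1y_1=x_2y_2$ with $x_k\in U_\imath$, $y_k\in U_\jmath$ forces $x_1=x_2$ and $y_1=y_2$.

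For a standard orbit you then conclude as you did. For the exceptional orbit, use the $3\times 3$ matrices in the proof of Lemma \ref{lem: limit}. In both the $b\neq 0$ and $b=0$ cases, $y_\imath(b,c)$ has $(3,1)$-entry $a$ and $(3,2)$-entry $b$ (respectively $tb$ when $m=4$). Then $c$ is recovered from $a$ and $b$ via Eq.~\eqref{eq: a}. Thus $(b,c)\mapsto y_\imath(b,c)$ is injective, which gives $(b'_1,c'_1)=(b_2,c_2)$. Finally $n'_1=n_2$ follows from Proposition \ref{prop: A2n valuation}, since $\val(a)=n\,m_\imath$ determines $n$.
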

    
    \medskip
    
    \textbf{(i).} $c_{\imath\jmath}=c_{\jmath\imath}=-1$, which corresponds to the braid move $s_\imath s_\jmath s_\imath = s_\jmath s_\imath s_\jmath$. In this case, it is necessary that $|\imath|=|\jmath|=1$ or $2$, and $m_\imath=m_\jmath=1$ or $2$. 
    \begin{proposition}\label{prop: A2} If $y_\imath(a'_1)y_\jmath(a'_2)y_\imath(a'_3)=y_\jmath(a_1)y_\imath(a_2)y_\jmath(a_3)$, then there is a non-empty open subset in $\mathcal{A}^{\imath_\bullet}(n_\bullet)$, over which
    \[
    n'_1=n_2+n_3-\min\{n_1,n_3\}, \quad n'_2=\min\{n_1,n_3\}, \quad n'_3=n_1+n_2-\min\{n_1,n_3\}.
    \]
    \end{proposition}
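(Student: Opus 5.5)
The plan is to reduce the braid move of type (i) to Lusztig's rank-two identity, Proposition \ref{prop: Lusztig transform}, and then take valuations. The case $c_{\imath\jmath}=c_{\jmath\imath}=-1$ splits in two. Either $|\imath|=|\jmath|=1$, so $y_\imath, y_\jmath$ are ordinary $y_i, y_j$ with $a_{ij}=a_{ji}=-1$. Or $|\imath|=|\jmath|=2$ with both orbits standard, so each of $y_\imath(a)$, $y_\jmath(a)$ is a product of two commuting root subgroup elements. Here the pairs are $\{i,\tau(i)\}$ and $\{j,\tau(j)\}$, and the only nonzero off-diagonal Cartan entries are $a_{ij}=a_{\tau(i)\tau(j)}=-1$, after relabelling $\tau(j)$ if necessary. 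In the second case the subgroup generated is of type $A_2\times A_2$. The product $y_\jmath(a_1)y_\imath(a_2)y_\jmath(a_3)$ then factors as two commuting $A_2$-words: $y_j(a_1)y_i(a_2)y_j(a_3)$ and its $\sigma$-image, with arguments $\epsilon^{\cdot}\gamma(a_s)$. So it suffices to treat a single $A_2$-word and to check that the $\sigma$-conjugate word transforms compatibly.

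For a single $A_2$-word, apply Proposition \ref{prop: Lusztig transform} with $(i,j)$ swapped. On the open locus where $a_1+a_3\neq 0$,
\[
a'_1=\frac{a_2a_3}{a_1+a_3},\qquad a'_2=a_1+a_3,\qquad a'_3=\frac{a_1a_2}{a_1+a_3}.
\]
In the $|\imath|=2$ case, the same formulas applied to the $\sigma$-conjugate word are obtained by applying $\gamma$ and the scalars $\epsilon^{\pm\alpha(h)}$. One must check these scalars are consistent, i.e. the prefactors cancel in $a'_1$ and $a'_3$. This uses $\sigma$-invariance of both sides (Proposition \ref{prop: sigma invariant unipotent}) together with uniqueness of the factorization for elements of $N^-(w)$ with prescribed reduced word, which follows from Lemma \ref{lem: relative position}(1). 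It also shows the $a'_s$ lie in $\mc{K}_\imath=\mc{K}_\jmath$, as they must.

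Next take valuations. We have $\val(a_s)=n_sm_\imath$, and $m_\imath=m_\jmath$. Define the open subset $U\subset\mc{A}^{\imath_\bullet}(n_\bullet)$ by the condition $\val(a_1+a_3)=\min\{\val(a_1),\val(a_3)\}$. When $n_1\neq n_3$ this holds automatically. When $n_1=n_3$ it says the leading coefficients of $a_1$ and $a_3$ do not cancel. This is a nonvanishing condition on finitely many coefficients, so $U$ is a nonempty Zariski open subset of the (irreducible) coefficient space. On $U$ we get $a_1+a_3\neq0$, so the transform exists. Dividing the valuations of the $a'_s$ by $m_\imath$ gives $n'_2=\min\{n_1,n_3\}$, $n'_1=n_2+n_3-\min\{n_1,n_3\}$, and $n'_3=n_1+n_2-\min\{n_1,n_3\}$, as claimed.

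The main obstacle is the bookkeeping in the $|\imath|=2$ case. There one has to verify that the twisting scalars $\epsilon^{-k\alpha_i(h)}$ in \eqref{eq: root subgroup elements} for $\imath$ and $\jmath$ match after the Lusztig move, so that the new factors have exactly the form $y_\imath(a'_1)$, $y_\jmath(a'_2)$, $y_\imath(a'_3)$ with a single parameter each. I would first handle this by the uniqueness argument: the $\sigma$-invariant element admits a unique factorization along $(\imath,\jmath,\imath)$. Failing that, I would compute directly, noting that $\alpha_i(h)$ and $\alpha_j(h)$ are even or zero in the relevant cases by the case list \eqref{eq: h}.
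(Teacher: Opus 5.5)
Your proposal is correct and follows the paper's own proof: split into $|\imath|=|\jmath|=1$ and $|\imath|=|\jmath|=2$, rewrite the latter product as two commuting $A_2$-words, apply Proposition~\ref{prop: Lusztig transform} on the open locus where the leading coefficients of $a_1$ and $a_3$ do not cancel, and read off the valuations divided by $m_\imath$. The twisting-scalar bookkeeping you flag follows directly from the homogeneity of Lusztig's formulas: writing $\overline{a}_s$ for the twisted arguments, one gets for instance $\overline{a}_2\overline{a}_3/(\overline{a}_1+\overline{a}_3)=\epsilon^{-\alpha_i(h)}\gamma(a'_1)$. So no parity input is needed; in fact your fallback claim that $\alpha_i(h)$ is even or zero is false, e.g.\ $\alpha_1(h)=1$ for $A_{2l-1}^{(2)}$ with $h=\check{\omega}_1+\check{\omega}_{2l-1}$.
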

    \begin{proof} If $|\imath|=|\jmath|=1$, then by Eq. \eqref{eq: Lusztig transform}
    \[
    y_j(a_1)y_i(a_2)y_j(a_3)=y_i\left(\frac{a_2a_3}{a_1+a_3}\right)y_j(a_1+a_3)y_i\left(\frac{a_1a_2}{a_1+a_3}\right),
    \]
    we see that, over the open subset where the leading coefficients of $a_1$ and $a_3$ do not cancel, we have the desired identity.
    
    Suppose $|\imath|=|\jmath|=2$. Let us fix $i\in \imath$ and $j\in \jmath$ such that $a_{ij}=-1$. Let $i'=\tau(i)$ and $j'=\tau(j)$. Then it is necessary that $a_{ii'}=a_{jj'}=a_{ij'}=a_{i'j}=0$. This implies that
    \begin{align*}
    y_\jmath(a_1)y_\imath(a_2)y_\jmath(a_3)=& y_j(a_1)y_{j'}(\epsilon^{-\alpha_j(h)}\gamma(a_1))y_i(a_2)y_{i'}(\epsilon^{-\alpha_i(h)}\gamma(a_2))y_j(a_3)y_{j'}(\epsilon^{-\alpha_j(h)}\gamma(a_3))\\
    =& y_j(a_1)y_i(a_2)y_j(a_3)y_{j'}(\epsilon^{-\alpha_j(h)}\gamma(a_1))y_{i'}(\epsilon^{-\alpha_i(h)}\gamma(a_2))y_{j'}(\epsilon^{-\alpha_j(h)}\gamma(a_3)).
    \end{align*}
    A similar identity can also be derived for $y_\imath(a'_1)y_\jmath(a'_2)y_\imath(a'_3)$. By applying Eq. \eqref{eq: Lusztig transform}, we can deduce the same desired identity over the open subset where the leading coefficients of $a_1$ and $a_3$ do not cancel each other.
    \end{proof}

    \medskip

     Before considering the next few cases, we first establish a lemma on the valuation of polynomials with twisted variables.

    \begin{definition}\label{def: polynomial} 
    Let $\gamma$ be the field automorphism  of $\mc{K}=\rk\dbp{t}$ that fixes $\rk$ pointwise and sends $t\mapsto \epsilon^{-1}t$, where $\epsilon$ is a primitive $m$th root of unity. Let $F$ be a polynomial over $\rk$ in the variables $\gamma^j(x_i)$ with  $1\leq i \leq n$ and $0\leq j < m$. 
    When computing the degree vectors of monomials,  we identify the variables $x_i, \gamma(x_i), \ldots, \gamma^{m-1}(x_i)$ for each $i$. The \emph{Newton polytope} of $F$ is  the convex hull of the degree vectors of the monomials appearing in $F$. A degree vector is  \emph{extremal} if it is a vertex of the Newton polytope of $F$. 
    \end{definition}

    \begin{example}\label{exmp: polynomial} Suppose $m=3$. Let  $\dot{x}:=\gamma(x)$ and $\ddot{x}:=\gamma^2(x)$. Set $N(x):=x\dot{x}\ddot{x}$ and $C(x):=x+\dot{x}+\ddot{x}$. Consider the polynomial \begin{align*}F&=N(x_1+\dot{x}_3)x_2+C(x_2^3\dot{x}_3)\\
&=N(x_1)x_2+C(x_1\dot{x}_1x_3)x_2+C(\dot{x}_1x_3\dot{x}_3)x_2+x_2N(x_3)+C(x_2^3\dot{x}_3)
    \end{align*}The degree vectors of the monomials appearing in the expansion of $F$ are 
    \[
(3,1,0),\quad (2,1,1),\quad (1,1,2),\quad (0,1,3),\quad (0,3,1).
    \]
    Each of $(3,1,0)$ and $(0,1,3)$ occurs for a single monomial, whereas each of $(2,1,1)$, $(1,1,2)$, and $(0,3,1)$ occurs for three monomials.
 The extremal degree vectors of $F$ are $(3,1,0)$, $(0,1,3)$, and $(0,3,1)$. Hence, the Newton polytope of $F$ is a triangle. 
    \end{example}

    \begin{lemma}\label{lem: non-zero coefficient} Let $F$ be a polynomial as in Definition \ref{def: polynomial}. If each extremal degree vector $d$ of $F$ corresponds to one and only one term in $F$, then for generic $f_1,\dots, f_n\in \mc{K}$,
    \[
    \val(F(f_1,\dots, f_n))=\min_{\text{extremal $d$}}\{d\cdot \nu\},
    \]
    where $\nu:=(\val(f_1),\dots, \val(f_n))$.
    \end{lemma}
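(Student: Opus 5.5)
Write each $f_i=t^{\nu_i}u_i$ with $u_i\in\mc{O}^\times$ and let $c_i\in\rk^\times$ denote the leading coefficient of $u_i$. The idea is that the minimal valuation is picked out by the Newton polytope, and genericity of the $c_i$ keeps the leading terms from cancelling.

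\emph{Substitution.} Since $\gamma$ fixes $\rk$ and sends $t\mapsto\epsilon^{-1}t$, we have $\gamma^j(f_i)=\epsilon^{-j\nu_i}t^{\nu_i}\gamma^j(u_i)$. The element $\gamma^j(u_i)$ is again a unit with the same leading coefficient $c_i$. Consider a monomial $M$ of $F$ with coefficient $\kappa_M\in\rk^\times$ and degree vector $d$. After substitution, $M$ becomes $\kappa_M\,\zeta_M\,c^{d}\,t^{d\cdot\nu}$ plus terms of higher valuation. Here $c^d=\prod_i c_i^{d_i}$, and $\zeta_M$ is a product of powers of $\epsilon$, hence nonzero. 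So every monomial has valuation exactly $d\cdot\nu$, and $\val(F(f))\ge\min_{d}\, d\cdot\nu$, where $d$ ranges over all degree vectors of $F$. Because $d\mapsto d\cdot\nu$ is linear, this minimum over the Newton polytope is attained at a vertex. It therefore equals $\min_{\text{extremal }d}\{d\cdot\nu\}=:\delta$.

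\emph{Lower-order coefficient.} The coefficient of $t^\delta$ in $F(f)$ is
\[\sum_{d\cdot\nu=\delta}\Big(\sum_{M:\deg M=d}\kappa_M\zeta_M\Big)c^{d},\]
where $d$ runs over the degree vectors lying on the face $\Phi_\nu$ of the Newton polytope on which $\nu$ is minimized. I regard this as a polynomial $P(c_1,\dots,c_n)$. For generic $f$ it is enough to show that $P$ is not identically zero, since then its nonvanishing is a nonempty Zariski-open condition on the leading coefficients.

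\emph{Main obstacle: non-cancellation.} The face $\Phi_\nu$ is a polytope, so it has at least one vertex $d_0$. That vertex is also a vertex of the whole Newton polytope, hence extremal. By hypothesis exactly one monomial $M_0$ has degree $d_0$, so the coefficient of $c^{d_0}$ in $P$ is $\kappa_{M_0}\zeta_{M_0}\neq0$. Distinct degree vectors give distinct monomials in the $c_i$, so $P\neq0$.

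The subtle point is that, for non-extremal degree vectors, several $\gamma$-twisted monomials can share a degree. Their roots-of-unity factors may then cancel, as with $C(\cdot)$ in Example \ref{exmp: polynomial}. The uniqueness hypothesis at vertices is exactly what rules this out at $d_0$. Consequently $\val(F(f))=\delta$ on the open set $\{P(c)\neq0\}$, whose complement is a proper closed condition, and this is the meaning of ``generic''. If $\nu$ is allowed to vary, one takes the intersection of these finitely many open conditions, one for each face.
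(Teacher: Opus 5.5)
Your proposal is correct and follows essentially the same route as the paper. Both arguments compute each monomial's leading term $c\,\epsilon^{(\cdot)}u^d t^{d\cdot\nu}$, then view the coefficient of $t^{\omega}$ as a polynomial in the leading coefficients, and finally use that the unique monomial at an extremal degree vector on the minimizing face contributes a nonzero coefficient that cannot cancel. Your explicit face/vertex argument justifies a step the paper only asserts, namely that the minimum over all degree vectors is attained at some extremal $d$.
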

    \begin{proof} Suppose for each index $i$, $f_i=u_it^{\nu_i}+\text{higher terms in $t$}$ with $u_i\neq 0$. Then $\gamma^n(f_i)=\epsilon^{-n\nu_i} u_it^{\nu_i}+\text{higher terms in $t$}$. 
    Let $T$ be a monomial appearing in $F$ with coefficient $c_T\neq 0\in \rk$ and degree vector $d$. 
    After plugging all the $f_i$'s, we get
    \[
    T(f_1,\dots, f_n)=c_T\epsilon^{n_T}u^dt^{d\cdot \nu}+\text{higher terms in $t$},
    \]
    where $n_T$ is an integer determined by which $\gamma$-powered variables occur in $T$ and their respective exponents, $u^d:=\prod_i u_i^{d_i}$, and $\nu=(\nu_1,\dots, \nu_n)$. 
    
    Now suppose that each extremal degree vector $d$ of $F$ corresponds to exactly one term $T_d$ in $F$. For any given tuple of valuations $\nu$, define $\omega:=\min_d\{d\cdot \nu\}$ where the minimum is taken over all degree vectors of $F$. Then the terms in $F(f_1,\dots, f_n)$ that possibly achieve this minimum valuation are of the form
    \[
    \left(\sum_{\substack{\text{extremal $d$} \\ d\cdot \nu=\omega}}c_{T_d}\epsilon^{n_{T_d}}u^d+\sum_{\substack{\text{non-extremal $d$} \\ d\cdot\nu=\omega}}
    \cdots \right)t^\omega.
    \]
    In particular, the first sum over extremal degrees $d$ is non-empty. Thus, for any given tuple of valuations $\nu$, $\sum_{\substack{\text{extremal $d$} \\ d\cdot \nu=\omega}}c_{T_d}\epsilon^{n_{T_d}}u^d+\sum_{\substack{\text{non-extremal $d$} \\ d\cdot\nu=\omega}}
    \cdots$ is a non-zero polynomial, and hence we can always find a generic $u_i$ for each $f_i$ such that $\val(F(f_1,\dots, f_n))=\omega$.
    \end{proof}

\medskip

    \textbf{(ii).} $c_{\imath\jmath}=-2$ and $c_{\jmath\imath}=-1$, which corresponds to the braid move $s_\imath s_\jmath s_\imath s_\jmath=s_\jmath s_\imath s_\jmath s_\imath$. 

    \begin{proposition}\label{prop: B2} If $y_\imath(a'_1)y_\jmath(a'_2)y_\imath(a'_3)y_\jmath(a'_4)=y_\jmath(a_1)y_\imath(a_2)y_\jmath(a_3)y_\imath(a_4)$, then there is a non-empty open subset in $\mathcal{A}^{\imath_\bullet}(n_\bullet)$, over which
    \[
    n'_1=n_2+2n_3+n_4-p, \quad n'_2=p-q, \quad n'_3=2q-p, \quad n'_4=n_1+n_2+n_3-q
    \]
    where
    \[
    p:=\min\{2n_1+n_2,2n_1+n_4,2n_3+n_4\} \quad \text{and} \quad q:=\min\{n_1+n_2,n_1+n_4,n_3+n_4\}.
    \]
    \end{proposition}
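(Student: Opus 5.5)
We reduce to the untwisted rank-two Lusztig transformation of type $B_2$/$C_2$ and then apply Lemma \ref{lem: non-zero coefficient} to read off valuations. First we identify the possible shapes of $(\imath,\jmath)$ with $c_{\imath\jmath}=-2$, $c_{\jmath\imath}=-1$. Either both orbits are standard, with one a singleton $\{i\}$ and the other a pair $\{j,j'=\tau(j)\}$, where $a_{ij}=a_{ij'}=-1$ and $a_{jj'}=0$ (as in $A_{2l-1}^{(2)}$ or $D_{l+1}^{(2)}$). Or $\jmath$ is the exceptional orbit of $A_{2l}^{(2)}$ and $\imath=\{l-1,l+2\}$. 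In the standard case, $y_\imath(a)y_\jmath(b)$ lives in the subgroup generated by the root subgroups of $i,j,j'$, which is of type $A_3$. The $\sigma$-fixed part of this subgroup is a group of type $B_2=C_2$. Thus $y_\jmath(b)=y_j(b)y_{j'}(\pm\gamma(b))$ plays the role of the long/short root element and $y_i(a)$ the other.

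Next we write down explicit rational formulas for $a'_1,\dots,a'_4$ in terms of $a_1,\dots,a_4$ and their $\gamma$-conjugates. The cleanest route uses Proposition \ref{prop: minor recursive formula}(1). Both factorizations are angle-unipotent factorizations of the same $y$, so each $a'_k$ is, up to a constant in $\rk^\times$, a Laurent monomial in chamber minors $\Delta_{w\Lambda}(g)$ of $g=\eta^{-1}(y)$. The chamber minors along $\imath'_\bullet$ not shared with $\imath_\bullet$ are two minors, $\Delta_{s_\jmath s_\imath\Lambda}$-type and $\Delta_{s_\imath\Lambda}$-type. These can be expanded, by direct matrix computation in $\SL_4$ (or by the standard $B_2$ Lusztig transformation of \cite{BZ} pulled back to $A_3$), as polynomials in the $a_k$ and $\gamma(a_k)$. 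We expect these to be the twisted analogues of the classical expressions. One is $\pi_1=a_1^2a_2+a_1^2a_4+a_3^2a_4$, with $a^2$ replaced by the norm $a\gamma(a)$ on the pair-orbit side. The other is $\pi_2=a_1a_2+a_1a_4+a_3a_4$, possibly also with twisted factors. The transformed coordinates are then
\[
a'_1\sim\frac{a_2a_3^2a_4}{\pi_1},\quad a'_2\sim\frac{\pi_1}{\pi_2},\quad a'_3\sim\frac{\pi_2^2}{\pi_1},\quad a'_4\sim\frac{a_1a_2a_3}{\pi_2},
\]
up to norms and units. The precise form depends on which of $\imath,\jmath$ is the pair orbit, which is recorded by the ratio $m_\imath/m_\jmath$ in the valuation normalization $\val(a_k)=n_km_{\imath_k}$.

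Taking valuations, Lemma \ref{lem: non-zero coefficient} applies to $\pi_1$ and $\pi_2$. Their degree vectors $(2,1,0,0)$, $(2,0,0,1)$, $(0,0,2,1)$ and $(1,1,0,0)$, $(1,0,0,1)$, $(0,0,1,1)$ are all extremal, and each comes from a single monomial once norms $a\gamma(a)$ are counted as one term. So on a nonempty open subset of $\mc{A}^{\imath_\bullet}(n_\bullet)$ (generic leading coefficients) we get $\val\pi_1\propto p$ and $\val\pi_2\propto q$. Rescaling by $m_\imath,m_\jmath$ then yields the stated formulas for $n'_\bullet$; for example, $n'_1=(n_2+2n_3+n_4)-p$ comes from the numerator valuation minus $\val\pi_1$.

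The exceptional case is the main obstacle. Here $y_\jmath(b,c)$ has two parameters and the polynomial $\pi_1$ involves $a=(b\gamma(b)+tc)/2$ or its $m=4$ analogue. Its monomials in $b,\gamma(b),c$ may collide, so the single-term hypothesis of Lemma \ref{lem: non-zero coefficient} must be checked separately. First we would work in terms of $a$ using Proposition \ref{prop: A2n valuation}. When $b\neq0$ the factorization is an honest product of $y_l,y_{l+1},y_l$, and we would compute in $\SL_5$ restricted to the $\{l-1,\dots,l+2\}$ block. We would also handle separately the stratum $b=0$, where $y_\jmath(0,c)$ is given by the special form. There the valuation identities should follow from the minor formula in Proposition \ref{prop: minor recursive formula}(2). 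The open subset is then taken inside the $b\neq0$ or $b=0$ part according to the parity of $n$, as dictated by Definition \ref{defn: A}.
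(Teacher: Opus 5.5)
Your strategy is the paper's: explicit transformation formulas, then Lemma~\ref{lem: non-zero coefficient} on the denominators, then rescaling by $m_\imath,m_\jmath$. However, the formulas are only guessed, and the guess fails in the standard case, which is where the content of the proposition lies.

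\textbf{The roles of $\imath,\jmath$ are reversed.} The condition $c_{\imath\jmath}=\langle\check{\gamma}_\imath,\gamma_\jmath\rangle=-2$ forces $\imath$ to be the two-element orbit. So either $\imath=\{i,i'\}$, $\jmath=\{j\}$ with $a_{ij}=a_{i'j}=-1$ (types $A_{2l-1}^{(2)}$, $D_{l+1}^{(2)}$, $E_6^{(2)}$, all with $m=2$), or $\imath=\{l,l+1\}$ is exceptional and $\jmath=\{l-1,l+2\}$. With the pair placed in $\jmath$, as you do, consistent bookkeeping yields Proposition~\ref{prop:B2'}, not the statement.

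\textbf{The template fails in the standard case.} With the correct roles, $a_1,a_3\in\mc{K}^\gamma$ have valuations $2n_1,2n_3$ and $a_2,a_4$ have valuations $n_2,n_4$. Your $a'_1\sim a_2a_3^2a_4/\pi_1$ then has valuation $n_2+4n_3+n_4-\val\pi_1$. This is not $n_2+2n_3+n_4-p$ however ``$\val\pi_1\propto p$'' is read, and no norm removes the square, since $a_3$ is $\gamma$-invariant. The actual formulas come from unfolding $s_\jmath s_\imath s_\jmath s_\imath$ into a reduced word of $W$ and applying Proposition~\ref{prop: Lusztig transform} repeatedly:
\[
a'_1=\frac{a_2a_3\overline{a}_4}{\overline{P}},\quad a'_2=\frac{N(P)}{Q},\quad a'_3=\frac{Q}{\overline{P}},\quad a'_4=\frac{a_1a_3N(a_2)}{Q},
\]
where $P=a_1(a_2+a_4)+a_3a_4$ and $Q=a_1N(a_2+a_4)+a_3N(a_4)$. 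So the polynomial with valuation $p$ is \emph{linear}. The coefficients $2$ in $p$ come from the normalization $m_\jmath=2$, not from squares. The norms sit on the pair-orbit variables $a_2,a_4$, which gives $\val(Q)=2q$. Algebraically this is the opposite rank-two pattern from the split $B_2$ one you copied; the statement's tropical formula appears only after rescaling.

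Your template is correct only in the exceptional case, where $m_\imath=m_\jmath$. There the paper indeed finds $P=N(a_1)\overline{a}_2+N(a_1+a_3)\overline{a}_4+\cdots$ and $Q=a_1\overline{a}_2+(a_1+a_3)\overline{a}_4+\cdots$. The two cases have genuinely different shapes and must be computed, not expected.

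\textbf{The exceptional case is missing the $b$-dependence.} The true $P$ and $Q$ also contain $a_1(\overline{a}_1+\overline{a}_3)\overline{b}_2b_4$ and $a_1\overline{b}_2b_4$ (times $t^2$ when $m=4$). These are not functions of $a_2,a_4$, so Proposition~\ref{prop: A2n valuation} alone does not determine $\val(P)$ and $\val(Q)$.
\begin{enumerate}
\item You need $\val(b_k)\ge\frac12\val(a_k)$ (for $m=4$, $\val(tb_k)\ge\frac12\val(a_k)$) to see that these terms never drop below the minimum of the others.
\item This inequality is strict unless $\val(a_k)$ is attained by the $b_k$-part, i.e.\ $n_k$ even for $m=2$ and $n_k$ odd for $m=4$. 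If it is strict for $k=2$ or $k=4$, the $b$-terms can be dropped.
\item If both are equalities, the $b$-terms can tie at the minimum. Then you must replace the leading parts of $a_2,a_4$ by those of $b_2\gamma(b_2)/2$ and $b_4\gamma(b_4)/2$. Apply Lemma~\ref{lem: non-zero coefficient} in the variables $a_1,b_2,a_3,b_4$, checking that the $b$-terms are non-extremal and that each extremal vertex carries a single monomial.
\end{enumerate}

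\textbf{The $b_k=0$ stratum is irrelevant.} The locus $b_k=0$ is closed and nowhere dense in $\mc{A}^{\imath_k}(n_k)$. A non-empty open subset cannot be taken inside it, and Proposition~\ref{prop: minor recursive formula}(2) is not needed for this proposition.
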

    \begin{proof} There are two possibilities: either $\imath=\{i,i'\}$ and $\jmath=\{j\}$ with $a_{ij}=a_{i'j}=-1$ in types $A_{2l-1}^{(2)}$, $D_{l+1}^{(2)}$, or $E_6^{(2)}$, or $\imath=\{l,l+1\}$ and $\jmath=\{l-1,l+2\}$ in type $A_{2l}^{(2)}$. 

    In the first case where $\imath=\{i,i'\}$ and $\jmath=\{j\}$ with $a_{ij}=a_{i'j}=-1$ in types $A_{2l-1}^{(2)}$, $D_{l+1}^{(2)}$, or $E_6^{(2)}$, it is necessary that $m=2$ and $\epsilon=-1$. For notation convenience, let us define 
    for $f\in \mc{K}$,
    \[
    \overline{f}:=\epsilon^{-\alpha_i(h)}\gamma(f) \quad \text{and} \quad N(f):=f\overline{f}\in \mc{K}^\gamma.
    \]
    Note that by construction, 
    \[
    y_\jmath(a_1)y_\imath(a_2)y_\jmath(a_3)y_\imath(a_4)=y_j(a_1)y_i(a_2)y_{i'}(\epsilon^{-\alpha_i(h)}\gamma(a_2))y_j(a_3)y_i(a_4)y_{i'}(\epsilon^{-\alpha_i(h)}\gamma(a_4)),
    \]
    where $a_1,a_3\in \mc{K}^\gamma=\rk\dbp{t^2}$. In order to get to $y_\imath(a'_1)y_\jmath(a'_2)y_\imath(a'_3)y_\jmath(a'_4)$, we can apply the following braid moves in $W$:
    \begin{align*}
    &s_js_is_{i'}s_js_is_{i'}=s_js_{i'}s_is_js_is_{i'}=s_js_{i'}s_js_is_js_{i'}=s_{i'}s_js_{i'}s_is_js_{i'}\\=&s_{i'}s_js_is_{i'}s_js_{i'}=s_{i'}s_js_is_js_{i'}s_j=s_{i'}s_is_js_is_{i'}s_j=s_is_{i'}s_js_is_{i'}s_j.
    \end{align*}
    By applying Eq. \ref{eq: Lusztig transform} repetitively, we get that
    \[
    a'_1=\frac{a_2a_3\overline{a}_4}{\overline{P}}, \quad a'_2=\frac{N(P)}{Q}, \quad a'_3=\frac{Q}{\overline{P}}, \quad a'_4=\frac{a_1a_3N(a_2)}{Q},
    \]
    where
    \[        P=a_1(a_2+a_4)+a_3a_4 \quad \text{and} \quad        Q=a_1N(a_2+a_4)+a_3N(a_4).
    \]
    In particular, $Q\in \mc{K}^\gamma$. Note that $\val(a_1)=2n_1$, $\val(a_2)=n_2$, $\val(a_3)=2n_3$, and $\val(a_4)=n_4$; over the open subset where the leading coefficients of the terms of $P$ and $Q$ do not cancel, we have
    \[
    \val(P)=\min\{2n_1+n_2,2n_1+n_4,2n_3+n_4\}=p,
    \]
    \[
    \val(Q)=2\min\{n_1+n_2,n_1+n_4,n_3+n_4\}=2q.
    \]
    Thus, we can conclude that
    \[
    n'_1=\val(a'_1)=n_2+2n_3+n_4-p, \quad \quad n'_2=\frac{1}{2}\val(a'_2)=p-q,
    \]
    \[
    n'_3=\val(a'_3)=2q-p, \quad \quad n'_4=\frac{1}{2}\val(a'_4)=n_1+n_2+n_3-q.
    \]


    In the second case where $\imath=\{l,l+1\}$ and $\jmath=\{l-1,l+2\}$ in type $A_{2l}^{(2)}$, the element $y_\imath$ is defined by Eq. \eqref{eq: root subgroup element for A2n}, and $\alpha_{l-1}(h)=\alpha_{l+2}(h)=0$. Note that regardless of whether $m=2$ or $4$, the actions of $\gamma$ on the $a_i$'s and $b_i$'s are all of order $2$; we therefore adopt the shorthand notation $\overline{x}:=\gamma(x)$ and define $N(x):=x\overline{x}$ for all $x\in \mc{K}^{\gamma^2}$; also, for notation simplicity, let us set $l=2$. 
    Now, the product $y_\jmath(a_1)y_\imath(b_2,c_2)y_\jmath(a_3)y_\imath(b_4,c_4)$ is equal to
    \[
    \left\{\begin{array}{ll}
       y_1(a_1)y_4(\overline{a}_1)y_2\left(\frac{\overline{a}_2}{b_2}\right)y_3(b_2)y_2\left(\frac{a_2}{b_2}\right)y_1(a_3)y_4(\overline{a}_3)y_2\left(\frac{\overline{a}_4}{b_4}\right)y_3(b_4)y_2\left(\frac{a_4}{b_4}\right)  & \text{if $m=2$}, \\
       y_1(a_1)y_4(\overline{a}_1)y_2\left(\frac{-\overline{a}_2}{tb_2}\right)y_3(tb_2)y_2\left(\frac{a_2}{tb_2}\right)y_1(a_3)y_4(\overline{a}_3)y_2\left(\frac{-\overline{a}_4}{tb_4}\right)y_3(tb_4)y_2\left(\frac{a_4}{tb_4}\right) & \text{if $m=4$}.
    \end{array}\right.
    \]
    In order to get to $y_\imath(b'_1,c'_1)y_\jmath(a'_2)y_\imath(b'
    _3,c'_3)y_\jmath(a'_4)$, we can apply the following braid moves in $W$:
    \begin{align*}
        &\sx\sz\so\sp\so\sx\sz\so\sp\so=\sx\so\sz\sp\sz\so\sx\so\sp\so=\sx\so\sp\sz\sp\sx\so\sx\sp\so\\
        =&\sx\so\sp\sz\sx\sp\so\sp\sx\so=\sx\so\sp\sz\sx\so\sp\so\sx\so=\sx\so\sx\sp\sz\so\sp\sx\so\sx\\
        =&\so\sx\so\sp\so\sz\sp\sx\so\sx=\so\sx\sp\so\sp\sz\sp\sx\so\sx=\so\sp\sx\so\sz\sp\sz\sx\so\sx\\
        =&\so\sp\sx\so\sx\sz\sp\so\sx\sz=\so\sp\so\sx\so\sz\sp\so\sx\sz=\so\sp\so\sx\sz\so\sp\so\sx\sz.
        \end{align*}
    By applying Eq. \eqref{eq: Lusztig transform} repetitively, we get that when $m=2$:
    \[
    a'_1=\frac{a_2N(a_3)a_4}{\overline{P}}, \quad b'_1=\frac{(a_1\overline{a}_4b_2+a_1a_2b_4+a_3\overline{a}_4b_2)\overline{a}_3}{P}, \quad a'_2=\frac{\overline{P}}{\overline{Q}},
    \]
    \[
    a'_3=\frac{N(Q)}{P}, \quad b'_3=\frac{(\overline{a}_1b_2+\overline{a}_1b_4+\overline{a}_3b_4)Q}{P}, \quad a'_4=\frac{\overline{a}_1a_2a_3}{\overline{Q}},
    \]
    where
    \[
    P=N(a_1)\overline{a}_2+N(a_1+a_3)\overline{a}_4{\color{blue}+a_1(\overline{a}_1+\overline{a}_3)\overline{b}_2b_4}\quad \text{and} \quad   Q=a_1\overline{a}_2+(a_1+a_3)\overline{a}_4{\color{blue}+a_1\overline{b}_2b_4}.
    \]
    Note that $\val(\overline{b}_2)\geq \dfrac{1}{2}\val(\overline{a}_2)$ and $\val(b_4)\geq \dfrac{1}{2}\val(\overline{a}_4)$. Thus, we have
    \begin{align*}    \val(a_1\overline{a}_1\overline{b}_2b_4)\geq &\val(N(a_1))+\frac{1}{2}\val(\overline{a}_2)+\frac{1}{2}\val(\overline{a}_4)\geq
    \min\{\val(N(a_1)\overline{a}_2), \val(N(a_1)\overline{a}_4)\},\\
    \val(a_1\overline{a}_3\overline{b}_2b_4)\geq &\frac{1}{2}\val(N(a_1)\overline{a}_2)+\frac{1}{2}\val(N(a_3)\overline{a}_4)
    \geq  \min\{\val(N(a_1)\overline{a}_2), \val(N(a_3)\overline{a}_4)\},\\
    %
    \val(a_1\overline{b}_2b_4)\geq&\val(a_1)+\frac{1}{2}\val(\overline{a}_2)+\frac{1}{2}\val(\overline{a}_4) \geq  \min\{\val(a_1\overline{a}_2),\val(a_1\overline{a}_4)\}.
    \end{align*}
    In other words, the part highlighted in blue in each of $P$ and $Q$ does not affect the minimum of the valuations of the terms in each of them. 

    When at least one of $n_2$ and $n_4$ is odd, the above inequalities are strict, and we can simply ignore the blue terms. We then observe that among the remaining terms, their Newton polytopes are triangles (2-simplices), and there is exactly one term for each extremal degree vector; therefore, by Lemma \ref{lem: non-zero coefficient}, we know that the minima of the valuations of the terms in $P$ and $Q$, respectively, are achieved generically.

    When both $n_2$ and $n_4$ are even, we know that $\val(a_2)=2\val(b_2)$ and $\val(a_4)=2\val(b_4)$. Thus, we may replace $\overline{a}_2$ by $b_2\overline{b}_2/2$ and replace $\overline{a}_4$ with $b_4\overline{b}_4/2$ in $P$ and $Q$ when computing their valuations. Now with respect to the variables $a_1$, $b_2$, $a_3$, and $b_4$, the Newton polytopes of these modified $P$ and $Q$ are still triangles, and there is still exactly one term for each extremal degree vector. In particular, these terms are in the black part of the expansion. Thus, by Lemma \ref{lem: non-zero coefficient}, we know that the minima of the valuations of the terms in $P$ and $Q$, respectively, are achieved generically. In conclusion, there is a non-empty open subset of $\mc{A}^{\imath_\bullet}(n_\bullet)$ over which
    
    %
    \begin{align*}
    \val(P)=&\min\{2n_1+n_2,2n_1+n_4, 2n_3+n_4\}=p,
    \\
    \val(Q)=&\min\{n_1+n_2,n_1+n_4,n_3+n_4\}=q.
    \end{align*}
    Plugging them back into the valuations of $a'_i$'s yields the desired formulas.

    When $m=4$, we get that
    \[
    a'_1=\frac{a_2N(a_3)a_4}{\overline{P}}, \quad b'_1=\frac{(a_1\overline{a}_4b_2-a_1a_2b_4+a_3\overline{a}_4b_2)\overline{a}_3}{P}, \quad a'_2=\frac{\overline{P}}{\overline{Q}}
    \]
    \[
    a'_3=\frac{N(Q)}{P}, \quad b'_3=\frac{(\overline{a}_1b_2+\overline{a}_1b_4+\overline{a}_3b_4)Q}{P}, \quad a'_4=\frac{\overline{a}_1a_2a_3}{\overline{Q}}
    \]
    where
    \[
    P=N(a_1)\overline{a}_2+N(a_1+a_3)\overline{a}_4{\color{blue}+a_1(\overline{a}_1+\overline{a}_3)\overline{b}_2b_4t^2} \quad \text{and} \quad Q=a_1\overline{a}_2+(a_1+a_3)\overline{a}_4{\color{blue}+a_1\overline{b}_2b_4t^2}.
    \]
    An argument similar to the $m=2$ case shows that there is a non-empty open subset of $\mathcal{A}^{\imath_\bullet}(n_\bullet)$ such that $\val(P)=2p$ and $\val(Q)=2q$. Note that they carry an extra factor $2$ because $\val(a_i)=2n_i$ for each $i$; but then we also need to divide $\val(a'_i)$ by $2$ to get to $n'_i$. Thus, we conclude the desired formulas again.
    \end{proof}

    A similar computation can be employed to prove the following reverse transformation.

    \begin{proposition}\label{prop:B2'} If $y_\jmath(a'_1)y_\imath(a'_2)y_\jmath(a'_3)y_\imath(a'_4)=y_\imath(a_1)y_\jmath(a_2)y_\imath(a_3)y_\jmath(a_4)$, then there is a non-empty open subset in $\mathcal{A}^{\imath_\bullet}(n_\bullet)$, over which
    \[
    n'_1=n_2+n_3+n_4-p, \quad n'_2=2p-q, \quad n'_3=q-p, \quad n'_4=n_1+2n_2+n_3-q,
    \]
    where 
    \[
    p:=\min\{n_1+n_2,n_1+n_4,n_3+n_4\} \quad \text{and} \quad q:=\min\{n_1+2n_2,n_1+2n_4,n_3+2n_4\}.
    \]
    \end{proposition}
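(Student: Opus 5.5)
The plan is to run the proof of Proposition~\ref{prop: B2} backwards. Either invert the explicit rational formulas obtained there, or redo the same chain of braid moves in $W$ in reverse order, repeatedly applying Eq.~\eqref{eq: Lusztig transform}. There are again two cases. In the first, $\imath=\{i,i'\}$ and $\jmath=\{j\}$ with $m=2$, in types $A_{2l-1}^{(2)}$, $D_{l+1}^{(2)}$, $E_6^{(2)}$. In the second, $\imath=\{l,l+1\}$ is exceptional and $\jmath=\{l-1,l+2\}$, in type $A_{2l}^{(2)}$, with $m=2$ or $4$.

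\emph{First case.} Expand
\[
y_\imath(a_1)y_\jmath(a_2)y_\imath(a_3)y_\jmath(a_4)=y_i(a_1)y_{i'}(\overline{a}_1)y_j(a_2)y_i(a_3)y_{i'}(\overline{a}_3)y_j(a_4),
\]
where $a_2,a_4\in\mc{K}^\gamma$ and $\val(a_1)=n_1$, $\val(a_2)=2n_2$, $\val(a_3)=n_3$, $\val(a_4)=2n_4$. Applying the reversed sequence of six-letter braid moves, I expect to obtain
\[
a'_1=\frac{a_2a_3a_4}{P'},\quad a'_2=\frac{Q'}{N(P')},\quad a'_3=\frac{\overline{P'}}{?},\quad\ldots
\]
where, by analogy with the forward case, the two polynomials should be
\[
P'=a_1(a_2+a_4)+a_3a_4\ (\text{up to bars}),\qquad Q'=N(a_1)a_2+N(a_1+a_3)a_4 \text{ or similar}.
\]
Their generic valuations are $\min\{n_1+2n_2,\,n_1+2n_4,\,n_3+2n_4\}=q$ and $2\min\{n_1+n_2,\,n_1+n_4,\,n_3+n_4\}=2p$, up to the obvious factor. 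The cleanest way to get the exact formulas is simply to solve the forward formulas of Proposition~\ref{prop: B2} for the unprimed variables with $\imath,\jmath$ swapped. One consistency check is that $\sum_k n'_k\beta'_k$ must equal $\sum_k n_k\beta_k$, which forces the shape $n'_1=n_2+n_3+n_4-p$, $n'_4=n_1+2n_2+n_3-q$. A second check is that composing with Proposition~\ref{prop: B2} gives the identity on Lusztig data, i.e.\ the piecewise-linear maps are mutually inverse on the positive cone.

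\emph{Generic valuations.} For the first case, note that the polynomials $P'$ and $Q'$ have triangular Newton polytopes, with a unique monomial at each extremal degree vector. Lemma~\ref{lem: non-zero coefficient} then gives the generic valuations, after first intersecting with the open set where the leading coefficients of the individual $a_k$ do not cancel. Dividing by $m_{\imath}$ or $m_\jmath$ as appropriate yields $n'_\bullet$.

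\emph{Second case.} Write the exceptional factors as $y_l(\gamma(a)/b)\,y_{l+1}(b)\,y_l(a/b)$ (with the extra $t$'s when $m=4$), and run the eleven-step chain of moves of Proposition~\ref{prop: B2} in reverse. The resulting $P'$ and $Q'$ will again carry extra terms involving $\overline{b}_1b_3$ (times $t^2$ when $m=4$). As before, the bounds $\val(b_k)\ge\frac12\val(a_k)$ show these terms never lower the minimum.

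If one of $n_1,n_3$ is odd, the inequality is strict and the extra terms can be dropped. If both are even, substitute $a_k\doteq b_k\overline{b}_k$ and apply Lemma~\ref{lem: non-zero coefficient} in the variables $b_1,a_2,b_3,a_4$, checking that the Newton polytopes are still simplices with unique extremal monomials.

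\emph{Main obstacle.} I expect the hardest step to be this last bookkeeping: computing the reverse rational formulas explicitly in the exceptional case and confirming that the extremal monomials are not cancelled by the extra $b$-terms, especially for $m=4$ with its sign twists. If the direct computation is too messy, the fallback is to avoid it. The map $\Phi$ on each side hits a dense subset of the same irreducible component by Theorem~\ref{thm_MV_constr}, so the generic $n'_\bullet$ is determined by the component. It can then be read off by inverting the piecewise-linear bijection of Proposition~\ref{prop: B2}, which is exactly the stated formula.
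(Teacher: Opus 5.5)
Your strategy is the paper's: it proves this proposition only by saying that ``a similar computation'' to Proposition~\ref{prop: B2} works. Your two polynomials and their generic valuations are also right. With $P'=a_1a_2+(a_1+a_3)a_4$ and $Q'=N(a_1)a_2+N(a_1+a_3)a_4$, one has generically $\val(P')=q$ and $\val(Q')=2p$.

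The problem is the rational expressions you write down for the $a'_k$. They are wrong, and the statement is read off from exactly these expressions. Carrying out the reversed moves in the subsystem $\{i,j,i'\}$, from $s_is_{i'}s_js_is_{i'}s_j$ to $s_js_is_{i'}s_js_is_{i'}$, gives
\[
a'_1=\frac{a_2N(a_3)a_4}{Q'},\qquad a'_2=\frac{Q'}{\overline{P'}},\qquad a'_3=\frac{N(P')}{Q'},\qquad a'_4=\frac{\overline{a}_1a_2a_3}{\overline{P'}},
\]
where $\overline{P'}=\overline{a}_1a_2+(\overline{a}_1+\overline{a}_3)a_4$. Dividing $\val(a'_k)$ by $m_\jmath=2$ at positions $1,3$ and by $m_\imath=1$ at positions $2,4$ gives exactly the stated $n'_\bullet$.

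Your guesses $a'_1=a_2a_3a_4/P'$ and $a'_2=Q'/N(P')$ fail. For the paper's example $n_\bullet=(1,2,2,1)$ they give $\val(a'_2)=2p-2q=-2$ and an odd $\val(a'_1)=5$, so they cannot even lie in the required sets. Separately, the weight identity supplies only two linear relations, so it cannot ``force'' $n'_1$ and $n'_4$.

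Your fallback does not work as stated. Knowing that a generic $[g]$ lies in $A^{(\jmath,\imath,\jmath,\imath)}(R^{-1}(n_\bullet))$ fixes the $D$-values. But the $a'_k$ are ratios of minors of the specific representative $g=\eta_{w_0}^{-1}(y)$. For the chamber weights that are new in the word $(\jmath,\imath,\jmath,\imath)$, you only know $\val\Delta_{w'\Lambda}(g)\ge D_{w'\Lambda}([g])$; Lemma~\ref{lem: ratio} gives equality only along the word used to build $g$. Closing this needs an extra argument, for instance Lemma~\ref{lem: representative} together with the construction in Theorem~\ref{prop: surjective} and semicontinuity. It also needs the combinatorial check that the stated map really inverts the one in Proposition~\ref{prop: B2}.

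There is a cleaner way to avoid the exceptional-case bookkeeping entirely:
\begin{enumerate}
\item Define $\iota(g)=h_0g^{-1}h_0^{-1}$, where $h_0\in H$ satisfies $\alpha_i(h_0)=-1$ for all $i$. This is an anti-automorphism that fixes every $y_i(f)$ and reverses products.
\item $\iota$ fixes $y_\imath(a)$ for standard $\imath$. For exceptional $\imath$ it sends $y_\imath(b,c)\mapsto y_\imath(b,-c)$, including when $b=0$ and when $m=4$. Hence it preserves each $\mathcal{A}^\imath(n)$.
\item Applying $\iota$ to the identity in the statement yields the identity of Proposition~\ref{prop: B2} with data $(n_4,n_3,n_2,n_1)\mapsto(n'_4,n'_3,n'_2,n'_1)$.
\item Under this reversal, the $p$ and $q$ of Proposition~\ref{prop: B2} become the $q$ and $p$ here. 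Its four formulas then turn into exactly the stated ones, in all cases at once.
\end{enumerate}
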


    \medskip

    \textbf{(iii).} $c_{\imath\jmath}=-3$ and $c_{\jmath\imath}=-1$, which corresponds to the braid move $s_\imath s_\jmath s_\imath s_\jmath s_\imath s_\jmath=s_\jmath s_\imath s_\jmath s_\imath s_\jmath s_\imath$. 

    \begin{proposition}\label{prop: G2} If $y_\imath(a'_1)y_\jmath(a'_2)y_\imath(a'_3)y_\jmath(a'_4)y_\imath(a'_5)y_\jmath(a'_6)=y_\jmath(a_1)y_\imath(a_2)y_\jmath(a_3)y_\imath(a_4)y_\jmath(a_5)y_\imath(a_6)$,
    then there is a non-empty open subset of $\mathcal{A}^{\imath_\bullet}(n_\bullet)$, over which
    \[
    n'_1=n_2+3n_3+2n_4+3n_5+n_6-p, \quad n'_2=p-q, \quad n'_3=3q-p-r,
    \]
    \[
    n'_4=r-q-s, \quad n'_5=3s-r, \quad n'_6=n_1+n_2+2n_3+n_4+n_5-s,
    \]
    where
    \begin{align*}
    p:=&\min\left\{\begin{array}{c}3n_1+2n_2+3n_5+n_6,\quad 3n_1+2n_4+3n_5+n_6, \quad 3n_3+2n_4+3n_5+n_6,\\
    3n_1+2n_2+3n_3+n_4, \quad 3n_1+2n_2+3n_3+n_6
    \end{array}\right\},\\
    q:=&\min\left\{\begin{array}{c}2n_1+2n_2+3n_3+n_4, \quad 2n_1+2n_2+3n_3+
    n_6, \quad 2n_1+2n_2+3n_5+n_6, \\
    2n_1+2n_4+3n_5+n_6, \quad 2n_3+2n_4+3n_5+n_6\end{array}\right\},\\
    r:=&\min\left\{\begin{array}{c}3n_1+3n_2+6n_3+2n_4, \quad 3n_1+3n_2+6n_3+2n_6, \quad 3n_1+3n_2+6n_5+2n_6,\\
    3n_1+3n_4+6n_5+2n_6, \quad 3n_3+3n_4+6n_5+2n_6\end{array}\right\},\\
    s:=&\min\left\{\begin{array}{c}n_1+n_2+2n_3+n_4, \quad n_1+n_2+2n_3+n_6, \quad n_1+n_2+2n_5+n_6,\\
    n_1+n_4+2n_5+n_6,\quad n_3+n_4+2n_5+n_6
    \end{array}\right\}.
    \end{align*}
    \end{proposition}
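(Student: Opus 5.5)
The plan follows the proof of Proposition \ref{prop: B2}, first case. Here $c_{\imath\jmath}=-3$ occurs only for $D_4^{(3)}$, with $\imath=\{1,3,4\}$ and $\jmath=\{2\}$. Then $m=3$, $h=0$, $\alpha_i(h)=0$, and $y_\imath(a)=y_1(a)y_3(\gamma(a))y_4(\gamma^2(a))$ with $a\in\mc{K}$. Moreover $y_\jmath(a)=y_2(a)$ with $a\in\mc{K}^\gamma=\rk\dbp{t^3}$, so $\val(a_k)=3n_k$ for $\jmath$-entries and $\val(a_k)=n_k$ for $\imath$-entries. Write $\dot f=\gamma(f)$ and $\ddot f=\gamma^2(f)$, and use $N(f)=f\dot f\ddot f$ and $C(f)=f+\dot f+\ddot f$ as in Example \ref{exmp: polynomial}.

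First I would expand the left side as a length-$12$ word in $D_4$:
\[
s_2(s_1s_3s_4)s_2(s_1s_3s_4)s_2(s_1s_3s_4).
\]
Using only commutations among $s_1,s_3,s_4$ and the $A_2$ braid moves between $s_2$ and each $s_i$, I would pass to $(s_1s_3s_4)s_2(s_1s_3s_4)s_2(s_1s_3s_4)s_2$. At each $A_2$ move I apply Eq.~\eqref{eq: Lusztig transform}. This expresses $a'_1,\dots,a'_6$ as rational functions in the $a_k$ and their $\gamma$-conjugates. I expect the result to have the form
\[
a'_1=\frac{(\text{monomial})}{P^\ast},\quad a'_2=\frac{(\cdot)}{Q},\quad a'_3=\frac{Q^\ast}{R^\ast},\quad a'_4=\frac{(\cdot)}{(\cdot)},\quad a'_5=\frac{(\cdot)}{S^\ast},\quad a'_6=\frac{(\text{monomial})}{S}.
\]
Here $P,Q,R,S$ are polynomials, $^\ast$ denotes some $\gamma$-conjugate, and the numerators of $a'_2$ and $a'_4$ involve $N(P)$ and $N(S)$ or similar norms. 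The guiding principle is that the four minimum expressions in the statement, taken from the tropical $G_2$ Lusztig transform, are tropicalizations of the four denominators.

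The key step is to compute the valuations of $P,Q,R,S$ on a non-empty open subset. For this I would use Lemma \ref{lem: non-zero coefficient}, which requires that in each polynomial, viewed with $a_k,\dot a_k,\ddot a_k$ identified, every extremal degree vector carries exactly one monomial. Each of the stated minima has five linear forms, so I expect each Newton polytope to have five vertices. For these I would check that the coefficient is a single monomial and not, for example, a full $C(\cdot)$-orbit. If a vertex carries a $\gamma$-orbit sum, I would use the same device as in Lemma \ref{lem: non-zero coefficient}: the leading coefficient is $\sum_T c_T\epsilon^{n_T}u^d$, and it suffices to check that this sum is not identically zero as a polynomial in the leading coefficients $u_i$.

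Then $\val(P)=p$, $\val(Q)=q$, $\val(R)=r$ and $\val(S)=s$, after rescaling by $3$ where the $\jmath$-valuations enter. Substituting into the formulas for the $a'_k$ and dividing $\jmath$-valuations by $3$ should give the six stated formulas. One consistency check is $\sum_k n'_k\beta'_k=\sum_k n_k\beta_k$, equivalently that the products of the minors $\Delta_{w_0\Lambda}$ computed from both words agree. I would also run a sanity check at $n_\bullet$ supported in one coordinate.

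The main obstacle is the explicit twelve-step braid computation and the resulting identification of $P,Q,R,S$ in closed form. Intermediate expressions grow quickly, and the $\gamma$-twists must be tracked carefully. To manage this I would use Proposition \ref{prop: minor recursive formula} as an alternative: express $a'_k$ through generalized minors of $g=\eta_{w_0}^{-1}(y)$ along the second word. The denominators $P,Q,R,S$ then become, up to monomials, the chamber minors $\Delta_{w'_k\Lambda}(g)$ expanded along the first word. These have the known positive Berenstein--Zelevinsky form, which should make the Newton polytope analysis tractable.
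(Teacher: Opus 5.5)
Your plan is essentially the paper's proof: the paper expands to the length-$12$ word in $D_4$ and applies Eq.~\eqref{eq: Lusztig transform} along an explicit chain of braid moves, obtaining $a'_1=\frac{a_2a_3\dot{a}_4\ddot{a}_4a_5a_6}{P}$, $a'_2=\frac{N(P)}{Q}$, $a'_3=\frac{Q}{PR}$, $a'_4=\frac{N(R)}{QS}$, $a'_5=\frac{S}{R}$ and $a'_6=\frac{a_1a_3^2a_5N(a_2a_4)}{S}$ (so your guessed shapes for $a'_3,a'_4,a'_5$ need correcting); it then checks by computer that each of $P,Q,R,S$ has a $4$-simplex as Newton polytope with a single coefficient-one monomial at every vertex, so Lemma~\ref{lem: non-zero coefficient} gives $\val(P)=p$, $\val(Q)=3q$, $\val(R)=r$, $\val(S)=3s$. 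Your orbit-sum fallback is therefore never needed, and it could not have rescued the argument anyway: several monomials sharing one extremal degree vector $d$ contribute $u^d$ times a fixed sum of roots of unity, which can vanish for some valuation vectors, and genericity in the $u_i$ cannot make it nonzero.
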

    \begin{proof} There is only one case to check: $D_4^{(3)}$. In this case, the field automorphism $\gamma$ on $\mc{K}$ is of order $3$. Let $\imath=\{1,3,4\}$ be the $\tau$-orbit of size $3$ and let $\jmath=\{2\}$ be the $\tau$-fixed vertex. Let us adopt the notations $\dot{x}$, $\ddot{x}$, $N(x)$, and $C(x)$ in Example \ref{exmp: polynomial}. Then
    \begin{align*}
        &y_\jmath(a_1)y_\imath(a_2)y_\jmath(a_3)y_\imath(a_4)y_\jmath(a_5)y_\imath(a_6)\\
        =&y_2(a_1)y_1(a_2)y_3(\dot{a}_2)y_4(\ddot{a}_2)y_2(a_3)y_1(a_4)y_3(\dot{a}_4)y_4(\ddot{a}_4)y_2(a_5)y_1(a_6)y_3(\dot{a}_6)y_4(\ddot{a}_6),
    \end{align*}
    with $a_1,a_3,a_5\in \mc{K}^\gamma=k\dbp{t^3}$ and $a_2,a_4,a_6\in \mc{K}$. In order to get to $y_\imath(a'_1)y_\jmath(a'_2)y_\imath(a'_3)y_\jmath(a'_4)y_\imath(a'_5)y_\jmath(a'_6)$, we can apply the following braid moves in $W$:
    \begin{align*}
        &\so\sx\sy\sz\so\sx\sy\sz\so\sx\sy\sz=\so\sx\sy\sz\so\sz\sy\sx\so\sx\sy\sz=\so\sx\sy\so\sz\so\sy\so\sx\so\sy\sz\\
        =&\so\sx\sy\so\sz\sy\so\sy\sx\so\sy\sz=\so\sx\sy\so\sy\sz\so\sx\sy\so\sy\sz=\so\sx\so\sy\so\sz\so\sx\so\sy\so\sz\\
        =&\sx\so\sx\sy\sz\so\sz\sx\so\sy\so\sz=\sx\so\sy\sz\sx\so\sx\sz\so\sy\so\sz=\sx\so\sy\sz\so\sx\so\sz\so\sy\so\sz\\
        =&\sx\so\sy\sz\so\sx\sz\so\sz\sy\so\sz=\sx\so\sy\sz\so\sz\sx\so\sy\sz\so\sz=\sx\so\sy\so\sz\so\sx\so\sy\so\sz\so\\
        =&\sx\sy\so\sy\sz\so\sx\sy\so\sy\sz\so=\sx\sy\so\sz\sy\so\sy\sx\so\sy\sz\so=\sx\sy\so\sz\so\sy\so\sx\so\sy\sz\so\\
        =&\sx\sy\sz\so\sz\sy\sx\so\sx\sy\sz\so=\sx\sy\sz\so\sx\sy\sz\so\sx\sy\sz\so.
    \end{align*}
    By applying Eq. \eqref{eq: Lusztig transform} repetitively, we get
    \[
    a'_1=\frac{a_2a_3\dot{a}_4\ddot{a}_4a_5a_6}{P}, \quad a'_2=\frac{N(P)}{Q},\quad a'_3=\frac{Q}{PR}, \quad a'_4=\frac{N(R)}{QS} \quad a'_5=\frac{S}{R}, \quad a'_6=\frac{a_1a_3^2a_5N(a_2a_4)}{S}
    \]
    where
    \begin{align*}
    P=&a_3\dot{a}_4\ddot{a}_4a_5a_6+a_1(\dot{a}_2+\dot{a}_4)(\ddot{a}_2+\ddot{a}_4)a_5a_6+a_1\dot{a}_2\ddot{a}_2a_3(a_4+a_6),\\
    Q=&a_1^2N\left(\dot{a}_2\ddot{a}_2a_3(a_4+a_6)+(\dot{a}_2+\dot{a}_4)(\ddot{a}_2+\ddot{a}_4)a_5a_6\right)+a_3^2a_5^3N(a_4^2a_6){\color{blue}+2a_1a_3a_5^3N(a_2+a_4)N(a_4a_6)}\\
    &{\color{blue}+a_1a_3^2a_5^2N(a_4)\left(C(a_2\dot{a}_2\ddot{a}_4^2a_6\dot{a}_6)+N(a_2)C(a_4\dot{a}_6\ddot{a}_6)+C(a_2\dot{a}_2\ddot{a}_4)N(a_6)+2N(a_2a_6)\right)},\\
    R=&a_1a_3^2N(a_2)(\dot{a}_4+\dot{a}_6)(\ddot{a}_4+\ddot{a}_6)+a_1a_5^2\dot{a}_6\ddot{a}_6N(a_2+a_4)+a_3a_5^2\dot{a}_6\ddot{a}_6N(a_4)\\
    &{\color{blue}+a_1a_3a_5\left(N(a_2)(\dot{a}_4\ddot{a}_6+\dot{a}_6\ddot{a}_4+2\dot{a}_6\ddot{a}_6)+\dot{a}_6\ddot{a}_6C(a_2\ddot{a}_2\dot{a}_4)+a_2(\ddot{a}_2\dot{a}_4^2\ddot{a}_6+\dot{a}_2\ddot{a}_4^2\dot{a}_6)\right)},\\
    S=&a_1a_3^2N(a_2)N(a_4+a_6)+a_1a_5^2N(a_2+a_4)N(a_6)+a_3a_5^2N(a_4a_6)\\
    &{\color{blue}+a_1a_3a_5\left(C(a_2\dot{a}_2\ddot{a}_4^2a_6\dot{a}_6)+N(a_2)C(a_4\dot{a}_6\ddot{a}_6)+C(a_2\dot{a}_2\ddot{a}_4)N(a_6)+2N(a_2a_6)\right)}.
    \end{align*}

    By using a computer program, we see that the Newton polytope of each of $P,Q,R,S$ is a $4$-dimensional simplex; we highlight the terms whose degree vectors are not extremal in blue above. It is not hard to see that for each of $P,Q,R,S$ and for each extremal degree vector $d$ within, there is exactly one term, and it has coefficient $1$. Thus, by Lemma \ref{lem: non-zero coefficient}, we can conclude that 
    \[
    \val(P)=p, \quad \val(Q)=3q, \quad \val(R)=r, \quad \val(S)=3s.
    \]
    (Note that $\val(a_i)=3n_i$ when $i$ is odd and $\val(a_i)=n_i$ when $i$ is even.) Plugging them into the rational expressions of the $a'_i$'s yields the desired formulas.
    \end{proof}

    A similar computation can be employed to prove the following reverse transformation.

    \begin{proposition}\label{prop:G2'} If $y_\jmath(a'_1)y_\imath(a'_2)y_\jmath(a'_3)y_\imath(a'_4)y_\jmath(a'_5)y_\imath(a'_6)=y_\imath(a_1)y_\jmath(a_2)y_\imath(a_3)y_\jmath(a_4)y_\imath(a_5)y_\jmath(a_6)$,
    then there is a non-empty open subset of $\mathcal{A}^{\imath_\bullet}(n_\bullet)$, over which
    \[
    n'_1=n_2+n_3+2n_4+n_5+n_6-p, \quad n'_2=3p-q, \quad n'_3=q-p-r,
    \]
    \[
    n'_4=3r-q-s, \quad n'_5=s-r, \quad n'_6=n_1+3n_2+2n_3+3n_4+n_5-s,
    \]
    where
    \begin{align*}
    p:=&\min\left\{\begin{array}{c}n_3+2n_4+n_5+n_6, \quad n_1+2n_4+n_5+n_6, \quad n_1+2n_2+n_5+n_6,\\
    n_1+2n_2+n_3+n_4,\quad n_1+2n_2+n_3+n_6
    \end{array}\right\},\\
    q:=&\min\left\{\begin{array}{c}2n_3+6n_4+3n_5+3n_6, \quad 2n_1+6n_2+3n_5+3n_6, \quad 2n_1+6n_4+3n_5+3n_6,\\
    2n_1+6n_2+3n_3+3n_4, \quad 2n_1+6n_2+3n_3+3n_6\end{array}\right\},\\
    r:=&\min\left\{\begin{array}{c}n_1+3n_2+2n_3+2n_4, \quad n_1+3n_2+2n_3+2n_6, \quad n_1+3n_2+2n_5+2n_6, \\
    n_1+3n_4+2n_5+2n_6, \quad n_3+2n_5+3n_4+2n_6\end{array}\right\},\\
    s:=&\min\left\{\begin{array}{c}n_1+3n_2+2n_3+3n_4, \quad n_1+3n_2+2n_3+3n_6, \quad n_1+3n_2+2n_5+3n_6,\\
    n_1+3n_4+2n_5+3n_6, \quad n_3+3n_4+2n_5+3n_6
    \end{array}\right\}.
    \end{align*}
    \end{proposition}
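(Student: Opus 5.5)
The plan is to mirror the proof of Proposition \ref{prop: G2}, with the roles of $\imath=\{1,3,4\}$ and $\jmath=\{2\}$ in $D_4^{(3)}$ interchanged. Now $a_1,a_3,a_5\in\mc{K}$ are attached to the size-three orbit, so $\val(a_s)=n_s$ for $s$ odd. The elements $a_2,a_4,a_6\in\mc{K}^\gamma=\rk\dbp{t^3}$ are attached to the fixed vertex $2$, so $\val(a_s)=3n_s$ for $s$ even. We expand
\[
y_\imath(a_1)y_\jmath(a_2)y_\imath(a_3)y_\jmath(a_4)y_\imath(a_5)y_\jmath(a_6)
\]
as a word of length $12$ in the $y_i$'s:
\[
y_1(a_1)y_3(\dot a_1)y_4(\ddot a_1)\,y_2(a_2)\cdots
\]
We then follow the chain of braid moves of Proposition \ref{prop: G2} in reverse order. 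That chain consists of commutations among $s_1,s_3,s_4$ and the rank-two moves $s_is_2s_i=s_2s_is_2$, and it transforms $s_{\color{blue}1}s_{\color{blue}3}s_{\color{blue}4}\so\cdots\so$ into $\so\sx\sy\sz\cdots$. At each rank-two move we apply Eq. \eqref{eq: Lusztig transform}.

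This yields rational expressions for $a'_1,\dots,a'_6$. They should take the form
\[
a'_1=\frac{(\ast)}{P'},\quad a'_2=\frac{N(P')}{Q'},\quad a'_3=\frac{Q'}{P'R'},\quad a'_4=\frac{N(R')}{Q'S'},\quad a'_5=\frac{S'}{R'},\quad a'_6=\frac{(\ast\ast)}{S'},
\]
where $P',Q',R',S'$ are polynomials in the $\gamma^j(a_s)$, and $N(\cdot)$ is the norm $x\dot x\ddot x$. One shortcut is to obtain these formulas as the inverse of the birational map in Proposition \ref{prop: G2}. A consistency check is that $a'_2,a'_4,a'_6$ must be $\gamma$-invariant, and here $P',R'$ play the role of the $\gamma$-non-invariant polynomials.

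With the formulas in hand, we compute the Newton polytope of each of $P',Q',R',S'$ in the sense of Definition \ref{def: polynomial}. The variables $a_s,\dot a_s,\ddot a_s$ are identified, and the degree vectors are then weighted by the valuation vector $\nu=(n_1,3n_2,n_3,3n_4,n_5,3n_6)$. We expect each polytope to be a $4$-simplex whose five vertices give exactly the five linear forms in the definitions of $p,q,r,s$, possibly after dividing by $3$. For example, the first form of $q$ is $2n_3+6n_4+3n_5+3n_6$; this matches a monomial such as $N(a_3a_5)\cdots$ combined with a norm, and the common factor $3$ is reflected in $\val(Q')=q$, whereas $\val(P')=3p$. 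Next we verify that each extremal degree vector is carried by a single monomial with coefficient $1$. Lemma \ref{lem: non-zero coefficient} then gives, on a non-empty open subset of $\mc{A}^{\imath_\bullet}(n_\bullet)$, the valuations
\[
\val(P')=3p,\qquad \val(Q')=q,\qquad \val(R')=3r,\qquad \val(S')=s,
\]
with the normalisation forced by the stated formulas: $n'_2=3p-q$ and $n'_2=\frac13\val(a'_2)$ force $\val(Q')$ to be $q$ in the appropriate units. Since the conclusion concerns a generic point, we take the intersection of these finitely many open sets.

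Finally, we substitute the valuations into the expressions for the $a'_s$. The primed word starts with $\jmath$, so $n'_s=\frac13\val(a'_s)$ for odd $s$ and $n'_s=\val(a'_s)$ for even $s$. Checking the stated formulas is then linear bookkeeping. For instance, $n'_3=q-p-r$ comes from $\val(Q')-\val(P')-\val(R')$ after rescaling.

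The main obstacle is the symbolic braid-move computation together with the Newton-polytope verification. The intermediate expressions involve long polynomials in twelve variables, including non-extremal cross terms analogous to the blue terms in Proposition \ref{prop: G2}. As the paper does, we would rely on a computer algebra check that these terms lie strictly inside the simplices and that the vertex terms are unique.
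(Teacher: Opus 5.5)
Your strategy is exactly the ``similar computation'' the paper has in mind: run the braid-move chain of Proposition~\ref{prop: G2} backwards, apply \eqref{eq: Lusztig transform} at each rank-two move, and read off valuations with Lemma~\ref{lem: non-zero coefficient}. As written, however, the last step fails. The shape you predict is copied from Proposition~\ref{prop: G2} without swapping the roles of the two orbits, and with that shape the bookkeeping does not give the stated formulas.

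\begin{itemize}
\item In the primed word $\jmath\imath\jmath\imath\jmath\imath$, the entries $a'_1,a'_3,a'_5$ are arguments of $y_\jmath=y_2$, so they must lie in $\mc{K}^\gamma$. The entries $a'_2,a'_4,a'_6$ only lie in $\mc{K}$. Your ``consistency check'' asserts the opposite.
\item Your remark $n'_2=\frac13\val(a'_2)$ contradicts your own (correct) rule $n'_s=\val(a'_s)$ for even $s$.
\item Take your shape together with your valuations $\val(P')=3p$, $\val(Q')=q$, $\val(R')=3r$, $\val(S')=s$. Then $n'_2=\val(N(P')/Q')=9p-q$, $n'_3=\frac13(q-3p-3r)$, $n'_4=9r-q-s$ and $n'_5=\frac13(s-3r)$. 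None of these matches the statement.
\item Your sample vertex monomial $N(a_3a_5)\cdots$ for the first form of $q$ has the wrong $a_3$-degree. The actual vertex term is $\dot a_3\ddot a_3a_4^2N(a_5)a_6$.
\end{itemize}

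The cleanest way to get the correct shape, and in fact the whole proposition, is by reversal rather than inversion.
\begin{itemize}
\item Let $\iota(g):=h_0g^{-1}h_0^{-1}$, where $h_0\in H$ satisfies $\alpha_i(h_0)=-1$ for all $i$ (e.g.\ $h_0=(2\check\rho)(\zeta)$ with $\zeta^2=-1$).
\item $\iota$ is an anti-automorphism fixing every $y_i(a)$. Hence it also fixes $y_\jmath(a)=y_2(a)$ and $y_\imath(a)=y_1(a)y_3(\dot a)y_4(\ddot a)$, since here $h=0$ and the three factors commute.
\item Applying $\iota$ turns the identity in the statement into the identity of Proposition~\ref{prop: G2} with $a_s\mapsto a_{7-s}$ and $a'_s\mapsto a'_{7-s}$.
\item Permuting factors identifies $\mc{A}^{\imath_\bullet}(n_\bullet)$ with $\mc{A}^{(\jmath,\imath,\jmath,\imath,\jmath,\imath)}(n_6,\dots,n_1)$, so the open set of Proposition~\ref{prop: G2} transfers.
\end{itemize}
This gives
\[
a'_1=\frac{a_2a_4^2a_6N(a_3a_5)}{P'},\quad a'_2=\frac{P'}{Q'},\quad a'_3=\frac{N(Q')}{P'R'},
\]
\[
a'_4=\frac{R'}{Q'S'},\quad a'_5=\frac{N(S')}{R'},\quad a'_6=\frac{a_1a_2\dot a_3\ddot a_3a_4a_5}{S'}.
\]
Here $P',Q',R',S'$ are the polynomials $S,R,Q,P$ of Proposition~\ref{prop: G2} with indices reversed. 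So $P'$ and $R'$ are $\gamma$-invariant, with generic valuations $3p$ and $3r$; $Q'$ and $S'$ are not invariant, with generic valuations $q$ and $s$.

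Your valuations were therefore right, and with this shape the bookkeeping closes. Moreover, the $p,q,r,s$ of the statement are exactly the $s,r,q,p$ of Proposition~\ref{prop: G2} after $n_j\mapsto n_{7-j}$. All six formulas are those of Proposition~\ref{prop: G2} under $n_j\mapsto n_{7-j}$ and $n'_j\mapsto n'_{7-j}$. So the proposition follows from Proposition~\ref{prop: G2} with no new Newton-polytope or computer verification.
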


    \begin{theorem}
    \label{thm:main-MV}
    Suppose $\imath_\bullet$ and $\imath'_\bullet$ differ by a single braid move, and $n_\bullet$ and $n'_\bullet$ satisfy the respective Lusztig transformation, i.e. the formulas listed in Propositions \ref{prop: A1xA1}, \ref{prop: A2}, \ref{prop: B2}, \ref{prop:B2'}, \ref{prop: G2}, and \ref{prop:G2'}, then $A^{\imath_\bullet}(n_\bullet)$ and $A^{\imath'_\bullet}(n'_\bullet)$ have a non-empty intersection that is dense in each of them. 
    \end{theorem}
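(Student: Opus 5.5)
The plan is to transport the explicit braid-move identities of Propositions \ref{prop: A1xA1}--\ref{prop:G2'} through the parametrization $\Phi$ of Definition \ref{def: twist image}. Write $\imath_\bullet = \alpha\,\beta\,\delta$ and $\imath'_\bullet=\alpha\,\beta'\,\delta$, where $\beta,\beta'$ are the two sides of the braid move occupying positions $s+1,\dots,s+r$. Given $a_\bullet\in\mc{A}^{\imath_\bullet}(n_\bullet)$, the element $y_{\imath_\bullet}$ factors as $y_\alpha\, y_\beta\, y_\delta$. On the nonempty open subset $U\subset \mc{A}^{\imath_\bullet}(n_\bullet)$ provided by the relevant proposition, the middle factor $y_\beta$ can be rewritten as $y_{\beta'}$ with parameters $a'$ of valuations $n'$.

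\textbf{Step 1: the new parameters are admissible.} We must check that the new parameters $a'_\bullet$ (keeping the $\alpha,\delta$ parameters unchanged) lie in $\mc{A}^{\imath'_\bullet}(n'_\bullet)$. The valuations are correct by the propositions. For the fields, $y_{\beta'}$ is $\sigma$-invariant since it equals $y_\beta$. Alternatively, one can argue via Theorem \ref{prop: surjective}: the factorization of $\eta_{w_0}(g)$ along $\imath'_\bullet$ is unique, since the angle unipotents are determined by the flags, and that proof already shows these coordinates lie in $\mc{K}_{\imath_k}$ with the correct $\gamma$-(anti)invariance. Hence
\[ \Phi_{\imath_\bullet}(a_\bullet)=[\eta_{w_0}^{-1}(y_{\imath_\bullet})]=[\eta_{w_0}^{-1}(y_{\imath'_\bullet})]=\Phi_{\imath'_\bullet}(a'_\bullet)\in A^{\imath'_\bullet}(n'_\bullet). \]
This gives $\Phi(U)\subset A^{\imath_\bullet}(n_\bullet)\cap A^{\imath'_\bullet}(n'_\bullet)$, so the intersection is nonempty.

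\textbf{Step 2: density.} Since $\mc{A}^{\imath_\bullet}(n_\bullet)$ is irreducible and $\Phi$ is continuous and surjective onto $A^{\imath_\bullet}(n_\bullet)$ (Theorem \ref{prop: surjective}), $\Phi(U)$ is dense in $A^{\imath_\bullet}(n_\bullet)$. By Theorem \ref{thm_MV_constr}, the closures $\overline{A^{\imath_\bullet}(n_\bullet)}$ and $\overline{A^{\imath'_\bullet}(n'_\bullet)}$ are irreducible components of $\overline{S_\mu\cap T_0}$ of the same dimension $\langle\mu,\rho\rangle$. Here $\mu=\sum n_s\beta_s^{\imath_\bullet}=\sum n'_s\beta_s^{\imath'_\bullet}$, and this equality of $\mu$ should be checked from the formulas; it also follows automatically from the nonempty intersection, via Lemma \ref{lem:semi-infinite} applied at $k=N$. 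The first closure is contained in the second, because $\Phi(U)\subset A^{\imath'_\bullet}(n'_\bullet)$. Two irreducible components with one contained in the other are equal, so $\overline{A^{\imath_\bullet}(n_\bullet)}=\overline{A^{\imath'_\bullet}(n'_\bullet)}$.

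It remains to see that the intersection is dense in each stratum. Both strata are irreducible, locally closed subsets (finite intersections of semi-infinite cells) with the same closure $Z$. Each is therefore open dense in $Z$, since a constructible dense subset of an irreducible variety contains an open dense set. The intersection of two dense opens in the irreducible $Z$ is dense, and hence dense in each stratum.

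\textbf{Main obstacle.} The delicate point is Step 1 for exceptional orbits, i.e.\ case (ii) in type $A_{2l}^{(2)}$. There one must ensure that the rewritten $y_{\beta'}$ genuinely has the form $y_\imath(b',c')$ with $(b',c')$ in the prescribed product $t^{\ast}\mc{O}\times t^{\ast}\mc{O}^\gamma$, in particular with the correct unit condition on $b'$ or $c'$ depending on parity. I would avoid recomputing the formulas for $b'_1,b'_3$ and instead lean on the uniqueness argument above. Since $\Phi(a_\bullet)$ lies in $A^{\imath'_\bullet}(n''_\bullet)$ for the unique $n''$ determined by its $D$-values, and the valuations computed on $U$ force $n''=n'$, the surjectivity proof (run in reverse) places the $\imath'$-coordinates in $\mc{A}^{\imath'_\bullet}(n'_\bullet)$. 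One caveat: this requires that on $U$ the minors $\Delta_{w\Lambda}(g)$ realize the $D$-values, which holds by the preceding proposition since $g=g_{\imath_\bullet}$ is of the form covered by Lemma \ref{lem: ratio}.
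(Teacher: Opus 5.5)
Your argument is correct, and its second half takes a different route from the paper.

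\textbf{Density in $A^{\imath_\bullet}(n_\bullet)$.} The paper argues exactly as in your Step 2: an open $U\subset\mc{A}^{\imath_\bullet}(n_\bullet)$ maps into $A^{\imath'_\bullet}(n'_\bullet)$, and $\Phi$ is surjective from an irreducible source.

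\textbf{Density in $A^{\imath'_\bullet}(n'_\bullet)$.} Here the paper says ``the same argument'' works in reverse: braid from $\imath'_\bullet$ back to $\imath_\bullet$ on an open subset of $\mc{A}^{\imath'_\bullet}(n'_\bullet)$, and use surjectivity of $\Phi$ for $\imath'_\bullet$. This implicitly uses that the reverse transformation (e.g.\ Proposition~\ref{prop:B2'} versus Proposition~\ref{prop: B2}) carries $n'_\bullet$ back to $n_\bullet$. Equality of closures is then a consequence. You instead use only the one-way containment $\overline{A^{\imath_\bullet}(n_\bullet)}\subseteq\overline{A^{\imath'_\bullet}(n'_\bullet)}$. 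Combined with Theorem~\ref{thm_MV_constr} (both closures are irreducible components of $\overline{S_\mu\cap T_0}$, with $\mu$ matched by disjointness of the $S_\mu$), this gives equality of closures first. Two-sided density then follows from local closedness. Your route avoids checking that the piecewise-linear maps are mutually inverse, at the cost of relying on the component count behind Theorem~\ref{thm_MV_constr}.

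\textbf{Admissibility of the new coordinates.} Your Step 1 is more explicit than the paper, which takes for granted that the braid-rewritten coordinates lie in $\mc{A}^{\imath'_\bullet}(n'_\bullet)$. The clean justification is the one in your ``Alternatively'' clause. Equality $y_{\beta'}=y_\beta$ alone only gives invariance of the product. Uniqueness of the factorization along $\imath'_\bullet$ is what forces each new factor to be $\sigma$-invariant. That gives $a'\in\mc{K}_{\imath}$ in the standard case and $c'\in\mc{K}^\gamma$ in the exceptional case. Together with $\val(a')$ from the propositions, this fixes the parity and unit conditions. The proposition that $\Phi$ lands in $A^{\imath'_\bullet}(n'_\bullet)$ then applies directly.

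\textbf{A correction to your ``main obstacle'' paragraph.} The proposition showing $\Phi$ lands in $A^{\imath_\bullet}(n_\bullet)$, applied to $g_{\imath_\bullet}$, gives $D=\val\Delta$ only at chamber weights on the $\imath_\bullet$-path. It says nothing about the new chamber weights inside the braid window. For those you would need Lemma~\ref{lem: ratio} run along $\imath'_\bullet$. That in turn needs integrality of the $\imath'$-coordinates, e.g.\ $\val(b')\geq\frac12\val(a')\geq 0$, which follows from $c'\in\mc{K}^\gamma$. Since the first route in Step 1 already yields $a'_\bullet\in\mc{A}^{\imath'_\bullet}(n'_\bullet)$, this detour is unnecessary.
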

    \begin{proof} From the transformation formulas we see that as long as the expressions in the denominators of the arguments inside $y_{\imath'}$'s are non-zero, the product $y=y_{\imath_1}y_{\imath_2}\cdots y_{\imath_N}$ can be factored as $y_{\imath'_1}y_{\imath'_2}\cdots y_{\imath'_N}$ as well. Thus, there is a non-empty open subset of $\mc{A}^{\imath_\bullet}(n_\bullet)$ whose image $[g]=[\eta_{w_0}^{-1}(y)]$ lies inside $A^{\imath'_\bullet}(n'_\bullet)$. Since the map $\Phi:\mc{A}^{\imath_\bullet}(n_\bullet)\rightarrow A^{\imath_\bullet}(n_\bullet)$ is surjective (Theorem \ref{prop: surjective}), it follows that the intersection $A^{\imath_\bullet}(n_\bullet)\cap A^{\imath'_\bullet}(n'_\bullet)$ is dense inside $A^{\imath_\bullet}(n_\bullet)$. The same argument shows that the intersection is dense inside $A^{\imath'_\bullet}(n'_\bullet)$. 
    \end{proof}

For any two reduced expressions $\imath$ and $\imath'$, it is well-known that they are related by finite steps of braid moves.  If $(\imath_\bullet, n_\bullet)$ and $(\imath', n'_\bullet)$ are related by a sequence of Lusztig transforms, we write 
\[ n'_\bullet=R_{\imath_\bullet}^{ \imath'_\bullet}(n_\bullet)    .\]
We naturally have the following result.
    \begin{theorem}\label{cor_MV_Lusztig_transf}
    \begin{enumerate}
\item Fix a reduced expression $\imath_\bullet$ of $w_0\in W^\tau$,  there is a one-to-one correspondence between sequence $n_\bullet$ of non-negative integers such that $\sum_s n_s \beta_s^{\imath_\bullet}=\mu$,  and irreducible components of $\overline{S_\mu\cap T_0}$, sending $n_\bullet$ to $\overline{A^{\imath_\bullet}(n_\bullet)}$
 \item  Given any two Lusztig data $(\imath_\bullet, n_\bullet)$ and $(\imath'_\bullet, n'_\bullet)$, 
        if $n'_\bullet=R_{\imath_\bullet}^{ \imath'_\bullet}(n_\bullet) $, 
   then
        $A^{\imath_\bullet}(n_\bullet)\cap A^{\imath'_\bullet}(n'_\bullet)$ contains a dense subset in each of $A^{\imath_\bullet}(n_\bullet)$ and $A^{\imath'_\bullet}(n'_\bullet)$, and 
        $\overline{A^{\imath_\bullet}(n_\bullet)}=\overline{A^{\imath'_\bullet}(n'_\bullet)}$. 
     
   \end{enumerate}
   \end{theorem}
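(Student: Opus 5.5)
Part (1) is essentially Theorem \ref{thm_MV_constr} repackaged. By the decomposition \eqref{eq:ST-decomp}, $S_\mu\cap T_0$ is the disjoint union of the strata $A^{\imath_\bullet}(n_\bullet)$, where $n_\bullet$ ranges over the non-negative tuples with $\sum_s n_s\beta^{\imath_\bullet}_s=\mu$. Theorem \ref{thm_MV_constr} shows that each closure $\overline{A^{\imath_\bullet}(n_\bullet)}$ is an irreducible component and that every component arises exactly once. The two ingredients behind this are irreducibility (Corollary \ref{cor:irred}, via surjectivity of $\Phi$ in Theorem \ref{prop: surjective}) and the count of components: the number of components is $k(\mu)$, which equals the number of such tuples. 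The map $n_\bullet\mapsto\overline{A^{\imath_\bullet}(n_\bullet)}$ is therefore well defined and surjective. It is injective because the strata are disjoint, nonempty and equidimensional of dimension $\la\mu,\rho\ra$: if two distinct strata had the same closure $C$, both would be dense in $C$, so each would contain a dense open subset of $C$, and these would intersect, contradicting disjointness. Nonemptiness follows from $\mc{A}^{\imath_\bullet}(n_\bullet)\neq\emptyset$ together with the image statement for $\Phi$.

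For part (2) I will induct on the number of braid moves in a chain realizing $R^{\imath'_\bullet}_{\imath_\bullet}$. Write the chain as $\imath_\bullet=\imath^{(0)}_\bullet,\imath^{(1)}_\bullet,\dots,\imath^{(r)}_\bullet=\imath'_\bullet$, with data $n^{(s)}_\bullet$ obtained successively by the single-move Lusztig transformations of Propositions \ref{prop: A1xA1}--\ref{prop:G2'}. These moves are applied to a window of consecutive letters, while the other letters and their $n$'s are untouched. Theorem \ref{thm:main-MV} gives that $A^{(s)}:=A^{\imath^{(s)}_\bullet}(n^{(s)}_\bullet)$ and $A^{(s+1)}$ meet in a set dense in each.

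Denseness is not transitive for arbitrary sets, so I pass to closures. Since $A^{(s)}\cap A^{(s+1)}$ is dense in both, we get $\overline{A^{(s)}}=\overline{A^{(s)}\cap A^{(s+1)}}=\overline{A^{(s+1)}}$. Chaining these equalities gives $\overline{A^{\imath_\bullet}(n_\bullet)}=\overline{A^{\imath'_\bullet}(n'_\bullet)}$.

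For the density claim on $A^{\imath_\bullet}(n_\bullet)\cap A^{\imath'_\bullet}(n'_\bullet)$ itself, I use constructibility. Each stratum is a finite intersection of semi-infinite orbits inside a finite-type piece (namely $\overline{S_\mu}\cap\overline{T_0}$ is contained in a Schubert variety), so it is constructible. By Corollary \ref{cor:irred} it is irreducible, and it is dense in the irreducible component $C=\overline{A^{(s)}}$, so it contains a dense open subset $U_s$ of $C$. Then $U_0\cap U_r$ is a nonempty open subset of $C$, hence dense, and it lies in $A^{\imath_\bullet}(n_\bullet)\cap A^{\imath'_\bullet}(n'_\bullet)$. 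Its closure is $C$, which contains both strata, so the intersection is dense in each of them.

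The main obstacle is the bookkeeping needed to extract a single braid move from a long reduced word, that is, to apply Theorem \ref{thm:main-MV} to $\imath^{(s)}_\bullet$ and $\imath^{(s+1)}_\bullet$. I need that the factorization $y=y_{\imath_1}\cdots y_{\imath_N}$ can be rewritten locally in the window where the move occurs. The argument should be that the generic open condition from the relevant Proposition only involves the window's variables, so it cuts out a nonempty open subset of $\mc{A}^{\imath^{(s)}_\bullet}(n^{(s)}_\bullet)$. The resulting parameters must also land in the right product $\mc{A}$. This last point needs care for the exceptional letters, but it follows from the surjectivity argument of Theorem \ref{prop: surjective} applied to the image point.
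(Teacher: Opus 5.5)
Your proposal is correct and follows the paper's route. Part (1) is exactly Theorem \ref{thm_MV_constr}, and part (2) reduces to Theorem \ref{thm:main-MV}; your passage to closures and your constructibility argument supply the chaining over a sequence of braid moves, which the paper's one-line proof leaves implicit. The concern in your last paragraph is already absorbed into Theorem \ref{thm:main-MV}, which is stated for full reduced words of $w_0$ differing by a single braid move, so you can simply cite it there.
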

\begin{proof}
Part 1) follows from Theorem \ref{thm_MV_constr}, and part 2) follows from Theorem \ref{thm:main-MV}.    
\end{proof}
By Theorem \ref{cor_MV_Lusztig_transf}, the vertices of \(P(A^{\iota_\bullet}(n_\bullet))\) along the path determined by a reduced word \(\iota_\bullet\) are given by the Lusztig datum \(n_\bullet\). Under a change of reduced word, Theorem \ref{thm:main-MV} shows that these data transform by the usual Lusztig transformations for the reductive group $(G^\vee)^\tau$ arising from ramified geometric Satake. Hence the resulting collection of vertices satisfies the usual Berenstein–Zelevinsky relations and determines the ordinary MV polytope associated with \(n_\bullet\). Consequently,
$P(A^{\iota_\bullet}(n_\bullet))$
is precisely the usual MV polytope of weight \((\mu,0)\).


\bibliographystyle{alphaurl-a}

\bibliography{biblio}

\end{document}